\title{Собственные симметрии трехмерных цепных дробей.
      }
     \date{}
\author{И.\,А.\,Тлюстангелов}
\theoremstyle{definition}
\newtheorem{definition}{Определение}
\newtheorem*{notation*}{Обозначение}
\theoremstyle{remark}
\newtheorem*{remark*}{Замечание}
\theoremstyle{plain}
\newtheorem{theorem}{Теорема}
\newtheorem*{theorem_dir*}{Теорема Дирихле}
\newtheorem{lemma}{Лемма}
\newtheorem{proposition}{Предложение}
\newtheorem{corollary}{Следствие}
\newtheorem*{statement*}{Утверждение}
\newtheorem*{corollary*}{Следствие}
\newtheorem{proof_m*}{Доказательство теоремы 1}
\newtheorem{question}{Вопрос}
\DeclareMathOperator{\conv}{conv}
\DeclareMathOperator{\id}{id}
\renewcommand{\phi}{\varphi}
\renewcommand{\vec}[1]{\mathbf{#1}}
\newcommand{\R}{\mathbb{R}}
\newcommand{\Z}{\mathbb{Z}}
\newcommand{\Q}{\mathbb{Q}}
\newcommand{\cK}{\mathcal{K}}
\newcommand{\gA}{\mathfrak{A}}
\newcommand{\gR}{\mathfrak{R}}
\newcommand{\gQ}{\mathfrak{Q}}
\newcommand{\Sl}{\textup{SL}}
\newcommand{\Gl}{\textup{GL}}
\newcommand{\trace}{\textup{Tr}}
\newcommand{\cf}{\textup{CF}}
\newcommand{\gal}{\textup{Gal}}
\begin{document}

\maketitle

\begin{abstract}
В данной работе доказывается критерий наличия у алгебраической цепной дроби собственной палиндромической симметрии в размерности $4$. В качестве многомерного обобщения цепных дробей рассматриваются полиэдры Клейна. 

\end{abstract}

\section{Введение}\label{intro}

Обыкновенная цепная дробь действительного числа имеет весьма изящную геометрическую интерпретацию, позволяющую перейти от классического случая к многомерному (см. \cite{klein} и, например, \cite{korkina_2dim}, \cite{german_sb}, \cite{german_bordeaux}, \cite{karpenkov_book}). Для описания такого обобщения рассмотрим $l_1,\ldots,l_n$ --- одномерные подпространства пространства $\R^n$, линейная оболочка которых совпадает со всем $\R^n$. Гиперпространства, натянутые на всевозможные $(n-1)$-наборы из этих подпространств, разбивают $\R^n$ на $2^n$ симплициальных конусов. Будем обозначать множество этих конусов через
\[ \mathcal{C}(l_1, \ldots, l_n).\]

Симплициальный конус с вершиной в начале координат $\vec{0}$ будем называть \emph{иррациональным}, если линейная оболочка любой его гиперграни не содержит целых точек, кроме начала координат  $\vec{0}$.

\begin{definition}
  Пусть  $C$ --- иррациональный конус, $C \in \mathcal{C}(l_1, \ldots, l_n)$. Выпуклая оболочка $\cK(C) = \conv(C\cap\Z^{n}\setminus\{\vec{0}\} )$ и его граница  $\partial(\cK(C))$ называются соответственно \emph{полиэдром Клейна} и \emph{парусом Клейна}, соответствующими конусу $C$. Объединение же всех $2^n$ парусов
  \[\cf(l_1, \ldots, l_n) = {\underset{C \, \in \, \mathcal{C}(l_1, \ldots, l_n)}{\bigcup}} \partial(\cK(C))\]
  называется \emph{$(n-1)$-мерной цепной дробью}.
\end{definition}

Особенный интерес представляет так называемый алгебраический случай. Напомним, что оператор из $\Gl_{n}(\Z)$ с вещественными собственными значениями, характеристический многочлен которого неприводим над $\Q$, называется \emph{гиперболическим}. Справедливо следующее утверждение о связи гиперболических операторов с алгебраическими числами в случае произвольного $n$ (подробности см., например, в \cite{german_tlyust_2})

\begin{proposition}\label{prop:more_than_pelle_n_dim}
  Числа $1,\alpha_1,\ldots,\alpha_{n-1}$ образуют базис некоторого вполне вещественного расширения $K$ поля $\Q$ тогда и только тогда, когда вектор $(1,\alpha_1,\ldots,\alpha_{n-1})$ является собственным для некоторого гиперболического оператора $A\in\Sl_n(\Z)$.
  При этом векторы $(1,\sigma_i(\alpha_1),\ldots,\sigma_i(\alpha_{n-1}))$, $i=1,\ldots,n$, где $\sigma_1(=\id),\sigma_2,\ldots,\sigma_n$ --- все вложения $K$ в $\R$, образуют собственный базис оператора $A$.
\end{proposition}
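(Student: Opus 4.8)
The plan is to prove the two implications separately, using in both directions the same link between the multiplication matrices of $K$ and the conjugate vectors $\vec{v}_i=(1,\sigma_i(\alpha_1),\ldots,\sigma_i(\alpha_{n-1}))$. Write $\omega_0=1$, $\omega_j=\alpha_j$, and for $\xi\in K$ let $M_\xi\in\Gl_n(\Q)$ be the matrix of multiplication by $\xi$ in the basis $\omega_0,\ldots,\omega_{n-1}$, so that $\xi\omega_k=\sum_j (M_\xi)_{jk}\omega_j$. Applying each real embedding $\sigma_i$ to this identity (its coefficients are rational, hence $\sigma_i$-invariant) yields $\tr{M_\xi}\,\vec{v}_i=\sigma_i(\xi)\,\vec{v}_i$. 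Thus every $\vec{v}_i$ is an eigenvector of the transpose $\tr{M_\xi}$ with eigenvalue $\sigma_i(\xi)$, and since $\sigma_1=\id$ the vector $\vec{v}_1=(1,\alpha_1,\ldots,\alpha_{n-1})$ is an eigenvector with eigenvalue $\xi$. This one computation will supply both the eigenbasis claim and, in the forward direction, the operator itself.

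For the implication ``eigenvector $\Rightarrow$ basis'', suppose $A\vec{v}_1=\lambda\vec{v}_1$ with $A\in\Sl_n(\Z)$ hyperbolic. Its characteristic polynomial $\chi_A$ is irreducible of degree $n$, hence the minimal polynomial of $\lambda$, so $K:=\Q(\lambda)$ has degree $n$; hyperbolicity forces all roots of $\chi_A$ to be real, making $K$ totally real. Solving $(A-\lambda I)\vec{v}=\vec{0}$ by Cramer's rule (the eigenvalue is simple, as $\chi_A$ is separable) shows the normalized eigenvector has entries in $\Q(\lambda)=K$, so $\alpha_j\in K$. Applying the embeddings $\sigma_i$ to the eigen-equation gives $A\vec{v}_i=\sigma_i(\lambda)\vec{v}_i$; since the $\sigma_i(\lambda)$ are the $n$ distinct roots of $\chi_A$, the $\vec{v}_i$ are eigenvectors for distinct eigenvalues and therefore linearly independent, which is already the asserted eigenbasis. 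Finally, a rational relation $\sum_j c_j\omega_j=0$ would, after applying every $\sigma_i$, place the nonzero vector $(c_0,\ldots,c_{n-1})$ in the kernel of the invertible matrix whose rows are the $\vec{v}_i$; hence $1,\alpha_1,\ldots,\alpha_{n-1}$ are $\Q$-linearly independent and form a basis of $K$.

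For the converse I would realize the matrix as a multiplication operator. Let $\La=\Z+\Z\alpha_1+\cdots+\Z\alpha_{n-1}$ and let $\cO=\{\xi\in K:\xi\La\subseteq\La\}$ be its ring of multipliers, an order in $K$; for $\xi\in\cO$ the matrix $M_\xi$ is integral with $\det M_\xi=\norm_{K/\Q}(\xi)$. If $\xi\in\cO$ is a unit with $\norm(\xi)=1$ that moreover generates $K$ over $\Q$, then $A:=\tr{M_\xi}$ lies in $\Sl_n(\Z)$, has the distinct real eigenvalues $\sigma_i(\xi)$, and is hyperbolic because its characteristic polynomial $\prod_i(x-\sigma_i(\xi))$ equals the irreducible minimal polynomial of the primitive element $\xi$; by the relation established at the outset, $\vec{v}_1$ is an eigenvector and the $\vec{v}_i$ form an eigenbasis. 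The existence of such a $\xi$ is where the work concentrates: Dirichlet's unit theorem gives $\operatorname{rank}\cO^*=n-1\geq 1$ in the totally real case, and the norm-one subgroup $U=\ker\bigl(\norm\colon\cO^*\to\{\pm1\}\bigr)$ still has rank $n-1$.

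The main obstacle is guaranteeing that $U$ contains a \emph{primitive} element. I would settle this by a covering argument. Since $K/\Q$ is separable it has only finitely many intermediate fields $F_1,\ldots,F_r$, and any non-primitive unit lies in some $F_j$. Each $F_j$ is totally real of degree $m_j<n$, and $U\cap F_j$ consists of units of the order $\cO\cap F_j$, hence has rank at most $m_j-1\leq n-2$. A free abelian group of rank $n-1$ cannot be covered by finitely many subgroups of strictly smaller rank (tensoring with $\Q$, a vector space is not a finite union of proper subspaces), so some $\xi\in U$ avoids every $F_j$ and generates $K$. No further adjustment is needed, since $\norm(\xi)=1$ already gives $\det A=1$. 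This produces the required operator and completes the equivalence together with the eigenbasis statement.
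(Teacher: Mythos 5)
The paper itself contains no proof of this proposition: it is quoted as known, and the details are explicitly deferred to \cite{german_tlyust_2}, so there is no internal argument to compare yours against line by line. Your proof is correct and self-contained, and it follows the same standard route as the cited source: realize the operator as the transpose of a multiplication matrix and extract everything from the identity $\tr{M_\xi}\,\vec v_i=\sigma_i(\xi)\,\vec v_i$. In the direction ``eigenvector $\Rightarrow$ basis'' all the checkpoints hold: irreducibility of $\chi_A$ makes $\lambda$ a degree-$n$ algebraic number whose conjugates are exactly the (real) eigenvalues of $A$, so $K=\Q(\lambda)$ is totally real; simplicity of $\lambda$ plus Cramer/adjugate puts the eigenvector normalized by its first coordinate in $K^n$, so $\alpha_j\in K$; applying the $n$ embeddings to the eigen-equation gives $n$ eigenvectors with pairwise distinct eigenvalues, which is the asserted eigenbasis and, via the invertibility of the matrix $\big(\sigma_i(\omega_j)\big)$, also the $\Q$-linear independence of $1,\alpha_1,\ldots,\alpha_{n-1}$. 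In the converse direction the only genuinely nonstandard step is producing a norm-one unit of the coefficient ring $\cO$ of $\La=\Z+\Z\alpha_1+\cdots+\Z\alpha_{n-1}$ that is simultaneously a primitive element of $K$; that $\cO$ is an order and that Dirichlet's theorem applies to orders are standard (both are in Borevich--Shafarevich, which the paper cites), and your covering argument is sound: each of the finitely many proper intermediate fields $F_j$ is totally real of degree $m_j\le n-1$, so $U\cap F_j\subseteq\cO_{F_j}^{*}$ has rank at most $m_j-1\le n-2$, while a subgroup-covering of the rank-$(n-1)$ group $U$ would, after tensoring with $\Q$ (every element of $U\otimes\Q$ being a rational multiple of the image of an element of $U$), exhibit $\Q^{n-1}$ as a finite union of proper subspaces, which is impossible over an infinite field. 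Combined with $\det \tr{M_\xi}=\norm_{K/\Q}(\xi)=1$ and the fact that for a primitive $\xi$ the characteristic polynomial $\prod_i\big(x-\sigma_i(\xi)\big)$ is the irreducible minimal polynomial, this yields the required hyperbolic $A\in\Sl_n(\Z)$ together with the eigenbasis claim, so no gaps remain.
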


В случае $n=2$ предложение \ref{prop:more_than_pelle_n_dim} позволяет геометрически проинтерпретировать классическую теорему Лагранжа о периодичности обыкновенной цепной дроби. Геометрически теорема Лагранжа означает, что последовательность целочисленных длин и углов паруса одномерной цепной дроби $\cf(l_1, l_2)$ периодична тогда и только тогда, когда направления $l_1$ и $l_2$ являются собственными для некоторого $\Sl_{2}(\Z)$ оператора с различными вещественными собственными значениями (см., например, \cite{german_tlyust}).

\begin{definition}
  Пусть $l_1,\ldots,l_n$ --- собственные подпространства некоторого гиперболического оператора $A\in\Gl_n(\Z)$. Тогда $(n-1)$-мерная геометрическая цепная дробь $\cf(l_1,\ldots,l_n)$ называется \emph{алгебраической}. Мы будем также говорить, что эта дробь \emph{ассоциирована} с оператором $A$ и писать $\cf(A)=\cf(l_1,\ldots,l_n)$. Множество всех $(n-1)$-мерных алгебраических цепных дробей будем обозначать $\gA_{n-1}$.
\end{definition}

Будем называть \emph{группой симметрий} алгебраической цепной дроби $\cf(A)=\cf(l_1,\ldots,l_n)$ множество
\[
  \textup{Sym}_{\Z}\big(\cf(A)\big)=
  \Big\{ G\in\Gl_n(\Z) \ \Big|\ G\big(\cf(A)\big)=\cf(A) \Big\}.
\]
Из соображений непрерывности ясно, что для каждого $G\in\textup{Sym}_{\Z}\big(\cf(A)\big)$ однозначно определена перестановка $\sigma_G$, такая что
\begin{equation} \label{eq:repres}
  G(l_{i})=l_{\sigma_G(i)},\quad i=1,\dots,n.
\end{equation}
И обратно, если для $G\in\Gl_n(\Z)$ существует такая перестановка $\sigma_{G}$, что выполняются соотношения \eqref{eq:repres}, то $G\in\textup{Sym}_{\Z}\big(\cf(A)\big)$.

Благодаря теореме Дирихле об алгебраических единицах (см. \cite{bor_shaf}) существует изоморфная $\Z^{n-1}$ подгруппа группы $\textup{Sym}_{\Z}\big(\cf(A)\big)$ (см., например, \cite{german_tlyust_2}). Относительно действия этой подгруппы на любом из $2^n$ парусов возникает фундаментальная область, которую можно отождествить с $(n-1)$-мерным тором (см. \cite{korkina_2dim}). Для каждого элемента $G$, принадлежащего этой подгруппе, $\sigma_G=\id$. Однако, в $\textup{Sym}_{\Z}\big(\cf(A)\big)$, вообще говоря, могут существовать такие элементы $G$, для которых $\sigma_G\neq\id$.

\begin{definition}
  Оператор $G\in\textup{Sym}_{\Z}\big(\cf(A)\big)$ такой, что $\sigma_G=\id$, будем называть \emph{симметрией Дирихле} дроби $\cf(A)\in\gA_{n-1}$.
\end{definition}

\begin{definition}
  Оператор $G\in\textup{Sym}_{\Z}\big(\cf(A)\big)$, не являющийся симметрией Дирихле, будем называть \emph{палиндромической симметрией} дроби $\cf(A)$. Если множество палиндромических симметрией цепной дроби непусто, то такую цепную дробь будем называть \emph{палиндромичной}.
\end{definition}

\begin{definition}
  Симметрия $G\in\textup{Sym}_{\Z}\big(\cf(A)\big)$ называется \emph{циклической}, если $\sigma_G$ --- циклическая перестановка.
\end{definition}
 
 Очевидно, что все циклические симметрии цепной дроби $\cf(A)$ являются палиндромическими симметриями этой цепной дроби.

\begin{definition}
  Палиндромическая симметрия $G\in\textup{Sym}_{\Z}\big(\cf(A)\big)$ называется \emph{собственной}, если у оператора $G$ существует неподвижная точка на некотором парусе цепной дроби $\cf(A)$. Палиндромическая симметрия $G\in\textup{Sym}_{\Z}\big(\cf(A)\big)$, не являющаяся собственной, называется  \emph{несобственной}.
\end{definition}

Для $n=2$, то есть для одномерных цепных дробей, палиндромичность напрямую связана с симметричностью периодов обыкновенных цепных дробей квадратичных иррациональностей. Критерий симметричности периода цепной дроби квадратичной иррациональности восходит к результатам Галуа \cite{galois}, Лежандра \cite{legendre}, Перрона \cite{perron_book} и Крайтчика \cite{kraitchik}. В работе \cite{german_tlyust} дано геометрическое доказательство этого критерия. При этом приходится рассматривать как собственные, так и несобственные симметрии. Критерий палиндромичности цепной дроби для $n=3$ был получен в работе \cite{german_tlyust_2}. Стоит отметить, что при $n=3$ любая палиндромичная цепная дробь обладает собственной циклической симметрией. Для $n=4$ существует критерий наличия собственной палиндромической симметрии у алгебраической цепной дроби, схема доказательства которого описана в работе \cite{tlyust_proper}. Данная работа посвящена полному доказательству этого критерия. Основной результат работы \cite{tlyust} --- доказательство критерия существования собственной циклической палиндромической симметрии у трехмерной алгебраической цепной дроби. В данной работе с помощью полученных результатов мы по-новому доказываем этот критерий.

\section{Критерии палиндромичности}\label{results}

Здесь и далее обозначение $\vec v_1\sim\vec v_2$ для векторов из $\R^n$ означает существование такого оператора $X\in\Gl_n(\Z)$ и такого ненулевого $\mu\in\R$, что $X\vec v_1=\mu\vec v_2$. Упомянутый критерий палиндромичности для $n=2$ выглядит следующим образом:

\begin{theorem}\label{two_dimension}
 Пусть $\cf(l_1,l_2)\in\gA_1$ и пусть подпространство $l_1$ порождено вектором $(1, \alpha)$. Тогда $\cf(l_1, l_2)$ имеет собственную симметрию (или, что тоже самое, собственную циклическую симметрию) в том и только в том случае, если существует такое алгебраическое число $\omega$ степени $2$ со своим сопряжённым $\omega'$, что выполнено хотя бы одно из следующих условий:

  \textup{(а)} $(1, \alpha) \sim(1,\omega):\hskip 6.5mm \trace(\omega)=\omega + \omega' =0$;

  \textup{(б)} $(1, \alpha) \sim(1,\omega):\hskip 6.5mm \trace(\omega)=\omega + \omega' =1$.
  
\end{theorem}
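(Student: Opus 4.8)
The plan is to translate ``has a proper symmetry'' into a concrete statement about involutive matrices in $\Gl_2(\Z)$ and then to settle that statement by bringing such matrices to normal form. First I would record the setup: since $n=2$, Proposition~\ref{prop:more_than_pelle_n_dim} shows $\alpha$ is a real quadratic irrationality with conjugate $\alpha'$, and $l_1=\langle(1,\alpha)\rangle$, $l_2=\langle(1,\alpha')\rangle$. The only non-identity permutation of two indices is the transposition $(1\,2)$, which is cyclic, so every palindromic symmetry here is cyclic; this already yields the parenthetical equivalence. A palindromic symmetry $G$ satisfies $Gl_1=l_2$, $Gl_2=l_1$, hence in the eigenbasis $Gv_1=\mu_1v_2$, $Gv_2=\mu_2v_1$, so $G^2=\mu_1\mu_2\cdot\id$; being an integer scalar matrix forces $G^2=\pm\id$. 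I would then show $G$ is proper iff $G^2=\id$: in that case $G\neq\pm\id$ is a reflection with a $(+1)$-eigenline $\ell\neq l_1,l_2$, and by convexity each of the two rays of $\ell$ crosses the sail of the cone it enters exactly once, producing a point of $\cf(l_1,l_2)$ fixed by $G$; whereas if $G^2=-\id$ then $\det G=1$ and $G$ has eigenvalues $\pm i$, so there are no nonzero real fixed points.

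For sufficiency, suppose $(1,\alpha)\sim(1,\omega)$ with $\trace\omega=0$. Take $G_1=\operatorname{diag}(1,-1)$: since $\omega'=-\omega$, we get $G_1\langle(1,\omega)\rangle=\langle(1,-\omega)\rangle=\langle(1,\omega')\rangle$ and symmetrically for the conjugate line, so by the permutation criterion~\eqref{eq:repres} the matrix $G_1$ is a symmetry, and $G_1^2=\id$ makes it proper. Transporting $G_1$ back through the $\Gl_2(\Z)$-equivalence $(1,\alpha)\sim(1,\omega)$ then gives a proper symmetry of $\cf(l_1,l_2)$. The case $\trace\omega=1$ is identical with $G_2=\left(\begin{smallmatrix}1&0\\1&-1\end{smallmatrix}\right)$, because then $G_2\langle(1,\omega)\rangle=\langle(1,1-\omega)\rangle=\langle(1,\omega')\rangle$.

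For necessity, a proper symmetry $G$ is by the first paragraph an involution with $\det G=-1$, and the core step is to classify such involutions up to $\Gl_2(\Z)$-conjugacy. Writing $\vec u,\vec w$ for primitive generators of the $(+1)$- and $(-1)$-eigenlines, one checks from $\vec z+G\vec z=2s\vec u$ and $\vec z-G\vec z=2t\vec w$ that every $\vec z\in\Z^2$ has half-integer coordinates in the basis $\vec u,\vec w$, and that index $m=4$ would contradict primitivity, so $m=[\Z^2:\Z\vec u+\Z\vec w]\in\{1,2\}$. When $m=1$ one finds $G$ conjugate to $G_1$, and when $m=2$ to $G_2$. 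Conjugating $G$ to its normal form by some $X\in\Gl_2(\Z)$ simultaneously sends $\alpha$ to $\omega=X\cdot\alpha$ with $(1,\alpha)\sim(1,\omega)$, and reading off the swap condition $G_i(1,\omega)\parallel(1,\omega')$ in the normal form forces $\trace\omega=0$ when $m=1$ (giving (а)) and $\trace\omega=1$ when $m=2$ (giving (б)). I expect the main obstacle to be this classification of involutions — pinning down exactly the two conjugacy classes and verifying $m\in\{1,2\}$ — together with the convexity argument guaranteeing that the fixed line genuinely meets a sail.
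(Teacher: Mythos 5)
Your proposal is correct, but note that the paper itself contains no proof of Theorem~\ref{two_dimension}: it is quoted as a known result going back to Galois, Legendre, Perron and Kraitchik, with a geometric proof given in \cite{german_tlyust}. What you have written is essentially that geometric proof, and it is also the one-dimensional miniature of the machinery this paper builds for $n=4$. Your reduction ``palindromic $\Rightarrow$ $G^2=\mu_1\mu_2\,\id=\pm\id$, and proper $\Leftrightarrow$ $G^2=\id$'' is the $2\times2$ analogue of Lemmas~\ref{prod_lemm_cyclic} and~\ref{prod_lemm} ($\mu_1\mu_2\cdots=1$ via the sign bookkeeping $\varepsilon_i$, plus the eigenvalue-$1$ direction landing strictly inside a cone); your observation that the $(\pm1)$-eigenlines are rational, hence distinct from the irrational $l_1,l_2$, so that the fixed ray pierces a sail in exactly one $G$-fixed point, is the analogue of Lemma~\ref{rational_subspace_2_2} combined with the closing step of Lemma~\ref{prod_lemm}; your index computation $2\vec z\in\Z\vec u+\Z\vec w$, $m=[\Z^2:\Z\vec u+\Z\vec w]\in\{1,2\}$, giving exactly the two normal forms $G_1=\operatorname{diag}(1,-1)$ and $G_2=\left(\begin{smallmatrix}1&0\\1&-1\end{smallmatrix}\right)$, is the one-dimensional shadow of Lemma~\ref{main_lem_2_2} (where the dichotomy ``$\vec z_1,\vec z_2$ a basis'' versus ``$\vec z_1,\tfrac12(\vec z_1+\vec z_2)$ a basis'' similarly encodes the sublattice index, and the half-sum points $\tfrac12(\vec u+\vec w)$ play the role of the points $\vec p,\vec p^{Q},\vec p^{R}$); and your final translation of the swap condition $G_i(1,\omega)\parallel(1,\omega')$ into $\trace(\omega)\in\{0,1\}$ is exactly the mechanism of Lemma~\ref{oper_eq_4d_2} and of the conjugations $X_iGX_i^{-1}=\widetilde{G_i}$ in the proof of Theorem~\ref{theorem_proper_2_2}. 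Two small points deserve explicit care in a write-up: in the $m=2$ case you must rule out the intermediate lattices $\Z\vec u+\Z\vec w+\Z\tfrac{\vec u}{2}$ and $\Z\vec u+\Z\vec w+\Z\tfrac{\vec w}{2}$ (each would contradict primitivity of $\vec u$ or $\vec w$), leaving only $\Z\vec u+\Z\vec w+\Z\tfrac{\vec u+\vec w}{2}$, which is what pins down the conjugacy class of $G_2$; and in the sufficiency direction one should say explicitly that the $\Gl_2(\Z)$-equivalence carries $l_2$ to the conjugate line of $\omega$ (eigen-subspaces of a hyperbolic operator are determined by any one eigenvector, as in the proof of Lemmas~\ref{l:CF_instead_of_statements_2_2}--\ref{l:CF_instead_of_statements_4}), so that symmetry and properness transport along the equivalence. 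With these details filled in, your argument is complete.
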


В размерности $n=3$ критерий палиндромичности имеет следующую формулировку:

\begin{theorem}\label{three_dimension}
 Пусть $\cf(l_1,l_2,l_3)\in\gA_2$ и пусть подпространство $l_1$ порождено вектором $(1, \alpha, \beta)$. Тогда $\cf(l_1, l_2, l_3)$ имеет собственную симметрию (или, что тоже самое, собственную циклическую симметрию) в том и только в том случае, если существует такое алгебраическое число $\omega$ степени $3$ со своими сопряжёнными $\omega'$ и $\omega''$, что выполнено хотя бы одно из следующих условий:

  \textup{(а)} $(1, \alpha, \beta)\sim(1, \omega, \omega'):\hskip 6.5mm \trace(\omega)=\omega + \omega' + \omega'' =0$;

  \textup{(б)} $(1, \alpha, \beta)\sim(1, \omega, \omega'):\hskip 6.5mm \trace(\omega)=\omega + \omega' + \omega'' =1$.

\noindent При выполнении утверждения \textup{(а)} или \textup{(б)} кубическое расширение $\Q(\alpha, \beta)$ будет нормальным.
\end{theorem}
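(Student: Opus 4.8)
The plan is to push everything through the arithmetic of the totally real cubic field $K=\Q(\alpha,\beta)$ furnished by Proposition \ref{prop:more_than_pelle_n_dim}. Fix a hyperbolic $A$ with eigenvector $(1,\alpha,\beta)$; its eigenlines are spanned by $\vec v_i=(1,\sigma_i(\alpha),\sigma_i(\beta))$, $i=1,2,3$, where $\sigma_1=\id,\sigma_2,\sigma_3$ are the embeddings of $K$. For $G\in\textup{Sym}_{\Z}(\cf(A))$ the relation \eqref{eq:repres} gives scalars $\lambda_i$ with $G\vec v_i=\lambda_i\vec v_{\sigma_G(i)}$. It is convenient to pass to the embedding model: identify $K\otimes\R$ with $\R^3$ via $\Phi(\xi)=(\sigma_1\xi,\sigma_2\xi,\sigma_3\xi)$, so that the lattice becomes $\Phi(M)$ for the module $M=\Z+\Z\alpha+\Z\beta$, the eigenlines become the coordinate axes, the cones of $\mathcal{C}(l_1,l_2,l_3)$ become the coordinate orthants, and the relation $\sim$ corresponds to homothety of modules by elements of $K^\ast$.

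First I would prove that a proper symmetry is automatically cyclic, which is what licenses the parenthetical ``or, equivalently, proper cyclic''. Since a palindromic $G$ has $\sigma_G\neq\id$, the permutation $\sigma_G$ is either a transposition or a $3$-cycle. If $\sigma_G=(ij)$, then the eigenvalue-$1$ subspace of $G$ lies either on the fixed axis $l_k$ or in the plane $\spanned(\vec v_i,\vec v_j)$; both lie in a bounding coordinate hyperplane, which the sail avoids because the cone is irrational. Hence a transposition has no fixed point on a sail, and $\sigma_G$ must be a $3$-cycle. For a $3$-cycle the characteristic polynomial of $G$ in the eigenbasis is $x^3-\lambda_1\lambda_2\lambda_3$, so a genuine (eigenvalue $1$) fixed ray forces $\lambda_1\lambda_2\lambda_3=1$, and requiring that ray to enter an open orthant forces all $\lambda_i>0$. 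Then $G^3=\id$, and $G$ has order $3$ with characteristic polynomial $x^3-1$, so $\det G=1$ and $\trace G=0$.

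Next comes the algebraic core, carried out for such a cyclic $G$. Conjugating the multiplication operator $A$ (multiplication by the generating eigenvalue $\theta$) by $G$ yields a rational operator diagonal in the eigenbasis; since the only such rational operators are the multiplications by elements of $K$, we get $GAG^{-1}=M_{\theta^\ast}$ with $\theta^\ast\in K$ a nontrivial conjugate of $\theta$. Thus $K$ contains two, hence all three, roots of the minimal polynomial of $\theta$, so $K/\Q$ is normal with $\gal(K/\Q)=\langle\tau\rangle\cong\Z/3\Z$; this is exactly the closing assertion of the theorem, and it forces $\omega'$ in (a),(b) to lie in $\Q(\omega)=\Q(\alpha,\beta)$. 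The same computation shows $G\xi=(f/\tau f)\,\tau(\xi)$ for a generator $f$ of the $G$-fixed sublattice of $M$, and properness (all $\lambda_i>0$) means exactly that $f$ is totally positive. Replacing $M$ by a suitable homothetic module one may then assume $\tau(M)=M$ and $M\cap\Q=\Z$ (so $1\in M$); now $M/\Z$ is a rank-one module over $\Z[\tau]/(x^2+x+1)\cong\Z[\zeta_3]$, a principal ideal ring, hence free, and a lift $\omega$ of a generator makes $\{1,\omega,\tau\omega\}$ a $\Z$-basis of $M$, giving $(1,\alpha,\beta)\sim(1,\omega,\tau\omega)$ with $\omega'=\tau\omega$. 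Finally $\trace\omega$ is pinned down only modulo $3$: the substitution $\omega\mapsto\omega+k$ keeps the basis and shifts the trace by $3k$, while $\omega\mapsto-\omega$ flips its sign, so $\trace\omega$ can be normalized into $\{0,1\}$, yielding case (a) when $\trace\omega\equiv0\pmod 3$ and case (b) otherwise. The converse is then immediate: from such an $\omega$ the module $\Z+\Z\omega+\Z\tau\omega$ is $\tau$-stable because $\tau^2\omega=\trace\omega-\omega-\tau\omega$ with $\trace\omega\in\Z$, so $\tau$ restricts to an element of $\Sl_3(\Z)$ of order $3$; it fixes the totally positive ray $\R\cdot 1$, which lies in an open orthant and crosses that sail in a true fixed point, exhibiting a proper cyclic symmetry.

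I expect the main obstacle to be the rigorous version of the reduction in the third paragraph: pinning down that $\sim$ really is $K^\ast$-homothety of modules, and that after the homothety by $1/f$ the Galois automorphism $\tau$ genuinely preserves the lattice and descends to a free rank-one $\Z[\zeta_3]$-module (the freeness being what makes the clean basis $\{1,\omega,\tau\omega\}$ available). The geometric input behind the first step also needs care, namely that the fixed point of a symmetry meets a sail precisely when its fixed ray lies in an open cone, and that sails stay off the bounding hyperplanes because the cones are irrational; this must be tied to the eigenstructure of $G$ rather than to projective fixed directions alone.
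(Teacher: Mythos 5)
Your route is genuinely different from the machinery of this paper: Theorem \ref{three_dimension} is imported here from \cite{german_tlyust_2}, and the analogous dimension-4 results (Theorems \ref{theorem_proper_2_2} and \ref{theorem_proper_4}) are proved geometrically --- invariant rational hyperplanes, an iterative selection of lattice points in the parallelograms $\Delta^{\pi},\Delta^{Q},\Delta^{R}$ (Lemmas \ref{main_lem_2_2}, \ref{main_lem_4}), and conjugation to explicit normal forms $\widetilde{G_i}$, $G_i'$. You replace all of that, in the cubic case, by module arithmetic: $G\xi=(f/\tau f)\,\tau(\xi)$ with $f$ generating the fixed sublattice, the homothety $M\mapsto f^{-1}M$ making the module $\tau$-stable with $f^{-1}M\cap\Q=\Z$, and freeness of the rank-one $\Z[\zeta_3]$-module $f^{-1}M/\Z$ (PID) yielding the basis $\{1,\omega,\tau\omega\}$, with the trace normalized into $\{0,1\}$ via $\omega\mapsto\omega+k$ and $\omega\mapsto-\omega$. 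I checked the load-bearing steps and they are sound: the commutant argument ($GAG^{-1}$ rational and diagonal in the eigenbasis, hence in $\Q[A]\cong K$) correctly yields normality and the closing assertion of the theorem; $\tau(f^{-1}M)=f^{-1}M$; $f\in M$ gives $1\in f^{-1}M$ and $M^{G}=\Z f$ gives $f^{-1}M\cap\Q=\Z$; $\trace(\omega)=(1+\tau+\tau^{2})\omega\in f^{-1}M\cap\Q=\Z$; and the converse ($\tau$ preserves $\Z+\Z\omega+\Z\tau\omega$ because $\tau^{2}\omega=\trace(\omega)-\omega-\tau\omega$, and its fixed ray $\R\cdot\Phi(1)$ is interior to an orthant) is fine. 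This buys a cleaner derivation of $(1,\alpha,\beta)\sim(1,\omega,\omega')$ than a lattice-point case analysis, at the price of input (normality, Hilbert-90-type factorization, $\Z[\zeta_3]$ a PID) that is special to degree $3$ and would not scale to the dimension-4 classification.

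There are two flaws, one of them a genuine gap. In excluding transpositions you assert that for $\sigma_G=(i\,j)$ the eigenvalue-$1$ subspace lies either on the fixed axis $l_k$ or in $\spanned(\vec v_i,\vec v_j)$. That dichotomy fails exactly when $\lambda_k=1$ and $\lambda_i\lambda_j=1$ simultaneously: then the eigenspace is two-dimensional and contains vectors with all three coordinates nonzero, i.e.\ interior to an orthant, and your ``no fixed point on a sail'' conclusion would collapse. The degenerate case must be ruled out, and it can be, with tools already in the paper: the characteristic polynomial $(x-\lambda_k)(x^{2}-\lambda_i\lambda_j)\in\Z[x]$ forces $\lambda_k\in\Z$ and $\lambda_k\lambda_i\lambda_j=\pm1$, whence $\lambda_k=\pm1$; but $G(\vec l_k)=\pm\vec l_k$ contradicts Lemma \ref{rational_eigen}, whose proof applies verbatim to any $\vec l_k$ since $1,\sigma_k(\alpha),\sigma_k(\beta)$ are linearly independent over $\Q$. (Equivalently: in the degenerate case the simple eigenvalue $-\lambda_k$ would have a rational eigenvector inside $\spanned(\vec v_i,\vec v_j)$, but that plane contains no nonzero rational vector, since a rational vector is $\Phi(\xi)$ with $\xi\in K$ and $\sigma_1(\xi)=0$ forces $\xi=0$.) Note that this integrality argument kills transpositions outright --- no palindromic symmetry with $\sigma_G$ a transposition exists at all --- which is precisely how the paper disposes of the analogous permutations in Lemma \ref{ord_3}. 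The second flaw is your side claim that properness is equivalent to all $\lambda_i>0$, equivalently to $f$ being totally positive: this is false. Properness of a cyclic symmetry is exactly $\lambda_1\lambda_2\lambda_3=1$ (as in Lemma \ref{prod_lemm_cyclic}); the fixed eigenvector has eigenbasis coordinates $(1,\lambda_1,\lambda_1\lambda_2)$, all nonzero, so its ray is interior to whichever orthant contains $\Phi(f)$ regardless of signs, and crosses that sail in a fixed point. Fortunately nothing downstream depends on positivity --- the homothety and the $\Z[\zeta_3]$ argument are sign-free, and the converse only uses that $(1,1,1)$ is interior --- so this is a misstatement to delete rather than a break in the proof.
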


Критерий существования собственной палиндромической симметрии у алгебраической цепной дроби в размерности $n=4$ формулируется следующим образом:

 \begin{theorem}\label{theorem_proper_2_2}
  Пусть $\cf(l_1,l_2,l_3,l_4)\in\gA_3$ и пусть подпространство $l_1$ порождено вектором $(1, \alpha, \beta, \gamma)$. Пусть $K = \Q(\alpha, \beta, \gamma)$ и $\sigma_1(=\id), \sigma_2, \sigma_3, \sigma_4$ --- все вложения поля $K$ в $\R$ (см. предложение \ref{prop:more_than_pelle_n_dim}). Тогда $\cf(l_1, l_2, l_3, l_4)$ имеет собственную палиндромическую симметрию в том и только в том случае, если (с точностью до перестановки индексов) выполняется
\[ \sigma_3(K) = K, \quad \sigma_4(K) = \sigma_2(K), \quad \sigma_{3}^{2} = \sigma_1 = \id, \quad \sigma_4 = \sigma_2 \sigma_3\]
и существуют такие алгебраические числа $\omega$ и $\psi$ степени $4$, принадлежащие полю $K$, что выполнено хотя бы одно из следующих условий:
  
  \textup{(1)} $(1, \alpha, \beta, \gamma)\sim(1, \omega,  \psi, \omega'):\hskip 14.5mm \psi + \psi'= - (\omega + \omega')$;
  
  \textup{(2)} $(1, \alpha, \beta, \gamma)\sim(1, \omega,  \psi, \omega'):\hskip 14.5mm \psi + \psi'= 1 - (\omega + \omega')$;
  
  \textup{(3)} $(1, \alpha, \beta, \gamma)\sim(1, \omega,  \psi, \omega'):\hskip 14.5mm \psi + \psi'= 2 - (\omega + \omega')$;
  
  \textup{(4)} $(1, \alpha, \beta, \gamma)\sim(1, \omega,  \psi, \frac{\omega + \omega'}{2}):\hskip 10.2mm \psi + \psi'= - (\omega + \omega')$;
  
  \textup{(5)} $(1, \alpha, \beta, \gamma)\sim(1, \omega,  \psi, \frac{\omega + \omega'}{2}):\hskip 10.2mm \psi + \psi'= 2 - (\omega + \omega')$;
  
  \textup{(6)} $(1, \alpha, \beta, \gamma)\sim(1, \omega,  \psi,  \frac{\omega + \omega' + 1}{2}):\hskip 6.5mm \psi + \psi'= - (\omega + \omega')$;
  
  \textup{(7)} $(1, \alpha, \beta, \gamma)\sim(1, \omega,  \psi, \frac{\omega + \omega' + 1}{2}):\hskip 6.5mm \psi + \psi'= 2 - (\omega + \omega')$;
  
  \textup{(8)} $(1, \alpha, \beta, \gamma)\sim(1, \omega,  \psi, \frac{\omega + \omega'}{2}):\hskip 10.2mm \psi + \psi'= 1 - \frac{\omega + \omega'}{2}$;
  
  \textup{(9)} $(1, \alpha, \beta, \gamma)\sim(1, \omega,  \psi, \frac{\omega + \omega'}{2}):\hskip 10.2mm \psi + \psi'= 2 - \frac{\omega + \omega'}{2}$;
  
  \textup{(10)} $(1, \alpha, \beta, \gamma)\sim(1, \omega,  \psi, \frac{\omega' - \omega}{4}):\hskip 8.2mm \psi + \psi'= 2 - \frac{\omega + \omega'}{2}$,

\noindent где $\omega' = \sigma_3(\omega), \psi' = \sigma_3(\psi)$.
  
\end{theorem}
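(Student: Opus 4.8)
The plan is to translate the existence of a proper palindromic symmetry into an explicit statement about the action of a field involution on the relevant lattice, and then to classify that action. By Proposition~\ref{prop:more_than_pelle_n_dim} the eigendirections are spanned by $\vec v_i=(1,\sigma_i(\alpha),\sigma_i(\beta),\sigma_i(\gamma))$, and any symmetry $G$ acts in this eigenbasis as a monomial matrix, $G\vec v_i=\mu_i\vec v_{\sigma_G(i)}$. Since $G$ has integer entries, applying the automorphisms of the Galois closure to these relations forces the permutation $\sigma_G$ to be realized by composition of embeddings: with $k:=\sigma_G(1)$ one obtains the compatibility $\sigma_{\sigma_G(j)}=\sigma_j\circ\sigma_k$ for all $j$, which already requires $\sigma_k(K)=K$. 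First I would use this to list the admissible $\sigma_G$: a transposition or a $3$-cycle leaves some index fixed, and evaluating the relation there forces one of the nontrivial embeddings to equal $\id$, which is impossible; hence $\sigma_G$ is a double transposition or a $4$-cycle. A proper $4$-cycle symmetry is cyclic, and its square is a proper double transposition with the same fixed point, so for the existence question one may relabel and assume $\sigma_G=(13)(24)$. Substituting this permutation into the compatibility relations yields precisely $\sigma_3^2=\id$, $\sigma_3(K)=K$, $\sigma_4=\sigma_2\sigma_3$ and $\sigma_4(K)=\sigma_2(K)$, which establishes the necessity of the field-theoretic part of Theorem~\ref{theorem_proper_2_2}.

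With this $(2,2)$-structure in hand, I would reinterpret $G$ through the fixed field $F=K^{\langle\sigma_3\rangle}$, a real quadratic field, realizing the symmetry modulo the Dirichlet group as the $\Q$-linear map $x\mapsto\xi\,\sigma_3(x)$ on $K\cong\Q^4$ for a suitable $\xi\in K$. Then $\mu_i=\sigma_i(\xi)$, so that $\mu_1\mu_3=\xi\,\sigma_3(\xi)=\norm_{K/F}(\xi)$ and $\mu_2\mu_4=\sigma_2\!\left(\norm_{K/F}(\xi)\right)$. The four eigenvalues of $G$ are $\pm\sqrt{\mu_1\mu_3}$ and $\pm\sqrt{\mu_2\mu_4}$, so $G$ has a genuine fixed point (eigenvalue $+1$) exactly when $\norm_{K/F}(\xi)=1$, and that fixed point lies in the interior of one of the $2^4$ simplicial cones --- hence on a sail --- precisely when the corresponding eigenvector has constant sign; this positivity is the geometric content of the word \emph{собственная}. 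Integrality of $G$ is \emph{not} merely that $\xi$ be a unit: it is the condition $\xi\,\sigma_3(\La)=\La$ on the lattice $\La\cong\Z^4$, viewed as a module (a fractional ideal) over the order $\Z[A]$, jointly constraining $\xi$ and the ideal class.

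The heart of the proof is the classification of the involution $\sigma_3$ acting on the rank-$4$ lattice $\La$, fixing $1$ and having fixed sublattice of rank $2$. I would decompose $\La$ as a module over $\Z[\langle\sigma_3\rangle]\cong\Z[t]/(t^2-1)$ and, using the $\Gl_4(\Z)$-equivalence $\sim$ and the Dirichlet group to normalize, choose an element $\omega$ for which $\{1,\omega,\psi,\sigma_3(\omega)\}$ --- or one of the variants with $\tfrac{\omega+\omega'}{2}$, $\tfrac{\omega+\omega'+1}{2}$, $\tfrac{\omega'-\omega}{4}$ in the fourth slot, according to the $2$-adic type of the module --- becomes an integral basis; this is exactly the meaning of the equivalences $(1,\alpha,\beta,\gamma)\sim(1,\omega,\psi,\omega')$ and their variants. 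Expressing that $\sigma_3$ (equivalently $G$) preserves this lattice forces the trace $\psi+\psi'=\psi+\sigma_3(\psi)\in F$ to lie in the explicit invariant sublattice spanned by $1$ and $\omega+\omega'$; writing $\psi+\psi'=a+b(\omega+\omega')$ and running through the finitely many integer pairs $(a,b)$ permitted by $\sigma_3^2=\id$, by $\det G=\pm1$ and by the chosen normalization reproduces precisely the ten relations. I expect this module classification, together with the $2$-adic case split responsible for the denominators $2$ and $4$, to be the main obstacle: reducing infinitely many admissible integral involutions to ten normal forms modulo $\sim$ and the Dirichlet action is where essentially all the work lies.

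For the converse I would reverse the construction: given the field conditions and one of the ten relations for some $\omega,\psi\in K$ of degree $4$, build the lattice with the prescribed integral basis, define $G$ by $x\mapsto\xi\,\sigma_3(x)$ with $\xi$ a norm-one element of $K/F$ chosen so that the trace relation guarantees $\xi\,\sigma_3(\La)=\La$, and hence $G\in\Gl_4(\Z)$. I would then verify through \eqref{eq:repres} that $G$ is a palindromic symmetry with $\sigma_G=(13)(24)$, and finally exhibit the fixed point: the eigenvalue-$(+1)$ eigenvector $\vec v_1+\mu_1\vec v_3$ lies in an open cone by the positivity arranged above, so its ray meets the corresponding sail and $G$ is proper. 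Specializing to the case $\sigma_2(K)=K$ (cyclic $K$) should then recover the criterion of \cite{tlyust} for cyclic symmetries as announced.
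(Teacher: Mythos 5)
Your structural reductions are essentially correct, and in one place slicker than the paper's own route: the compatibility relation $\sigma_{\sigma_G(j)}=\sigma_j\sigma_k$ with $k=\sigma_G(1)$ (valid because $\mu_1\in K$ and $\sigma_k(K)=K$ follow from the single relation $G\vec l_1=\mu_1\vec l_k$) shows at once that a permutation with a fixed index forces $\sigma_G=\id$, which disposes of the transposition and $3$-cycle cases that Lemma \ref{ord_3} handles by a longer computation with characteristic polynomials. The reduction to $\sigma_G=(1\,3)(2\,4)$ via squaring is Corollary \ref{all_to_2_2} together with Corollary \ref{property_ord_eq}; the representation $G=m_\xi\circ\sigma_3$ with $\xi\in K$ is legitimate (the map $G\circ\sigma_3^{-1}$ is $\Q$-linear, diagonal in the eigenbasis, hence multiplication by $\xi=(G\circ\sigma_3^{-1})(1)\in K$); and your criterion $\norm_{K/F}(\xi)=1$ is exactly Lemma \ref{prod_lemm}, since $\mu_1\mu_3=\norm_{K/F}(\xi)$ and $\mu_2\mu_4=\sigma_2\big(\norm_{K/F}(\xi)\big)$. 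One correction here: your additional ``constant sign'' condition is vacuous. Once $\norm_{K/F}(\xi)=1$ one has $G^2=\id$, the $(+1)$-eigenspace is two-dimensional and rational, and any nonzero integer fixed vector lies strictly inside one of the $2^4$ cones automatically, because the walls of the cones are spanned by eigenlines of the hyperbolic operator $A$ and contain no integer points except $\vec 0$; the paper makes precisely this observation at the end of the proof of Lemma \ref{prod_lemm}.

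The genuine gap is the third paragraph, which is where the theorem actually lives, and your proposed mechanism is insufficient as stated, not merely unexecuted. The classification of $\Z[t]/(t^2-1)$-lattices gives only three isomorphism types of pairs $(\Z^4,G)$ with $G^2=\id$ and eigenvalues $1,1,-1,-1$ (sums of trivial, sign and regular summands), so the ten classes of Theorem \ref{theorem_proper_2_2} cannot be read off the module structure of $(\La,\sigma_3)$ alone: they are invariants of the finer configuration consisting of the lattice, the involution, \emph{and} the marked eigenline $l_1$ normalized as $(1,\omega,\psi,\dots)$, all taken jointly up to $\Gl_4(\Z)$. Your plan compresses exactly this into the unproved assertions that $\psi+\psi'$ lies in the span of $1$ and $\omega+\omega'$, that only ``finitely many pairs $(a,b)$'' survive, and that the ``$2$-adic type'' produces precisely the four shapes $\omega'$, $\frac{\omega+\omega'}{2}$, $\frac{\omega+\omega'+1}{2}$, $\frac{\omega'-\omega}{4}$ in the fourth slot. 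Nothing in the sketch shows this list is exhaustive or irredundant --- e.g.\ one must \emph{prove} that $\psi+\psi'=3-(\omega+\omega')$ reduces to condition (2) (it does, via $\psi\mapsto\psi-1$), and, much harder, that no configuration outside the ten occurs. This exhaustion is the content of Lemma \ref{main_lem_2_2}: an iterative descent producing integer points $\vec z_1,\dots,\vec z_4$ on the two rational planes $Q,R$ nearest to the invariant plane $\pi$ inside the invariant hyperplane $S_1$, followed by the case analysis \textbf{А}--\textbf{Д} of the possible sets $(\Delta^{\pi}_{k}\cup\Delta^{Q}_{k}\cup\Delta^{R}_{k})\cap\Z^4$, yielding eleven lattice bases; Lemma \ref{oper_eq_4d_2} (where, in your notation, $\xi=1$ and all $\mu_i=1$, so $G$ is just the matrix of $\sigma_3$ in the standard basis) and the explicit conjugators $X_1,\dots,X_{11}$ in the proof of the theorem then convert these bases into the relations $\gR_1,\dots,\gR_{10}$. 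Until you supply an algebraic substitute for that descent-and-exhaustion step --- a worked classification of the triple (lattice, involution, marked eigenline) including the $2$-adic bookkeeping --- neither direction of the ten-condition list is established, while the field-theoretic part and the reduction to $(1\,3)(2\,4)$, which you do prove, are only the short preamble of the paper's argument.
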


Наконец, в размерности $n=4$ критерий существования собственной циклической палиндромической симметрии у алгебраической цепной дроби имеет следующий вид:

\begin{theorem}\label{theorem_proper_4}
  Пусть $\cf(l_1,l_2,l_3,l_4)\in\gA_3$ и пусть подпространство $l_1$ порождено вектором $(1, \alpha, \beta, \gamma)$. Пусть $K = \Q(\alpha, \beta, \gamma)$. Тогда $\cf(l_1, l_2, l_3, l_4)$  имеет собственную циклическую симметрию в том и только в том случае, если $K$ --- циклическое расширение Галуа степени $4$, группа Галуа которого порождается вложением $\sigma$, и существует такое алгебраическое число $\omega \in K$ степени $4$, что выполнено хотя бы одно из семи условий $(1)$ - $(7)$ теоремы \ref{theorem_proper_2_2} для $\psi = \sigma_2(\omega)$.

\end{theorem}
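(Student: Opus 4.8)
The plan is to \emph{derive} Theorem \ref{theorem_proper_4} from the general criterion of Theorem \ref{theorem_proper_2_2}, exploiting the fact that every cyclic symmetry is, by definition, a proper palindromic symmetry. Consequently, if $\cf(l_1,l_2,l_3,l_4)$ admits a proper cyclic symmetry $G$, then Theorem \ref{theorem_proper_2_2} already furnishes (after a suitable relabelling of indices) the embedding relations $\sigma_3(K)=K$, $\sigma_4(K)=\sigma_2(K)$, $\sigma_3^2=\id$, $\sigma_4=\sigma_2\sigma_3$ together with one of the ten conditions on $(\omega,\psi)$. The entire task therefore reduces to isolating, inside this list, exactly the sub-case in which the permutation $\sigma_G$ is a $4$-cycle, and to reformulating the embedding relations as the single statement that $K$ is a cyclic Galois extension of degree $4$.

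First I would set up the dictionary between the permutation $\sigma_G$ and the embeddings. Since $G\in\Gl_4(\Z)$ acts on the eigenlines by $G(l_i)=l_{\sigma_G(i)}$, and the eigenlines are indexed by the embeddings $\sigma_i$ through Proposition \ref{prop:more_than_pelle_n_dim}, the rationality of $G$ forces $\sigma_G$ to be realised by a relation of the form $\sigma_{\sigma_G(i)}=\tau\circ\sigma_i$ in the Galois group of the Galois closure, for a fixed automorphism $\tau$. I would then argue that $\sigma_G$ is a $4$-cycle precisely when a single such $\tau$ acts transitively on $\{\sigma_1,\ldots,\sigma_4\}$, i.e. when $\sigma_i=\tau^{\,i-1}$ with $\tau$ of order $4$; combined with $\sigma_3(K)=\tau^2(K)=K$ this upgrades $\sigma_2(K)=\tau(K)$ to $K$, so that $K/\Q$ is normal and $\gal(K/\Q)=\langle\tau\rangle\cong\Z/4\Z$. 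Conversely, starting from a cyclic field $K$ with $\gal(K/\Q)=\langle\sigma\rangle$ I set $\sigma_2=\sigma$, $\sigma_3=\sigma^2$, $\sigma_4=\sigma^3$; the four relations of Theorem \ref{theorem_proper_2_2} then hold automatically, and the symmetry produced there sends $l_1\to l_2\to l_3\to l_4\to l_1$, hence is cyclic.

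The remaining and principal step is to match the arithmetic. Putting $\psi=\sigma_2(\omega)=\sigma(\omega)$, $\omega'=\sigma^2(\omega)$ and $\psi'=\sigma^3(\omega)$ turns $\omega,\psi,\omega',\psi'$ into the full set of conjugates of $\omega$, so each relation in (1)--(7) collapses to the genuine integer condition $\trace(\omega)=\omega+\psi+\omega'+\psi'\in\{0,1,2\}$, which is realisable in the cyclic field. In contrast, conditions (8)--(10) feature $\tfrac{\omega+\omega'}{2}$ in place of $\omega+\omega'$ on the right-hand side; I expect this to be the fingerprint of the non-cyclic situation, where $\omega+\omega'=\omega+\sigma_3(\omega)$ is forced to lie in a proper quadratic subfield fixed by $\sigma_3$, the group becomes the Klein four-group, and $\sigma_G$ decomposes as a product of two transpositions instead of a $4$-cycle. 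The main obstacle, accordingly, is this exclusion: for the symmetry operator reconstructed from the data of each of (8)--(10) I would compute the induced permutation $\sigma_G$ and show it is never a single $4$-cycle, while verifying that (1)--(7) each do yield a $4$-cycle. Carrying this out cleanly requires tracking how the normalisation of the fourth coordinate interacts with the integrality constraints that separate the ten $\Gl_4(\Z)$-classes, and confirming that a half-integer value such as $\tfrac{\omega+\omega'}{2}$ can be accommodated by a lattice symmetry only when $\omega+\omega'$ is rational --- that is, outside the cyclic regime.
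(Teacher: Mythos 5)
Your strategy --- deducing Theorem \ref{theorem_proper_4} from Theorem \ref{theorem_proper_2_2} by isolating, inside the ten conditions, ``the sub-case in which $\sigma_G$ is a $4$-cycle'' --- is precisely what the paper warns against: it states explicitly that Theorem \ref{theorem_proper_4} \emph{does not} follow directly from Theorem \ref{theorem_proper_2_2}, and the open Question posed after Proposition \ref{proper_but_not_cyclic} shows that even for a cyclic Galois field $K$ it is unknown whether palindromicity forces the existence of a proper \emph{cyclic} symmetry. The concrete failure point is your converse direction: the symmetries that Theorem \ref{theorem_proper_2_2} furnishes are the operators $\widetilde{G_{1}},\ldots,\widetilde{G_{10}}$ of Lemma \ref{oper_eq_4d_2}, and every one of them satisfies $\widetilde{G_{i}}\big(\vec l_1,\vec l_2,\vec l_3,\vec l_4\big)=\big(\vec l_3,\vec l_4,\vec l_1,\vec l_2\big)$, i.e.\ induces the permutation $(1\,3)(2\,4)$ of order $2$ --- never a $4$-cycle. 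So when you write that ``the symmetry produced there sends $l_1\to l_2\to l_3\to l_4\to l_1$, hence is cyclic,'' this is false; exhibiting a cyclic symmetry under conditions (1)--(7) with $\psi=\sigma_2(\omega)$ requires the genuinely different matrices $G_1',\ldots,G_7'$ and the separate verification of Lemma \ref{oper_eq_4d}, which uses the relations $\beta=\sigma(\alpha)$, $\gamma=\sigma^2(\alpha)$ (or its halved variants) in an essential way. Likewise your forward direction cannot be settled by ``computing $\sigma_G$ of the operator reconstructed from the data of (8)--(10)'': the existence of an order-$2$ proper symmetry in one of those classes says nothing a priori about the non-existence of some \emph{other}, cyclic, symmetry of the same fraction.

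The paper closes this gap geometrically, not Galois-theoretically: it proves a separate Lemma \ref{main_lem_4}, re-running the lattice-point analysis of Lemma \ref{main_lem_2_2} with the points $\vec z_1,\ldots,\vec z_4$ now forming a \emph{single} $G$-orbit (so $Q$ and $R$ are automatically equidistant from $\pi$, being swapped by $G$), and excludes four of the eleven configurations by showing that $G$ would carry certain non-integer points to points forced to be integral --- e.g.\ in case \textbf{А.2.б} one gets $G(\vec p^{R})=\vec p^{Q}\in\Z^4$, a contradiction; then, in the proof of Theorem \ref{theorem_proper_4}, it conjugates $G$ by explicit $X_i\in\Gl_4(\Z)$ into the seven normal forms $G_i'$. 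None of this re-examination of the lattice geometry can be bypassed by your proposed ``fingerprint'' heuristic, which is moreover incorrect on its own terms: conditions (4)--(7), which \emph{do} survive into Theorem \ref{theorem_proper_4}, also involve the halved quantities $\frac{\omega+\omega'}{2}$ and $\frac{\omega+\omega'+1}{2}$, and a cyclic quartic field possesses a quadratic subfield (the fixed field of $\sigma^2$) just as a biquadratic one does, so neither the appearance of $\frac{\omega+\omega'}{2}$ nor the existence of a quadratic subfield separates the excluded conditions (8)--(10) from the retained ones. The actual dividing line is visible only at the level of the lattice configurations of Lemma \ref{main_lem_2_2} and their compatibility with a single $G$-orbit, which your proposal never engages.
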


Следующее утверждение, доказанное в работе \cite{german_tlyust_2}, показывает, что, в отличии от случаев $n=2$ и $n=3$, не всякая цепная дробь, обладающая собственными симметриями, обладает собственными циклическими симметриями:

\begin{proposition}\label{proper_but_not_cyclic}
Существуют такие вещественные числа $\alpha$, $\beta$, $\gamma$, что подпространство $l_1$ порождено вектором $(1, \alpha, \beta, \gamma)$, вполне вещественное расширение $K=\Q(\alpha,\beta, \gamma)$ поля $\Q$ не является нормальным и $\cf(l_1,l_2,l_3,l_4)\in\gA_{3}$ --- цепная дробь, обладающая собственными симметриями, но не обладающая собственными циклическими симметриями.
\end{proposition}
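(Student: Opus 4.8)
The plan is to read the required example off the two criteria themselves. By Theorem~\ref{theorem_proper_2_2} the existence of a proper palindromic symmetry is \emph{equivalent} to a list of field-theoretic conditions on $K=\Q(\alpha,\beta,\gamma)$, whereas by Theorem~\ref{theorem_proper_4} the existence of a proper \emph{cyclic} symmetry forces $K$ to be a cyclic Galois extension of degree~$4$. Hence it suffices to manufacture a vector $(1,\alpha,\beta,\gamma)$ for which $K$ is totally real and \emph{non-normal}, but nonetheless satisfies every hypothesis of Theorem~\ref{theorem_proper_2_2}: the non-normality alone already contradicts the hypothesis of Theorem~\ref{theorem_proper_4}, so the associated dimension-three continued fraction $\cf(l_1,l_2,l_3,l_4)\in\gA_3$ will possess proper but not proper cyclic symmetries.

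First I would build a totally real quartic field of dihedral type. Put $\theta=\sqrt{m+\sqrt n}$ and $K=\Q(\theta)$, with minimal polynomial $x^4-2mx^2+(m^2-n)$; then $F=\Q(\sqrt n)$ is a quadratic subfield and the map $\sigma_3\colon\theta\mapsto-\theta$ is an automorphism of $K$ of order~$2$ fixing $F$. Choosing $m,n$ with $m>\sqrt n>0$ makes all four conjugates $\pm\sqrt{m\pm\sqrt n}$ real (total reality), and choosing them so that neither $m^2-n$ nor $n(m^2-n)$ is a rational square makes the Galois group of the splitting field equal to $D_4$ (by the classical classification of Galois groups of $x^4+px^2+q$), so that $K$ is non-normal; the pair $m=3$, $n=2$ works. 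Labelling the embeddings $\sigma_1=\id$, $\sigma_2\colon\theta\mapsto\sqrt{m-\sqrt n}$, $\sigma_3\colon\theta\mapsto-\theta$, $\sigma_4\colon\theta\mapsto-\sqrt{m-\sqrt n}$, one verifies at once that $\sigma_3(K)=K$, $\sigma_3^2=\id$, $\sigma_4=\sigma_2\sigma_3$, and $\sigma_4(K)=\sigma_2(K)$, which are exactly the Galois relations demanded by Theorem~\ref{theorem_proper_2_2}.

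Next I would select $\omega,\psi\in K$ of degree~$4$ meeting one of the conditions $(1)$--$(10)$, aiming at the simplest, namely $(1)$. Writing any element of $K=F(\theta)$ as $u+v\theta$ with $u,v\in F$, so that $\sigma_3(u+v\theta)=u-v\theta$, the relative trace $\omega+\omega'$ is twice the $F$-part of $\omega$. Therefore the choice
\[
  \omega=a+b\theta,\qquad \psi=-a+e\theta,\qquad a,b,e\in F,
\]
yields $\psi+\psi'=-2a=-(\omega+\omega')$ identically, which is condition~$(1)$. With $b,e\neq0$ both $\omega$ and $\psi$ lie in $K\setminus F$ and hence have degree~$4$, and writing $a=a_0+a_1\sqrt n$, $b=b_0+b_1\sqrt n$, $e=e_0+e_1\sqrt n$ one finds that the determinant of the coordinates of $1,\omega,\psi,\omega'$ in the $\Q$-basis $1,\sqrt n,\theta,\sqrt n\,\theta$ equals $\pm2a_1(b_0e_1-b_1e_0)$; taking $a=\sqrt n$, $b=1$, $e=\sqrt n$ makes it nonzero, so $1,\omega,\psi,\omega'$ is a $\Q$-basis of $K$. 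I then set $(\alpha,\beta,\gamma)=(\omega,\psi,\omega')$.

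With this in hand the conclusion is immediate: by Proposition~\ref{prop:more_than_pelle_n_dim} the vector $(1,\alpha,\beta,\gamma)$ is an eigenvector of a hyperbolic operator in $\Sl_4(\Z)$, so $l_1$ and $\cf(l_1,l_2,l_3,l_4)\in\gA_3$ are well defined; Theorem~\ref{theorem_proper_2_2}, whose hypotheses we arranged through condition~$(1)$, furnishes a proper palindromic symmetry, and Theorem~\ref{theorem_proper_4} forbids any proper cyclic one because $K$ is not even Galois. I expect the only genuinely delicate point to be the simultaneous realization of the embedding relations of Theorem~\ref{theorem_proper_2_2} together with non-normality; the dihedral construction dissolves this difficulty, since the quadratic subfield supplies the order-two automorphism $\sigma_3$ automatically while the lack of any further automorphism of $K$ keeps it non-Galois. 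The remaining steps---irreducibility, total reality, and the $\Q$-linear independence of $1,\omega,\psi,\omega'$---are the routine verifications indicated above.
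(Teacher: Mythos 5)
Your construction is correct, but it proceeds quite differently from the paper: the paper does not prove Proposition~\ref{proper_but_not_cyclic} at all, quoting it from \cite{german_tlyust_2}, where the example is produced directly, whereas you derive it as a formal consequence of the two criteria, Theorems~\ref{theorem_proper_2_2} and~\ref{theorem_proper_4}. This is logically sound within the present paper — the proofs of Theorems~\ref{theorem_proper_2_2} and~\ref{theorem_proper_4} nowhere use the proposition, so there is no circularity, and you only need the ``if'' direction of Theorem~\ref{theorem_proper_2_2} and the ``only if'' direction of Theorem~\ref{theorem_proper_4}. The verifications check out: for $m=3$, $n=2$ the polynomial $x^4-6x^2+7$ is irreducible (since $3+\sqrt2$ is not a square in $\Q(\sqrt2)$), all four roots $\pm\sqrt{3\pm\sqrt2}$ are real, and with $q=m^2-n=7$ and $q(p^2-4q)=4n(m^2-n)=56$ both nonsquares the splitting field has group $D_4$, so $K$ is totally real and non-normal; the embeddings as you label them satisfy $\sigma_3(K)=K$, $\sigma_3^2=\id$, $\sigma_4=\sigma_2\sigma_3$, $\sigma_4(K)=\sigma_2(K)$; the choice $\omega=a+b\theta$, $\psi=-a+e\theta$ gives $\psi+\psi'=-2a=-(\omega+\omega')$ identically, and your determinant $\pm 2a_1(b_0e_1-b_1e_0)$ is correct, equal to $2$ for $a=\sqrt n$, $b=1$, $e=\sqrt n$, so $1,\omega,\psi,\omega'$ is a $\Q$-basis and Proposition~\ref{prop:more_than_pelle_n_dim} puts the fraction in $\gA_3$. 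Two points deserve one line each that you left implicit: the step ``$\omega,\psi\in K\setminus F$ hence degree $4$'' uses that a non-normal quartic field has a \emph{unique} quadratic subfield (if $L\neq F$ were another, $K=LF$ would be biquadratic, hence Galois), and the conclusion ``no proper cyclic symmetry'' is exactly the contrapositive of the ``only if'' part of Theorem~\ref{theorem_proper_4}, since a non-normal $K$ is in particular not a cyclic Galois extension. What your route buys is economy and transparency — the example drops out of the classification with no explicit symmetry operator ever written down; what the direct construction in \cite{german_tlyust_2} buys is independence from the (long) proofs of the two criteria.
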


Любопытно, что теорема \ref{theorem_proper_4} не следует непосредственно из теоремы \ref{theorem_proper_2_2}. В связи с этим возникает естественный вопрос.

\begin{question}
 Верно ли, что существует такая палиндромичная алгебраическая цепная дробь $\cf(l_1,l_2,l_3,l_4)$, для которой поле $K$ является циклическим расширением Галуа и у которой не существует собственных циклических палиндромических симметрий?
\end{question}
 
Оставшаяся часть статьи имеет следующую структуру: в параграфе \ref{permutation_properity_n_4} мы анализируем то, как у собственных симметрий трехмерных цепных дробей устроены собственные подпространства и перестановки из соотношения \eqref{eq:repres}; в параграфе \ref{geom_proper_n_4} мы изучаем геометрию трехмерных цепных дробей, обладающих собственными симметриями, в том числе циклическими; в параграфе \ref{matrix_and_algebraicity} мы устанавливаем связь между определенными классами цепных дробей и матрицами их собственных симметрий; наконец, параграф \ref{proof_theorem_proper_2_2} посвящен доказательству теорем \ref{theorem_proper_2_2} и \ref{theorem_proper_4}.

\section{Собственные симметрии и собственные подпространства}\label{permutation_properity_n_4}
 Если задана дробь $\cf(l_1, \ldots, l_n)=\cf(A)\in\gA_{n-1}$, будем считать, что подпространство $l_1$ порождается вектором  $\vec l_1=(1,\alpha_1, \dots, \alpha_{n-1})$ (данное допущение корректно в силу предложения \ref{prop:more_than_pelle_n_dim}). Тогда из предложения \ref{prop:more_than_pelle_n_dim} следует, что числа $1,\alpha_1, \dots, \alpha_{n-1}$ образуют базис поля $K=\Q(\alpha_1, \dots, \alpha_{n-1})$ над $\Q$ и каждое $l_i$ порождается вектором $\vec l_i=(1,\sigma_i(\alpha_1), \dots, \sigma_i(\alpha_{n-1}))$, где $\sigma_1(=\id),\sigma_2, \dots, \sigma_n$ --- все вложения $K$ в $\R$. Заметим, что верна следующая 
\begin{lemma}\label{rational_eigen}
  Пусть $G\in\textup{Sym}_{\Z}\big(\cf(A)\big)$ и $\cf(A)=\cf(l_1, \ldots, l_n)$. Пусть $G \neq \pm I_{n}$ и $G(\vec{l}_1) = \lambda \vec{l}_1$. Тогда  $\lambda \notin \Q$.
  \end{lemma}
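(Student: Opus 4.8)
План доказательства состоит в том, чтобы показать, что равенство $G(\vec l_1)=\lambda\vec l_1$ автоматически влечёт, что $G$ фиксирует \emph{каждое} собственное направление $l_i$ с собственным значением $\sigma_i(\lambda)$; после этого предположение $\lambda\in\Q$ моментально приводит к $G=\pm I_n$.

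Сначала я замечу, что $\lambda\in K$. В самом деле, первая координата векторного равенства $G\vec l_1=\lambda\vec l_1$ даёт $\lambda=\sum_{j}G_{1j}(\vec l_1)_j$, поскольку первая координата вектора $\vec l_1$ равна $1$. Правая часть есть $\Q$-линейная комбинация чисел $1,\alpha_1,\dots,\alpha_{n-1}$ (элементы $G_{1j}$ целые), а значит $\lambda\in K=\Q(\alpha_1,\dots,\alpha_{n-1})$.

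Затем я применю к равенству $G\vec l_1=\lambda\vec l_1$ каждое вложение $\sigma_i\colon K\to\R$ покоординатно. Так как элементы матрицы $G$ целые и потому неподвижны относительно любого $\sigma_i$, а по построению $\sigma_i(\vec l_1)=(1,\sigma_i(\alpha_1),\dots,\sigma_i(\alpha_{n-1}))=\vec l_i$, я получу $G\vec l_i=\sigma_i(\lambda)\vec l_i$ для всех $i=1,\dots,n$. Тем самым каждый $\vec l_i$ оказывается собственным вектором оператора $G$ с собственным значением $\sigma_i(\lambda)$ (в частности, $\sigma_G=\id$). Теперь предположу, что $\lambda\in\Q$. Тогда $\sigma_i(\lambda)=\lambda$ при всех $i$, и поскольку векторы $\vec l_1,\dots,\vec l_n$ образуют базис пространства $\R^n$ (собственный базис гиперболического оператора $A$), отсюда следует, что $G=\lambda I_n$. Из $G\in\Gl_n(\Z)$ и $\det G=\pm1$ получаю $\lambda^n=\pm1$ и $\lambda\in\Z$, то есть $\lambda=\pm1$, а значит $G=\pm I_n$ --- противоречие с условием $G\neq\pm I_n$. Следовательно, $\lambda\notin\Q$.

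Серьёзного препятствия в доказательстве я не ожидаю: рассуждение короткое и чисто алгебраическое. Единственная тонкость, требующая аккуратности, --- это шаг с применением вложений $\sigma_i$: здесь нужно явно использовать как рациональность (целочисленность) элементов $G$, так и то, что координаты вектора $\vec l_i$ суть $\sigma_i$-образы координат $\vec l_1$. Всё остальное --- прямое следствие того, что $\{\vec l_i\}$ образует базис $\R^n$, а $\lambda$, как собственное значение целочисленной матрицы, есть целое алгебраическое число.
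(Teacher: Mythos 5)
Your proof is correct, but it takes a genuinely different route from the paper's. The paper argues locally: assuming $\lambda\in\Q$, the matrix $G-\lambda I_n$ is a nonzero rational matrix (nonzero because $G\in\Gl_n(\Z)$ together with $G\neq\pm I_n$ rules out $G=\lambda I_n$), so a nonzero row of it, applied to the equation $(G-\lambda I_n)\vec l_1=\vec 0$, produces a nontrivial rational linear relation among $1,\alpha_1,\dots,\alpha_{n-1}$, contradicting Proposition \ref{prop:more_than_pelle_n_dim}, by which these numbers form a $\Q$-basis of $K$. You instead globalize: first $\lambda\in K$ (read off the first coordinate), then applying each embedding $\sigma_i$ coordinatewise gives $G\vec l_i=\sigma_i(\lambda)\vec l_i$ for all $i$, and the assumption $\lambda\in\Q$ makes $G$ act as the scalar $\lambda$ on the basis $\vec l_1,\dots,\vec l_n$ of $\R^n$, whence $G=\lambda I_n$ and, by integrality and $\det G=\pm1$, $G=\pm I_n$ --- a contradiction. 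Both arguments rest on Proposition \ref{prop:more_than_pelle_n_dim}, but on different parts of it: the paper uses only the $\Q$-linear independence of $1,\alpha_1,\dots,\alpha_{n-1}$, while you use that the $\vec l_i$ form an eigenbasis together with the embeddings $\sigma_i$. The paper's version is shorter and more elementary; yours is slightly longer but establishes strictly more along the way, namely the identity $G\vec l_i=\sigma_i(\lambda)\vec l_i$ for every $i$ (so the spectrum of such a symmetry consists of the conjugates of $\lambda$, and $\sigma_G=\id$), a fact of independent use in the discussion of Dirichlet symmetries.
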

\begin{proof}
Предположим, что $\lambda \in \Q$. Поскольку $G \in \Gl_{n}(\Z)$ и $G \neq \pm I_{n}$, то $\textup{rank} (G - \lambda I_{n}) > 0$. Так как $(G - \lambda I_{n})(\vec{l}_1) = \vec{0}$, то какие-то числа из набора $1,\alpha_1,\ldots,\alpha_{n-1}$ выражаются через оставшиеся числа этого набора в виде некоторой линейной комбинации с коэффициентами из $\Q$. В силу предложения \ref{prop:more_than_pelle_n_dim} получаем противоречие.
\end{proof}

Отныне будем считать, что $n=4$, то есть будем рассматривать трехмерные цепные дроби. Напомним также, что для каждого $G\in\textup{Sym}_{\Z}\big(\cf(A)\big)$ соотношением \eqref{eq:repres} определена перестановка $\sigma_G$. 

\begin{lemma}\label{ord_3}
  Пусть $G$ --- палиндромическая симметрия $\cf(l_1,l_2,l_3, l_4)\in\gA_3$, ассоциированной с (гиперболическим) оператором $A$.
  Тогда существует такая нумерация подпространств $l_1, l_2, l_3, l_4$, что $\sigma_{G} = (1, 2)(3, 4)$ или $\sigma_{G} = (1, 2, 3, 4)$.
\end{lemma}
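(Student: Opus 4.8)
\emph{План и основная идея.} Поскольку $G$ --- палиндромическая симметрия, перестановка $\sigma_G\neq\id$. Перестановки четырёхэлементного множества без неподвижных точек --- это в точности перестановки с цикловым типом $(2,2)$ и четырёхциклы, которые с точностью до перенумерации подпространств $l_1,l_2,l_3,l_4$ совпадают соответственно с $(1,2)(3,4)$ и $(1,2,3,4)$. Поэтому всё утверждение сводится к тому, чтобы доказать, что $\sigma_G$ не имеет неподвижных точек. Именно это я и планирую установить, переведя вопрос о неподвижных точках $\sigma_G$ в вопрос о степени сопряжённых собственного значения оператора $A$ над $\Q$.

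\emph{Подготовка.} Сначала я бы зафиксировал собственный вектор $\vec l_1=(1,\alpha_1,\alpha_2,\alpha_3)$ оператора $A$ с собственным значением $\lambda$. Так как $A$ гиперболичен, его характеристический многочлен неприводим над $\Q$ и имеет степень $4$; значит, $\lambda$ имеет над $\Q$ степень $4$, все его сопряжённые $\lambda^{(i)}=\sigma_i(\lambda)$ вещественны и попарно различны, и $K=\Q(\lambda)$. Применяя вложения $\sigma_i$ к равенству $A\vec l_1=\lambda\vec l_1$ (оператор $A$ целочислен), получаем $A\vec l_i=\lambda^{(i)}\vec l_i$ для $i=1,\dots,4$. Ключевой шаг --- рассмотреть оператор $B=GAG^{-1}\in\Gl_4(\Z)$. Записывая $G\vec l_i=\mu_i\vec l_{\sigma_G(i)}$ с ненулевыми $\mu_i$ (соотношение \eqref{eq:repres}), прямое вычисление $B\vec l_{\sigma_G(i)}=GAG^{-1}\vec l_{\sigma_G(i)}=GA(\mu_i^{-1}\vec l_i)=\mu_i^{-1}\lambda^{(i)}\mu_i\vec l_{\sigma_G(i)}$ показывает, что скаляры $\mu_i$ сокращаются и $B\vec l_j=\lambda^{(\sigma_G^{-1}(j))}\vec l_j$; таким образом, $B$ диагонализуется в том же собственном базисе. В частности, собственное значение целочисленного оператора $B$ на векторе $\vec l_1=(1,\alpha_1,\alpha_2,\alpha_3)$ равно $\lambda^{(\sigma_G^{-1}(1))}$ и одновременно, как видно из первой координаты равенства $B\vec l_1=\lambda^{(\sigma_G^{-1}(1))}\vec l_1$, принадлежит полю $K=\Q(\lambda)$. Записав его в виде $P(\lambda)$ для некоторого $P\in\Q[x]$ и применив $\sigma_i$, я получу $P(\lambda^{(i)})=\lambda^{(\sigma_G^{-1}(i))}$ при всех $i$, то есть многочлен $P$ с рациональными коэффициентами переставляет сопряжённые $\lambda^{(1)},\dots,\lambda^{(4)}$ согласно перестановке $\sigma_G^{-1}$.

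\emph{Завершение и основная трудность.} Остаётся заметить, что у $\sigma_G$ (а значит и у $\sigma_G^{-1}$) не может быть неподвижной точки. Действительно, если $\sigma_G(i_0)=i_0$, то $\lambda^{(i_0)}$ --- корень многочлена $P(x)-x\in\Q[x]$ степени не выше $3$; но $\lambda^{(i_0)}$ имеет над $\Q$ степень $4$, поэтому $P(x)-x\equiv 0$, то есть $P(x)=x$. Тогда $\lambda^{(i)}=\lambda^{(\sigma_G^{-1}(i))}$ при всех $i$, и ввиду различности сопряжённых $\sigma_G=\id$, что противоречит палиндромичности $G$. Следовательно, $\sigma_G$ свободна от неподвижных точек и имеет цикловой тип $(2,2)$ либо $(4)$, что и требовалось. Основную трудность я ожидаю на шаге с оператором $B$: нужно аккуратно провести вычисление его собственных значений, убедиться в сокращении скаляров $\mu_i$ и в том, что соответствующее собственное значение действительно лежит в $K$; после этого заключение получается чисто арифметическим сравнением степеней и классификацией перестановок в $S_4$ по отсутствию неподвижных точек.
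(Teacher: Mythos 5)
Ваше доказательство корректно, но идёт по существенно иному пути, чем авторское. В статье лемма доказывается перебором цикловых типов с неподвижными точками: случай $\sigma_G=\id$ исключается по определению палиндромической симметрии, а случаи $\sigma_{G} = (1)(2, 3, 4)$ и $\sigma_{G} = (1)(2)(3, 4)$ убиваются по отдельности через явную матрицу $G$ в базисе $\vec{l}_1,\dots,\vec{l}_4$: целочисленность коэффициентов характеристического многочлена даёт в первом случае рациональное собственное значение $\mu_1=\pm1$ с собственным подпространством $l_1$ (противоречие с гиперболичностью $A$), а во втором --- сначала $\mu_3\mu_4=\pm1$ через арифметический аргумент с взаимно простыми $p,q$, затем двукратное применение леммы \ref{rational_eigen} к $G^2$ и к $G$, приводящее к $G=\pm I_4$. Вы же обрабатываете все случаи с неподвижной точкой единообразно: оператор $B=GAG^{-1}\in\Gl_4(\Z)$ диагонализуется в том же собственном базисе с переставленными собственными значениями, неподвижная точка $\sigma_G$ заставляет многочлен $P$ (которого следует явно взять степени $\le 3$, приведя по модулю минимального многочлена $\lambda$) удовлетворять $P(x)-x\equiv 0$, откуда $\sigma_G=\id$ --- противоречие; равенство $P(\lambda^{(i)})=\lambda^{(\sigma_G^{-1}(i))}$ при этом аккуратнее всего получать, применяя $\sigma_i$ покоординатно к векторному тождеству $B\vec{l}_1=P(\lambda)\vec{l}_1$ (законно ввиду целочисленности $B$) и сравнивая с уже вычисленным действием $B\vec{l}_i=\lambda^{(\sigma_G^{-1}(i))}\vec{l}_i$ --- эту деталь стоит прописать. Ваш подход выигрывает в общности: тот же аргумент дословно показывает, что $\sigma_G$ палиндромической симметрии свободна от неподвижных точек в любой размерности $n$, тогда как авторский перебор привязан к $n=4$. Подход статьи, в свою очередь, попутно даёт явные нормальные формы \eqref{matrix_ord_4} и \eqref{matrix_ord_2_0} и соотношения на скаляры $\mu_i$, которые используются в последующих леммах, и обходится элементарной линейной алгеброй без обращения к представлению сопряжённого собственного значения многочленом от $\lambda$.
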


\begin{proof}
  Случай $\sigma_{G} = \textup{id}$ невозможен в силу того, что оператор $G$ не является симметрией Дирихле $\cf(A)$.

 Предположим, существует такая нумерация подпространств $l_1, l_2, l_3, l_4$, для которой $\sigma_{G} = (1)(2, 3, 4)$. Таким образом, существуют такие вещественные числа $\mu_{1}$, $\mu_{2}$, $\mu_{3}$, $\mu_{4}$, что матрица оператора $G$ в базисе $\vec{l}_{1}, \vec{l}_{2}, \vec{l}_{3}, \vec{l}_{4}$ имеет вид
 \[
   \begin{pmatrix}
     \mu_{1} & 0 & 0 & 0\\
     0 & 0 & 0 & \mu_{2}\\
      0 & \mu_{3} & 0 & 0\\
       0 & 0 &  \mu_{4} & 0
   \end{pmatrix}.
 \]
  Тогда характеристический многочлен оператора $G$ имеет вид
  \[\chi_{G}(x) = (x - \mu_{1})(x^3 - \mu_{2}\mu_{3}\mu_{4}) = x^4 - \mu_{1} x^3 - \mu_{2}\mu_{3}\mu_{4} x \pm 1 \in \Z[x].\]
  Следовательно, $\mu_{1}$ --- целое число, и при этом $\mu_{1}$ --- корень уравнения $\chi_{G}(x) = 0$, то есть $\mu_{1} = \pm 1$. Стало быть, $l_1$ --- собственное подпространство оператора $G$, соответствующее собственному значению $\mu_{1} = \pm 1$. То есть $l_1$ рационально, что противоречит гиперболичности оператора $A$.
  
 Теперь предположим, существует такая нумерация подпространств $l_1, l_2, l_3, l_4$, что $\sigma_{G} = (1) (2) (3, 4)$. Таким образом, существуют такие вещественные числа $\mu_{1}$, $\mu_{2}$, $\mu_{3}$, $\mu_{4}$, что матрица оператора $G$ в базисе $\vec{l}_{1}, \vec{l}_{2}, \vec{l}_{3}, \vec{l}_{4}$ имеет вид
 \[
   \begin{pmatrix}
     \mu_{1} & 0 & 0 & 0\\
     0 & \mu_{2} & 0 & 0\\
      0 & 0 & 0 & \mu_{3}\\
       0 & 0 &  \mu_{4} & 0
   \end{pmatrix}.
 \]
 Тогда характеристический многочлен оператора $G$, коэффициенты которого целочисленны, имеет вид
  \[(x - \mu_{1})(x - \mu_{2})(x^2 - \mu_{3}\mu_{4}) =\]
  \[ = x^4 - (\mu_{1} + \mu_{2})x^3 + (\mu_{1}\mu_{2} - \mu_{3}\mu_{4})x^2  + (\mu_{1} + \mu_{2}) \mu_{3}\mu_{4}x - \mu_{1}\mu_{2}\mu_{3}\mu_{4}.\]
 Так как $\mu_{1} + \mu_{2} \in \Z$, то $\mu_{3}\mu_{4} \in \Q$. Тогда существуют такие взаимно-простые целые числа $p \ge 1$ и $q \ge 1$, что $|\mu_{3}\mu_{4}| = \frac{p}{q}$, $|\mu_{1}\mu_{2}| = \frac{q}{p}$, а значит,
 \[ |\mu_{1}\mu_{2} - \mu_{3}\mu_{4}| = \frac{\pm p^2 \pm q^2}{pq}.\]
 
 Итак, $p^2$ делится на $q$ и $q^2$ делится на $p$, то есть $p=q=1$ и $\mu_{3}\mu_{4} = \pm 1$. Таким образом, матрица оператора $G^2$ в базисе $\vec{l}_{1}, \vec{l}_{2}, \vec{l}_{3}, \vec{l}_{4}$ имеет вид
 \[
   \begin{pmatrix}
     \mu_{1}^2 & 0 & 0 & 0\\
     0 & \mu_{2}^2 & 0 & 0\\
      0 & 0 & \pm 1 & 0\\
       0 & 0 &  0 & \pm 1
   \end{pmatrix}.
 \]
Из леммы \ref{rational_eigen} следует, что $G^2 = I_{4}$, то есть $\mu_{1} = \pm 1$. Вновь применяя лемму \ref{rational_eigen}, получаем, что $G = \pm I_{4}$, чего не может быть. 
 
Таким образом, существует такая нумерация подпространств $l_1, l_2, l_3, l_4$, что $\sigma_{G} = (1, 2)(3, 4)$ или $\sigma_{G} = (1, 2, 3, 4)$.
  \end{proof}
 
 \begin{corollary}\label{all_to_2_2}
Пусть $G$ --- палиндромическия симметрия $\cf(l_1,l_2,l_3, l_4)\in\gA_3$. Пусть $G' = G^{2}$, если $\textup{ord}(\sigma_{G}) = 4$, и $G' = G$, если $\textup{ord}(\sigma_{G}) = 2$. Тогда $G'$ --- палиндромическия симметрия $\cf(l_1,l_2,l_3, l_4)$ и $\textup{ord}(\sigma_{G'}) = 2$.
\end{corollary}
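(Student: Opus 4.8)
План состоит в том, чтобы свести утверждение к лемме \ref{ord_3} и воспользоваться тем, что сопоставление $G\mapsto\sigma_G$ является гомоморфизмом групп. Прежде всего я бы отметил, что множество $\textup{Sym}_{\Z}\big(\cf(A)\big)$ замкнуто относительно умножения (это стабилизатор множества $\cf(A)$ при действии $\Gl_4(\Z)$), так что вместе с $G$ ему принадлежит и $G^2$. Далее, из соотношений \eqref{eq:repres} немедленно получается мультипликативность: применяя $G$ дважды, имеем $G^2(l_i)=G(l_{\sigma_G(i)})=l_{\sigma_G(\sigma_G(i))}$, откуда в силу единственности перестановки, отвечающей данному оператору, следует $\sigma_{G^2}=\sigma_G^2$.

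Затем я бы разобрал два случая, предоставляемые леммой \ref{ord_3}. Если $\textup{ord}(\sigma_G)=2$, то по лемме \ref{ord_3} при подходящей нумерации $\sigma_G=(1,2)(3,4)$; тогда $G'=G$ по определению, и утверждение тривиально: $G'$ --- та же палиндромическая симметрия, а $\textup{ord}(\sigma_{G'})=\textup{ord}(\sigma_G)=2$.

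Если же $\textup{ord}(\sigma_G)=4$, то по лемме \ref{ord_3} при подходящей нумерации $\sigma_G=(1,2,3,4)$, и мы полагаем $G'=G^2$. По сказанному выше $G'\in\textup{Sym}_{\Z}\big(\cf(A)\big)$ и $\sigma_{G'}=\sigma_G^2=(1,3)(2,4)$. Эта перестановка имеет порядок $2$ и отлична от $\id$, поэтому $G'$ не является симметрией Дирихле, то есть является палиндромической симметрией, и $\textup{ord}(\sigma_{G'})=2$, что и требовалось.

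Серьёзного препятствия здесь нет: всё содержание уже упаковано в лемму \ref{ord_3} и в обсуждение соотношений \eqref{eq:repres} из введения. Единственное, за чем стоит аккуратно проследить, --- это корректность перехода к $\sigma_G^2$, то есть замкнутость группы симметрий относительно возведения в квадрат и мультипликативность сопоставления $G\mapsto\sigma_G$; именно этот элементарный технический момент я считаю основным.
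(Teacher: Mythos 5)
Ваше рассуждение корректно и совпадает с тем, что подразумевается в статье: следствие \ref{all_to_2_2} выводится непосредственно из леммы \ref{ord_3} через мультипликативность сопоставления $G\mapsto\sigma_G$ (так что при $\sigma_G=(1,2,3,4)$ получается $\sigma_{G^2}=(1,3)(2,4)\neq\id$, то есть $G^2$ --- палиндромическая симметрия порядка $2$). Вы лишь аккуратно выписали технические детали (замкнутость $\textup{Sym}_{\Z}\big(\cf(A)\big)$ относительно умножения и соотношение $\sigma_{G^2}=\sigma_G^2$), которые в статье оставлены читателю.
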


Пусть $G$ --- палиндромическия симметрия $\cf(l_1,l_2,l_3, l_4)\in\gA_3$.  Изменив при необходимости нумерацию подпространств $l_1, l_2, l_3, l_4$, в силу леммы \ref{ord_3} можно рассмотреть такие вещественные числа $\mu_{1}$, $\mu_{2}$, $\mu_{3}$, $\mu_{4}$, что матрица оператора $G$ в базисе $\vec{l}_{1}, \vec{l}_{2}, \vec{l}_{3}, \vec{l}_{4}$ имеет вид
  \begin{equation}\label{matrix_ord_4}
   \begin{pmatrix}
     0 & 0 & 0 &  \mu_{1}\\
     \mu_{2} & 0 & 0 & 0\\
      0 & \mu_{3} & 0 & 0\\
       0 & 0 &  \mu_{4} & 0
   \end{pmatrix}
 \end{equation}
или вид 
 \begin{equation}\label{matrix_ord_2_0}
   \begin{pmatrix}
     0 & 0 & \mu_{1} &  0\\
     0 & 0 & 0 &  \mu_{2}\\
     \mu_{3} & 0 & 0 & 0\\
     0 & \mu_{4} &  0 & 0
   \end{pmatrix}.
 \end{equation}

Пусть $G$ --- палиндромическая симметрия $\cf(l_1,l_2,l_3, l_4)\in\gA_3$ и матрица оператора $G$ в базисе $\vec{l}_{1}, \vec{l}_{2}, \vec{l}_{3}, \vec{l}_{4}$ имеет вид \eqref{matrix_ord_4}. В работе \cite{tlyust} доказывается следующая

\begin{lemma}\label{prod_lemm_cyclic}
 Пусть $G$ --- палиндромическая симметрия $\cf(l_1,l_2,l_3, l_4)\in\gA_3$ и матрица оператора $G$ в базисе $\vec{l}_{1}, \vec{l}_{2}, \vec{l}_{3}, \vec{l}_{4}$ имеет вид  \eqref{matrix_ord_4}. Тогда $G$ является собственной симметрией дроби $\cf(l_1,l_2, l_3, l_4)$ в том и только том случае, если  $\mu_{1}\mu_{2}\mu_{3}\mu_{4} = 1$.
 \end{lemma}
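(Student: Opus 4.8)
The plan is to extract everything from the cyclic structure of \eqref{matrix_ord_4}. Reading off the columns, $G$ acts on the eigenbasis by $G\vec l_1 = \mu_2\vec l_2$, $G\vec l_2 = \mu_3\vec l_3$, $G\vec l_3 = \mu_4\vec l_4$, $G\vec l_4 = \mu_1\vec l_1$, and therefore
\[
  G^4 = (\mu_1\mu_2\mu_3\mu_4)\, I_4 .
\]
Write $c = \mu_1\mu_2\mu_3\mu_4$. Since $G\in\Gl_4(\Z)$ we have $(\det G)^4 = \det(G^4) = c^4$, so $c\in\{-1,1\}$; the content of the lemma is thus that $G$ is proper exactly in the case $c=1$. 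Note that the $\mu_i$ are all nonzero because $G$ is invertible.

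For the forward implication I would argue directly from $G^4 = cI_4$. If $G$ is a proper symmetry, there is a point $\vec p\neq\vec 0$ on some sail with $G\vec p = \vec p$. Applying $G^4$ gives $c\,\vec p = G^4\vec p = \vec p$, and since $\vec p\neq\vec 0$ this forces $c=1$, i.e. $\mu_1\mu_2\mu_3\mu_4 = 1$. (Equivalently, a fixed point is a real eigenvector of $G$ for the eigenvalue $1$, and every eigenvalue $\lambda$ of $G$ satisfies $\lambda^4 = c$; when $c=-1$ the four eigenvalues are the primitive eighth roots of unity, so $G$ has no nonzero fixed vector whatsoever.)

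For the converse, assume $c=1$ and exhibit an explicit fixed point on a sail. Set
\[
  \vec v = \vec l_1 + \mu_2\,\vec l_2 + \mu_2\mu_3\,\vec l_3 + \mu_2\mu_3\mu_4\,\vec l_4 .
\]
A one-line check using the action above shows $G\vec v = \vec v$, the consistency condition for this being precisely $\mu_1\mu_2\mu_3\mu_4 = 1$. All four coordinates of $\vec v$ in the basis $\vec l_1,\dots,\vec l_4$ are nonzero, so the ray $\R_{+}\vec v$ lies in the interior of exactly one of the sixteen cones $C\in\mathcal C(l_1,l_2,l_3,l_4)$, the one selected by the signs of those coordinates. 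Invoking the structural property of sails, let $\vec p$ be the unique point in which $\partial(\cK(C))$ meets the ray $\R_{+}\vec v$. Then $\vec p = t\vec v$ with $t>0$, whence $G\vec p = t\,G\vec v = t\vec v = \vec p$, so $\vec p$ is a fixed point of $G$ lying on the sail $\partial(\cK(C))\subset\cf(l_1,l_2,l_3,l_4)$, and $G$ is proper.

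The step requiring the most care is the geometric input used in the last paragraph: that for an irrational simplicial cone $C$ every ray from $\vec 0$ into the interior of $C$ crosses the sail $\partial(\cK(C))$ in exactly one point. This is a standard fact about Klein polyhedra — $\cK(C)$ is a closed convex set whose recession cone is $\overline C$ and which does not contain $\vec 0$, so its boundary is a graph over the directions of $C$ — and I would either cite it or derive it from convexity before assembling the two implications. Everything else is the bookkeeping with $G^4 = cI_4$ and the explicit fixed vector $\vec v$.
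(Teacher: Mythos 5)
Your proof is correct. The converse coincides with the paper's argument, only made explicit: the paper merely asserts that when $\mu_1\mu_2\mu_3\mu_4=1$ the operator $G$ has an eigendirection with eigenvalue $1$ lying inside some cone of $\mathcal{C}(l_1,l_2,l_3,l_4)$, and your vector $\vec v=\vec l_1+\mu_2\vec l_2+\mu_2\mu_3\vec l_3+\mu_2\mu_3\mu_4\vec l_4$, together with the standard fact that a ray interior to an irrational cone meets the sail $\partial(\cK(C))$, supplies exactly the computation the paper leaves implicit (the paper relies on the same ray--sail fact without citation, so your flagging it is diligence, not a gap). Where you genuinely diverge is the forward implication: the paper takes the cone $C$ containing the fixed point, notes that $G$ must map its generators $\varepsilon_i\vec l_i$, $\varepsilon_i\in\{-1,1\}$, to positive multiples of one another, derives the four inequalities $\mu_1\varepsilon_4/\varepsilon_1>0,\ \mu_2\varepsilon_1/\varepsilon_2>0,\ \mu_3\varepsilon_2/\varepsilon_3>0,\ \mu_4\varepsilon_3/\varepsilon_4>0$, multiplies them to get $\mu_1\mu_2\mu_3\mu_4>0$, and then uses integrality to upgrade $>0$ to $=1$; you instead observe $G^4=(\mu_1\mu_2\mu_3\mu_4)I_4$ and evaluate at the nonzero fixed point, which yields $\mu_1\mu_2\mu_3\mu_4=1$ in one line, with no sign bookkeeping and without even needing that $G$ preserves the cone containing the fixed point (and it gives the eigenvalue picture --- $x^4+1$ has no root $1$ --- for free). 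What the paper's route buys in exchange is uniformity: the same $\varepsilon$-analysis transfers verbatim to the order-two pattern \eqref{matrix_ord_2_0} in Lemma \ref{prod_lemm}, where your scalar trick fails because $G^2=\mathrm{diag}(\mu_1\mu_3,\mu_2\mu_4,\mu_1\mu_3,\mu_2\mu_4)$ is not scalar and one needs the two separate positivity conditions $\mu_1\mu_3>0$, $\mu_2\mu_4>0$ that only the cone-sign argument delivers.
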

\begin{proof}
Пусть $G$ является собственной симметрией цепной дроби $\cf(l_1,l_2, l_3, l_4)$. Тогда существуют такие числа $\varepsilon_{1}$, $\varepsilon_{2}$, $\varepsilon_{3}$, $\varepsilon_{4}$ из множества $\{-1, 1\}$, что
\[G(\varepsilon_{1}\vec{l}_1, \varepsilon_{2}\vec{l}_2, \varepsilon_{3}\vec{l}_3, \varepsilon_{4}\vec{l}_4) =  \big(\mu_2\varepsilon_{1}\vec l_2,\mu_3\varepsilon_{2}\vec l_3,\mu_4\varepsilon_{3}\vec l_4,\mu_1\varepsilon_{4}\vec l_1\big),\]
и выполняются неравенства
\[\mu_1\frac{\varepsilon_{4}}{\varepsilon_{1}} > 0, \, \, \mu_2\frac{\varepsilon_{1}}{\varepsilon_{2}} > 0, \, \, \mu_3\frac{\varepsilon_{2}}{\varepsilon_{3}} > 0, \, \, \mu_4\frac{\varepsilon_{3}}{\varepsilon_{4}} > 0.\]
Стало быть, $\mu_{1}\mu_{2}\mu_{3}\mu_{4} > 0$, а значит, $\mu_{1}\mu_{2}\mu_{3}\mu_{4}  = 1$.

Если $\mu_{1}\mu_{2}\ldots\mu_{n} = 1$, то оператор $G$ имеет собственное направление, которое соответствует собственному значению $1$ и лежит внутри некоторого конуса $C \in \mathcal{C}(l_1, \ldots, l_n)$. 
\end{proof}

Для палиндромических симметрий вида \eqref{matrix_ord_2_0} справедливо аналогичное утверждение:

\begin{lemma}\label{prod_lemm}
  Пусть $G$ --- палиндромическая симметрия $\cf(l_1,l_2,l_3, l_4)\in\gA_3$ и матрица оператора $G$ в базисе $\vec{l}_{1}, \vec{l}_{2}, \vec{l}_{3}, \vec{l}_{4}$ имеет вид \eqref{matrix_ord_2_0}. Тогда $G$ является собственной симметрией дроби $\cf(l_1,l_2, l_3, l_4)$ в том и только том случае, если $\mu_{1}\mu_{3} = \mu_{2}\mu_{4} = 1$.
\end{lemma}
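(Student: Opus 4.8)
The plan is to exploit the block structure of \eqref{matrix_ord_2_0}. After reordering the eigenbasis as $\vec{l}_1,\vec{l}_3,\vec{l}_2,\vec{l}_4$ the operator $G$ becomes block-diagonal with the two $2\times 2$ blocks $\left(\begin{smallmatrix} 0 & \mu_1 \\ \mu_3 & 0\end{smallmatrix}\right)$ and $\left(\begin{smallmatrix} 0 & \mu_2 \\ \mu_4 & 0\end{smallmatrix}\right)$ acting on $\langle l_1,l_3\rangle$ and $\langle l_2,l_4\rangle$ respectively. Hence the eigenvalues of $G$ are $\pm\sqrt{\mu_1\mu_3}$ and $\pm\sqrt{\mu_2\mu_4}$, and a direct computation shows that $G^2$ is diagonal in the basis $\vec{l}_1,\vec{l}_2,\vec{l}_3,\vec{l}_4$ with diagonal $(\mu_1\mu_3,\mu_2\mu_4,\mu_1\mu_3,\mu_2\mu_4)$. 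In particular each $l_i$ is fixed by $G^2$, so $\sigma_{G^2}=\id$ and $G^2$ is a \emph{симметрия Дирихле}. These two observations are the whole engine of the proof.

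\emph{Forward direction.} Being proper means $G$ has a fixed point $\vec p\ne\vec 0$ on some sail, i.e. $1$ is a (real) eigenvalue of $G$. By the eigenvalue list above, $1$ can occur only as $\sqrt{\mu_1\mu_3}$ or $\sqrt{\mu_2\mu_4}$, so $\mu_1\mu_3=1$ or $\mu_2\mu_4=1$. In either case the Dirichlet symmetry $G^2$ fixes one of the vectors $\vec{l}_i$ with eigenvalue $1\in\Q$, whence Lemma \ref{rational_eigen} (applied to $G^2$ and that $\vec{l}_i$) forces $G^2=\pm I_4$; since the eigenvalue in question is $+1$, we conclude $G^2=I_4$, i.e. $\mu_1\mu_3=\mu_2\mu_4=1$. (One could instead imitate the sign argument of Lemma \ref{prod_lemm_cyclic} to extract $\mu_1\mu_3>0$ and $\mu_2\mu_4>0$ and then use $\det G=\mu_1\mu_2\mu_3\mu_4=\pm1$, but that only yields the \emph{product} $\mu_1\mu_2\mu_3\mu_4=1$, not the two separate equalities, so the torsion argument via Lemma \ref{rational_eigen} is what does the real work.)

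\emph{Converse.} Assume $\mu_1\mu_3=\mu_2\mu_4=1$. Then $G$ fixes pointwise the $2$-plane $\Pi$ spanned by $\vec p_{13}=\mu_1\vec{l}_1+\vec{l}_3$ and $\vec p_{24}=\mu_2\vec{l}_2+\vec{l}_4$, as one checks directly using $\mu_1\mu_3=\mu_2\mu_4=1$. For $s,t>0$ the vector $s\vec p_{13}+t\vec p_{24}$ has coordinates $(s\mu_1,t\mu_2,s,t)$, all nonzero, hence lies in the interior of the cone $C$ spanned by $(\operatorname{sgn}\mu_1)\vec{l}_1,(\operatorname{sgn}\mu_2)\vec{l}_2,\vec{l}_3,\vec{l}_4$. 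Thus $\Pi$ meets $\interior C$; any ray of $\Pi\cap\interior C$ crosses the sail $\partial\cK(C)$ in a point $\vec p^{\ast}$, and since $\Pi$ is a linear subspace we have $\vec p^{\ast}\in\Pi$, so $G\vec p^{\ast}=\vec p^{\ast}$. Hence $G$ has a fixed point on the sail of $C$ and is proper.

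\emph{Main obstacle.} The delicate point is the forward direction. Unlike the cyclic case of Lemma \ref{prod_lemm_cyclic}, where the eigenvalue-$1$ eigenvector already sits in the interior of a cone, here every real eigenvector of $G$ is confined to one of the coordinate $2$-planes $\langle l_1,l_3\rangle$, $\langle l_2,l_4\rangle$; consequently the bare positivity $\mu_1\mu_3>0$, $\mu_2\mu_4>0$ together with $\det G=\pm1$ does not rule out the spurious configuration $\mu_1\mu_3=1$, $\mu_2\mu_4=-1$, for which $G$ would have order $4$ while $\sigma_G$ has order $2$. What kills this configuration is precisely the torsion-freeness of the group of Dirichlet symmetries, which is exactly the content of Lemma \ref{rational_eigen} as applied to $G^2$; this is the step I expect to require the most care to state cleanly.
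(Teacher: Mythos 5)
Your proof is correct, and while its skeleton coincides with the paper's (a fixed point on a sail gives eigenvalue $1$, and the factorization $\chi_G(x)=(x^2-\mu_1\mu_3)(x^2-\mu_2\mu_4)$ then forces $\mu_1\mu_3=1$ or $\mu_2\mu_4=1$), both halves are finished by a genuinely different mechanism. In the forward direction the paper pins down the second product by a sign analysis: a proper symmetry permutes the rays $\varepsilon_i\vec{l}_i$, which yields $\mu_1\mu_3>0$ and $\mu_2\mu_4>0$, and combined with ``one product equals $1$'' and $\det G=\mu_1\mu_2\mu_3\mu_4=\pm1$ (equivalently, integrality of the constant term of $\chi_G$) this gives both equalities. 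You instead use torsion: $G^2$ is diagonal in the eigenbasis, fixes some $\vec{l}_i$ with rational eigenvalue $1$, so Lemma \ref{rational_eigen} forces $G^2=\pm I_4$, hence $G^2=I_4$ and $\mu_1\mu_3=\mu_2\mu_4=1$. This is valid and avoids the $\varepsilon_i$ bookkeeping entirely; note only that Lemma \ref{rational_eigen} is literally stated for $\vec{l}_1$, so in the case $\mu_2\mu_4=1$ you need its obvious extension to $\vec{l}_2$ (whose coordinates form a $\Q$-basis of $\sigma_2(K)$) or a renumbering --- the paper itself applies the lemma in exactly this way inside the proof of Lemma \ref{ord_3}. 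In the converse the paper goes through $\chi_G(x)=x^4-2x^2+1$, rationality of the eigenvalue-$1$ eigenspace, an integer eigenvector, and the irrationality of the cones' walls to place that vector inside a cone; your construction exhibits the pointwise-fixed plane spanned by $\mu_1\vec{l}_1+\vec{l}_3$ and $\mu_2\vec{l}_2+\vec{l}_4$ together with an explicit cone whose interior it meets, which is more direct and needs no integer points at all (you do rely, as the paper does implicitly, on the standard fact that every interior ray of an irrational cone meets the sail).

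One inaccuracy in your closing commentary: you claim that positivity together with $\det G=\pm1$ ``does not rule out the spurious configuration $\mu_1\mu_3=1$, $\mu_2\mu_4=-1$,'' but $\mu_2\mu_4>0$ excludes it outright, and positivity plus $(\mu_1\mu_3)(\mu_2\mu_4)=1$ plus ``one product equals $1$'' already yields both equalities. So the paper's sign-based route is complete without any torsion argument; your Lemma \ref{rational_eigen} route is a clean alternative, not a necessity. This misjudgment affects only the commentary, not the correctness of your proof.
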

  
\begin{proof}
Пусть $G$ является собственной симметрией цепной дроби $\cf(l_1,l_2, l_3, l_4)$. Тогда существуют такие числа $\varepsilon_{1}$, $\varepsilon_{2}$, $\varepsilon_{3}$, $\varepsilon_{4}$ из множества $\{-1, 1\}$, что
\[G(\varepsilon_{1}\vec{l}_1, \varepsilon_{2}\vec{l}_2, \varepsilon_{3}\vec{l}_3, \varepsilon_{4}\vec{l}_4) =  \big(\mu_3\varepsilon_{1}\vec l_3,\mu_4\varepsilon_{2}\vec l_4,\mu_1\varepsilon_{3}\vec l_1,\mu_2\varepsilon_{4}\vec l_2\big),\]
и выполняются неравенства
\[\mu_1\frac{\varepsilon_{3}}{\varepsilon_{1}} > 0, \, \, \mu_2\frac{\varepsilon_{4}}{\varepsilon_{2}} > 0, \, \, \mu_3\frac{\varepsilon_{1}}{\varepsilon_{3}} > 0, \, \, \mu_4\frac{\varepsilon_{2}}{\varepsilon_{4}} > 0.\]
Стало быть, $\mu_{1}\mu_{3} > 0$ и $\mu_{2}\mu_{4} > 0$. Так как у оператора $G$ существует неподвижная точка на некотором парусе, то у оператора $G$ существует одномерное собственное подпространство, соответствующее собственному значению $1$. Теперь, поскольку характеристический многочлен оператора $G$ имеет вид $(x^2-\mu_{1}\mu_{3})(x^2-\mu_{2}\mu_{4})$, то $\mu_{1}\mu_{3} = 1$ или $\mu_{2}\mu_{4} = 1$. Тогда $\mu_{1}\mu_{3} = \mu_{2}\mu_{4} = 1$.

Если $\mu_{1}\mu_{3} = \mu_{2}\mu_{4} = 1$, то, опять же, характеристический многочлен оператора $G$ имеет вид $x^{4} - 2x^{2} + 1$. Стало быть, у оператора $G$ существует целочисленный собственный вектор, соответствующий собственному значению $1$. Этот вектор лежит внутри некоторого конуса $C \in \mathcal{C}(l_1, l_2, l_3, l_4)$, поскольку цепная дробь $\cf(l_1,l_2,l_3, l_4)$ является алгебраической. 
\end{proof} 

 \begin{corollary}\label{property_ord_eq}
Пусть $G$ --- палиндромическия симметрия $\cf(l_1,l_2,l_3, l_4)\in\gA_3$. Тогда $G$ является собственной симметрией в том и только том случае, если $G'$ (см. следствие \ref{all_to_2_2}) является собственной симметрией. 
\end{corollary}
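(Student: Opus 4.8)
План состоит в том, чтобы разобрать два случая в зависимости от порядка перестановки $\sigma_G$, опираясь на определение оператора $G'$ из следствия \ref{all_to_2_2} и на уже доказанные леммы \ref{prod_lemm_cyclic} и \ref{prod_lemm}. Если $\textup{ord}(\sigma_G) = 2$, то по определению $G' = G$, и доказывать нечего. Поэтому всё существо дела сосредоточено в случае $\textup{ord}(\sigma_G) = 4$, когда $G' = G^2$.

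В этом случае, изменив при необходимости нумерацию подпространств, я бы считал, что матрица оператора $G$ в базисе $\vec{l}_1, \vec{l}_2, \vec{l}_3, \vec{l}_4$ имеет вид \eqref{matrix_ord_4}. По лемме \ref{prod_lemm_cyclic} симметрия $G$ собственна тогда и только тогда, когда $\mu_1\mu_2\mu_3\mu_4 = 1$. Далее я бы непосредственно возвёл эту матрицу в квадрат: оператор $G^2$ переводит $\vec{l}_1 \mapsto \mu_2\mu_3\vec{l}_3$, $\vec{l}_2 \mapsto \mu_3\mu_4\vec{l}_4$, $\vec{l}_3 \mapsto \mu_1\mu_4\vec{l}_1$, $\vec{l}_4 \mapsto \mu_1\mu_2\vec{l}_2$, то есть его матрица в том же базисе имеет вид \eqref{matrix_ord_2_0} с элементами $\mu_1\mu_4,\ \mu_1\mu_2,\ \mu_2\mu_3,\ \mu_3\mu_4$ на соответствующих местах (при этом $\sigma_{G^2} = (1,3)(2,4)$, что согласуется со следствием \ref{all_to_2_2}).

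Остаётся применить к $G^2$ лемму \ref{prod_lemm}: оператор $G^2$ собствен тогда и только тогда, когда $(\mu_1\mu_4)(\mu_2\mu_3) = (\mu_1\mu_2)(\mu_3\mu_4) = 1$. Оба эти произведения равны $\mu_1\mu_2\mu_3\mu_4$, так что оба условия равносильны единственному равенству $\mu_1\mu_2\mu_3\mu_4 = 1$ — в точности критерию собственности $G$. Следовательно, $G$ собственна тогда и только тогда, когда $G' = G^2$ собственна, что и требовалось. Единственная техническая тонкость здесь — аккуратно провести возведение в квадрат и убедиться, что получается именно форма \eqref{matrix_ord_2_0}; после этого результат немедленно вытекает из сопоставления условий двух лемм, так что серьёзного препятствия я не ожидаю.
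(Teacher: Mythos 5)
Ваше рассуждение верно и по существу совпадает с доказательством из статьи: там обратная импликация получается тем же возведением матрицы вида \eqref{matrix_ord_4} в квадрат, приведением $G^2$ к виду \eqref{matrix_ord_2_0} с элементами $\mu_1\mu_4$, $\mu_1\mu_2$, $\mu_2\mu_3$, $\mu_3\mu_4$ и сопоставлением критериев лемм \ref{prod_lemm} и \ref{prod_lemm_cyclic} (прямую импликацию статья отмечает как очевидную, тогда как у вас обе направления покрываются одной цепочкой равносильностей, что даже аккуратнее). Попутно ваша запись исправляет опечатку в тексте статьи, где в случае $\textup{ord}(\sigma_G)=4$ матрица $G$ ошибочно названа имеющей вид \eqref{matrix_ord_2_0} вместо \eqref{matrix_ord_4}.
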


\begin{proof}
Если $G$ является собственной симметрией $\cf(l_1,l_2,l_3, l_4)$, то, очевидно, оператор $G'$ также является собственной симметрией цепной дроби $\cf(l_1,l_2,l_3, l_4)$. 

Обратно, предположим, что $G'$ --- собственная симметрия $\cf(l_1,l_2,l_3, l_4)$. Случай $\textup{ord}(\sigma_{G}) = 2$, то есть $G' = G$, очевиден. Если $\textup{ord}(\sigma_{G}) = 4$, то, изменив при необходимости нумерацию подпространств $l_1, l_2, l_3, l_4$, можно считать, что матрица оператора $G$ в базисе $\vec{l}_{1}, \vec{l}_{2}, \vec{l}_{3}, \vec{l}_{4}$ имеет вид \eqref{matrix_ord_2_0}. Тогда матрица оператора $G' = G^{2}$ в базисе $\vec{l}_{1}, \vec{l}_{2}, \vec{l}_{3}, \vec{l}_{4}$ имеет вид

\[
   \begin{pmatrix}
     0 & 0 & \mu_{1}\mu_{4} &  0\\
     0 & 0 & 0 &  \mu_{2}\mu_{1}\\
     \mu_{3}\mu_{2} & 0 & 0 & 0\\
     0 & \mu_{4}\mu_{3} &  0 & 0
   \end{pmatrix}.
\]
 Стало быть, $\mu_{1}\mu_{2}\mu_{3}\mu_{4} = 1$ в силу леммы \ref{prod_lemm}, а значит, $G$ --- собственная симметрия цепной дроби $\cf(l_1,l_2, l_3, l_4)$ в силу леммы \ref{prod_lemm_cyclic}.
\end{proof}
   
  \begin{lemma}\label{rational_subspace_2_2}
  Пусть $G$ --- собственная симметрия $\cf(l_1,l_2,l_3, l_4)\in\gA_3$ и $\textup{ord}({\sigma_{G}}) = 2$. Тогда существуют такие одномерные рациональные подпространства $l^{1}_{+}$, $l^{2}_{+}$, $l^{1}_{-}$ и $l^{2}_{-}$, что $G l^{1}_{+} = l^{1}_{+}$, $G l^{2}_{+} = l^{2}_{+}$,  $G l^{1}_{-} = l^{1}_{-}$,  $G l^{2}_{-} = l^{2}_{-}$ и $l^{1}_{+} + l^{2}_{+} + l^{1}_{-} + l^{2}_{-}= \R^{4}$. При этом подпространства $l^{1}_{+}$ и $l^{2}_{+}$ соответствуют собственному значению $1$, а подпространства $l^{1}_{-}$ и $l^{2}_{-}$ соответствуют собственному значению $-1$.
  \end{lemma}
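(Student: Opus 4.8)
The plan is to reduce the statement to the observation that $G$ is an \emph{integer} involution, whose eigenspaces are then automatically rational and can be split into rational lines.

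First I would invoke Lemma \ref{ord_3}: since $\textup{ord}(\sigma_G)=2$, after a suitable renumbering of $l_1,l_2,l_3,l_4$ the matrix of $G$ in the basis $\vec l_1,\vec l_2,\vec l_3,\vec l_4$ has the form \eqref{matrix_ord_2_0} (the $4$-cycle form \eqref{matrix_ord_4} being excluded because it realizes a permutation of order $4$). Because $G$ is a proper symmetry, Lemma \ref{prod_lemm} then yields $\mu_1\mu_3=\mu_2\mu_4=1$.

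Next I would simply square the matrix \eqref{matrix_ord_2_0}. A direct computation shows that $G^2$ is diagonal in the basis $\vec l_1,\vec l_2,\vec l_3,\vec l_4$ with diagonal entries $\mu_1\mu_3,\ \mu_2\mu_4,\ \mu_1\mu_3,\ \mu_2\mu_4$, so the relations $\mu_1\mu_3=\mu_2\mu_4=1$ give $G^2=I_4$. Thus $G$ is an involution, and $G\neq\pm I_4$ since $\sigma_G\neq\textup{id}$. Its characteristic polynomial is $(x^2-\mu_1\mu_3)(x^2-\mu_2\mu_4)=(x-1)^2(x+1)^2$, whence $\R^4=V_+\oplus V_-$, where $V_\pm=\ker(G\mp I_4)$ are the eigenspaces for the eigenvalues $\pm1$, each of dimension $2$ (the involution being diagonalizable, geometric and algebraic multiplicities coincide).

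The crucial point is then that $V_+$ and $V_-$ are rational subspaces: indeed $G-I_4$ and $G+I_4$ are integer matrices, so their kernels are defined by linear systems with integer coefficients and are therefore spanned by integer vectors. Choosing two linearly independent rational vectors in $V_+$ gives rational lines $l^1_+,l^2_+$ with $V_+=l^1_++l^2_+$, and likewise $V_-=l^1_-+l^2_-$ for rational lines $l^1_-,l^2_-$. Every vector of $V_+$ is fixed by $G$ and every vector of $V_-$ is negated, so $Gl^i_+=l^i_+$ with eigenvalue $1$ and $Gl^i_-=l^i_-$ with eigenvalue $-1$, while $l^1_++l^2_++l^1_-+l^2_-=V_++V_-=\R^4$, as required. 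I expect no real obstacle here: the only matters needing care are confirming that order-$2$ forces the form \eqref{matrix_ord_2_0} and that rationality of the eigenspaces of the integer involution $G$ suffices to produce the four rational lines, both of which are routine.
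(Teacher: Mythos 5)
Your proof is correct and takes essentially the same approach as the paper: both reduce via Lemma \ref{prod_lemm} to the matrix form \eqref{matrix_ord_2_0} with $\mu_1\mu_3=\mu_2\mu_4=1$, deduce $\chi_G(x)=(x-1)^2(x+1)^2$ with two-dimensional eigenspaces for $\pm 1$, and obtain rationality of these eigenspaces from the fact that they are solution spaces of the integer linear systems $(G\mp I_4)\vec x=\vec 0$, after which splitting each into two rational lines is immediate. Your explicit check that $G^2=I_4$ is a minor addition that the paper leaves implicit in its matrix form, but the argument is the same.
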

 
  \begin{proof}
   Изменив при необходимости нумерацию подпространств $l_1, l_2, l_3, l_4$, в силу леммы \ref{prod_lemm} можно считать, что существуют такие вещественные числа $\mu_{1}$ и $\mu_{2}$, что  матрица оператора $G$ в базисе $\vec{l}_{1}, \vec{l}_{2}, \vec{l}_{3}, \vec{l}_{4}$ имеет вид
\[
   \begin{pmatrix}
     0 & 0 & \mu_{1} &  0\\
     0 & 0 & 0 &  \mu_{2}\\
     \frac{1}{\mu_{1}} & 0 & 0 & 0\\
     0 & \frac{1}{\mu_{2}} &  0 & 0
   \end{pmatrix}.
 \]

Так как $\chi_{G}(x) = (x - 1)^2(x + 1)^2$, то у оператора $G$ есть двумерное инвариантное подпространство $L_{+}$, соответствующее собственному значению $1$ и двумерное инвариантное подпространство $L_{-}$, соответствующее собственному значению $-1$. Покажем рациональность подпространств $L_{+}$ и $L_{-}$, из чего будет следовать утверждение леммы.

Поскольку подпространство $L_{+}$ совпадает с решением системы линейных уравнений
\[(G-I_4)\vec{x}^\top = \vec{0},\]
то фундаментальная система решений данной системы линейных уравнений имеет размерность 2. Рассмотрев в качестве значений свободных переменных наборы $(0, 1)$ и $(1, 0)$, мы определим два линейно-независимых рациональных решения данной системы, из чего следует рациональность  $L_{+}$. Рациональность подпространства $L_{-}$ доказывается аналогичным способом.
\end{proof}

\begin{lemma}\label{rational_subspace_4}
  Пусть $G$ --- собственная циклическая симметрия $\cf(l_1,l_2,l_3, l_4)\in\gA_3$. Тогда собственные значения оператора $G$ равны $1$, $-1$, $i$ и $-i$. Более того, собственные подпространства $l_{+}$ (соответствующее собственному значению $1$), $l_{-}$ (соответствующее собственному значению $-1$) и $L$ (соответствующее собственным значениям $i$ и $-i$) являются рациональными. В частности, подпространство $L$ не содержит собственных для $G$ одномерных подпространств и для любого $\vec v \in L$ верно, что $G^2({\vec v}) = -\vec v$.
\end{lemma}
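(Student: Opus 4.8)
The plan is to pass to the eigenbasis $\vec{l}_1,\dots,\vec{l}_4$ and reduce everything to the characteristic polynomial of $G$. Since $G$ is a \emph{cyclic} symmetry, the permutation $\sigma_G$ is a single $4$-cycle, so among the two possibilities supplied by Lemma \ref{ord_3} only $\sigma_G=(1,2,3,4)$ survives; after the corresponding renumbering the matrix of $G$ in the basis $\vec{l}_1,\vec{l}_2,\vec{l}_3,\vec{l}_4$ has the form \eqref{matrix_ord_4} with some real $\mu_1,\mu_2,\mu_3,\mu_4$. Because $G$ is a proper symmetry, Lemma \ref{prod_lemm_cyclic} gives $\mu_1\mu_2\mu_3\mu_4=1$. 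Expanding $\det(xI_4-G)$ along the first row then yields
\[
  \chi_G(x)=x^4-\mu_1\mu_2\mu_3\mu_4=x^4-1=(x-1)(x+1)(x^2+1),
\]
so the eigenvalues of $G$ are exactly $1,-1,i,-i$, each simple. This establishes the first assertion of the lemma.

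Next I would extract the three invariant subspaces directly from this factorization. The factors $x-1$, $x+1$, $x^2+1$ are pairwise coprime in $\Q[x]$, hence the primary (kernel) decomposition gives $\R^4=\ker(G-I_4)\oplus\ker(G+I_4)\oplus\ker(G^2+I_4)$, with summands of dimensions $1$, $1$, $2$ (the multiplicities of the factors, all simple). Setting $l_+=\ker(G-I_4)$, $l_-=\ker(G+I_4)$ and $L=\ker(G^2+I_4)$ identifies the three subspaces in the statement. Their rationality follows exactly as in Lemma \ref{rational_subspace_2_2}: each is the solution space of a homogeneous linear system with integer coefficients, namely $(G-I_4)\vec{x}^\top=\vec{0}$, $(G+I_4)\vec{x}^\top=\vec{0}$, and $(G^2+I_4)\vec{x}^\top=\vec{0}$, so assigning rational values to the free variables produces a rational basis of each.

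It remains to check the two final claims about $L$. The identity $G^2(\vec v)=-\vec v$ for every $\vec v\in L$ is immediate from the definition $L=\ker(G^2+I_4)$. For the absence of a $G$-invariant line inside $L$, suppose $\vec v\in L$, $\vec v\neq\vec 0$, spans a $G$-invariant subspace; then $G\vec v=\lambda\vec v$ for some $\lambda\in\R$, whence $-\vec v=G^2\vec v=\lambda^2\vec v$ forces $\lambda^2=-1$, which is impossible for real $\lambda$. Thus $L$ contains no one-dimensional $G$-invariant subspace.

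The argument is essentially routine once $\chi_G$ is pinned down, so I do not anticipate a genuine obstacle. The only steps demanding care are the two initial reductions — invoking Lemma \ref{ord_3} to force the matrix shape \eqref{matrix_ord_4}, and Lemma \ref{prod_lemm_cyclic} to obtain $\mu_1\mu_2\mu_3\mu_4=1$ — together with the observation that $x^2+1$ is irreducible over $\Q$, which is precisely what guarantees both that $L$ is a single two-dimensional rational block and that it harbours no real eigenline.
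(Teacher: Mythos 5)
Your proof is correct, and its opening reduction is exactly the paper's: renumber via Lemma \ref{ord_3} so that the matrix of $G$ has the form \eqref{matrix_ord_4}, invoke Lemma \ref{prod_lemm_cyclic} to get $\mu_1\mu_2\mu_3\mu_4=1$, and conclude $\chi_G(x)=x^4-1$, whence the eigenvalues $1,-1,i,-i$, the two real eigenlines, and the plane $L$ with no real eigenline. Where you genuinely diverge is the rationality of $L$. You treat $L$ as $\ker(G^2+I_4)$ and solve the homogeneous integer system $(G^2+I_4)\vec{x}^\top=\vec{0}$, i.e.\ you transplant to $G^2+I_4$ the same kernel-of-an-integer-matrix argument the paper itself uses in Lemma \ref{rational_subspace_2_2}; since $x^2+1$ is a simple factor of $\chi_G$, the kernel is $2$-dimensional over $\R$, ranks over $\Q$ and $\R$ agree, and a rational fundamental system of solutions gives a rational basis. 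The paper instead argues geometrically: decomposing any $\vec v$ as $\vec p(\vec v,l_-)+\vec p(\vec v,l_+)+\vec p(\vec v,L)$ and using that $G^2$ fixes the $l_\pm$-components and negates the $L$-component, it shows that for any integer point $\vec z\in\Z^4\setminus(l_++l_-)$ the nonzero integer vectors $\vec z-G^2(\vec z)$ and $G(\vec z)-G^3(\vec z)=G\big(\vec z-G^2(\vec z)\big)$ lie in $L$ and are noncollinear (precisely because $L$ carries no $G$-invariant line), so $L$ contains two independent lattice vectors and is rational. Your route is the more uniform and elementary one, handling all three subspaces by a single linear-algebra principle and making $G^2\vec v=-\vec v$ on $L$ true by definition; the paper's route is constructive, producing explicit lattice vectors inside $L$, at the cost of the small projection computation. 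The two arguments yield identical conclusions, and your remaining verifications (no real eigenline in $L$ from $\lambda^2=-1$, dimensions from the simplicity of the roots of $x^4-1$) are sound.
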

 
\begin{proof}
   Изменив при необходимости нумерацию подпространств $l_1, l_2, l_3, l_4$, можно считать, что в силу леммы \ref{prod_lemm_cyclic} существуют такие вещественные числа $\mu_{1}$, $\mu_{2}$, $\mu_{3}$, $\mu_{4}$, что $\mu_{1}\mu_{2}\mu_{3}\mu_{4} = 1$ и матрица оператора $G$ в базисе $\vec{l}_{1}, \vec{l}_{2}, \vec{l}_{3}, \vec{l}_{4}$ имеет вид \eqref{matrix_ord_4}. Так как $\chi_{G}(x) = x^4 - \mu_{1}\mu_{2}\mu_{3}\mu_{4}$, то собственные значения оператора $G$ равны $1$, $-1$, $i$ и $-i$, а значит, у $G$ есть ровно два одномерных собственных подпространства и двумерное инвариантное подпространство, которое не содержит собственных для $G$ одномерных подпространств. Обозначим через $l_{+}$ рациональное одномерное собственное подпространство оператора $G$, соответствующее собственному значению $1$, через $l_{-}$ --- рациональное одномерное собственное подпространство оператора $G$, соответствующее собственному значению $-1$, а через $L$ --- двумерное инвариантное подпространство, соответствующее собственным значениям $i$ и $-i$. Покажем рациональность подпространства $L$.

Поскольку $l_{-} + l_{+} + L = \R^{4}$, то для любого вектора $\vec v \in \R^{4}$ существуют такие единственные векторы $\vec{p}(\vec v, l_{-}) \in  l_{-}$, $\vec{p}(\vec v,  l_{+})  \in  l_{+}$ и $\vec{p}(\vec v, L) \in L$, что выполняется равенство
\[\vec v = \vec{p}(\vec v, l_{-}) + \vec{p}(\vec v,  l_{+}) + \vec{p}(\vec v, L).\]
Заметим, что $\vec{p}\big(G^{2}(\vec{v}),  L\big) = \vec{p}(-\vec v, L)$, $\vec{p}\big(G^{2}(\vec{v}), l_{-}\big) = \vec{p}(\vec v, l_{-})$ и $\vec{p}\big(G^{2}(\vec{v}), l_{+}\big) = \vec{p}(\vec v, l_{+})$ для любого вектора $\vec v \in \R^{4}$. Таким образом, для любой точки $\vec{z} \in \Z^{4} \setminus (l_{+} \, + \, l_{-})$ ненулевые целочисленные векторы $\vec{z}-G^{2}(\vec{z})$ и $G(\vec{z})-G^{3}(\vec{z})$ лежат в двумерном подпространстве $L$. Эти два целочисленных вектора неколлинеарны, поскольку $G(\vec{z})-G^{3}(\vec{z}) =  G\big(\vec{z}-G^{2}(\vec{z})\big)$ и подпространство $L$ не содержит собственных для оператора $G$ одномерных подпространств.  Итак, мы показали, что подпространство $L$ рационально. 
\end{proof}

\section{Геометрия собственных симметрий}\label{geom_proper_n_4}
  
 \begin{lemma}\label{main_lem_2_2}
   Пусть $G$ --- собственная симметрия дроби $\cf(l_1,l_2,l_3, l_4)\in\gA_3$. Пусть $F=G'$ (см. следствие \ref{all_to_2_2}) --- собственная симметрия $\cf(l_1,l_2,l_3, l_4)$ (см. следствие \ref{property_ord_eq}). Тогда существуют $\vec{z}_1$, $\vec{z}_2$, $\vec{z}_3$, $\vec{z}_4$ $\in$ $\Z^4$, такие что
\[F(\vec{z}_{1}) = \vec{z}_{3}, \, F(\vec{z}_{2}) = \vec{z}_{4}, \, F(\vec{z}_{3}) = \vec{z}_{1}, \, F(\vec{z}_{4}) = \vec{z}_{2}\] 
и выполняется хотя бы одно из следующих одиннадцати утверждений:

\textup{(1)} векторы $\vec{z}_{1}$, $\vec{z}_{2}$, $\vec{z}_{3}$, $\frac{1}{4}(\vec{z}_{1}+\vec{z}_{2}+\vec{z}_{3}+\vec{z}_{4})$ образуют базис решетки $\Z^4$;

\textup{(2)} векторы $\vec{z}_{1}$, $\vec{z}_{2}$,  $\vec{z}_{3}$, $\vec{z}_{4}$ образуют базис решетки $\Z^4$;

\textup{(3)} векторы $\vec{z}_{1}$, $\frac{1}{2}(\vec{z}_{1} + \vec{z}_{2})$, $\frac{1}{2}(\vec{z}_{1}+\vec{z}_{3})$, $\frac{1}{2}(\vec{z}_{1} + \vec{z}_{4})$ образуют базис решетки $\Z^4$;

\textup{(4)} векторы $\vec{z}_{1}$, $\vec{z}_{2}$, $\frac{1}{2}(\vec{z}_{1}+\vec{z}_{3})$, $\frac{1}{4}(\vec{z}_{1}+\vec{z}_{2}+\vec{z}_{3}+\vec{z}_{4})$ образуют базис решетки $\Z^4$;

\textup{(5)} векторы $\vec{z}_{1}$, $\vec{z}_{2}$, $\frac{1}{2}(\vec{z}_{1}+\vec{z}_{3})$, $\frac{1}{2}(\vec{z}_{2}+\vec{z}_{4})$ образуют базис решетки $\Z^4$;

\textup{(6)} векторы $\vec{z}_{1}$, $\vec{z}_{2}$, $\vec{z}_{3}$, $\frac{1}{2}(\vec{z}_{1} + \vec{z}_{3} + \vec{z}_{4} - \vec{z}_{2})$ образуют базис решетки $\Z^4$;

\textup{(7)} векторы $\vec{z}_{1}$, $\vec{z}_{2}$, $\vec{z}_{3}$, $\frac{1}{2}(\vec{z}_{1}+\vec{z}_{2}) + \frac{1}{4}(\vec{z}_{1}+\vec{z}_{4} - \vec{z}_{3} - \vec{z}_{2})$ образуют базис решетки $\Z^4$;

\textup{(8)} векторы $\vec{z}_{1}$, $\vec{z}_{2}$, $\vec{z}_{3}$, $\frac{1}{2}(\vec{z}_{2} + \vec{z}_{4})$ образуют базис решетки $\Z^4$;

\textup{(9)} векторы $\vec{z}_{1}$, $\vec{z}_{2}$, $\frac{1}{2}(\vec{z}_{1} + \vec{z}_{3})$, $\frac{1}{4}\vec{z}_{1} +  \frac{1}{2}\vec{z}_{2} - \frac{1}{4}\vec{z}_{3} + \frac{1}{2}\vec{z}_{4}$ образуют базис решетки $\Z^4$;

\textup{(10)} векторы $\vec{z}_{1}$, $\vec{z}_{2}$, $\frac{1}{2}(\vec{z}_{1}+\vec{z}_{3})$, $\frac{1}{2}\vec{z}_{1} +  \frac{1}{4}\vec{z}_{2} +  \frac{1}{2}\vec{z}_{4}$ образуют базис решетки $\Z^4$;

\textup{(11)} векторы $\vec{z}_{1}$, $\frac{1}{2}(\vec{z}_{1} + \vec{z}_{2})$, $\vec{z}_{3}$, $\frac{1}{2}(\vec{z}_{1} + \vec{z}_{3} + \vec{z}_{4} - \vec{z}_{2})$ образуют базис решетки $\Z^4$.
\end{lemma}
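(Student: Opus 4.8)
The plan is to reduce everything to the involution $F=G'$ and to the geometry of its eigenspaces. By Corollaries \ref{all_to_2_2} and \ref{property_ord_eq}, $F$ is again a proper symmetry with $\textup{ord}(\sigma_F)=2$, and by Lemma \ref{rational_subspace_2_2} it is a genuine involution, $F^2=I_4$, with $\R^4=L_+\oplus L_-$, where $L_+$ and $L_-$ are the rational $2$-dimensional eigenspaces for the eigenvalues $+1$ and $-1$. The orbit relations $\vec z_3=F\vec z_1$, $\vec z_4=F\vec z_2$ that we must produce are then automatically compatible with $F^2=I_4$, and for every $\vec z\in\Z^4$ the vectors $\vec z+F\vec z$ and $\vec z-F\vec z$ lie in $L_+$ and $L_-$ respectively. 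Thus the whole statement becomes a question about the position of $\Z^4$ relative to the fixed rational splitting $L_+\oplus L_-$.

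First I would set $\Gamma_+=\Z^4\cap L_+$ and $\Gamma_-=\Z^4\cap L_-$, both lattices of rank $2$, and record the chain $2\Z^4\subseteq\Gamma_+\oplus\Gamma_-\subseteq\Z^4\subseteq\tfrac12\Gamma_+\oplus\tfrac12\Gamma_-$, which holds because $2\vec z=(\vec z+F\vec z)+(\vec z-F\vec z)$ with the two summands in $\Gamma_+$ and $\Gamma_-$. Hence $W:=\Z^4/(\Gamma_+\oplus\Gamma_-)$ is an elementary abelian $2$-group, naturally a subspace of $(\tfrac12\Gamma_+/\Gamma_+)\oplus(\tfrac12\Gamma_-/\Gamma_-)\cong(\Z/2\Z)^2\oplus(\Z/2\Z)^2$. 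Because $\Gamma_\pm=\Z^4\cap L_\pm$, the subspace $W$ meets each of the two summands trivially: if the $L_-$-projection of a class is integral then so is its $L_+$-projection, and the class vanishes. Consequently $W$ is the graph of an isomorphism $\theta\colon W_+\to W_-$ between subspaces $W_\pm\subseteq(\Z/2\Z)^2$ of a common dimension $d\in\{0,1,2\}$. All of this data is canonical up to the action of $\Gl_2(\Z)$ on each of $\Gamma_+,\Gamma_-$ (reducing to $\Gl_2(\Z/2\Z)$ on the quotients), together with rescaling and the interchange $L_+\leftrightarrow L_-$.

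Next I would choose integer vectors $\vec z_1,\vec z_2$ generating regular (i.e. two-element) $F$-orbits — primitive vectors transversal to $L_+$ and $L_-$ whose classes pin down the normalisation of $(W_+,W_-,\theta)$ — and put $\vec z_3=F\vec z_1$, $\vec z_4=F\vec z_2$. The sublattice $\langle\vec z_1,\vec z_2,\vec z_3,\vec z_4\rangle$ then has index a power of $2$ in $\Z^4$, and recovering $\Z^4$ from it amounts to adjoining the glue vectors prescribed by $\theta$. Normalising the pair $(W_+,\theta)$ under the symmetry group above and writing out, in each normal form, an explicit integral basis of $\Z^4$ in terms of $\vec z_1,\vec z_2,\vec z_3,\vec z_4$ produces precisely the eleven displayed quadruples. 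That each displayed quadruple is a genuine $\Z$-basis is the routine verification that the corresponding $4\times4$ coefficient matrix has determinant $\pm1$; equivalently, its reciprocal records the index $1,2,4$ or $8$ of the frame sublattice, which matches the half- and quarter-integer combinations occurring in the list.

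The main obstacle is the completeness and non-redundancy of this enumeration. One must verify that \emph{every} admissible configuration $(W_+,W_-,\theta)$, together with an optimal primitive choice of the orbit generators $\vec z_1,\vec z_2$, falls under at least one of the eleven normal forms, and that the constructive procedure (keeping as many of the raw frame vectors $\vec z_1,\vec z_2,\vec z_3$ as basis vectors as the configuration allows, and correcting by a single glue vector otherwise) never produces a configuration outside the list. Organising the orbits of $\Gl_2(\Z/2\Z)\times\Gl_2(\Z/2\Z)$, augmented by rescaling and the eigenspace swap, on the gluing data, and matching each orbit to the prescribed basis with a fixed pivot preference, is where essentially all the bookkeeping lives; by contrast the individual determinant checks for the eleven quadruples are mechanical.
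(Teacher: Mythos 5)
Your route is genuinely different from the paper's. The paper constructs the points $\vec z_i$ \emph{affinely}: it takes the rational hyperplane $S_1$ nearest to $S=l^2_++l^1_-+l^2_-$, the fixed point $\vec p=S_1\cap l^1_+$, the two rational planes $Q,R\subset S_1$ nearest to the plane $\pi$ through $\vec p$ parallel to $L_-$, and then runs an iterative shrinking procedure on triples of parallelograms $(\Delta^\pi_k,\Delta^Q_k,\Delta^R_k)$ until no extra lattice points remain; the eleven cases are then read off by an exhaustive analysis of $\Delta^Q_k\cap\Z^4$, $\Delta^R_k\cap\Z^4$ and of whether $\pi$ is rational. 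You instead linearize: since $F^2=I_4$ (this does follow from the matrix form behind Lemma \ref{rational_subspace_2_2}), the pair $(\Z^4,F)$ is a lattice with an integral involution, and your chain $2\Z^4\subseteq\Gamma_+\oplus\Gamma_-\subseteq\Z^4\subseteq\tfrac12\Gamma_+\oplus\tfrac12\Gamma_-$ together with the graph description of the glue group $W$ is correct. This buys a real simplification, because the lemma is purely existential and the proofs of Theorems \ref{theorem_proper_2_2} and \ref{theorem_proper_4} use only its statement: by the classical classification of integral representations of the group of order two (indecomposables: trivial $\Z$, sign $\Z^-$, and the regular lattice), equivalently by your glue analysis, $(\Z^4,F)$ has exactly \emph{three} equivariant types, corresponding to $d=\dim W_+\in\{0,1,2\}$, and in each type one can exhibit admissible $\vec z_1,\vec z_2$ directly.

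That said, your write-up has a concrete gap exactly at the step you flag as the main obstacle, and moreover its asserted outcome is wrong: normalizing $(W_+,W_-,\theta)$ under $\Gl_2(\Z/2\Z)\times\Gl_2(\Z/2\Z)$ yields three orbits, not eleven, so there is no matching of normal forms ``precisely'' to the eleven displayed quadruples. The eleven cases are not equivariant invariants of $(\Z^4,F)$ at all --- for instance, in both case (1) and case (8) the glue group has order $2$, so these configurations are equivariantly isomorphic; the finer distinctions in the paper's list record non-invariant data produced by its specific affine construction. Had you pursued your enumeration literally, it would not reproduce the list, but this actually makes your task easier, since the lemma is a disjunction: it suffices to realize \emph{one} case per type. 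Concretely, for $d=2$ take a basis $e_1,e_2,e_3,e_4$ with $F$ swapping $e_1\leftrightarrow e_3$, $e_2\leftrightarrow e_4$ and set $\vec z_i=e_i$ (case (2)); for $d=1$ take $u$ fixed, $v$ anti-fixed, $e_1\leftrightarrow e_2$ swapped, and set $\vec z_1=e_1$, $\vec z_2=u+v$, so that $\tfrac12(\vec z_2+\vec z_4)=u$ and $(\vec z_1,\vec z_2,\vec z_3,u)$ is unimodular (case (8)); for $d=0$ take $\vec z_1=u_1+v_1$, $\vec z_2=u_2+v_2$ with $\Gamma_+=\langle u_1,u_2\rangle$, $\Gamma_-=\langle v_1,v_2\rangle$, so that $\tfrac12(\vec z_1+\vec z_3)=u_1$, $\tfrac12(\vec z_2+\vec z_4)=u_2$ (case (5)). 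With this supplement (three short determinant checks) your argument is complete, and it in fact proves a sharper statement than the paper's lemma --- that cases (2), (5), (8) alone always suffice --- whereas as submitted the central verification is only announced, not performed.
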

\begin{proof}
Будем называть плоскость \emph{рациональной}, если множество содержащихся в нем целых точек является (аффинной) решеткой ранга, равного размерности этой плоскости.

Рассмотрим для собственной симметрии $F$ подпространства $l^{1}_{+}$, $l^{2}_{+}$, $l^{1}_{-}$ и $l^{2}_{-}$ из леммы \ref{rational_subspace_2_2} и положим $S = l^{2}_{+} + l^{1}_{-} + l^{2}_{-}$. Обозначим через $S_1$ ближайшую к $S$ рациональную гиперплоскость, параллельную $S$ и не совпадающую с $S$ (любую из двух). Тогда $G(S_1) = S_1$. Также обозначим через $\vec{p}$ точку пересечения гиперплоскости $S_1$ и $l^{1}_{+}$, а через $l$ и $\pi$ прямую и плоскость, проходящие через точку $\vec{p}$ и параллельные $ l^{2}_{+}$ и $L_{-} = l^{1}_{-} + l^{2}_{-}$ соответственно. При этом $F(l) = l$, $F(\pi) = \pi$ и $F(\vec{p}) = \vec{p}$.

 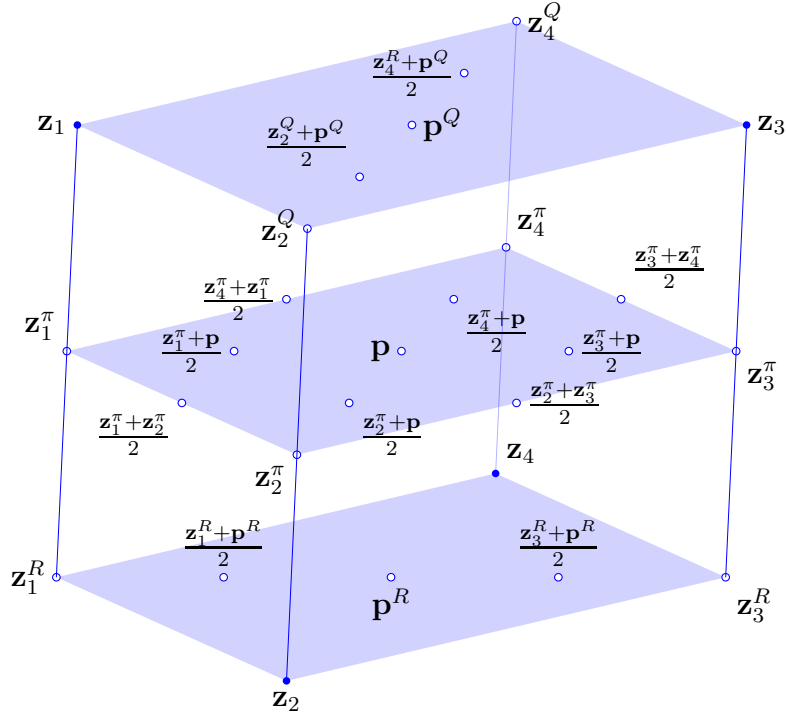
\begin{figure}[h]
  \centering
  \begin{tikzpicture}[x=10mm, y=7mm, z=-5mm, scale=1.1]
    \begin{scope}[rotate around z=0]

    \coordinate (z_1) at (-6,3,0);
    \coordinate (z_3) at (2,3,0);
    \coordinate (z_2) at (-1,-3,5);
    \coordinate (z_4) at (-1,-3,0);
    
    \coordinate (p_pl_1) at ($ (z_1)!1/2!(z_3) $);
    \coordinate (p_mi_1) at ($ (z_2)!1/2!(z_4) $);
    \coordinate (z_1_z_2_half) at ($ (z_1)!1/2!(z_2) $);
    \coordinate (z_2_z_3_half) at ($ (z_2)!1/2!(z_3) $);
    \coordinate (z_3_z_4_half) at ($ (z_3)!1/2!(z_4) $);
    \coordinate (z_4_z_1_half) at ($ (z_4)!1/2!(z_1) $); 
    
    \coordinate (p) at ($ (p_pl_1)!1/2!(p_mi_1) $);
    \coordinate (z_1_pl_1) at ($(p_mi_1) + (z_1) - (p_pl_1) $);
    \coordinate (z_3_pl_1) at ($(p_mi_1) + (z_3) - (p_pl_1) $);
    \coordinate (z_2_mi_1) at ($(p_pl_1) + (z_2) - (p_mi_1) $);
    \coordinate (z_4_mi_1) at ($(p_pl_1) + (z_4) - (p_mi_1) $);
    
    \coordinate (z_1_p) at ($ (z_1)!1/2!(z_1_pl_1) $);
    \coordinate (z_3_p) at ($(z_3)!1/2!(z_3_pl_1)$);
    \coordinate (z_2_p) at ($(z_2)!1/2!(z_2_mi_1)$);
    \coordinate (z_4_p) at ($(z_4)!1/2!(z_4_mi_1)$);
    
    \coordinate (p_1_p) at ($(z_1_pl_1)!1/2!(p_mi_1)$);
    \coordinate (p_3_p) at ($(z_3_pl_1)!1/2!(p_mi_1)$);
    
    \coordinate (p_2_q) at ($(z_2_mi_1)!1/2!(p_pl_1)$);
    \coordinate (p_4_q) at ($(z_4_mi_1)!1/2!(p_pl_1)$);
    
    \coordinate (z_1_p_1_2) at ($(p)!1/2!(z_1_p)$);
    \coordinate (z_3_p_1_2) at ($(p)!1/2!(z_3_p)$);
    \coordinate (z_2_p_1_2) at ($(p)!1/2!(z_2_p)$);
    \coordinate (z_4_p_1_2) at ($(p)!1/2!(z_4_p)$);

     \fill[blue!25,opacity=0.7] (z_1) -- (z_2_mi_1) -- (z_3) -- (z_4_mi_1) -- cycle;
     
      \fill[blue!25,opacity=0.7] (z_1_pl_1) -- (z_2) -- (z_3_pl_1) -- (z_4) -- cycle;
      
     \fill[blue!25,opacity=0.7] (z_1_p) -- (z_2_p) -- (z_3_p) -- (z_4_p) -- cycle;
    
     \node[fill=blue,circle,inner sep=1pt] at (z_1) {};
    \draw (z_1) node[left] {$\vec{z}_{1}$};
    
    \node[fill=blue,circle,inner sep=1pt] at (z_3) {};
    \draw (z_3) node[right] {$\vec{z}_{3}$};
    
    \node[fill=blue,circle,inner sep=1pt] at (z_2) {};
    \draw (z_2) node[below] {$\vec{z}_{2}$};
    
     \node[fill=blue,circle,inner sep=1pt] at (z_4) {};
    \draw (z_4) node[above right] {$\vec{z}_{4}$};

    \node[fill=white,circle,inner sep=1pt,draw=blue] at (z_1_pl_1) {};
    \draw (z_1_pl_1) node[left] {$\vec{z}_{1}^{R}$};
    
    \node[fill=white,circle,inner sep=1pt,draw=blue] at (z_3_pl_1) {};
    \draw (z_3_pl_1) node[below right] {$\vec{z}_{3}^{R}$};
    
    \node[fill=white,circle,inner sep=1pt,draw=blue] at (z_2_mi_1) {};
    \draw (z_2_mi_1) node[left] {$\vec{z}_{2}^{Q}$};
    
    \node[fill=white,circle,inner sep=1pt,draw=blue] at (z_4_mi_1) {};
    \draw (z_4_mi_1) node[right] {$\vec{z}_{4}^{Q}$};
    
     \node[fill=white,circle,inner sep=1pt,draw=blue] at (z_1_p) {};
    \draw (z_1_p) node[above left] {$\vec{z}_{1}^{\pi}$};
    
    \node[fill=white,circle,inner sep=1pt,draw=blue] at (z_3_p) {};
    \draw (z_3_p) node[below right] {$\vec{z}_{3}^{\pi}$};
    
     \node[fill=white,circle,inner sep=1pt,draw=blue] at (z_2_p) {};
    \draw (z_2_p) node[below left] {$\vec{z}_{2}^{\pi}$};
    
     \node[fill=white,circle,inner sep=1pt,draw=blue] at (z_4_p) {};
    \draw (z_4_p) node[above right] {$\vec{z}_{4}^{\pi}$};
    
     \node[fill=white,circle,inner sep=1pt,draw=blue] at (p) {};
    \draw (p) node[left] {$\vec p$};
    
    \node[fill=white,circle,inner sep=1pt,draw=blue] at (p_pl_1) {};
    \draw (p_pl_1) node[right] {$\vec{p}^{Q}$};
    
    \node[fill=white,circle,inner sep=1pt,draw=blue] at (p_mi_1) {};
    \draw (p_mi_1) node[below] {$\vec{p}^{R}$};
    
     \node[fill=white,circle,inner sep=1pt,draw=blue] at (z_1_z_2_half) {};
    \draw (z_1_z_2_half) node[below left] {$\frac{\vec{z}_{1}^{\pi} + \vec{z}_{2}^{\pi}}{2}$};
    
    \node[fill=white,circle,inner sep=1pt,draw=blue] at (z_2_z_3_half) {};
    \draw (z_2_z_3_half) node[right] {$\frac{\vec{z}_{2}^{\pi} + \vec{z}_{3}^{\pi}}{2}$};
    
         \node[fill=white,circle,inner sep=1pt,draw=blue] at (z_3_z_4_half) {};
    \draw (z_3_z_4_half) node[above right] {$\frac{\vec{z}_{3}^{\pi} + \vec{z}_{4}^{\pi}}{2}$};
    
     \node[fill=white,circle,inner sep=1pt,draw=blue] at (z_4_z_1_half) {};
    \draw (z_4_z_1_half) node[left] {$\frac{\vec{z}_{4}^{\pi} + \vec{z}_{1}^{\pi}}{2}$};
    
    \node[fill=white,circle,inner sep=1pt,draw=blue] at (p_1_p) {};
    \draw (p_1_p) node[above] {$\frac{\vec{z}_{1}^{R} + \vec{p}^{R}}{2}$};
    
    \node[fill=white,circle,inner sep=1pt,draw=blue] at (p_3_p) {};
    \draw (p_3_p) node[above] {$\frac{\vec{z}_{3}^{R} + \vec{p}^{R}}{2}$};
    
      \node[fill=white,circle,inner sep=1pt,draw=blue] at (p_2_q) {};
    \draw (p_2_q) node[above left] {$\frac{\vec{z}_{2}^{Q} + \vec{p}^{Q}}{2}$};
    
    \node[fill=white,circle,inner sep=1pt,draw=blue] at (p_4_q) {};
    \draw (p_4_q) node[left] {$\frac{\vec{z}_{4}^{R} + \vec{p}^{Q}}{2}$};
    
     \node[fill=white,circle,inner sep=1pt,draw=blue] at (z_1_p_1_2) {};
    \draw (z_1_p_1_2) node[left] {$\frac{\vec{z}^{\pi}_{1} +  \vec{p}}{2}$};
    
    \node[fill=white,circle,inner sep=1pt,draw=blue] at (z_3_p_1_2) {};
    \draw (z_3_p_1_2) node[right] {$\frac{\vec{z}^{\pi}_{3} +  \vec{p}}{2}$};
    
    \node[fill=white,circle,inner sep=1pt,draw=blue] at (z_2_p_1_2) {};
    \draw (z_2_p_1_2) node[below right] {$\frac{\vec{z}^{\pi}_{2} +  \vec{p}}{2}$};
    
    \node[fill=white,circle,inner sep=1pt,draw=blue] at (z_4_p_1_2) {};
    \draw (z_4_p_1_2) node[below right] {$\frac{\vec{z}^{\pi}_{4} +  \vec{p}}{2}$};
    
   \draw[-] [blue,very thin] (z_1_pl_1) -- (z_1);
   \draw[-] [blue,very thin] (z_3_pl_1) -- (z_3);
   \draw[-] [blue,very thin, opacity=2] (z_2_mi_1) -- (z_2);
   \draw[-] [blue,very thin, opacity=0.3] (z_4_mi_1) -- (z_4);
   
    \end{scope}
  \end{tikzpicture}
  \caption{Возможное расположение точек решетки $\Z^4$ в параллелограммах из построенной тройки $(\Delta_{k}^{\pi}, \Delta_{k}^{Q}, \Delta_{k}^{R})$}
  \label{integer_points}
\end{figure}

Плоскость $\pi$ разделяет гиперплоскость $S_1$ на два множества $S^{+}_1$ и $S^{-}_1$. Пусть $Q$ и $R$ --- рациональные плоскости ближайшие к $\pi$, параллельные $\pi$ и не совпадающие с $\pi$, принадлежащие множествам $S^{+}_1$ и $S^{-}_1$ соответственно. Отметим, что, вообще говоря, расстояния от $\pi$ до $Q$ и от $\pi$ до $R$ не обязательно равны. Положим $\vec{p}^{Q} = Q \cap l$ и $\vec{p}^{R} = R \cap l$. Построим точки $\vec{z}_{1}$, $\vec{z}_{2}$, $\vec{z}_{3}$, $\vec{z}_{4}$ при помощи следующей итерационной процедуры. 

Для начала предположим, $(\vec{v}_{1} , \vec{v}_{2})$ --- такая пара точек решетки $\Z^4$, что $\vec{v}_{1} \in Q, \vec{v}_{2} \in R$, векторы $\vec{v}_{1} - \vec{p}^{Q}$ и $\vec{v}_{2} - \vec{p}^{R}$ неколлинеарны. Тогда можно построить точки 
\begin{equation}\label{points}
\begin{gathered}
\vec{v}_{3} = F(\vec{v}_{1}) \in \Z^4 \cap Q, \quad  \vec{v}_{4} = F(\vec{v}_{2}) \in \Z^4  \cap R,\\
\vec{v}^{R}_{1} = \vec{v}_{1} + (\vec{p}^{R} - \vec{p}^{Q}), \quad \vec{v}^{R}_{3} = \vec{v}_{3} + (\vec{p}^{R} - \vec{p}^{Q}),\\
\vec{v}^{Q}_{2} = \vec{v}_{2} + (\vec{p}^{Q} - \vec{p}^{R}), \quad \vec{v}^{Q}_{4} = \vec{v}_{4} + (\vec{p}^{Q} - \vec{p}^{R}),\\
\vec{v}^{\pi}_{1} = \vec{v}_{1} + (\vec{p} - \vec{p}^{Q}), \quad \vec{v}^{\pi}_{3} = \vec{v}_{3} + (\vec{p} - \vec{p}^{Q}),\\
\vec{v}^{\pi}_{2} = \vec{v}_{2} + (\vec{p} - \vec{p}^{R}), \quad \vec{v}^{\pi}_{4} = \vec{v}_{4} + (\vec{p} - \vec{p}^{R}).
\end{gathered}
\end{equation}

Рассмотренные точки определяют тройку параллелограммов $(\Delta^{\pi}, \Delta^{Q}, \Delta^{R})$, где
\begin{equation}\label{parallelograms}
\begin{gathered}
\Delta^{\pi} = \textup{conv}(\vec{v}^{\pi}_{1}, \vec{v}^{\pi}_{2}, \vec{v}^{\pi}_{3}, \vec{v}^{\pi}_{4}) \subset \pi,\\
\Delta^{Q} = \textup{conv}(\vec{v}_{1}, \vec{v}^{Q}_{2}, \vec{v}_{3}, \vec{v}^{Q}_{4}) \subset Q, \quad \Delta^{R} = \textup{conv}(\vec{v}^{R}_{1}, \vec{v}_{2}, \vec{v}^{R}_{3}, \vec{v}_{4}) \subset R.
\end{gathered}
\end{equation}

Возьмем произвольную целочисленную точку $\vec{v}_{1,1} \in Q \setminus l$. Тогда существует такая целочисленная точка $\vec{v}_{1,2} \in R \setminus l$, что вектор $\vec{v}_{1,1} - \vec{p}^{Q}$ неколлинеарен вектору $\vec{v}_{1,2} - \vec{p}^{R}$.

Теперь предположим, мы построили такую пару точек $(\vec{v}_{j,1}, \vec{v}_{j,2})$ решетки $\Z^4$, что $\vec{v}_{j,1} \in Q,  \vec{v}_{j,2} \in R$, векторы $\vec{v}_{j,1} - \vec{p}^{Q}$ и $ \vec{v}_{j,2} - \vec{p}^{R}$ неколлинеарны. Положив $(\vec{v}_{1} , \vec{v}_{2}) = (\vec{v}_{j,1} , \vec{v}_{j,2})$, определим с помощью \ref{points} точки $\vec{v}_{j,3}=\vec{v}_{3}$, $\vec{v}_{j,4}=\vec{v}_{4}$, $\vec{v}^{R}_{j,1} = \vec{v}^{R}_{1}$, $\vec{v}^{R}_{j,3} = \vec{v}^{R}_{3}$, $\vec{v}^{Q}_{j,2} = \vec{v}^{Q}_{2}$, $\vec{v}^{Q}_{j,4} = \vec{v}^{Q}_{4}$, $\vec{v}^{\pi}_{j,1} = \vec{v}^{\pi}_{1}$, $\vec{v}^{\pi}_{j,2} = \vec{v}^{\pi}_{2}$, $\vec{v}^{\pi}_{j,3} = \vec{v}^{\pi}_{3}$, $\vec{v}^{\pi}_{j,4} = \vec{v}^{\pi}_{4}$. Также, с помощью \ref{parallelograms} определим параллелограммы $\Delta^{\pi}_{j} = \Delta^{\pi}, \Delta^{Q}_{j} = \Delta^{Q}, \Delta^{R}_{j} = \Delta^{R}$. Кроме того, положим $\vec{p}^{R}_{j,1} =  \frac{1}{2}(\vec{v}^{R}_{j,1} + \vec{p}^{R})$, $\vec{p}^{R}_{j,3} =  \frac{1}{2}(\vec{v}^{R}_{j,3} + \vec{p}^{R})$, $\vec{p}^{Q}_{j,2} =  \frac{1}{2}(\vec{v}^{Q}_{j,2} + \vec{p}^{Q})$ и $\vec{p}^{Q}_{j,4} =  \frac{1}{2}(\vec{v}^{R}_{j,4} + \vec{p}^{Q})$.

 Если на плоскостях $Q$ и $R$ существует целая точка, не совпадающая с точками $\vec{p}^{Q}$, $\vec{p}^{R}$, $\vec{p}^{R}_{j,1}$, $\vec{p}^{R}_{j,3}$, $\vec{p}^{Q}_{j,2}$, $\vec{p}^{Q}_{j,4}$ и не с какой из вершин параллелограммов $\Delta^{Q}_{j}$ и $\Delta^{R}_{j}$, при этом лежащая в одном из этих параллелограммов (без ограничения общности, в $\Delta^{Q}_{j}$), то обозначим ее через $\vec{v}$. Иначе будем называть пару точек $(\vec{v}_{j,1}, \vec{v}_{j,2})$ \emph{допустимой парой для оператора $G$ и цепной дроби $\cf(l_1,l_2,l_3, l_4)\in\gA_3$}.

Предположим, что вектор $\vec{v} - \vec{p}^{Q}$ неколлинеарен вектору $\vec{v}_{j,2} - \vec{p}^{R}$. Тогда положим $\vec{v}_{j+1,1} = \vec{v}$, $\vec{v}_{j+1,2} = \vec{v}_{j,2}$. 

Теперь предположим, что вектор $\vec{v} - \vec{p}^{Q}$ коллинеарен вектору $\vec{v}_{j,2} - \vec{p}^{R}$. Заметим, что $\vec{v} - F(\vec{v}) = 2(\vec{v} - \vec{p}^{Q})$, а значит, $|\vec{v} - F(\vec{v})| < |\vec{v}_{j,4} - \vec{v}_{j,2}|$ и векторы $\pm\big(\vec{v} - F(\vec{v})\big)$ не совпадают ни с каким из векторов $\vec{v}_{j,4} - \vec{v}_{j,2}$ и $\vec{p}^{R} - \vec{v}_{j,2}  = \vec{p}^{Q}_{j,4} - \vec{p}^{Q}_{j,2}$. Таким образом, либо точка $\vec{v}_{j,2} + \big(\vec{v} - F(\vec{v}))$, либо точка $\vec{v}_{j,2} - (\vec{v} - F(\vec{v}))$ лежит в параллелограмме $\Delta^{R}_{j}$ и не совпадает с точками $\vec{p}^{R}$, $\vec{p}^{R}_{j,1}$, $\vec{p}^{R}_{j,3}$ и не с какой из вершин параллелограмма $\Delta^{R}_{j}$. Обозначим эту точку через $\vec{v}_{j+1,2}$ и положим $\vec{v}_{j+1,1} = \vec{v}_{j,1}$. Заметим, что вектор $\vec{v}_{j+1,1}  - \vec{p}^{Q}$ неколлинеарен вектору $\vec{v}_{j+1,2} - \vec{p}^{R}$.

Последовательность пар $(\vec{v}_{j,1}, \vec{v}_{j,2})$ конечна, так как, по построению  
\[\Delta^{\pi}_{j} \subset \Delta^{\pi}_{j+1}, \quad \Delta^{Q}_{j} \subset \Delta^{Q}_{j+1}, \quad \Delta^{R}_{j} \subset \Delta^{R}_{j+1}.\]
Пусть $(\vec{z}_{1}, \vec{z}_{2}) = (\vec{v}_{k,1}, \vec{v}_{k,2})$ --- последний элемент такой последовательности, то есть $(\vec{z}_{1}, \vec{z}_{2})$ --- допустимая пара для оператора $G$ и цепной дроби $\cf(l_1,l_2,l_3, l_4)\in\gA_3$. Положив $(\vec{v}_{1} , \vec{v}_{2}) = (\vec{z}_{1} , \vec{z}_{2})$, определим с помощью \ref{points} точки $\vec{z}_{3}=\vec{v}_{3}$, $\vec{z}_{4}=\vec{v}_{4}$, $\vec{z}^{R}_{1} = \vec{v}^{R}_{1}$, $\vec{z}^{R}_{3} = \vec{v}^{R}_{3}$, $\vec{z}^{Q}_{2} = \vec{v}^{Q}_{2}$, $\vec{z}^{Q}_{4} = \vec{v}^{Q}_{4}$, $\vec{z}^{\pi}_{1} = \vec{v}^{\pi}_{1}$, $\vec{z}^{\pi}_{2} = \vec{v}^{\pi}_{2}$, $\vec{z}^{\pi}_{3} = \vec{v}^{\pi}_{3}$, $\vec{z}^{\pi}_{4} = \vec{v}^{\pi}_{4}$. Также, с помощью \ref{parallelograms} определим параллелограммы $\Delta^{\pi}_{k} = \Delta^{\pi}, \Delta^{Q}_{k} = \Delta^{Q}, \Delta^{R}_{k} = \Delta^{R}$. Кроме того, положим $\vec{p}^{R}_{1} =  \frac{1}{2}(\vec{z}^{R}_{1} + \vec{p}^{R})$, $\vec{p}^{R}_{3} =  \frac{1}{2}(\vec{z}^{R}_{3} + \vec{p}^{R})$, $\vec{p}^{Q}_{2} =  \frac{1}{2}(\vec{z}^{Q}_{2} + \vec{p}^{Q})$ и $\vec{p}^{Q}_{4} =  \frac{1}{2}(\vec{z}^{R}_{4} + \vec{p}^{Q})$.

Покажем, что множество $(\Delta^{\pi}_{k} \cup \Delta^{Q}_{k} \cup \Delta^{R}_{k}) \, \cap \, \Z^{4}$ совпадает с одним из множеств (с точностью до перенумерации точек $\vec{z}_1, \vec{z}_2, \vec{z}_3, \vec{z}_4$)

\[\{\vec{z}_{1}, \, \vec{z}^{Q}_{2}, \, \vec{z}_{3}, \, \vec{z}^{Q}_{4}, \, \vec{z}^{R}_{1}, \, \vec{z}_{2}, \, \vec{z}^{R}_{3}, \, \vec{z}_{4}, \, \vec{p}\},\]
\[\{\vec{z}_{1}, \, \vec{z}_{3}, \, \vec{z}_{2}, \, \vec{z}_{4}\},\]
\[\{\vec{z}_{1}, \, \vec{z}_{3}, \, \vec{p}^{Q}, \, \vec{z}_{2},  \, \vec{z}_{4}, \,  \vec{p}^{R}_{1}, \, \vec{p}^{R}_{3}\},\]
\[\{\vec{z}_{1}, \, \vec{z}^{Q}_{2}, \, \vec{z}_{3}, \, \vec{z}^{Q}_{4}, \, \vec{z}^{R}_{1}, \, \vec{z}_{2}, \, \vec{z}^{R}_{3}, \, \vec{z}_{4}, \, \vec{p}^{Q}, \, \vec{p}^{R}, \, \frac{\vec{z}^{\pi}_{1} + \vec{z}^{\pi}_{2}}{2}, \, \frac{\vec{z}^{\pi}_{2} + \vec{z}^{\pi}_{3}}{2}, \, \frac{\vec{z}^{\pi}_{3} + \vec{z}^{\pi}_{4}}{2}, \, \frac{\vec{z}^{\pi}_{4} + \vec{z}^{\pi}_{1}}{2}\},\]
\[\{\vec{z}_{1}, \, \vec{z}^{Q}_{2}, \, \vec{z}_{3}, \, \vec{z}^{Q}_{4}, \, \vec{z}^{R}_{1}, \, \vec{z}_{2}, \, \vec{z}^{R}_{3}, \, \vec{z}_{4}, \, \vec{p}^{Q}, \, \vec{p}^{R}, \, \frac{\vec{z}^{\pi}_{1} +  \vec{p}}{2}, \, \frac{\vec{z}^{\pi}_{3} +  \vec{p}}{2} \},\]
\[\{\vec{z}_{1}, \, \vec{z}^{Q}_{2}, \, \vec{z}_{3}, \, \vec{z}^{Q}_{4}, \, \vec{z}^{R}_{1}, \, \vec{z}_{2}, \, \vec{z}^{R}_{3}, \, \vec{z}_{4}, \, \frac{\vec{z}^{\pi}_{1} + \vec{z}^{\pi}_{2}}{2}, \, \frac{\vec{z}^{\pi}_{3} + \vec{z}^{\pi}_{4}}{2} \},\]
\[\{\vec{z}_{1}, \, \vec{z}^{Q}_{2}, \, \vec{z}_{3}, \, \vec{z}^{Q}_{4}, \, \vec{z}^{R}_{1}, \, \vec{z}_{2}, \, \vec{z}^{R}_{3}, \, \vec{z}_{4}, \, \vec{z}^{\pi}_{1}, \, \vec{z}^{\pi}_{2}, \, \vec{z}^{\pi}_{3}, \, \vec{z}^{\pi}_{4}\},\]
\[\{\vec{z}_{1}, \, \vec{z}_{3}, \, \vec{z}_{2}, \, \vec{z}_{4}, \vec{p}^{R}\},\]
\[\{\vec{z}_{1}, \, \vec{z}^{Q}_{2}, \, \vec{z}_{3}, \, \vec{z}^{Q}_{4}, \, \vec{z}^{R}_{1}, \, \vec{z}_{2}, \, \vec{z}^{R}_{3}, \, \vec{z}_{4}\},\]
\[\{\vec{p}, \, \vec{z}_{1}, \, \vec{z}^{Q}_{2}, \, \vec{z}_{3}, \, \vec{z}^{Q}_{4}, \, \vec{z}^{R}_{1}, \, \vec{z}_{2}, \, \vec{z}^{R}_{3}, \, \vec{z}_{4}, \, \vec{p}^{Q}, \, \vec{p}^{R}, \, \vec{z}^{\pi}_{1}, \, \vec{z}^{\pi}_{2}, \, \vec{z}^{\pi}_{3}, \, \vec{z}^{\pi}_{4} \},\]
\[\{\vec{z}_{1}, \, \vec{z}^{Q}_{2}, \, \vec{z}_{3}, \, \vec{z}^{Q}_{4}, \, \vec{z}^{R}_{1}, \, \vec{z}_{2}, \, \vec{z}^{R}_{3}, \, \vec{z}_{4}, \, \vec{p}^{Q}, \, \vec{p}^{R}\}.\]

Рассмотрим параллелограмм $\Delta^{Q}_{k}$. Поскольку $F(\vec{p}^{Q}_{2}) =  \vec{p}^{Q}_{4}$, $F(\vec{p}^{Q}_{4}) =  \vec{p}^{Q}_{2}$, $F(\vec{z}^{Q}_{2}) =  \vec{z}^{Q}_{4}$ и $F(\vec{z}^{Q}_{4}) =  \vec{z}^{Q}_{2}$, то $\vec{p}^{Q}_{2} \in \Z^{4} \Leftrightarrow \vec{p}^{Q}_{4} \in \Z^{4}$ и $\vec{z}^{Q}_{2} \in \Z^{4} \Leftrightarrow \vec{z}^{Q}_{4} \in \Z^{4}$.  Заметим, что если $\vec{p}^{Q}_{2} \in \Z^{4}$ и $\vec{p}^{Q}_{4} \in \Z^{4}$, то точки $\vec{z}^{Q}_{2}, \vec{z}^{Q}_{4}, \vec{p}^{Q}$ не принадлежат решетке $\Z^{4}$ в силу способа построения параллелограмма $\Delta^{R}_{k}$. Таким образом, множество $\Delta^{Q}_{k} \cap \Z^{4}$ совпадает с одним из множеств 
\[ \{\vec{z}_{1}, \vec{z}_{3}\}, \{\vec{z}_{1}, \vec{z}_{3}, \vec{z}^{Q}_{2}, \vec{z}^{Q}_{4}\}, \{\vec{z}_{1}, \vec{z}_{3}, \vec{p}^{Q}\}, \{\vec{z}_{1}, \vec{z}_{3}, \vec{z}^{Q}_{2}, \vec{z}^{Q}_{4},  \vec{p}^{Q}\}, \{\vec{z}_{1}, \vec{z}_{3}, \vec{p}^{Q}_{2},\vec{p}^{Q}_{4}\}. \]
Аналогично, множество $\Delta^{R}_{k} \cap \Z^{4}$ совпадает с одним из множеств 
\[ \{\vec{z}_{2}, \vec{z}_{4}\}, \{\vec{z}_{2}, \vec{z}_{4}, \vec{z}^{R}_{1}, \vec{z}^{R}_{3}\}, \{\vec{z}_{2}, \vec{z}_{4}, \vec{p}^{R}\}, \{\vec{z}_{2}, \vec{z}_{4}, \vec{z}^{R}_{1}, \vec{z}^{R}_{3},  \vec{p}^{R}\}, \{\vec{z}_{2}, \vec{z}_{4}, \vec{p}^{R}_{1},\vec{p}^{R}_{3}\}. \]
Рассмотрим следующие случаи:

\textbf{А}) $\Delta^{Q}_{k} \cap \Z^{4} = \{\vec{z}_{1}, \vec{z}_{3}\}$. Если $\vec{z}^{R}_{1} \in \Z^{4}$ и $\vec{z}^{R}_{3} \in \Z^{4}$, то $\vec{z}^{Q}_{2} = \vec{z}_{1} + (\vec{z}_{2} - \vec{z}^{R}_{1}) \in \Z^{4}$ и $\vec{z}^{Q}_{4} \in \Z^{4}$, что противоречит рассматриваемому случаю. Если $\vec{p}^{R}_{1} \in \Z^{4}$ и $\vec{p}^{R}_{3} \in \Z^{4}$, то $\vec{p}^{Q} = \vec{z}_{1} + (\vec{p}^{R}_{3} - \vec{p}^{R}_{1})  \in \Z^{4}$, что также противоречит рассматриваемому случаю. Таким образом, существует ровно два подслучая:

\textbf{А.1}) $\Delta^{R}_{k} \cap \Z^{4} = \{\vec{z}_{2}, \vec{z}_{4}\}$. Этот случай, в свою очередь, разбивается на два подсулчая:

\textbf{А.1.а}) Плоскость $\pi$ рациональная. Тогда, плоскости $Q$ и $R$ равноудалены от $\pi$. Покажем, что параллелограмм $\Delta^{\pi}_{k}$ не содержит точек решетки $\Z^{4}$. Пусть это не так. Будем считать, без ограничения общности, что существует точка $\vec{w} \in \Z^{4}$, лежащая в параллелограмме $\textup{conv}(\vec{z}^{\pi}_{1}, \frac{\vec{z}^{\pi}_{1} + \vec{z}^{\pi}_{2}}{2}, \vec{p}, \frac{\vec{z}^{\pi}_{4} + \vec{z}^{\pi}_{1}}{2})$. Если $\vec{w} \notin \{\frac{\vec{z}^{\pi}_{1} + \vec{z}^{\pi}_{2}}{2}, \frac{\vec{z}^{\pi}_{4} + \vec{z}^{\pi}_{1}}{2}\}$, то $\vec{w} + (\vec{w} - \vec{z}_1) \in \Delta^{R}_{k} \cap \Z^{4}$ и $\vec{w} + (\vec{w} - \vec{z}_1) \notin \{\vec{z}_2, \vec{z}_4\}$, чего не может быть. Если $\vec{w} =  \frac{\vec{z}^{\pi}_{1} + \vec{z}^{\pi}_{2}}{2}$, то $\frac{\vec{z}^{\pi}_{3} + \vec{z}^{\pi}_{4}}{2} = F(\frac{\vec{z}^{\pi}_{1} + \vec{z}^{\pi}_{2}}{2}) \in \Z^{4}$, а значит, $\vec{z}^{Q}_{4} = \vec{z}_1 + (\frac{\vec{z}^{\pi}_{3} + \vec{z}^{\pi}_{4}}{2} - \frac{\vec{z}^{\pi}_{1} + \vec{z}^{\pi}_{2}}{2}) \in \Z^{4}$, чего также не может быть. Аналогично показывается, что случай $\vec{w}  = \frac{\vec{z}^{\pi}_{4} + \vec{z}^{\pi}_{1}}{2}$ невозможен, а значит, $\Delta^{\pi}_{k}$ не содержит точек решетки $\Z^{4}$. Поскольку $\textup{conv}\big(\vec{z}_{1}, \vec{z}_{3},  \vec{z}_{3} + (\vec{z}_{4} - \vec{z}_{2})\big)  \cap \Z^{4} = \{\vec{z}_{1}, \vec{z}_{3},  \vec{z}_{3} + (\vec{z}_{4} - \vec{z}_{2})\}$, то в параллелограмме $\textup{conv}\big(\vec{z}^{\pi}_{3}, \vec{z}^{\pi}_{4},  \vec{z}^{\pi}_{1} + (\vec{z}^{\pi}_{1} -  \vec{z}^{\pi}_{4}), \vec{z}^{\pi}_{2} + (\vec{z}^{\pi}_{2} -  \vec{z}^{\pi}_{3})\big)$ должна существовать точка $\vec{y} \in \Z^{4}$, а значит, по доказанному выше, точка $\vec{y}$ лежит в параллелограмме $\textup{conv}\big(\vec{z}^{\pi}_{2}, \vec{z}^{\pi}_{1},  \vec{z}^{\pi}_{1} + (\vec{z}^{\pi}_{1} -  \vec{z}^{\pi}_{4}), \vec{z}^{\pi}_{2} + (\vec{z}^{\pi}_{2} -  \vec{z}^{\pi}_{3})\big)$. Но тогда $\vec{z}_{3} + (\vec{z}_3 - \vec{y}) \in \Z^{4}$ и $\vec{z}_{3} + (\vec{z}_3 - \vec{y}) \in \Delta^{\pi}_{k}$, чего, как мы показали, не может быть.

\textbf{А.1.б}) (будет соответствовать утверждению (2)) Плоскость $\pi$ не является рациональной плоскостью. Тогда 
\[(\Delta^{\pi}_{k} \cup \Delta^{Q}_{k} \cup \Delta^{R}_{k}) \cap \Z^{4} = \{\vec{z}_{1}, \, \vec{z}_{3}, \, \vec{z}_{2}, \, \vec{z}_{4}\},\]
набор векторов $\vec{z}_{1}$, $\vec{z}_{2}$, $\vec{z}_{3}$, $\vec{z}_{4}$ образует базис решетки $\Z^{4}$, а значит, выполняется утверждение (2).

 \begin{figure}[h]
  \centering
  \begin{tikzpicture}[x=10mm, y=7mm, z=-5mm]
    \begin{scope}[rotate around x=0]
  
    \draw[->] [very thin] (-5,0,0) -- (5,0,0);
    \draw[->] [very thin] (0,-3,0) -- (0,7,0);
    \draw[->] [very thin] (0,0,-4) -- (0,0,4);   
    
    \node[fill=black,circle,inner sep=1pt,opacity=0.5] at (0,0,0) {};
    \node[fill=black,circle,inner sep=1pt,opacity=0.5] at (0,0,2) {};
    \node[fill=black,circle,inner sep=1pt,opacity=0.5] at (0,0,-2) {};
    \node[fill=black,circle,inner sep=1pt,opacity=0.5] at (2,0,0) {};
    \node[fill=black,circle,inner sep=1pt,opacity=0.5] at (2,0,2) {};
    \node[fill=black,circle,inner sep=1pt,opacity=0.5] at (2,0,-2) {};
    \node[fill=black,circle,inner sep=1pt,opacity=0.5] at (-2,0,0) {};
    \node[fill=black,circle,inner sep=1pt,opacity=0.5] at (-2,0,2) {};
    \node[fill=black,circle,inner sep=1pt,opacity=0.5] at (-2,0,-2) {};   
    \node[fill=black,circle,inner sep=1pt,opacity=0.5] at (0,5,0) {};
    \node[fill=black,circle,inner sep=1pt,opacity=0.5] at (0,5,2) {};
    \node[fill=black,circle,inner sep=1pt,opacity=0.5] at (0,5,-2) {};
    \node[fill=black,circle,inner sep=1pt,opacity=0.5] at (2,5,0) {};
    \node[fill=black,circle,inner sep=1pt,opacity=0.5] at (2,5,2) {};
    \node[fill=black,circle,inner sep=1pt,opacity=0.5] at (2,5,-2) {};
    \node[fill=black,circle,inner sep=1pt,opacity=0.5] at (-2,5,0) {};
    \node[fill=black,circle,inner sep=1pt,opacity=0.5] at (-2,5,2) {};
    \node[fill=black,circle,inner sep=1pt,opacity=0.5] at (-2,5,-2) {};
    
    \coordinate (z_1) at (0,0,0);
    \coordinate (z_3) at (0,0,2);
    \coordinate (z_2) at (0,5,0);
    \coordinate (z_4) at (2,0,0);
    
    \coordinate (p_pl_1) at ($ (z_1)!1/2!(z_3) $);
    \coordinate (p_mi_1) at ($ (z_2)!1/2!(z_4) $);
    \coordinate (z_1_z_2_half) at ($ (z_1)!1/2!(z_2) $);
    \coordinate (z_2_z_3_half) at ($ (z_2)!1/2!(z_3) $);
    \coordinate (z_3_z_4_half) at ($ (z_3)!1/2!(z_4) $);
    \coordinate (z_4_z_1_half) at ($ (z_4)!1/2!(z_1) $); 
    
    \coordinate (p) at ($ (p_pl_1)!1/2!(p_mi_1) $);
    \coordinate (z_1_pl_1) at ($(p_mi_1) + (z_1) - (p_pl_1) $);
    \coordinate (z_3_pl_1) at ($(p_mi_1) + (z_3) - (p_pl_1) $);
    \coordinate (z_2_mi_1) at ($(p_pl_1) + (z_2) - (p_mi_1) $);
    \coordinate (z_4_mi_1) at ($(p_pl_1) + (z_4) - (p_mi_1) $);
    
    \coordinate (z_1_p) at ($ (z_1)!1/2!(z_1_pl_1) $);
    \coordinate (z_3_p) at ($(z_3)!1/2!(z_3_pl_1)$);
    \coordinate (z_2_p) at ($(z_2)!1/2!(z_2_mi_1)$);
    \coordinate (z_4_p) at ($(z_4)!1/2!(z_4_mi_1)$);

     \fill[blue!05,opacity=0.3] (3,0,3) -- (3,0,-3) -- (-3,0,-3) -- (-3,0,3) -- cycle;
     \fill[blue!05,opacity=0.3] (3,5,3) -- (3,5,-3) -- (-3,5,-3) -- (-3,5,3) -- cycle;

     \fill[blue!25,opacity=0.6] (z_1) -- (z_2_mi_1) -- (z_3) -- (z_4_mi_1) -- cycle;     
     \fill[blue!25,opacity=0.6] (z_1_pl_1) -- (z_2) -- (z_3_pl_1) -- (z_4) -- cycle;
       
    \node[fill=blue,circle,inner sep=1pt] at (z_1) {};
    \draw (z_1) node[below right] {$\vec z_1$};
    
    \node[fill=blue,circle,inner sep=1pt] at (z_3) {};
    \draw (z_3) node[left] {$\vec z_3$};
    
    \node[fill=blue,circle,inner sep=1pt] at (z_2) {};
    \draw (z_2) node[left] {$\vec z_2$};
    
     \node[fill=blue,circle,inner sep=1pt] at (z_4) {};
    \draw (z_4) node[above right] {$\vec z_4$};
  
    \node[fill=white,circle,inner sep=1pt,draw=blue] at (z_1_pl_1) {};
    \draw (z_1_pl_1) node[above right] {$\vec z_1^{R}$};
    
    \node[fill=white,circle,inner sep=1pt,draw=blue] at (z_3_pl_1) {};
    \draw (z_3_pl_1) node[above] {$\vec z_3^{R}$};
    
    \node[fill=white,circle,inner sep=1pt,draw=blue] at (z_2_mi_1) {};
    \draw (z_2_mi_1) node[left] {$\vec z_2^{Q}$};
    
    \node[fill=white,circle,inner sep=1pt, draw=blue] at (z_4_mi_1) {};
    \draw (z_4_mi_1) node[left] {$\vec z_4^{Q}$};

    \node[fill=white,circle,inner sep=1pt, draw=blue] at (p) {};
    \draw (p) node[right] {$\vec p$};
    
    \node[fill=white,circle,inner sep=1pt,draw=blue] at (p_pl_1) {};
    \draw (p_pl_1) node[above] {$\vec{p}^{Q}$};
    
    \node[fill=white,circle,inner sep=1pt,draw=blue] at (p_mi_1) {};
    \draw (p_mi_1) node[right] {$\vec{p}^{R}$};
    
    \draw[->] [red,very thin] (z_1) -- (z_2);
    \draw[->] [red,very thin] (z_1) -- (z_3);
    \draw[->] [red,very thin] (z_1) -- (z_4);

    \draw (5, 0, 0) node[right] {$x$};
    \draw (0, 7, 0) node[left] {$z$};
    \draw (0, 0, 4) node[left] {$y$};

    \end{scope}
  \end{tikzpicture}
  \caption{Расположение точек внутри гиперплоскости $S_{1}$ из случая \textbf{А.1.б} леммы \ref{main_lem_2_2}}
\end{figure}

\textbf{А.2}) $\Delta^{R}_{k} \cap \Z^{4} = \{\vec{z}_{2}, \vec{z}_{4}, \vec{p}^{R}\}$. Этот случай, в свою очередь, разбивается на два подсулчая:

\textbf{А.2.а}) Плоскость $\pi$ рациональная. Тогда, плоскости $Q$ и $R$ равноудалены от $\pi$. Покажем, что параллелограмм $\Delta^{\pi}_{k}$ не содержит точек решетки $\Z^{4}$. Пусть это не так. Можно считать, что существует точка $\vec{w} \in \Z^{4}$, лежащая либо в параллелограмме $\textup{conv}(\vec{z}^{\pi}_{1}, \frac{\vec{z}^{\pi}_{1} + \vec{z}^{\pi}_{2}}{2}, \vec{p}, \frac{\vec{z}^{\pi}_{4} + \vec{z}^{\pi}_{1}}{2})$, либо в параллелограмме $\textup{conv}(\vec{z}^{\pi}_{2}, \frac{\vec{z}^{\pi}_{2} + \vec{z}^{\pi}_{3}}{2}, \vec{p}, \frac{\vec{z}^{\pi}_{1} + \vec{z}^{\pi}_{2}}{2})$. Рассмотрим случай $\vec{w} \in \textup{conv}(\vec{z}^{\pi}_{1}, \frac{\vec{z}^{\pi}_{1} + \vec{z}^{\pi}_{2}}{2}, \vec{p}, \frac{\vec{z}^{\pi}_{4} + \vec{z}^{\pi}_{1}}{2})$. Если $\vec{w} \notin \{\frac{\vec{z}^{\pi}_{1} + \vec{z}^{\pi}_{2}}{2},  \frac{\vec{z}^{\pi}_{4} + \vec{z}^{\pi}_{1}}{2}, \frac{\vec{z}^{\pi}_{1} +  \vec{p}}{2}\}$, то $\vec{w} + (\vec{w} - \vec{z}_1) \in \Delta^{R}_{k} \cap \Z^{4}$ и $\vec{w} + (\vec{w} - \vec{z}_1) \notin \{\vec{z}_2, \vec{z}_4, \vec{p}^{R}\}$, чего не может быть. Если $\vec{w} =  \frac{\vec{z}^{\pi}_{1} +  \vec{p}}{2}$, то $ \frac{\vec{z}^{\pi}_{3} +  \vec{p}}{2} = F( \frac{\vec{z}^{\pi}_{1} +  \vec{p}}{2}) \in \Z^{4}$, а значит, $\vec{p}^{Q} = \vec{z}_1 + ( \frac{\vec{z}^{\pi}_{3} +  \vec{p}}{2} - \frac{\vec{z}^{\pi}_{1} +  \vec{p}}{2}) \in \Z^{4}$, чего не может быть. Если $\vec{w} =  \frac{\vec{z}^{\pi}_{1} + \vec{z}^{\pi}_{2}}{2}$, то $\frac{\vec{z}^{\pi}_{3} + \vec{z}^{\pi}_{4}}{2} = F(\frac{\vec{z}^{\pi}_{1} + \vec{z}^{\pi}_{2}}{2}) \in \Z^{4}$, а значит, $\vec{z}^{Q}_{4} = \vec{z}_1 + (\frac{\vec{z}^{\pi}_{3} + \vec{z}^{\pi}_{4}}{2} - \frac{\vec{z}^{\pi}_{1} + \vec{z}^{\pi}_{2}}{2}) \in \Z^{4}$, чего также не может быть. Аналогично показывается, что случай $\vec{w}  = \frac{\vec{z}^{\pi}_{1} + \vec{z}^{\pi}_{4}}{2}$ невозможен, а значит, случай $\vec{w} \in \textup{conv}(\vec{z}^{\pi}_{1}, \frac{\vec{z}^{\pi}_{1} + \vec{z}^{\pi}_{2}}{2}, \vec{p}, \frac{\vec{z}^{\pi}_{4} + \vec{z}^{\pi}_{1}}{2})$ невозможен. 
Теперь рассмотрим случай $\vec{w} \in \textup{conv}(\vec{z}^{\pi}_{2}, \frac{\vec{z}^{\pi}_{2} + \vec{z}^{\pi}_{3}}{2}, \vec{p}, \frac{\vec{z}^{\pi}_{1} + \vec{z}^{\pi}_{2}}{2})$. Если $\vec{w} \notin \{\frac{\vec{z}^{\pi}_{1} + \vec{z}^{\pi}_{2}}{2}, \frac{\vec{z}^{\pi}_{2} + \vec{z}^{\pi}_{3}}{2}\}$, то $\vec{w} + (\vec{w} - \vec{z}_2) \in \Delta^{Q}_{k} \cap \Z^{4}$ и $\vec{w} + (\vec{w} - \vec{z}_2) \notin \{\vec{z}_1, \vec{z}_3\}$, чего не может быть. По доказанному выше, случаи $\vec{w} =  \frac{\vec{z}^{\pi}_{1} + \vec{z}^{\pi}_{2}}{2}$ и $\vec{w} =  \frac{\vec{z}^{\pi}_{2} + \vec{z}^{\pi}_{3}}{2}$ также невозможны, а значит, $\Delta^{\pi}_{k}$ не содержит точек решетки $\Z^{4}$. Поскольку $\textup{conv}\big(\vec{z}_{1}, \vec{z}_{3},  \vec{z}_{3} + (\vec{z}_{4} - \vec{p}^{R})\big)  \cap \Z^{4} = \{\vec{z}_{1}, \vec{z}_{3},  \vec{z}_{3} + (\vec{z}_{4} - \vec{p}^{R})\}$, то в параллелограмме $\textup{conv}\big(\vec{z}^{\pi}_{3}, \vec{z}^{\pi}_{4},  \vec{z}^{\pi}_{1} + (\vec{z}^{\pi}_{1} -  \vec{z}^{\pi}_{4}), \vec{z}^{\pi}_{2} + (\vec{z}^{\pi}_{2} -  \vec{z}^{\pi}_{3})\big)$ должна существовать точка $\vec{y} \in \Z^{4}$, а значит, по доказанному выше, точка $\vec{y}$ лежит в параллелограмме $\textup{conv}\big(\vec{z}^{\pi}_{2}, \vec{z}^{\pi}_{1},  \vec{z}^{\pi}_{1} + (\vec{z}^{\pi}_{1} -  \vec{z}^{\pi}_{4}), \vec{z}^{\pi}_{2} + (\vec{z}^{\pi}_{2} -  \vec{z}^{\pi}_{3})\big)$. Но тогда $\vec{z}_{3} + (\vec{z}_3 - \vec{y}) \in \Z^{4}$ и $\vec{z}_{3} + (\vec{z}_3 - \vec{y}) \in \Delta^{\pi}_{k}$, чего, как мы показали, не может быть.

\textbf{А.2.б}) (будет соответствовать утверждению (8)) Плоскость $\pi$ не является рациональной плоскостью. Тогда 
\[(\Delta^{\pi}_{k} \cup \Delta^{Q}_{k} \cup \Delta^{R}_{k}) \cap \Z^{4} = \{\vec{z}_{1}, \, \vec{z}_{3}, \, \vec{z}_{2}, \, \vec{z}_{4}, \vec{p}^{R}\},\]
набор векторов $\vec{z}_{1}$, $\vec{z}_{2}$, $\vec{z}_{3}$, $\vec{p}^{R} = \frac{1}{2}(\vec{z}_{2} + \vec{z}_{4})$ образует базис решетки $\Z^{4}$, а значит, выполняется утверждение (8).

 \begin{figure}[h]
  \centering
  \begin{tikzpicture}[x=10mm, y=7mm, z=-5mm, scale=0.8]
    \begin{scope}[rotate around x=0]

    \draw[->] [very thin] (-5,0,0) -- (5,0,0);
    \draw[->] [very thin] (0,-3,0) -- (0,7,0);
    \draw[->] [very thin] (0,0,-4) -- (0,0,4);

    \node[fill=black,circle,inner sep=1pt,opacity=0.5] at (0,0,0) {};
    \node[fill=black,circle,inner sep=1pt,opacity=0.5] at (0,0,2) {};
    \node[fill=black,circle,inner sep=1pt,opacity=0.5] at (0,0,-2) {};
    \node[fill=black,circle,inner sep=1pt,opacity=0.5] at (2,0,0) {};
    \node[fill=black,circle,inner sep=1pt,opacity=0.5] at (2,0,2) {};
    \node[fill=black,circle,inner sep=1pt,opacity=0.5] at (2,0,-2) {};
    \node[fill=black,circle,inner sep=1pt,opacity=0.5] at (-2,0,0) {};
    \node[fill=black,circle,inner sep=1pt,opacity=0.5] at (-2,0,2) {};
    \node[fill=black,circle,inner sep=1pt,opacity=0.5] at (-2,0,-2) {};
    \node[fill=black,circle,inner sep=1pt,opacity=0.5] at (0,5,0) {};
    \node[fill=black,circle,inner sep=1pt,opacity=0.5] at (0,5,2) {};
    \node[fill=black,circle,inner sep=1pt,opacity=0.5] at (0,5,-2) {};
    \node[fill=black,circle,inner sep=1pt,opacity=0.5] at (2,5,0) {};
    \node[fill=black,circle,inner sep=1pt,opacity=0.5] at (2,5,2) {};
    \node[fill=black,circle,inner sep=1pt,opacity=0.5] at (2,5,-2) {};
    \node[fill=black,circle,inner sep=1pt,opacity=0.5] at (-2,5,0) {};
    \node[fill=black,circle,inner sep=1pt,opacity=0.5] at (-2,5,2) {};
    \node[fill=black,circle,inner sep=1pt,opacity=0.5] at (-2,5,-2) {};
    \node[fill=black,circle,inner sep=1pt,opacity=0.5] at (4,5,2) {};
    \node[fill=black,circle,inner sep=1pt,opacity=0.5] at (4,5,0) {};
    \node[fill=black,circle,inner sep=1pt,opacity=0.5] at (4,5,-2) {};
    
    \coordinate (z_1) at (0,0,0);
    \coordinate (z_3) at (0,0,2);
    \coordinate (z_2) at (0,5,0);
    \coordinate (z_4) at (4,5,0);
    
    \coordinate (p_pl_1) at ($ (z_1)!1/2!(z_3) $);
    \coordinate (p_mi_1) at ($ (z_2)!1/2!(z_4) $);
    \coordinate (z_1_z_2_half) at ($ (z_1)!1/2!(z_2) $);
    \coordinate (z_2_z_3_half) at ($ (z_2)!1/2!(z_3) $);
    \coordinate (z_3_z_4_half) at ($ (z_3)!1/2!(z_4) $);
    \coordinate (z_4_z_1_half) at ($ (z_4)!1/2!(z_1) $); 
    
    \coordinate (p) at ($ (p_pl_1)!1/2!(p_mi_1) $);
    \coordinate (z_1_pl_1) at ($(p_mi_1) + (z_1) - (p_pl_1) $);
    \coordinate (z_3_pl_1) at ($(p_mi_1) + (z_3) - (p_pl_1) $);
    \coordinate (z_2_mi_1) at ($(p_pl_1) + (z_2) - (p_mi_1) $);
    \coordinate (z_4_mi_1) at ($(p_pl_1) + (z_4) - (p_mi_1) $);
    
    \coordinate (z_1_p) at ($ (z_1)!1/2!(z_1_pl_1) $);
    \coordinate (z_3_p) at ($(z_3)!1/2!(z_3_pl_1)$);
    \coordinate (z_2_p) at ($(z_2)!1/2!(z_2_mi_1)$);
    \coordinate (z_4_p) at ($(z_4)!1/2!(z_4_mi_1)$);

    \fill[blue!05,opacity=0.3] (3,0,3) -- (3,0,-3) -- (-3,0,-3) -- (-3,0,3) -- cycle;
    \fill[blue!05,opacity=0.3] (5,5,3) -- (5,5,-3) -- (-3,5,-3) -- (-3,5,3) -- cycle;

    \fill[blue!25,opacity=0.6] (z_1) -- (z_2_mi_1) -- (z_3) -- (z_4_mi_1) -- cycle;
    \fill[blue!25,opacity=0.6] (z_1_pl_1) -- (z_2) -- (z_3_pl_1) -- (z_4) -- cycle;

    \node[fill=blue,circle,inner sep=1pt] at (z_1) {};
    \draw (z_1) node[below right] {$\vec z_1$};
    
    \node[fill=blue,circle,inner sep=1pt] at (z_3) {};
    \draw (z_3) node[below right] {$\vec z_3$};
    
    \node[fill=blue,circle,inner sep=1pt] at (z_2) {};
    \draw (z_2) node[left] {$\vec z_2$};
    
     \node[fill=blue,circle,inner sep=1pt] at (z_4) {};
    \draw (z_4) node[above right] {$\vec z_4$};
  
    \node[fill=white,circle,inner sep=1pt,draw=blue] at (z_1_pl_1) {};
    \draw (z_1_pl_1) node[above] {$\vec z_1^{R}$};
    
    \node[fill=white,circle,inner sep=1pt,draw=blue] at (z_3_pl_1) {};
    \draw (z_3_pl_1) node[left] {$\vec z_3^{R}$};
    
    \node[fill=white,circle,inner sep=1pt,draw=blue] at (z_2_mi_1) {};
    \draw (z_2_mi_1) node[left] {$\vec z_2^{Q}$};
    
    \node[fill=white,circle,inner sep=1pt, draw=blue] at (z_4_mi_1) {};
    \draw (z_4_mi_1) node[right] {$\vec z_4^{Q}$};

    \node[fill=white,circle,inner sep=1pt, draw=blue] at (p) {};
    \draw (p) node[left] {$\vec p$};
    
    \node[fill=white,circle,inner sep=1pt,draw=blue] at (p_pl_1) {};
    \draw (p_pl_1) node[left] {$\vec{p}^{Q}$};
    
    \node[fill=blue,circle,inner sep=1pt,draw=blue] at (p_mi_1) {};
    \draw (p_mi_1) node[right] {$\vec{p}^{R}$};
    
    \draw[->] [red,very thin] (z_1) -- (z_2);
    \draw[->] [red,very thin] (z_1) -- (z_3);
    \draw[->] [red,very thin] (z_1) -- (p_mi_1);

    \draw (5, 0, 0) node[right] {$x$};
    \draw (0, 7, 0) node[left] {$z$};
    \draw (0, 0, 4) node[left] {$y$};

    \end{scope}
  \end{tikzpicture}
  \caption{Расположение точек внутри гиперплоскости $S_{1}$ из случая \textbf{А.2.б} леммы \ref{main_lem_2_2}}
\end{figure}

\textbf{Б}) $\Delta^{Q}_{k} \cap \Z^{4} = \{\vec{z}_{1}, \vec{z}_{3}, \vec{z}^{Q}_{2}, \vec{z}^{Q}_{4}\}$. Заметим, что случай $\Delta^{R}_{k} \cap \Z^{4} = \{\vec{z}_{2}, \vec{z}_{4}\}$ с точностью до перестановки индексов невозможен в силу пункта \textbf{А}. Если $\vec{p}^{R} \in \Z^{4}$, то  $\vec{p}^{Q} =  \vec{z}^{Q}_{2} + (\vec{p}^{R} - \vec{z}_{2}) \in \Z^{4}$, что противоречит рассматриваемому случаю. Если $\vec{p}^{R}_{1} \in \Z^{4}$ и $\vec{p}^{R}_{3} \in \Z^{4}$, то $\vec{p}^{Q} = \vec{z}_{1} + (\vec{p}^{R}_{3} - \vec{p}^{R}_{1})  \in \Z^{4}$, что также противоречит рассматриваемому случаю. Таким образом, существует ровно один подслучай:

\textbf{Б.1}) $\Delta^{R}_{k} \cap \Z^{4} = \{\vec{z}_{2}, \vec{z}_{4}, \vec{z}^{R}_{1}, \vec{z}^{R}_{3}\}$. Этот случай, в свою очередь, разбивается на два подслучая:

\textbf{Б.1.а}) Плоскость $\pi$ рациональная. Тогда, плоскости $Q$ и $R$ равноудалены от $\pi$. Поскольку $\textup{conv}\big(\vec{z}_{1}, \vec{z}_{3},  \vec{z}_{3} + (\vec{z}_{4} - \vec{z}^{R}_{3})\big)  \cap \Z^{4} = \{\vec{z}_{1}, \vec{z}_{3},  \vec{z}_{3} + (\vec{z}_{4} - \vec{z}^{R}_{3})\}$, то в параллелограмме $\Delta^{\pi}_{k}$ должна существовать хотя бы одна точка решетки $\Z^{4}$. Пусть $\vec{w} \in \Delta^{\pi}_{k} \cap \Z^{4}$. Покажем, что $\vec{w} \in \{\vec{z}^{\pi}_{1}, \vec{z}^{\pi}_{2}, \vec{z}^{\pi}_{3}, \vec{z}^{\pi}_{4}, \vec{p}, \frac{\vec{z}^{\pi}_{1} + \vec{z}^{\pi}_{2}}{2}, \frac{\vec{z}^{\pi}_{2} + \vec{z}^{\pi}_{3}}{2}, \frac{\vec{z}^{\pi}_{3} + \vec{z}^{\pi}_{4}}{2}, \frac{\vec{z}^{\pi}_{4} + \vec{z}^{\pi}_{1}}{2} \}$. Предположим, что это не так. Можно считать, что либо 
\[\vec{w} \in \textup{conv}(\vec{z}^{\pi}_{1}, \frac{\vec{z}^{\pi}_{1} + \vec{z}^{\pi}_{2}}{2}, \vec{p}, \frac{\vec{z}^{\pi}_{4} + \vec{z}^{\pi}_{1}}{2}),\]
либо 
\[\vec{w} \in \textup{conv}(\vec{z}^{\pi}_{2}, \frac{\vec{z}^{\pi}_{2} + \vec{z}^{\pi}_{3}}{2}, \vec{p}, \frac{\vec{z}^{\pi}_{1} + \vec{z}^{\pi}_{2}}{2}).\] Если $\vec{w} \in \textup{conv}(\vec{z}^{\pi}_{1}, \frac{\vec{z}^{\pi}_{1} + \vec{z}^{\pi}_{2}}{2}, \vec{p}, \frac{\vec{z}^{\pi}_{4} + \vec{z}^{\pi}_{1}}{2})$, то $\vec{w} + (\vec{w} - \vec{z}_1) \in \Delta^{R}_{k} \cap \Z^{4}$ и $\vec{w} + (\vec{w} - \vec{z}_1) \notin \{\vec{z}_{2}, \vec{z}_{4}, \vec{z}^{R}_{1}, \vec{z}^{R}_{3}\}$, чего не может быть. Если $\vec{w} \in \textup{conv}(\vec{z}^{\pi}_{2}, \frac{\vec{z}^{\pi}_{2} + \vec{z}^{\pi}_{3}}{2}, \vec{p}, \frac{\vec{z}^{\pi}_{1} + \vec{z}^{\pi}_{2}}{2})$, то $\vec{w} + (\vec{w} - \vec{z}_2) \in \Delta^{Q}_{k} \cap \Z^{4}$ и $\vec{w} + (\vec{w} - \vec{z}_2) \notin \{\vec{z}_{1}, \vec{z}_{3}, \vec{z}^{Q}_{2}, \vec{z}^{Q}_{4}\}$, чего не может быть. Далее рассмотрим подслучаи:

\textbf{Б.1.а.1}) (будет соответствовать утверждению (7)) Пусть $\vec{w} \in \{\vec{z}^{\pi}_{1}, \vec{z}^{\pi}_{2}, \vec{z}^{\pi}_{3}, \vec{z}^{\pi}_{4} \}$. В этом случае каждая из точек множества $\{\vec{z}^{\pi}_{1}, \vec{z}^{\pi}_{2}, \vec{z}^{\pi}_{3}, \vec{z}^{\pi}_{4}\}$ принадлежит решетке $\Z^{4}$. При этом, так как $\Delta^{Q}_{k} \cap \Z^{4} = \{\vec{z}_{1}, \vec{z}_{3}, \vec{z}^{Q}_{2}, \vec{z}^{Q}_{4}\}$, то никакая из точек множества $\{\vec{p}, \frac{\vec{z}^{\pi}_{1} + \vec{z}^{\pi}_{2}}{2}, \frac{\vec{z}^{\pi}_{2} + \vec{z}^{\pi}_{3}}{2}, \frac{\vec{z}^{\pi}_{3} + \vec{z}^{\pi}_{4}}{2}, \frac{\vec{z}^{\pi}_{4} + \vec{z}^{\pi}_{1}}{2} \}$ не принадлежит решетке $\Z^{4}$. Тогда 
\[(\Delta^{\pi}_{k} \cup \Delta^{Q}_{k} \cup \Delta^{R}_{k}) \cap \Z^{4} = \{\vec{z}_{1}, \, \vec{z}^{Q}_{2}, \, \vec{z}_{3}, \, \vec{z}^{Q}_{4}, \, \vec{z}^{R}_{1}, \, \vec{z}_{2}, \, \vec{z}^{R}_{3}, \, \vec{z}_{4}, \, \vec{z}^{\pi}_{1}, \, \vec{z}^{\pi}_{2}, \, \vec{z}^{\pi}_{3}, \, \vec{z}^{\pi}_{4}\}\]
и, так как плоскости $Q$ и $R$ равноудалены от $\pi$, набор векторов $\vec{z}_{1}$, $\vec{z}_{2}$, $\vec{z}_{3}$, $\vec{z}^{\pi}_{1} = \frac{1}{2}(\vec{z}_{1}+\vec{z}_{2}) + \frac{1}{4}(\vec{z}_{1}+\vec{z}_{4} - \vec{z}_{3} - \vec{z}_{2})$ образует базис решетки $\Z^{4}$, а значит, выполняется утверждение (7).

  \begin{figure}[h]
  \centering
  \begin{tikzpicture}[x=10mm, y=7mm, z=-5mm, scale=0.8]
    \begin{scope}[rotate around x=0]

    \draw[->] [very thin] (-5,0,0) -- (5,0,0);
    \draw[->] [very thin] (0,-5,0) -- (0,13,0);
    \draw[->] [very thin] (0,0,-5) -- (0,0,7);

    \node[fill=black,circle,inner sep=1pt,opacity=0.5] at (0,0,0) {};
    \node[fill=black,circle,inner sep=1pt,opacity=0.5] at (0,0,2) {};
    \node[fill=black,circle,inner sep=1pt,opacity=0.5] at (0,0,-2) {};
    \node[fill=black,circle,inner sep=1pt,opacity=0.5] at (2,0,0) {};
    \node[fill=black,circle,inner sep=1pt,opacity=0.5] at (2,0,2) {};
    \node[fill=black,circle,inner sep=1pt,opacity=0.5] at (2,0,-2) {};
    \node[fill=black,circle,inner sep=1pt,opacity=0.5] at (-2,0,0) {};
    \node[fill=black,circle,inner sep=1pt,opacity=0.5] at (-2,0,2) {};
    \node[fill=black,circle,inner sep=1pt,opacity=0.5] at (-2,0,-2) {};
    \node[fill=black,circle,inner sep=1pt,opacity=0.5] at (2,0,4) {};
    \node[fill=black,circle,inner sep=1pt,opacity=0.5] at (0,5,0) {};
    \node[fill=black,circle,inner sep=1pt,opacity=0.5] at (0,5,2) {};
    \node[fill=black,circle,inner sep=1pt,opacity=0.5] at (0,5,-2) {};
    \node[fill=black,circle,inner sep=1pt,opacity=0.5] at (2,5,0) {};
    \node[fill=black,circle,inner sep=1pt,opacity=0.5] at (2,5,2) {};
    \node[fill=black,circle,inner sep=1pt,opacity=0.5] at (2,5,-2) {};
    \node[fill=black,circle,inner sep=1pt,opacity=0.5] at (-2,5,0) {};
    \node[fill=black,circle,inner sep=1pt,opacity=0.5] at (-2,5,2) {};
    \node[fill=black,circle,inner sep=1pt,opacity=0.5] at (-2,5,-2) {}; 
    \node[fill=black,circle,inner sep=1pt,opacity=0.5] at (0,10,0) {};
    \node[fill=black,circle,inner sep=1pt,opacity=0.5] at (0,10,2) {};
    \node[fill=black,circle,inner sep=1pt,opacity=0.5] at (0,10,-2) {};
    \node[fill=black,circle,inner sep=1pt,opacity=0.5] at (2,10,0) {};
    \node[fill=black,circle,inner sep=1pt,opacity=0.5] at (2,10,2) {};
    \node[fill=black,circle,inner sep=1pt,opacity=0.5] at (2,10,-2) {};
    \node[fill=black,circle,inner sep=1pt,opacity=0.5] at (-2,10,0) {};
    \node[fill=black,circle,inner sep=1pt,opacity=0.5] at (-2,10,2) {};
    \node[fill=black,circle,inner sep=1pt,opacity=0.5] at (-2,10,-2) {};    
    
    \coordinate (z_1) at (2,10,-2);
    \coordinate (z_3) at (4,10,0);
    \coordinate (z_2) at (-2,0,4);
    \coordinate (z_4) at (0,0,2);
    
    \coordinate (p_pl_1) at ($ (z_1)!1/2!(z_3) $);
    \coordinate (p_mi_1) at ($ (z_2)!1/2!(z_4) $);
    \coordinate (z_1_z_2_half) at ($ (z_1)!1/2!(z_2) $);
    \coordinate (z_2_z_3_half) at ($ (z_2)!1/2!(z_3) $);
    \coordinate (z_3_z_4_half) at ($ (z_3)!1/2!(z_4) $);
    \coordinate (z_4_z_1_half) at ($ (z_4)!1/2!(z_1) $); 
    
    \coordinate (p) at ($ (p_pl_1)!1/2!(p_mi_1) $);
    \coordinate (z_1_pl_1) at ($(p_mi_1) + (z_1) - (p_pl_1) $);
    \coordinate (z_3_pl_1) at ($(p_mi_1) + (z_3) - (p_pl_1) $);
    \coordinate (z_2_mi_1) at ($(p_pl_1) + (z_2) - (p_mi_1) $);
    \coordinate (z_4_mi_1) at ($(p_pl_1) + (z_4) - (p_mi_1) $);
    
    \coordinate (z_1_p) at ($ (z_1)!1/2!(z_1_pl_1) $);
    \coordinate (z_3_p) at ($(z_3)!1/2!(z_3_pl_1)$);
    \coordinate (z_2_p) at ($(z_2)!1/2!(z_2_mi_1)$);
    \coordinate (z_4_p) at ($(z_4)!1/2!(z_4_mi_1)$);
    
    \fill[blue!05,opacity=0.3] (3,0,5) -- (3,0,-3) -- (-3,0,-3) -- (-3,0,5) -- cycle;
    \fill[blue!05,opacity=0.3] (3,5,3) -- (3,5,-3) -- (-3,5,-3) -- (-3,5,3) -- cycle;
    \fill[blue!05,opacity=0.3] (3,10,3) -- (3,10,-3) -- (-3,10,-3) -- (-3,10,3) -- cycle;
    
    \fill[blue!25,opacity=0.6] (z_1) -- (z_2_mi_1) -- (z_3) -- (z_4_mi_1) -- cycle; 
    \fill[blue!25,opacity=0.6] (z_1_p) -- (z_2_p) -- (z_3_p) -- (z_4_p) -- cycle;     
    \fill[blue!25,opacity=0.6] (z_1_pl_1) -- (z_2) -- (z_3_pl_1) -- (z_4) -- cycle;
         
    \node[fill=blue,circle,inner sep=1pt] at (z_1) {};
    \draw (z_1) node[above] {$\vec z_1$};
    
    \node[fill=blue,circle,inner sep=1pt] at (z_3) {};
    \draw (z_3) node[right] {$\vec z_3$};
    
    \node[fill=blue,circle,inner sep=1pt] at (z_2) {};
    \draw (z_2) node[left] {$\vec z_2$};
    
     \node[fill=blue,circle,inner sep=1pt] at (z_4) {};
    \draw (z_4) node[right] {$\vec z_4$};
  
    \node[fill=blue,circle,inner sep=1pt] at (z_1_pl_1) {};
    \draw (z_1_pl_1) node[above] {$\vec z_1^{R}$};
    
    \node[fill=blue,circle,inner sep=1pt] at (z_3_pl_1) {};
    \draw (z_3_pl_1) node[right] {$\vec z_3^{R}$};
    
    \node[fill=blue,circle,inner sep=1pt] at (z_2_mi_1) {};
    \draw (z_2_mi_1) node[left] {$\vec z_2^{Q}$};
    
    \node[fill=blue,circle,inner sep=1pt] at (z_4_mi_1) {};
    \draw (z_4_mi_1) node[right] {$\vec z_4^{Q}$};
    
     \node[fill=blue,circle,inner sep=1pt] at (z_1_p) {};
    \draw (z_1_p) node[left] {$\vec z_1^{\pi}$};
    
    \node[fill=blue,circle,inner sep=1pt] at (z_3_p) {};
    \draw (z_3_p) node[below right] {$\vec z_3^{\pi}$};
    
     \node[fill=blue,circle,inner sep=1pt] at (z_2_p) {};
    \draw (z_2_p) node[below left] {$\vec z_2^{\pi}$};
    
     \node[fill=blue,circle,inner sep=1pt] at (z_4_p) {};
    \draw (z_4_p) node[above right] {$\vec z_4^{\pi}$};

    \node[fill=white,circle,inner sep=1pt,draw=blue] at (p) {};
    \draw (p) node[right] {$\vec p$};

    \node[fill=white,circle,inner sep=1pt,draw=blue] at (p_pl_1) {};
    \draw (p_pl_1) node[right] {$\vec{p}^{Q}$};
    
    \node[fill=white,circle,inner sep=1pt,draw=blue] at (p_mi_1) {};
    \draw (p_mi_1) node[above] {$\vec{p}^{R}$};
    
    \node[fill=white,circle,inner sep=1pt,draw=blue] at (z_1_z_2_half) {};
    \draw (z_1_z_2_half) node[left] {$\frac{\vec{z}_1^{\pi} + \vec{z}_2^{\pi}}{2}$};
    
    \node[fill=white,circle,inner sep=1pt,draw=blue] at (z_2_z_3_half) {};
    \draw (z_2_z_3_half) node[below] {$\frac{\vec{z}_2^{\pi} + \vec{z}_3^{\pi}}{2}$};
    
    \node[fill=white,circle,inner sep=1pt,draw=blue] at (z_3_z_4_half) {};
    \draw (z_3_z_4_half) node[right] {$\frac{\vec{z}_3^{\pi} + \vec{z}_4^{\pi}}{2}$};
    
    \node[fill=white,circle,inner sep=1pt,draw=blue] at (z_4_z_1_half) {};
    \draw (z_4_z_1_half) node[above] {$\frac{\vec{z}_4^{\pi} + \vec{z}_1^{\pi}}{2}$};
    
    \draw[->] [red,very thin] (z_1) -- (z_3);
    \draw[->] [red,very thin] (z_1) -- (z_2);
    \draw[->] [red,very thin] (z_1) -- (z_1_p);

    \draw (5, 0, 0) node[right] {$x$};
    \draw (0, 13, 0) node[left] {$z$};
    \draw (0, 0, 7) node[left] {$y$};

    \end{scope}
  \end{tikzpicture}
  \caption{Расположение точек внутри гиперплоскости $S_{1}$ из случая \textbf{Б.1.а.1} леммы \ref{main_lem_2_2}}
\end{figure}

\textbf{Б.1.а.2}) (будет соответствовать утверждению (1)) Пусть $\vec{w} = \vec{p}$. В силу доказательства случая \textbf{Б.1.а.1} никакая из точек множества $\{\vec{z}^{\pi}_{1}, \vec{z}^{\pi}_{2}, \vec{z}^{\pi}_{3}, \vec{z}^{\pi}_{4} \}$ не принадлежит решетке $\Z^{4}$. Кроме того, так как $\Delta^{Q}_{k} \cap \Z^{4} = \{\vec{z}_{1}, \vec{z}_{3}, \vec{z}^{Q}_{2}, \vec{z}^{Q}_{4}\}$, то никакая из точек множества $\{ \frac{\vec{z}^{\pi}_{1} + \vec{z}^{\pi}_{2}}{2}, \frac{\vec{z}^{\pi}_{2} + \vec{z}^{\pi}_{3}}{2}, \frac{\vec{z}^{\pi}_{3} + \vec{z}^{\pi}_{4}}{2}, \frac{\vec{z}^{\pi}_{4} + \vec{z}^{\pi}_{1}}{2} \}$ не принадлежит решетке $\Z^{4}$. Тогда
\[(\Delta^{\pi}_{k} \cup \Delta^{Q}_{k} \cup \Delta^{R}_{k}) \cap \Z^{4} = \{\vec{z}_{1}, \, \vec{z}^{Q}_{2}, \, \vec{z}_{3}, \, \vec{z}^{Q}_{4}, \, \vec{z}^{R}_{1}, \, \vec{z}_{2}, \, \vec{z}^{R}_{3}, \, \vec{z}_{4}, \, \vec{p}\}\]
и, так как плоскости $Q$ и $R$ равноудалены от $\pi$, набор векторов $\vec{z}_{1}$, $\vec{z}_{2}$, $\vec{z}_{3}$, $\vec{p} = \frac{1}{4}(\vec{z}_{1}+\vec{z}_{2}+\vec{z}_{3}+\vec{z}_{4})$ образует базис решетки $\Z^{4}$, а значит, выполняется утверждение (1).

 \begin{figure}[h]
  \centering
  \begin{tikzpicture}[x=10mm, y=7mm, z=-5mm]
    \begin{scope}[rotate around x=0]
    
    \draw[->] [very thin] (-5,0,0) -- (5,0,0);
    \draw[->] [very thin] (0,-7,0) -- (0,7,0);
    \draw[->] [very thin] (0,0,-7) -- (0,0,7);

    \node[fill=black,circle,inner sep=1pt,opacity=0.5] at (0,0,0) {};
    \node[fill=black,circle,inner sep=1pt,opacity=0.5] at (0,0,2) {};
    \node[fill=black,circle,inner sep=1pt,opacity=0.5] at (0,0,-2) {};
    \node[fill=black,circle,inner sep=1pt,opacity=0.5] at (2,0,0) {};
    \node[fill=black,circle,inner sep=1pt,opacity=0.5] at (2,0,-2) {};
    \node[fill=black,circle,inner sep=1pt,opacity=0.5] at (-2,0,0) {};
    \node[fill=black,circle,inner sep=1pt,opacity=0.5] at (-2,0,2) {};
    \node[fill=black,circle,inner sep=1pt,opacity=0.5] at (0,-5,0) {};
    \node[fill=black,circle,inner sep=1pt,opacity=0.5] at (0,-5,2) {};
    \node[fill=black,circle,inner sep=1pt,opacity=0.5] at (0,-5,-2) {};
    \node[fill=black,circle,inner sep=1pt,opacity=0.5] at (2,-5,0) {};
    \node[fill=black,circle,inner sep=1pt,opacity=0.5] at (2,-5,2) {};
    \node[fill=black,circle,inner sep=1pt,opacity=0.5] at (2,-5,-2) {};
    \node[fill=black,circle,inner sep=1pt,opacity=0.5] at (-2,-5,0) {};
    \node[fill=black,circle,inner sep=1pt,opacity=0.5] at (-2,-5,2) {};
    \node[fill=black,circle,inner sep=1pt,opacity=0.5] at (-2,-5,-2) {};
    \node[fill=black,circle,inner sep=1pt,opacity=0.5] at (0,5,0) {};
    \node[fill=black,circle,inner sep=1pt,opacity=0.5] at (0,5,2) {};
    \node[fill=black,circle,inner sep=1pt,opacity=0.5] at (0,5,-2) {};
    \node[fill=black,circle,inner sep=1pt,opacity=0.5] at (2,5,0) {};
    \node[fill=black,circle,inner sep=1pt,opacity=0.5] at (2,5,2) {};
    \node[fill=black,circle,inner sep=1pt,opacity=0.5] at (2,5,-2) {};
    \node[fill=black,circle,inner sep=1pt,opacity=0.5] at (-2,5,0) {};
    \node[fill=black,circle,inner sep=1pt,opacity=0.5] at (-2,5,2) {};
    \node[fill=black,circle,inner sep=1pt,opacity=0.5] at (-2,5,-2) {};
       
    \coordinate (z_1) at (-2,0,0);
    \coordinate (z_3) at (0,-5,2);
    \coordinate (z_2) at (0,5,0);
    \coordinate (z_4) at (2,0,-2);
    
    \coordinate (p_pl_1) at ($ (z_1)!1/2!(z_3) $);
    \coordinate (p_mi_1) at ($ (z_2)!1/2!(z_4) $);
    \coordinate (z_1_z_2_half) at ($ (z_1)!1/2!(z_2) $);
    \coordinate (z_2_z_3_half) at ($ (z_2)!1/2!(z_3) $);
    \coordinate (z_3_z_4_half) at ($ (z_3)!1/2!(z_4) $);
    \coordinate (z_4_z_1_half) at ($ (z_4)!1/2!(z_1) $); 
    
    \coordinate (p) at ($ (p_pl_1)!1/2!(p_mi_1) $);
    \coordinate (z_1_pl_1) at ($(p_mi_1) + (z_1) - (p_pl_1) $);
    \coordinate (z_3_pl_1) at ($(p_mi_1) + (z_3) - (p_pl_1) $);
    \coordinate (z_2_mi_1) at ($(p_pl_1) + (z_2) - (p_mi_1) $);
    \coordinate (z_4_mi_1) at ($(p_pl_1) + (z_4) - (p_mi_1) $);
    
    \coordinate (z_1_p) at ($ (z_1)!1/2!(z_1_pl_1) $);
    \coordinate (z_3_p) at ($(z_3)!1/2!(z_3_pl_1)$);
    \coordinate (z_2_p) at ($(z_2)!1/2!(z_2_mi_1)$);
    \coordinate (z_4_p) at ($(z_4)!1/2!(z_4_mi_1)$);
        
    \fill[blue!05,opacity=0.3] (3,0,3) -- (3,0,-3) -- (-3,0,-3) -- (-3,0,3) -- cycle;
    \fill[blue!05,opacity=0.3] (3,-5,3) -- (3,-5,-3) -- (-3,-5,-3) -- (-3,-5,3) -- cycle;   
    \fill[blue!05,opacity=0.3] (3,5,3) -- (3,5,-3) -- (-3,5,-3) -- (-3,5,3) -- cycle;  
    \fill[blue!25,opacity=0.4] (-2,5,-2) -- (-2,5,2) -- (2,-5,2) -- (2,-5,-2) -- cycle;
    
    \fill[blue!25,opacity=0.7] (z_1) -- (z_2_mi_1) -- (z_3) -- (z_4_mi_1) -- cycle;  
    \fill[blue!25,opacity=0.7] (z_1_pl_1) -- (z_2) -- (z_3_pl_1) -- (z_4) -- cycle;     
    \fill[blue!25,opacity=0.7] (z_1_p) -- (z_2_p) -- (z_3_p) -- (z_4_p) -- cycle;     
    
    \node[fill=blue,circle,inner sep=1pt] at (z_1) {};
    \draw (z_1) node[left] {$\vec z_1$};
    
    \node[fill=blue,circle,inner sep=1pt] at (z_3) {};
    \draw (z_3) node[below] {$\vec z_3$};
    
    \node[fill=blue,circle,inner sep=1pt] at (z_2) {};
    \draw (z_2) node[above] {$\vec z_2$};
    
    \node[fill=blue,circle,inner sep=1pt] at (z_4) {};
    \draw (z_4) node[right] {$\vec z_4$};
    
    \node[fill=blue,circle,inner sep=1pt] at (z_1_pl_1) {};
    \draw (z_1_pl_1) node[right] {$\vec z_1^{R}$};
    
    \node[fill=blue,circle,inner sep=1pt] at (z_3_pl_1) {};
    \draw (z_3_pl_1) node[left] {$\vec z_3^{R}$};
    
    \node[fill=blue,circle,inner sep=1pt] at (z_2_mi_1) {};
    \draw (z_2_mi_1) node[left] {$\vec z_2^{Q}$};
    
    \node[fill=blue,circle,inner sep=1pt] at (z_4_mi_1) {};
    \draw (z_4_mi_1) node[right] {$\vec z_4^{Q}$};
    
    \node[fill=white,circle,inner sep=1pt,draw=blue] at (z_1_p) {};
    \draw (z_1_p) node[below] {$\vec z_1^{\pi}$};
    
    \node[fill=white,circle,inner sep=1pt,draw=blue] at (z_3_p) {};
    \draw (z_3_p) node[left] {$\vec z_3^{\pi}$};
    
    \node[fill=white,circle,inner sep=1pt,draw=blue] at (z_2_p) {};
    \draw (z_2_p) node[below left] {$\vec z_2^{\pi}$};
    
    \node[fill=white,circle,inner sep=1pt,draw=blue] at (z_4_p) {};
    \draw (z_4_p) node[right] {$\vec z_4^{\pi}$};
     
    \node[fill=blue,circle,inner sep=1pt] at (p) {};
    \draw (p) node[right] {$\vec p$};
    
    \node[fill=white,circle,inner sep=1pt,draw=blue] at (p_pl_1) {};
    \draw (p_pl_1) node[right] {$\vec{p}^{Q}$};
    
    \node[fill=white,circle,inner sep=1pt,draw=blue] at (p_mi_1) {};
    \draw (p_mi_1) node[above right] {$\vec{p}^{R}$};
    
    \node[fill=white,circle,inner sep=1pt,draw=blue] at (z_1_z_2_half) {};
    \draw (z_1_z_2_half) node[left] {$\frac{\vec{z}_1^{\pi} + \vec{z}_2^{\pi}}{2}$};
    
    \node[fill=white,circle,inner sep=1pt,draw=blue] at (z_2_z_3_half) {};
    \draw (z_2_z_3_half) node[below right] {$\frac{\vec{z}_2^{\pi} + \vec{z}_3^{\pi}}{2}$};
    
    \node[fill=white,circle,inner sep=1pt,draw=blue] at (z_3_z_4_half) {};
    \draw (z_3_z_4_half) node[below right] {$\frac{\vec{z}_3^{\pi} + \vec{z}_4^{\pi}}{2}$};
    
    \node[fill=white,circle,inner sep=1pt,draw=blue] at (z_4_z_1_half) {};
    \draw (z_4_z_1_half) node[above] {$\frac{\vec{z}_4^{\pi} + \vec{z}_1^{\pi}}{2}$};
    
    \draw[->] [red,very thin] (p) -- (z_1);
    \draw[->] [red,very thin] (p) -- (z_2);
    \draw[->] [red,very thin] (p) -- (z_3);
    
    \draw (5, 0, 0) node[right] {$x$};
    \draw (0, 7, 0) node[left] {$z$};
    \draw (0, 0, 7) node[left] {$y$};

    \end{scope}
  \end{tikzpicture}
  \caption{Расположение точек внутри гиперплоскости $S_{1}$ из случая \textbf{Б.1.а.2} леммы \ref{main_lem_2_2}}
\end{figure}

\textbf{Б.1.а.3}) (будет соответствовать утверждению (11)) Пусть $\vec{w} \in \{\frac{\vec{z}^{\pi}_{1} + \vec{z}^{\pi}_{2}}{2}, \frac{\vec{z}^{\pi}_{3} + \vec{z}^{\pi}_{4}}{2} \}$. В этом случае каждая из точек множества $\{\frac{\vec{z}^{\pi}_{1} + \vec{z}^{\pi}_{2}}{2}, \frac{\vec{z}^{\pi}_{3} + \vec{z}^{\pi}_{4}}{2} \}$ принадлежит решетке $\Z^{4}$. В силу доказательства случаев \textbf{Б.1.а.1} и \textbf{Б.1.а.2} никакая из точек множества $\{\vec{z}^{\pi}_{1}, \vec{z}^{\pi}_{2}, \vec{z}^{\pi}_{3}, \vec{z}^{\pi}_{4}, \vec{p} \}$ не принадлежит решетке $\Z^{4}$. Кроме того, так как $\Delta^{Q}_{k} \cap \Z^{4} = \{\vec{z}_{1}, \vec{z}_{3}, \vec{z}^{Q}_{2}, \vec{z}^{Q}_{4}\}$, то никакая из точек множества $\{\frac{\vec{z}^{\pi}_{2} + \vec{z}^{\pi}_{3}}{2}, \frac{\vec{z}^{\pi}_{4} + \vec{z}^{\pi}_{1}}{2} \}$ не принадлежит решетке $\Z^{4}$. Тогда
\[(\Delta^{\pi}_{k} \cup \Delta^{Q}_{k} \cup \Delta^{R}_{k}) \cap \Z^{4} = \{ \vec{z}_{1}, \, \vec{z}^{Q}_{2}, \, \vec{z}_{3}, \, \vec{z}^{Q}_{4}, \, \vec{z}^{R}_{1}, \, \vec{z}_{2}, \, \vec{z}^{R}_{3}, \, \vec{z}_{4}, \, \frac{\vec{z}^{\pi}_{1} + \vec{z}^{\pi}_{2}}{2}, \, \frac{\vec{z}^{\pi}_{3} + \vec{z}^{\pi}_{4}}{2} \}\]
и, так как плоскости $Q$ и $R$ равноудалены от $\pi$, набор векторов $\vec{z}_{1}$, $\frac{1}{2}(\vec{z}^{\pi}_{1} + \vec{z}^{\pi}_{2}) = \frac{1}{2}(\vec{z}_{1} + \vec{z}_{2})$, $\vec{z}_{3}$, $\vec{z}^{Q}_{4} = \frac{1}{2}(\vec{z}_{1} + \vec{z}_{3} + \vec{z}_{4} - \vec{z}_{2})$ образует базис решетки $\Z^{4}$, а значит, выполняется утверждение (11).

\begin{figure}[h]
  \centering
  \begin{tikzpicture}[x=10mm, y=7mm, z=-5mm, scale=0.8]
    \begin{scope}[rotate around x=0]
  
    \draw[->] [very thin] (-5,0,0) -- (5,0,0);
    \draw[->] [very thin] (0,-7,0) -- (0,11,0);
    \draw[->] [very thin] (0,0,-7) -- (0,0,7);
    
    \node[fill=black,circle,inner sep=1pt,opacity=0.5] at (0,0,0) {};
    \node[fill=black,circle,inner sep=1pt,opacity=0.5] at (0,0,2) {};
    \node[fill=black,circle,inner sep=1pt,opacity=0.5] at (0,0,-2) {};
    \node[fill=black,circle,inner sep=1pt,opacity=0.5] at (2,0,0) {};
    \node[fill=black,circle,inner sep=1pt,opacity=0.5] at (2,0,-2) {};
    \node[fill=black,circle,inner sep=1pt,opacity=0.5] at (-2,0,0) {};
    \node[fill=black,circle,inner sep=1pt,opacity=0.5] at (-2,0,2) {}; 
    \node[fill=black,circle,inner sep=1pt,opacity=0.5] at (0,-5,0) {};
    \node[fill=black,circle,inner sep=1pt,opacity=0.5] at (0,-5,2) {};
    \node[fill=black,circle,inner sep=1pt,opacity=0.5] at (0,-5,-2) {};
    \node[fill=black,circle,inner sep=1pt,opacity=0.5] at (2,-5,0) {};
    \node[fill=black,circle,inner sep=1pt,opacity=0.5] at (2,-5,2) {};
    \node[fill=black,circle,inner sep=1pt,opacity=0.5] at (2,-5,-2) {};
    \node[fill=black,circle,inner sep=1pt,opacity=0.5] at (-2,-5,0) {};
    \node[fill=black,circle,inner sep=1pt,opacity=0.5] at (-2,-5,2) {};
    \node[fill=black,circle,inner sep=1pt,opacity=0.5] at (-2,-5,-2) {};
    \node[fill=black,circle,inner sep=1pt,opacity=0.5] at (0,5,0) {};
    \node[fill=black,circle,inner sep=1pt,opacity=0.5] at (0,5,2) {};
    \node[fill=black,circle,inner sep=1pt,opacity=0.5] at (0,5,-2) {};
    \node[fill=black,circle,inner sep=1pt,opacity=0.5] at (2,5,0) {};
    \node[fill=black,circle,inner sep=1pt,opacity=0.5] at (2,5,2) {};
    \node[fill=black,circle,inner sep=1pt,opacity=0.5] at (2,5,-2) {};
    \node[fill=black,circle,inner sep=1pt,opacity=0.5] at (-2,5,0) {};
    \node[fill=black,circle,inner sep=1pt,opacity=0.5] at (-2,5,2) {};
    \node[fill=black,circle,inner sep=1pt,opacity=0.5] at (-2,5,-2) {};
    \node[fill=black,circle,inner sep=1pt,opacity=0.5] at (0,10,0) {};
    \node[fill=black,circle,inner sep=1pt,opacity=0.5] at (0,10,2) {};
    \node[fill=black,circle,inner sep=1pt,opacity=0.5] at (0,10,-2) {};
    \node[fill=black,circle,inner sep=1pt,opacity=0.5] at (2,10,0) {};
    \node[fill=black,circle,inner sep=1pt,opacity=0.5] at (2,10,2) {};
    \node[fill=black,circle,inner sep=1pt,opacity=0.5] at (2,10,-2) {};
    \node[fill=black,circle,inner sep=1pt,opacity=0.5] at (-2,10,0) {};
    \node[fill=black,circle,inner sep=1pt,opacity=0.5] at (-2,10,2) {};
    \node[fill=black,circle,inner sep=1pt,opacity=0.5] at (-2,10,-2) {};    
    
    \coordinate (z_1) at (0,0,0);
    \coordinate (z_3) at (0,0,2);
    \coordinate (z_2) at (0,10,0);
    \coordinate (z_4) at (4,0,-2);
    
    \coordinate (p_pl_1) at ($ (z_1)!1/2!(z_3) $);
    \coordinate (p_mi_1) at ($ (z_2)!1/2!(z_4) $);
    \coordinate (z_1_z_2_half) at ($ (z_1)!1/2!(z_2) $);
    \coordinate (z_2_z_3_half) at ($ (z_2)!1/2!(z_3) $);
    \coordinate (z_3_z_4_half) at ($ (z_3)!1/2!(z_4) $);
    \coordinate (z_4_z_1_half) at ($ (z_4)!1/2!(z_1) $); 
    
    \coordinate (p) at ($ (p_pl_1)!1/2!(p_mi_1) $);
    \coordinate (z_1_pl_1) at ($(p_mi_1) + (z_1) - (p_pl_1) $);
    \coordinate (z_3_pl_1) at ($(p_mi_1) + (z_3) - (p_pl_1) $);
    \coordinate (z_2_mi_1) at ($(p_pl_1) + (z_2) - (p_mi_1) $);
    \coordinate (z_4_mi_1) at ($(p_pl_1) + (z_4) - (p_mi_1) $);
    
    \coordinate (z_1_p) at ($ (z_1)!1/2!(z_1_pl_1) $);
    \coordinate (z_3_p) at ($(z_3)!1/2!(z_3_pl_1)$);
    \coordinate (z_2_p) at ($(z_2)!1/2!(z_2_mi_1)$);
    \coordinate (z_4_p) at ($(z_4)!1/2!(z_4_mi_1)$);
    
    \fill[blue!05,opacity=0.3] (3,0,3) -- (3,0,-3) -- (-3,0,-3) -- (-3,0,3) -- cycle;
    \fill[blue!05,opacity=0.3] (3,-5,3) -- (3,-5,-3) -- (-3,-5,-3) -- (-3,-5,3) -- cycle;   
    \fill[blue!05,opacity=0.3] (3,5,3) -- (3,5,-3) -- (-3,5,-3) -- (-3,5,3) -- cycle;
    \fill[blue!05,opacity=0.3] (3,10,3) -- (3,10,-3) -- (-3,10,-3) -- (-3,10,3) -- cycle;

    \fill[blue!25,opacity=0.6] (z_1) -- (z_2_mi_1) -- (z_3) -- (z_4_mi_1) -- cycle; 
    \fill[blue!25,opacity=0.6] (z_1_p) -- (z_2_p) -- (z_3_p) -- (z_4_p) -- cycle; 
    \fill[blue!25,opacity=0.6] (z_1_pl_1) -- (z_2) -- (z_3_pl_1) -- (z_4) -- cycle;
      
    \node[fill=blue,circle,inner sep=1pt] at (z_1) {};
    \draw (z_1) node[below right] {$\vec z_1$};
    
    \node[fill=blue,circle,inner sep=1pt] at (z_3) {};
    \draw (z_3) node[left] {$\vec z_3$};
    
    \node[fill=blue,circle,inner sep=1pt] at (z_2) {};
    \draw (z_2) node[above] {$\vec z_2$};
    
    \node[fill=blue,circle,inner sep=1pt] at (z_4) {};
    \draw (z_4) node[above right] {$\vec z_4$};
    
    \node[fill=blue,circle,inner sep=1pt] at (z_1_pl_1) {};
    \draw (z_1_pl_1) node[right] {$\vec z_1^{R}$};
    
    \node[fill=blue,circle,inner sep=1pt] at (z_3_pl_1) {};
    \draw (z_3_pl_1) node[below right] {$\vec z_3^{R}$};
    
    \node[fill=blue,circle,inner sep=1pt] at (z_2_mi_1) {};
    \draw (z_2_mi_1) node[left] {$\vec z_2^{Q}$};
    
    \node[fill=blue,circle,inner sep=1pt] at (z_4_mi_1) {};
    \draw (z_4_mi_1) node[right] {$\vec z_4^{Q}$};
    
    \node[fill=white,circle,inner sep=1pt,draw=blue] at (z_1_p) {};
    \draw (z_1_p) node[below right] {$\vec z_1^{\pi}$};
    
    \node[fill=white,circle,inner sep=1pt,draw=blue] at (z_3_p) {};
    \draw (z_3_p) node[left] {$\vec z_3^{\pi}$};
    
    \node[fill=white,circle,inner sep=1pt,draw=blue] at (z_2_p) {};
    \draw (z_2_p) node[below left] {$\vec z_2^{\pi}$};
    
    \node[fill=white,circle,inner sep=1pt,draw=blue] at (z_4_p) {};
    \draw (z_4_p) node[above right] {$\vec z_4^{\pi}$};
    
    \node[fill=white,circle,inner sep=1pt,draw=blue] at (p) {};
    \draw (p) node[right] {$\vec p$};
    
    \node[fill=white,circle,inner sep=1pt,draw=blue] at (p_pl_1) {};
    \draw (p_pl_1) node[right] {$\vec{p}^{Q}$};
    
    \node[fill=white,circle,inner sep=1pt,draw=blue] at (p_mi_1) {};
    \draw (p_mi_1) node[above left] {$\vec{p}^{R}$};
    
    \node[fill=blue,circle,inner sep=1pt] at (z_1_z_2_half) {};
    \draw (z_1_z_2_half) node[right] {$\frac{\vec{z}_1^{\pi} + \vec{z}_2^{\pi}}{2}$};
    
    \node[fill=white,circle,inner sep=1pt,draw=blue] at (z_2_z_3_half) {};
    \draw (z_2_z_3_half) node[left] {$\frac{\vec{z}_2^{\pi} + \vec{z}_3^{\pi}}{2}$};
    
    \node[fill=blue,circle,inner sep=1pt] at (z_3_z_4_half) {};
    \draw (z_3_z_4_half) node[below] {$\frac{\vec{z}_3^{\pi} + \vec{z}_4^{\pi}}{2}$};
    
    \node[fill=white,circle,inner sep=1pt,draw=blue] at (z_4_z_1_half) {};
    \draw (z_4_z_1_half) node[above right] {$\frac{\vec{z}_4^{\pi} + \vec{z}_1^{\pi}}{2}$};
    
    \draw[->] [red,very thin] (z_1) -- (z_1_z_2_half);
    \draw[->] [red,very thin] (z_1) -- (z_3);
    \draw[->] [red,very thin] (z_1) -- (z_4_mi_1);
    
    \draw (5, 0, 0) node[right] {$x$};
    \draw (0, 11, 0) node[left] {$z$};
    \draw (0, 0, 7) node[left] {$y$};

    \end{scope}
  \end{tikzpicture}
  \caption{Расположение точек внутри гиперплоскости $S_{1}$ из случая \textbf{Б.1.а.3} леммы \ref{main_lem_2_2}}
\end{figure}

\textbf{Б.1.б}) (будет соответствовать утверждению (6)) Плоскость $\pi$ не является рациональной плоскостью. Тогда 
\[(\Delta^{\pi}_{k} \cup \Delta^{Q}_{k} \cup \Delta^{R}_{k}) \cap \Z^{4} = \{\vec{z}_{1}, \, \vec{z}^{Q}_{2}, \, \vec{z}_{3}, \, \vec{z}^{Q}_{4}, \, \vec{z}^{R}_{1}, \, \vec{z}_{2}, \, \vec{z}^{R}_{3}, \, \vec{z}_{4}\},\]
набор векторов $\vec{z}_{1}$, $\vec{z}_{2}$, $\vec{z}_{3}$, $\vec{z}^{Q}_{4} = \frac{1}{2}(\vec{z}_{1} + \vec{z}_{3} + \vec{z}_{4} - \vec{z}_{2})$ образует базис решетки $\Z^{4}$, а значит, выполняется утверждение (6). 

 \begin{figure}[h]
  \centering
  \begin{tikzpicture}[x=10mm, y=7mm, z=-5mm]
    \begin{scope}[rotate around x=0]
    
    \draw[->] [very thin] (-5,0,0) -- (5,0,0);
    \draw[->] [very thin] (0,-3,0) -- (0,7,0);
    \draw[->] [very thin] (0,0,-4) -- (0,0,4);
    
    \node[fill=black,circle,inner sep=1pt,opacity=0.5] at (0,0,0) {};
    \node[fill=black,circle,inner sep=1pt,opacity=0.5] at (0,0,2) {};
    \node[fill=black,circle,inner sep=1pt,opacity=0.5] at (0,0,-2) {};
    \node[fill=black,circle,inner sep=1pt,opacity=0.5] at (2,0,0) {};
    \node[fill=black,circle,inner sep=1pt,opacity=0.5] at (2,0,2) {};
    \node[fill=black,circle,inner sep=1pt,opacity=0.5] at (2,0,-2) {};
    \node[fill=black,circle,inner sep=1pt,opacity=0.5] at (-2,0,0) {};
    \node[fill=black,circle,inner sep=1pt,opacity=0.5] at (-2,0,2) {};
    \node[fill=black,circle,inner sep=1pt,opacity=0.5] at (-2,0,-2) {};    
    \node[fill=black,circle,inner sep=1pt,opacity=0.5] at (0,5,0) {};
    \node[fill=black,circle,inner sep=1pt,opacity=0.5] at (0,5,2) {};
    \node[fill=black,circle,inner sep=1pt,opacity=0.5] at (0,5,-2) {};
    \node[fill=black,circle,inner sep=1pt,opacity=0.5] at (2,5,0) {};
    \node[fill=black,circle,inner sep=1pt,opacity=0.5] at (2,5,2) {};
    \node[fill=black,circle,inner sep=1pt,opacity=0.5] at (2,5,-2) {};
    \node[fill=black,circle,inner sep=1pt,opacity=0.5] at (-2,5,0) {};
    \node[fill=black,circle,inner sep=1pt,opacity=0.5] at (-2,5,2) {};
    \node[fill=black,circle,inner sep=1pt,opacity=0.5] at (-2,5,-2) {};
    
    \coordinate (z_1) at (2,5,0);
    \coordinate (z_3) at (0,5,-2);
    \coordinate (z_2) at (0,0,0);
    \coordinate (z_4) at (-2,0,2);
    
    \coordinate (p_pl_1) at ($ (z_1)!1/2!(z_3) $);
    \coordinate (p_mi_1) at ($ (z_2)!1/2!(z_4) $);
    \coordinate (z_1_z_2_half) at ($ (z_1)!1/2!(z_2) $);
    \coordinate (z_2_z_3_half) at ($ (z_2)!1/2!(z_3) $);
    \coordinate (z_3_z_4_half) at ($ (z_3)!1/2!(z_4) $);
    \coordinate (z_4_z_1_half) at ($ (z_4)!1/2!(z_1) $); 
    
    \coordinate (p) at ($ (p_pl_1)!1/2!(p_mi_1) $);
    \coordinate (z_1_pl_1) at ($(p_mi_1) + (z_1) - (p_pl_1) $);
    \coordinate (z_3_pl_1) at ($(p_mi_1) + (z_3) - (p_pl_1) $);
    \coordinate (z_2_mi_1) at ($(p_pl_1) + (z_2) - (p_mi_1) $);
    \coordinate (z_4_mi_1) at ($(p_pl_1) + (z_4) - (p_mi_1) $);
    
    \coordinate (z_1_p) at ($ (z_1)!1/2!(z_1_pl_1) $);
    \coordinate (z_3_p) at ($(z_3)!1/2!(z_3_pl_1)$);
    \coordinate (z_2_p) at ($(z_2)!1/2!(z_2_mi_1)$);
    \coordinate (z_4_p) at ($(z_4)!1/2!(z_4_mi_1)$);
    
    \fill[blue!05,opacity=0.3] (3,0,3) -- (3,0,-3) -- (-3,0,-3) -- (-3,0,3) -- cycle;
    \fill[blue!05,opacity=0.3] (3,5,3) -- (3,5,-3) -- (-3,5,-3) -- (-3,5,3) -- cycle;
    
    \fill[blue!25,opacity=0.6] (z_1) -- (z_2_mi_1) -- (z_3) -- (z_4_mi_1) -- cycle;
    \fill[blue!25,opacity=0.6] (z_1_pl_1) -- (z_2) -- (z_3_pl_1) -- (z_4) -- cycle;
    
    \node[fill=blue,circle,inner sep=1pt] at (z_1) {};
    \draw (z_1) node[below right] {$\vec z_1$};
    
    \node[fill=blue,circle,inner sep=1pt] at (z_3) {};
    \draw (z_3) node[above] {$\vec z_3$};
    
    \node[fill=blue,circle,inner sep=1pt] at (z_2) {};
    \draw (z_2) node[below right] {$\vec z_2$};
    
    \node[fill=blue,circle,inner sep=1pt] at (z_4) {};
    \draw (z_4) node[left] {$\vec z_4$};
    
    \node[fill=blue,circle,inner sep=1pt] at (z_1_pl_1) {};
    \draw (z_1_pl_1) node[right] {$\vec z_1^{R}$};
    
    \node[fill=blue,circle,inner sep=1pt] at (z_3_pl_1) {};
    \draw (z_3_pl_1) node[above right] {$\vec z_3^{R}$};
    
    \node[fill=blue,circle,inner sep=1pt] at (z_2_mi_1) {};
    \draw (z_2_mi_1) node[above right] {$\vec z_2^{Q}$};
    
    \node[fill=blue,circle,inner sep=1pt] at (z_4_mi_1) {};
    \draw (z_4_mi_1) node[left] {$\vec z_4^{Q}$};
    
    \node[fill=white,circle,inner sep=1pt,draw=blue] at (p) {};
    \draw (p) node[right] {$\vec p$};
    
    \node[fill=white,circle,inner sep=1pt,draw=blue] at (p_pl_1) {};
    \draw (p_pl_1) node[above right] {$\vec{p}^{Q}$};
    
    \node[fill=white,circle,inner sep=1pt,draw=blue] at (p_mi_1) {};
    \draw (p_mi_1) node[right] {$\vec{p}^{R}$};
    
    \draw[->] [red,very thin] (z_1) -- (z_2);
    \draw[->] [red,very thin] (z_1) -- (z_3);
    \draw[->] [red,very thin] (z_1) -- (z_2_mi_1);
    
    \draw (5, 0, 0) node[right] {$x$};
    \draw (0, 7, 0) node[left] {$z$};
    \draw (0, 0, 4) node[left] {$y$};
    
    \end{scope}
  \end{tikzpicture}
  \caption{Расположение точек внутри гиперплоскости $S_{1}$ из случая \textbf{Б.1.б} леммы \ref{main_lem_2_2}}
\end{figure}

\textbf{В}) $\Delta^{Q}_{k} \cap \Z^{4} = \{\vec{z}_{1}, \vec{z}_{3}, \vec{p}^{Q}\}$. Заметим, что случай $\Delta^{R}_{k} \cap \Z^{4} = \{\vec{z}_{2}, \vec{z}_{4}\}$ с точностью до перестановки индексов полностью эквивалентен пункту \textbf{А.2}. Кроме того, случай $\Delta^{R}_{k} \cap \Z^{4} = \{\vec{z}_{2}, \vec{z}_{4}, \vec{z}^{R}_{1}, \vec{z}^{R}_{3}\}$ с точностью до перестановки индексов невозможен в силу пункта \textbf{Б}. Если $\vec{p}^{R} \in \Z^{4}$, то  $\vec{z}^{Q}_{2} = \vec{p}^{Q} + (\vec{z}_{2} - \vec{p}^{R}) \in \Z^{4}$, что противоречит рассматриваемому случаю. Таким образом, существует ровно один подслучай:

\textbf{В.1}) $\Delta^{R}_{k} \cap \Z^{4} = \{\vec{z}_{2}, \vec{z}_{4}, \vec{p}^{R}_{1},\vec{p}^{R}_{3}\}$. Этот случай, в свою очередь, разбивается на два подслучая:

\textbf{В.1.а}) Плоскость $\pi$ рациональная. Тогда, плоскости $Q$ и $R$ равноудалены от $\pi$. Покажем, что параллелограмм $\Delta^{\pi}_{k}$ не содержит точек решетки $\Z^{4}$. Пусть это не так. Можно считать, что существует точка $\vec{w} \in \Z^{4}$, лежащая либо в параллелограмме $\textup{conv}(\vec{z}^{\pi}_{1}, \frac{\vec{z}^{\pi}_{1} + \vec{z}^{\pi}_{2}}{2}, \vec{p}, \frac{\vec{z}^{\pi}_{4} + \vec{z}^{\pi}_{1}}{2})$, либо в параллелограмме $\textup{conv}(\vec{z}^{\pi}_{2}, \frac{\vec{z}^{\pi}_{2} + \vec{z}^{\pi}_{3}}{2}, \vec{p}, \frac{\vec{z}^{\pi}_{1} + \vec{z}^{\pi}_{2}}{2})$. Рассмотрим случай $\vec{w} \in \textup{conv}(\vec{z}^{\pi}_{1}, \frac{\vec{z}^{\pi}_{1} + \vec{z}^{\pi}_{2}}{2}, \vec{p}, \frac{\vec{z}^{\pi}_{4} + \vec{z}^{\pi}_{1}}{2})$. Если $\vec{w} \notin \{\frac{\vec{z}^{\pi}_{1} + \vec{z}^{\pi}_{2}}{2},  \frac{\vec{z}^{\pi}_{4} + \vec{z}^{\pi}_{1}}{2}, \frac{3\vec{z}^{\pi}_{1} +  \vec{p}}{4}, \frac{\vec{z}^{\pi}_{1} +  3\vec{p}}{4}\}$, то $\vec{w} + (\vec{w} - \vec{z}_1) \in \Delta^{R}_{k} \cap \Z^{4}$ и $\vec{w} + (\vec{w} - \vec{z}_1) \notin \{\vec{z}_{2}, \vec{z}_{4}, \vec{p}^{R}_{1},\vec{p}^{R}_{3}\}$, чего не может быть. Если $\vec{w} =  \frac{\vec{z}^{\pi}_{1} + \vec{z}^{\pi}_{2}}{2}$, то $\frac{\vec{z}^{\pi}_{3} + \vec{z}^{\pi}_{4}}{2} = F(\frac{\vec{z}^{\pi}_{1} + \vec{z}^{\pi}_{2}}{2}) \in \Z^{4}$, а значит, $\vec{z}^{Q}_{4} = \vec{z}_1 + (\frac{\vec{z}^{\pi}_{3} + \vec{z}^{\pi}_{4}}{2} - \frac{\vec{z}^{\pi}_{1} + \vec{z}^{\pi}_{2}}{2}) \in \Z^{4}$, чего также не может быть. Аналогично показывается, что случай $\vec{w}  = \frac{\vec{z}^{\pi}_{1} + \vec{z}^{\pi}_{4}}{2}$ невозможен. Если $\vec{w} =  \frac{3\vec{z}^{\pi}_{1} +  \vec{p}}{4}$, то $\frac{\vec{z}^{\pi}_{3} +  3\vec{p}}{4} =  \frac{3\vec{z}^{\pi}_{1} +  \vec{p}}{4} + (\vec{p}^{R}_{3} - \vec{p}^{R}_{3}) \in \Z^{4}$, а значит, $\frac{\vec{z}^{\pi}_{1} +  3\vec{p}}{4} = F(\vec{w}) \in \Z^{4}$. Тогда $\vec{p}^{R} = \vec{p}^{R}_{1} + (\frac{\vec{z}^{\pi}_{3} +  3\vec{p}}{4} - \frac{\vec{z}^{\pi}_{1} +  3\vec{p}}{4}) \in \Z^{4}$, чего не может быть. Если $\vec{w} =  \frac{\vec{z}^{\pi}_{1} +  3\vec{p}}{4}$, то $\frac{\vec{z}^{\pi}_{3} +  3\vec{p}}{4} = F(\vec{w}) \in \Z^{4}$, и вновь получаем $\vec{p}^{R} = \vec{p}^{R}_{1} + (\frac{\vec{z}^{\pi}_{3} +  3\vec{p}}{4} - \frac{\vec{z}^{\pi}_{1} +  3\vec{p}}{4}) \in \Z^{4}$, чего не может быть.
Теперь рассмотрим случай $\vec{w} \in \textup{conv}(\vec{z}^{\pi}_{2}, \frac{\vec{z}^{\pi}_{2} + \vec{z}^{\pi}_{3}}{2}, \vec{p}, \frac{\vec{z}^{\pi}_{1} + \vec{z}^{\pi}_{2}}{2})$. Если $\vec{w} \notin \{\frac{\vec{z}^{\pi}_{1} + \vec{z}^{\pi}_{2}}{2}, \frac{\vec{z}^{\pi}_{2} + \vec{z}^{\pi}_{3}}{2}, \frac{\vec{z}^{\pi}_{2} +  \vec{p}}{2}\}$, то $\vec{w} + (\vec{w} - \vec{z}_2) \in \Delta^{Q}_{k} \cap \Z^{4}$ и $\vec{w} + (\vec{w} - \vec{z}_2) \notin \{\vec{z}_1, \vec{z}_3, \vec{p}^{Q}\}$, чего не может быть. По доказанному выше, случаи $\vec{w} =  \frac{\vec{z}^{\pi}_{1} + \vec{z}^{\pi}_{2}}{2}$ и $\vec{w} =  \frac{\vec{z}^{\pi}_{2} + \vec{z}^{\pi}_{3}}{2}$ невозможны. Если $\vec{w} = \frac{\vec{z}^{\pi}_{2} +  \vec{p}}{2}$, то $\frac{\vec{z}^{\pi}_{4} +  \vec{p}}{2} = F(\vec{w}) \in \Z^{4}$, а значит, $\vec{p}^{R} = \vec{z}_{2} + (\frac{\vec{z}^{\pi}_{4} +  \vec{p}}{2} - \frac{\vec{z}^{\pi}_{2} +  \vec{p}}{2}) \in \Z^{4}$, чего не может быть. И так, параллелограмм $\Delta^{\pi}_{k}$ не содержит точек решетки $\Z^{4}$. С другой стороны, параллелограмм $\textup{conv}(\vec{z}^{\pi}_{2}, \frac{\vec{z}^{\pi}_{1} +  \vec{p}}{2}, \vec{z}^{\pi}_{4}, \frac{\vec{z}^{\pi}_{1} +  \vec{p}}{2}) \subset \Delta^{\pi}_{k}$ должен содержать хотя бы одну точку решетки $\Z^{4}$, поскольку $\textup{conv}(\vec{z}_{2}, \vec{p}^{R}_{1},  \vec{z}_{4}, \vec{p}^{R}_{3})  \cap \Z^{4} = \{\vec{z}_{2}, \vec{p}^{R}_{1},  \vec{z}_{4}, \vec{p}^{R}_{3})\}$. Таким образом, данный случай невозможен.

\textbf{В.1.б}) (будет соответствовать утверждению (9)) Плоскость $\pi$ не является рациональной плоскостью. Тогда 
\[(\Delta^{\pi}_{k} \cup \Delta^{Q}_{k} \cup \Delta^{R}_{k}) \cap \Z^{4} = \{\vec{z}_{1}, \, \vec{z}_{3}, \, \vec{p}^{Q}, \, \vec{z}_{2},  \, \vec{z}_{4}, \,  \vec{p}^{R}_{1}, \, \vec{p}^{R}_{3}\},\]
набор векторов $\vec{z}_{1}$, $\vec{z}_{2}$, $\vec{p}^{Q} = \frac{1}{2}(\vec{z}_{1} + \vec{z}_{3})$, $\vec{p}^{R}_{1} = \vec{p}^{R}  + (\frac{1}{2}(\vec{p}^{Q} + \vec{z}_{3}) - \vec{z}_{3})= \frac{1}{2}(\vec{z}_{2} + \vec{z}_{4}) + (\frac{1}{2}(\frac{1}{2}(\vec{z}_{1} + \vec{z}_{3}) +  \vec{z}_{3}) - \vec{z}_{3}) =  \frac{1}{4}\vec{z}_{1} +  \frac{1}{2}\vec{z}_{2} - \frac{1}{4}\vec{z}_{3} + \frac{1}{2}\vec{z}_{4}$ образует базис решетки $\Z^{4}$, а значит, выполняется утверждение (9). 

 \begin{figure}[h]
  \centering
  \begin{tikzpicture}[x=10mm, y=7mm, z=-5mm]
    \begin{scope}[rotate around x=0]
  
    \draw[->] [very thin] (-5,0,0) -- (5,0,0);
    \draw[->] [very thin] (0,-3,0) -- (0,7,0);
    \draw[->] [very thin] (0,0,-4) -- (0,0,5);
    
    \node[fill=black,circle,inner sep=1pt,opacity=0.5] at (0,0,0) {};
    \node[fill=black,circle,inner sep=1pt,opacity=0.5] at (0,0,2) {};
    \node[fill=black,circle,inner sep=1pt,opacity=0.5] at (0,0,-2) {};
    \node[fill=black,circle,inner sep=1pt,opacity=0.5] at (2,0,0) {};
    \node[fill=black,circle,inner sep=1pt,opacity=0.5] at (2,0,2) {};
    \node[fill=black,circle,inner sep=1pt,opacity=0.5] at (2,0,-2) {};
    \node[fill=black,circle,inner sep=1pt,opacity=0.5] at (-2,0,0) {};
    \node[fill=black,circle,inner sep=1pt,opacity=0.5] at (-2,0,2) {};
    \node[fill=black,circle,inner sep=1pt,opacity=0.5] at (-2,0,-2) {};
    \node[fill=black,circle,inner sep=1pt,opacity=0.5] at (0,5,0) {};
    \node[fill=black,circle,inner sep=1pt,opacity=0.5] at (0,5,2) {};
    \node[fill=black,circle,inner sep=1pt,opacity=0.5] at (0,5,-2) {};
    \node[fill=black,circle,inner sep=1pt,opacity=0.5] at (2,5,0) {};
    \node[fill=black,circle,inner sep=1pt,opacity=0.5] at (2,5,2) {};
    \node[fill=black,circle,inner sep=1pt,opacity=0.5] at (2,5,-2) {};
    \node[fill=black,circle,inner sep=1pt,opacity=0.5] at (-2,5,0) {};
    \node[fill=black,circle,inner sep=1pt,opacity=0.5] at (-2,5,2) {};
    \node[fill=black,circle,inner sep=1pt,opacity=0.5] at (-2,5,-2) {};
    \node[fill=black,circle,inner sep=1pt,opacity=0.5] at (4,5,2) {};
    \node[fill=black,circle,inner sep=1pt,opacity=0.5] at (4,5,0) {};
    \node[fill=black,circle,inner sep=1pt,opacity=0.5] at (4,5,-2) {};
    
    \coordinate (z_1) at (0,0,0);
    \coordinate (z_3) at (0,0,4);
    \coordinate (z_2) at (0,5,0);
    \coordinate (z_4) at (4,5,2);
    
    \coordinate (p_pl_1) at ($ (z_1)!1/2!(z_3) $);
    \coordinate (p_mi_1) at ($ (z_2)!1/2!(z_4) $);
    \coordinate (z_1_z_2_half) at ($ (z_1)!1/2!(z_2) $);
    \coordinate (z_2_z_3_half) at ($ (z_2)!1/2!(z_3) $);
    \coordinate (z_3_z_4_half) at ($ (z_3)!1/2!(z_4) $);
    \coordinate (z_4_z_1_half) at ($ (z_4)!1/2!(z_1) $); 
    
    \coordinate (p) at ($ (p_pl_1)!1/2!(p_mi_1) $);
    \coordinate (z_1_pl_1) at ($(p_mi_1) + (z_1) - (p_pl_1) $);
    \coordinate (z_3_pl_1) at ($(p_mi_1) + (z_3) - (p_pl_1) $);
    \coordinate (z_2_mi_1) at ($(p_pl_1) + (z_2) - (p_mi_1) $);
    \coordinate (z_4_mi_1) at ($(p_pl_1) + (z_4) - (p_mi_1) $);
    
    \coordinate (z_1_p) at ($ (z_1)!1/2!(z_1_pl_1) $);
    \coordinate (z_3_p) at ($(z_3)!1/2!(z_3_pl_1)$);
    \coordinate (z_2_p) at ($(z_2)!1/2!(z_2_mi_1)$);
    \coordinate (z_4_p) at ($(z_4)!1/2!(z_4_mi_1)$);
    
    \coordinate (p_1_p) at ($(z_1_pl_1)!1/2!(p_mi_1)$);
    \coordinate (p_3_p) at ($(z_3_pl_1)!1/2!(p_mi_1)$);
    
    \fill[blue!05,opacity=0.3] (3,0,5) -- (3,0,-3) -- (-3,0,-3) -- (-3,0,5) -- cycle;
    \fill[blue!05,opacity=0.3] (5,5,4) -- (5,5,-3) -- (-3,5,-3) -- (-3,5,4) -- cycle;
    
    \fill[blue!25,opacity=0.6] (z_1) -- (z_2_mi_1) -- (z_3) -- (z_4_mi_1) -- cycle;
    \fill[blue!25,opacity=0.6] (z_1_pl_1) -- (z_2) -- (z_3_pl_1) -- (z_4) -- cycle;
    
    \node[fill=blue,circle,inner sep=1pt] at (z_1) {};
    \draw (z_1) node[below right] {$\vec z_1$};
    
    \node[fill=blue,circle,inner sep=1pt] at (z_3) {};
    \draw (z_3) node[below right] {$\vec z_3$};
    
    \node[fill=blue,circle,inner sep=1pt] at (z_2) {};
    \draw (z_2) node[left] {$\vec z_2$};
    
    \node[fill=blue,circle,inner sep=1pt] at (z_4) {};
    \draw (z_4) node[above right] {$\vec z_4$};
    
    \node[fill=white,circle,inner sep=1pt,draw=blue] at (z_1_pl_1) {};
    \draw (z_1_pl_1) node[above] {$\vec z_1^{R}$};
    
    \node[fill=white,circle,inner sep=1pt,draw=blue] at (z_3_pl_1) {};
    \draw (z_3_pl_1) node[left] {$\vec z_3^{R}$};
    
    \node[fill=white,circle,inner sep=1pt,draw=blue] at (z_2_mi_1) {};
    \draw (z_2_mi_1) node[left] {$\vec z_2^{Q}$};
    
    \node[fill=white,circle,inner sep=1pt, draw=blue] at (z_4_mi_1) {};
    \draw (z_4_mi_1) node[right] {$\vec z_4^{Q}$};
    
    \node[fill=blue,circle,inner sep=1pt] at (p_1_p) {};
    \draw (p_1_p) node[above left] {$\vec{p}^{R}_{1}$};
    
    \node[fill=blue,circle,inner sep=1pt] at (p_3_p) {};
    \draw (p_3_p) node[above left] {$\vec{p}^{R}_{3}$};
    
    \node[fill=white,circle,inner sep=1pt, draw=blue] at (p) {};
    \draw (p) node[left] {$\vec p$};
    
    \node[fill=blue,circle,inner sep=1pt,draw=blue] at (p_pl_1) {};
    \draw (p_pl_1) node[left] {$\vec{p}^{Q}$};
    
    \node[fill=white,circle,inner sep=1pt,draw=blue] at (p_mi_1) {};
    \draw (p_mi_1) node[right] {$\vec{p}^{R}$};
    
    \draw[->] [red,very thin] (z_1) -- (z_2);
    \draw[->] [red,very thin] (z_1) -- (p_pl_1);
    \draw[->] [red,very thin] (z_1) -- (p_1_p);
    
    \draw (5, 0, 0) node[right] {$x$};
    \draw (0, 7, 0) node[left] {$z$};
    \draw (0, 0, 5) node[left] {$y$};
    
    \end{scope}
  \end{tikzpicture}
  \caption{Расположение точек внутри гиперплоскости $S_{1}$ из случая \textbf{В.1.б} леммы \ref{main_lem_2_2}}
\end{figure}

\textbf{Г}) $\Delta^{Q}_{k} \cap \Z^{4} = \{\vec{z}_{1}, \vec{z}_{3}, \vec{z}^{Q}_{2}, \vec{z}^{Q}_{4},  \vec{p}^{Q}\}$. Заметим, что случай $\Delta^{R}_{k} \cap \Z^{4} = \{\vec{z}_{2}, \vec{z}_{4}\}$ с точностью до перестановки индексов невозможен в силу пункта \textbf{А}. Также случай $\Delta^{R}_{k} \cap \Z^{4} = \{\vec{z}_{2}, \vec{z}_{4}, \vec{z}^{R}_{1}, \vec{z}^{R}_{3}\}$ с точностью до перестановки индексов невозможен в силу пункта \textbf{Б}. Кроме того, случай $\Delta^{R}_{k} \cap \Z^{4} = \{\vec{z}_{2}, \vec{z}_{4}, \vec{p}^{R}\}$ с точностью до перестановки индексов невозможен в силу пункта \textbf{В}. При этом $\vec{z}^{R}_{1} = \vec{z}_{2} + (\vec{z}_{1} - \vec{z}^{Q}_{2}) \in \Z^{4}$. Таким образом, существует ровно один подслучай:

\textbf{Г.1}) $\Delta^{R}_{k} \cap \Z^{4} = \{\vec{z}_{2}, \vec{z}_{4}, \vec{z}^{R}_{1}, \vec{z}^{R}_{3},  \vec{p}^{R}\}$. Этот случай, в свою очередь, разбивается на два подслучая:

\textbf{Г.1.а}) Плоскость $\pi$ рациональная. Тогда, плоскости $Q$ и $R$ равноудалены от $\pi$. Поскольку $\{\vec{z}_{1}, \vec{z}_{3}, \vec{z}^{Q}_{2}, \vec{z}^{Q}_{4}\} \subset \Delta^{Q}_{k}$, то в параллелограмме $\Delta^{\pi}_{k}$ должна существовать хотя бы одна точка решетки $\Z^{4}$. Пусть $\vec{w} \in \Delta^{\pi}_{k} \cap \Z^{4}$. Покажем, что 
\[\vec{w} \in \{\vec{z}^{\pi}_{1}, \vec{z}^{\pi}_{2}, \vec{z}^{\pi}_{3}, \vec{z}^{\pi}_{4}, \vec{p}, \frac{\vec{z}^{\pi}_{1} + \vec{z}^{\pi}_{2}}{2}, \frac{\vec{z}^{\pi}_{2} + \vec{z}^{\pi}_{3}}{2}, \frac{\vec{z}^{\pi}_{3} + \vec{z}^{\pi}_{4}}{2}, \frac{\vec{z}^{\pi}_{4} + \vec{z}^{\pi}_{1}}{2},  \frac{\vec{z}^{\pi}_{1} +  \vec{p}}{2}, \frac{\vec{z}^{\pi}_{2} +  \vec{p}}{2}, \frac{\vec{z}^{\pi}_{3} +  \vec{p}}{2}, \frac{\vec{z}^{\pi}_{4} +  \vec{p}}{2}\}.\]
Предположим, что это не так. Можно считать, что $\vec{w} \in \textup{conv}(\vec{z}^{\pi}_{1}, \frac{\vec{z}^{\pi}_{1} + \vec{z}^{\pi}_{2}}{2}, \vec{p}, \frac{\vec{z}^{\pi}_{4} + \vec{z}^{\pi}_{1}}{2})$. Тогда $\vec{w} + (\vec{w} - \vec{z}_1) \in \Delta^{R}_{k} \cap \Z^{4}$ и $\vec{w} + (\vec{w} - \vec{z}_1) \notin \{\vec{z}_{2}, \vec{z}_{4}, \vec{z}^{R}_{1}, \vec{z}^{R}_{3},  \vec{p}^{R}\}$, чего не может быть. Далее рассмотрим подслучаи:

\textbf{Г.1.а.1})  (будет соответствовать утверждению (3)) Предположим, что
 \[\vec{w} \in \{\frac{\vec{z}^{\pi}_{1} + \vec{z}^{\pi}_{2}}{2}, \frac{\vec{z}^{\pi}_{2} + \vec{z}^{\pi}_{3}}{2}, \frac{\vec{z}^{\pi}_{3} + \vec{z}^{\pi}_{4}}{2}, \frac{\vec{z}^{\pi}_{4} + \vec{z}^{\pi}_{1}}{2} \}.\]
 В этом случае каждая из точек множества 
\[\{\frac{\vec{z}^{\pi}_{1} + \vec{z}^{\pi}_{2}}{2}, \frac{\vec{z}^{\pi}_{2} + \vec{z}^{\pi}_{3}}{2}, \frac{\vec{z}^{\pi}_{3} + \vec{z}^{\pi}_{4}}{2}, \frac{\vec{z}^{\pi}_{4} + \vec{z}^{\pi}_{1}}{2} \}\]
 принадлежит решетке $\Z^{4}$. При этом, так как $\Delta^{Q}_{k} \cap \Z^{4} = \{\vec{z}_{1}, \vec{z}_{3}, \vec{z}^{Q}_{2}, \vec{z}^{Q}_{4},  \vec{p}^{Q}\}$, то никакая из точек множества 
\[\{\vec{z}^{\pi}_{1}, \vec{z}^{\pi}_{2}, \vec{z}^{\pi}_{3}, \vec{z}^{\pi}_{4}, \vec{p}, \frac{\vec{z}^{\pi}_{1} +  \vec{p}}{2}, \frac{\vec{z}^{\pi}_{2} +  \vec{p}}{2}, \frac{\vec{z}^{\pi}_{3} +  \vec{p}}{2}, \frac{\vec{z}^{\pi}_{4} +  \vec{p}}{2}\}\] не принадлежит решетке $\Z^{4}$. Тогда
\[(\Delta^{\pi}_{k} \cup \Delta^{Q}_{k} \cup \Delta^{R}_{k}) \cap \Z^{4} = \]
\[\{\vec{z}_{1}, \, \vec{z}^{Q}_{2}, \, \vec{z}_{3}, \, \vec{z}^{Q}_{4}, \, \vec{z}^{R}_{1}, \, \vec{z}_{2}, \, \vec{z}^{R}_{3}, \, \vec{z}_{4}, \, \vec{p}^{Q}, \, \vec{p}^{R}, \, \frac{\vec{z}^{\pi}_{1} + \vec{z}^{\pi}_{2}}{2}, \, \frac{\vec{z}^{\pi}_{2} + \vec{z}^{\pi}_{3}}{2}, \, \frac{\vec{z}^{\pi}_{3} + \vec{z}^{\pi}_{4}}{2}, \, \frac{\vec{z}^{\pi}_{4} + \vec{z}^{\pi}_{1}}{2}\}\]
и, так как плоскости $Q$ и $R$ равноудалены от $\pi$, набор векторов $\vec{z}_{1}$, $\frac{\vec{z}^{\pi}_{1} + \vec{z}^{\pi}_{2}}{2} = \frac{1}{2}(\vec{z}_{1} + \vec{z}_{2})$, $\vec{p}^{Q} = \frac{1}{2}(\vec{z}_{1}+\vec{z}_{3})$, $\frac{\vec{z}^{\pi}_{4} + \vec{z}^{\pi}_{1}}{2} = \frac{1}{2}(\vec{z}_{1} + \vec{z}_{4})$ образует базис решетки $\Z^{4}$, а значит, выполняется утверждение (3).

\begin{figure}[h]
  \centering
  \begin{tikzpicture}[x=10mm, y=7mm, z=-5mm, scale=0.7]
    \begin{scope}[rotate around x=0]
    
    \draw[->] [very thin] (-5,0,0) -- (5,0,0);
    \draw[->] [very thin] (0,-7,0) -- (0,13,0);
    \draw[->] [very thin] (0,0,-7) -- (0,0,7);
    
    \node[fill=black,circle,inner sep=1pt,opacity=0.5] at (0,0,0) {};
    \node[fill=black,circle,inner sep=1pt,opacity=0.5] at (0,0,2) {};
    \node[fill=black,circle,inner sep=1pt,opacity=0.5] at (0,0,-2) {};
    \node[fill=black,circle,inner sep=1pt,opacity=0.5] at (2,0,0) {};
    \node[fill=black,circle,inner sep=1pt,opacity=0.5] at (2,0,-2) {};
    \node[fill=black,circle,inner sep=1pt,opacity=0.5] at (-2,0,0) {};
    \node[fill=black,circle,inner sep=1pt,opacity=0.5] at (-2,0,2) {};
    \node[fill=black,circle,inner sep=1pt,opacity=0.5] at (0,-5,0) {};
    \node[fill=black,circle,inner sep=1pt,opacity=0.5] at (0,-5,2) {};
    \node[fill=black,circle,inner sep=1pt,opacity=0.5] at (0,-5,-2) {};
    \node[fill=black,circle,inner sep=1pt,opacity=0.5] at (2,-5,0) {};
    \node[fill=black,circle,inner sep=1pt,opacity=0.5] at (2,-5,2) {};
    \node[fill=black,circle,inner sep=1pt,opacity=0.5] at (2,-5,-2) {};
    \node[fill=black,circle,inner sep=1pt,opacity=0.5] at (-2,-5,0) {};
    \node[fill=black,circle,inner sep=1pt,opacity=0.5] at (-2,-5,2) {};
    \node[fill=black,circle,inner sep=1pt,opacity=0.5] at (-2,-5,-2) {};  
    \node[fill=black,circle,inner sep=1pt,opacity=0.5] at (0,5,0) {};
    \node[fill=black,circle,inner sep=1pt,opacity=0.5] at (0,5,2) {};
    \node[fill=black,circle,inner sep=1pt,opacity=0.5] at (0,5,-2) {};
    \node[fill=black,circle,inner sep=1pt,opacity=0.5] at (2,5,0) {};
    \node[fill=black,circle,inner sep=1pt,opacity=0.5] at (2,5,2) {};
    \node[fill=black,circle,inner sep=1pt,opacity=0.5] at (2,5,-2) {};
    \node[fill=black,circle,inner sep=1pt,opacity=0.5] at (-2,5,0) {};
    \node[fill=black,circle,inner sep=1pt,opacity=0.5] at (-2,5,2) {};
    \node[fill=black,circle,inner sep=1pt,opacity=0.5] at (-2,5,-2) {};  
    \node[fill=black,circle,inner sep=1pt,opacity=0.5] at (0,10,0) {};
    \node[fill=black,circle,inner sep=1pt,opacity=0.5] at (0,10,2) {};
    \node[fill=black,circle,inner sep=1pt,opacity=0.5] at (0,10,-2) {};
    \node[fill=black,circle,inner sep=1pt,opacity=0.5] at (2,10,0) {};
    \node[fill=black,circle,inner sep=1pt,opacity=0.5] at (2,10,2) {};
    \node[fill=black,circle,inner sep=1pt,opacity=0.5] at (2,10,-2) {};
    \node[fill=black,circle,inner sep=1pt,opacity=0.5] at (-2,10,0) {};
    \node[fill=black,circle,inner sep=1pt,opacity=0.5] at (-2,10,2) {};
    \node[fill=black,circle,inner sep=1pt,opacity=0.5] at (-2,10,-2) {};    
    
    \coordinate (z_1) at (0,0,0);
    \coordinate (z_3) at (0,0,4);
    \coordinate (z_2) at (0,10,0);
    \coordinate (z_4) at (4,0,0);
    
    \coordinate (p_pl_1) at ($ (z_1)!1/2!(z_3) $);
    \coordinate (p_mi_1) at ($ (z_2)!1/2!(z_4) $);
    \coordinate (z_1_z_2_half) at ($ (z_1)!1/2!(z_2) $);
    \coordinate (z_2_z_3_half) at ($ (z_2)!1/2!(z_3) $);
    \coordinate (z_3_z_4_half) at ($ (z_3)!1/2!(z_4) $);
    \coordinate (z_4_z_1_half) at ($ (z_4)!1/2!(z_1) $); 
    
    \coordinate (p) at ($ (p_pl_1)!1/2!(p_mi_1) $);
    \coordinate (z_1_pl_1) at ($(p_mi_1) + (z_1) - (p_pl_1) $);
    \coordinate (z_3_pl_1) at ($(p_mi_1) + (z_3) - (p_pl_1) $);
    \coordinate (z_2_mi_1) at ($(p_pl_1) + (z_2) - (p_mi_1) $);
    \coordinate (z_4_mi_1) at ($(p_pl_1) + (z_4) - (p_mi_1) $);
    
    \coordinate (z_1_p) at ($ (z_1)!1/2!(z_1_pl_1) $);
    \coordinate (z_3_p) at ($(z_3)!1/2!(z_3_pl_1)$);
    \coordinate (z_2_p) at ($(z_2)!1/2!(z_2_mi_1)$);
    \coordinate (z_4_p) at ($(z_4)!1/2!(z_4_mi_1)$);
    
    \fill[blue!05,opacity=0.3] (3,0,3) -- (3,0,-3) -- (-3,0,-3) -- (-3,0,3) -- cycle;
    \fill[blue!05,opacity=0.3] (3,-5,3) -- (3,-5,-3) -- (-3,-5,-3) -- (-3,-5,3) -- cycle;   
    \fill[blue!05,opacity=0.3] (3,5,3) -- (3,5,-3) -- (-3,5,-3) -- (-3,5,3) -- cycle;
    \fill[blue!05,opacity=0.3] (3,10,3) -- (3,10,-3) -- (-3,10,-3) -- (-3,10,3) -- cycle;
    
    \fill[blue!25,opacity=0.3] (z_1) -- (z_2_mi_1) -- (z_3) -- (z_4_mi_1) -- cycle;
    \fill[blue!25,opacity=0.6] (z_1_p) -- (z_2_p) -- (z_3_p) -- (z_4_p) -- cycle;
    \fill[blue!25,opacity=0.9] (z_1_pl_1) -- (z_2) -- (z_3_pl_1) -- (z_4) -- cycle;
    
    \node[fill=blue,circle,inner sep=1pt] at (z_1) {};
    \draw (z_1) node[below right] {$\vec z_1$};
    
    \node[fill=blue,circle,inner sep=1pt] at (z_3) {};
    \draw (z_3) node[left] {$\vec z_3$};
    
    \node[fill=blue,circle,inner sep=1pt] at (z_2) {};
    \draw (z_2) node[above] {$\vec z_2$};
    
    \node[fill=blue,circle,inner sep=1pt] at (z_4) {};
    \draw (z_4) node[above right] {$\vec z_4$};
    
    \node[fill=blue,circle,inner sep=1pt] at (z_1_pl_1) {};
    \draw (z_1_pl_1) node[right] {$\vec z_1^{R}$};
    
    \node[fill=blue,circle,inner sep=1pt] at (z_3_pl_1) {};
    \draw (z_3_pl_1) node[above right] {$\vec z_3^{R}$};
    
    \node[fill=blue,circle,inner sep=1pt] at (z_2_mi_1) {};
    \draw (z_2_mi_1) node[left] {$\vec z_2^{Q}$};
    
    \node[fill=blue,circle,inner sep=1pt] at (z_4_mi_1) {};
    \draw (z_4_mi_1) node[right] {$\vec z_4^{Q}$};
    
    \node[fill=white,circle,inner sep=1pt,draw=blue] at (z_1_p) {};
    \draw (z_1_p) node[below right] {$\vec z_1^{\pi}$};
    
    \node[fill=white,circle,inner sep=1pt,draw=blue] at (z_3_p) {};
    \draw (z_3_p) node[left] {$\vec z_3^{\pi}$};
    
    \node[fill=white,circle,inner sep=1pt,draw=blue] at (z_2_p) {};
    \draw (z_2_p) node[below left] {$\vec z_2^{\pi}$};
    
    \node[fill=white,circle,inner sep=1pt,draw=blue] at (z_4_p) {};
    \draw (z_4_p) node[above right] {$\vec z_4^{\pi}$};
    
    \node[fill=white,circle,inner sep=1pt,draw=blue] at (p) {};
    \draw (p) node[right] {$\vec p$};
    
    \node[fill=blue,circle,inner sep=1pt] at (p_pl_1) {};
    \draw (p_pl_1) node[right] {$\vec{p}^{Q}$};
    
    \node[fill=blue,circle,inner sep=1pt] at (p_mi_1) {};
    \draw (p_mi_1) node[above right] {$\vec{p}^{R}$};
    
    \node[fill=blue,circle,inner sep=1pt] at (z_1_z_2_half) {};
    \draw (z_1_z_2_half) node[right] {$\frac{\vec{z}_1^{\pi} + \vec{z}_2^{\pi}}{2}$};
    
    \node[fill=blue,circle,inner sep=1pt] at (z_2_z_3_half) {};
    \draw (z_2_z_3_half) node[left] {$\frac{\vec{z}_2^{\pi} + \vec{z}_3^{\pi}}{2}$};
    
    \node[fill=blue,circle,inner sep=1pt] at (z_3_z_4_half) {};
    \draw (z_3_z_4_half) node[below] {$\frac{\vec{z}_3^{\pi} + \vec{z}_4^{\pi}}{2}$};
    
    \node[fill=blue,circle,inner sep=1pt] at (z_4_z_1_half) {};
    \draw (z_4_z_1_half) node[above right] {$\frac{\vec{z}_4^{\pi} + \vec{z}_1^{\pi}}{2}$};
    
    \draw[->] [red,very thin] (z_1) -- (z_1_z_2_half);
    \draw[->] [red,very thin] (z_1) -- (z_2_z_3_half);
    \draw[->] [red,very thin] (z_1) -- (z_3_z_4_half);
    
    \draw (5, 0, 0) node[right] {$x$};
    \draw (0, 13, 0) node[left] {$z$};
    \draw (0, 0, 7) node[left] {$y$};
    
    \end{scope}
  \end{tikzpicture}
  \caption{Расположение точек внутри гиперплоскости $S_{1}$ из случая \textbf{Г.1.а.1} леммы \ref{main_lem_2_2}}
\end{figure}

\textbf{Г.1.а.2}) (будет соответствовать утверждению (4)) Предположим, что 
\[\vec{w} \in \{\vec{z}^{\pi}_{1}, \vec{z}^{\pi}_{2}, \vec{z}^{\pi}_{3}, \vec{z}^{\pi}_{4}, \vec{p}\}.\]
В этом случае каждая из точек множества $\{\vec{z}^{\pi}_{1}, \vec{z}^{\pi}_{2}, \vec{z}^{\pi}_{3}, \vec{z}^{\pi}_{4}, \vec{p}\}$ принадлежит решетке $\Z^{4}$. В силу доказательства случая \textbf{Г.1.а.1} никакая из точек множества $\{\frac{\vec{z}^{\pi}_{1} + \vec{z}^{\pi}_{2}}{2}, \frac{\vec{z}^{\pi}_{2} + \vec{z}^{\pi}_{3}}{2}, \frac{\vec{z}^{\pi}_{3} + \vec{z}^{\pi}_{4}}{2}, \frac{\vec{z}^{\pi}_{4} + \vec{z}^{\pi}_{1}}{2}\}$ не принадлежит решетке $\Z^{4}$. Кроме того, так как $\Delta^{Q}_{k} \cap \Z^{4} = \{\vec{z}_{1}, \vec{z}_{3}, \vec{z}^{Q}_{2}, \vec{z}^{Q}_{4},  \vec{p}^{Q}\}$, то никакая из точек множества $\{\frac{\vec{z}^{\pi}_{1} +  \vec{p}}{2}, \frac{\vec{z}^{\pi}_{2} +  \vec{p}}{2}, \frac{\vec{z}^{\pi}_{3} +  \vec{p}}{2}, \frac{\vec{z}^{\pi}_{4} +  \vec{p}}{2} \}$ не принадлежит решетке $\Z^{4}$. Тогда
\[(\Delta^{\pi}_{k} \cup \Delta^{Q}_{k} \cup \Delta^{R}_{k}) \cap \Z^{4} = \{\vec{p}, \, \vec{z}_{1}, \, \vec{z}^{Q}_{2}, \, \vec{z}_{3}, \, \vec{z}^{Q}_{4}, \, \vec{z}^{R}_{1}, \, \vec{z}_{2}, \, \vec{z}^{R}_{3}, \, \vec{z}_{4}, \, \vec{p}^{Q}, \, \vec{p}^{R}, \, \vec{z}^{\pi}_{1}, \, \vec{z}^{\pi}_{2}, \, \vec{z}^{\pi}_{3}, \, \vec{z}^{\pi}_{4} \}\]
и, так как плоскости $Q$ и $R$ равноудалены от $\pi$, набор векторов $\vec{z}_{1}$, $\vec{z}_{2}$, $\vec{p}^{Q} = \frac{1}{2}(\vec{z}_{1}+\vec{z}_{3})$, $\vec{p} = \frac{1}{4}(\vec{z}_{1}+\vec{z}_{2}+\vec{z}_{3}+\vec{z}_{4})$ образует базис решетки $\Z^{4}$, а значит, выполняется утверждение (4).

 \begin{figure}[h]
  \centering
  \begin{tikzpicture}[x=10mm, y=7mm, z=-5mm, scale=0.8]
    \begin{scope}[rotate around x=0]
    
    \draw[->] [very thin] (-5,0,0) -- (5,0,0);
    \draw[->] [very thin] (0,-7,0) -- (0,7,0);
    \draw[->] [very thin] (0,0,-7) -- (0,0,7);
    
    \node[fill=black,circle,inner sep=1pt,opacity=0.5] at (0,0,0) {};
    \node[fill=black,circle,inner sep=1pt,opacity=0.5] at (0,0,2) {};
    \node[fill=black,circle,inner sep=1pt,opacity=0.5] at (0,0,-2) {};
    \node[fill=black,circle,inner sep=1pt,opacity=0.5] at (2,0,0) {};
    \node[fill=black,circle,inner sep=1pt,opacity=0.5] at (2,0,2) {};
    \node[fill=black,circle,inner sep=1pt,opacity=0.5] at (2,0,-2) {};
    \node[fill=black,circle,inner sep=1pt,opacity=0.5] at (-2,0,0) {};
    \node[fill=black,circle,inner sep=1pt,opacity=0.5] at (-2,0,2) {};
    \node[fill=black,circle,inner sep=1pt,opacity=0.5] at (-2,0,-2) {};
    \node[fill=black,circle,inner sep=1pt,opacity=0.5] at (0,-5,0) {};
    \node[fill=black,circle,inner sep=1pt,opacity=0.5] at (0,-5,2) {};
    \node[fill=black,circle,inner sep=1pt,opacity=0.5] at (0,-5,-2) {};
    \node[fill=black,circle,inner sep=1pt,opacity=0.5] at (2,-5,0) {};
    \node[fill=black,circle,inner sep=1pt,opacity=0.5] at (2,-5,2) {};
    \node[fill=black,circle,inner sep=1pt,opacity=0.5] at (2,-5,-2) {};
    \node[fill=black,circle,inner sep=1pt,opacity=0.5] at (-2,-5,0) {};
    \node[fill=black,circle,inner sep=1pt,opacity=0.5] at (-2,-5,2) {};
    \node[fill=black,circle,inner sep=1pt,opacity=0.5] at (-2,-5,-2) {};
    \node[fill=black,circle,inner sep=1pt,opacity=0.5] at (2,-5,4) {};
    \node[fill=black,circle,inner sep=1pt,opacity=0.5] at (0,-5,4) {};
    \node[fill=black,circle,inner sep=1pt,opacity=0.5] at (-2,-5,4) {};
    \node[fill=black,circle,inner sep=1pt,opacity=0.5] at (-4,-5,4) {};
    \node[fill=black,circle,inner sep=1pt,opacity=0.5] at (-4,-5,2) {};
    \node[fill=black,circle,inner sep=1pt,opacity=0.5] at (-4,-5,0) {};
    \node[fill=black,circle,inner sep=1pt,opacity=0.5] at (-4,-5,-2) {};
    \node[fill=black,circle,inner sep=1pt,opacity=0.5] at (0,5,0) {};
    \node[fill=black,circle,inner sep=1pt,opacity=0.5] at (0,5,2) {};
    \node[fill=black,circle,inner sep=1pt,opacity=0.5] at (0,5,-2) {};
    \node[fill=black,circle,inner sep=1pt,opacity=0.5] at (2,5,0) {};
    \node[fill=black,circle,inner sep=1pt,opacity=0.5] at (2,5,2) {};
    \node[fill=black,circle,inner sep=1pt,opacity=0.5] at (2,5,-2) {};
    \node[fill=black,circle,inner sep=1pt,opacity=0.5] at (-2,5,0) {};
    \node[fill=black,circle,inner sep=1pt,opacity=0.5] at (-2,5,2) {};
    \node[fill=black,circle,inner sep=1pt,opacity=0.5] at (-2,5,-2) {};
    \node[fill=black,circle,inner sep=1pt,opacity=0.5] at (4,5,-2) {};
    \node[fill=black,circle,inner sep=1pt,opacity=0.5] at (4,5,2) {};
    \node[fill=black,circle,inner sep=1pt,opacity=0.5] at (4,5,0) {};
    \node[fill=black,circle,inner sep=1pt,opacity=0.5] at (4,5,-4) {};
    \node[fill=black,circle,inner sep=1pt,opacity=0.5] at (0,5,-4) {};
    \node[fill=black,circle,inner sep=1pt,opacity=0.5] at (-2,5,-4) {};
    
    \coordinate (z_1) at (0,5,-2);
    \coordinate (z_3) at (4,5,-2);
    \coordinate (z_2) at (-2,-5,4);
    \coordinate (z_4) at (-2,-5,0);
    
    \coordinate (p_pl_1) at ($ (z_1)!1/2!(z_3) $);
    \coordinate (p_mi_1) at ($ (z_2)!1/2!(z_4) $);
    \coordinate (z_1_z_2_half) at ($ (z_1)!1/2!(z_2) $);
    \coordinate (z_2_z_3_half) at ($ (z_2)!1/2!(z_3) $);
    \coordinate (z_3_z_4_half) at ($ (z_3)!1/2!(z_4) $);
    \coordinate (z_4_z_1_half) at ($ (z_4)!1/2!(z_1) $); 
    
    \coordinate (p) at ($ (p_pl_1)!1/2!(p_mi_1) $);
    \coordinate (z_1_pl_1) at ($(p_mi_1) + (z_1) - (p_pl_1) $);
    \coordinate (z_3_pl_1) at ($(p_mi_1) + (z_3) - (p_pl_1) $);
    \coordinate (z_2_mi_1) at ($(p_pl_1) + (z_2) - (p_mi_1) $);
    \coordinate (z_4_mi_1) at ($(p_pl_1) + (z_4) - (p_mi_1) $);
    
    \coordinate (z_1_p) at ($ (z_1)!1/2!(z_1_pl_1) $);
    \coordinate (z_3_p) at ($(z_3)!1/2!(z_3_pl_1)$);
    \coordinate (z_2_p) at ($(z_2)!1/2!(z_2_mi_1)$);
    \coordinate (z_4_p) at ($(z_4)!1/2!(z_4_mi_1)$);
    
    \fill[blue!05,opacity=0.3] (3,0,3) -- (3,0,-3) -- (-3,0,-3) -- (-3,0,3) -- cycle;
    \fill[blue!05,opacity=0.3] (3,-5,5) -- (3,-5,-3) -- (-5,-5,-3) -- (-5,-5,5) -- cycle;   
    \fill[blue!05,opacity=0.3] (3,5,3) -- (3,5,-3) -- (-3,5,-3) -- (-3,5,3) -- cycle;
    
    \fill[blue!25,opacity=0.7] (z_1) -- (z_2_mi_1) -- (z_3) -- (z_4_mi_1) -- cycle; 
    \fill[blue!25,opacity=0.7] (z_1_pl_1) -- (z_2) -- (z_3_pl_1) -- (z_4) -- cycle;
    \fill[blue!25,opacity=0.7] (z_1_p) -- (z_2_p) -- (z_3_p) -- (z_4_p) -- cycle;
    
    \node[fill=blue,circle,inner sep=1pt] at (z_1) {};
    \draw (z_1) node[left] {$\vec z_1$};
    
    \node[fill=blue,circle,inner sep=1pt] at (z_3) {};
    \draw (z_3) node[right] {$\vec z_3$};
    
    \node[fill=blue,circle,inner sep=1pt] at (z_2) {};
    \draw (z_2) node[left] {$\vec z_2$};
    
    \node[fill=blue,circle,inner sep=1pt] at (z_4) {};
    \draw (z_4) node[right] {$\vec z_4$};
    
    \node[fill=blue,circle,inner sep=1pt] at (z_1_pl_1) {};
    \draw (z_1_pl_1) node[left] {$\vec z_1^{R}$};
    
    \node[fill=blue,circle,inner sep=1pt] at (z_3_pl_1) {};
    \draw (z_3_pl_1) node[below right] {$\vec z_3^{R}$};
    
    \node[fill=blue,circle,inner sep=1pt] at (z_2_mi_1) {};
    \draw (z_2_mi_1) node[left] {$\vec z_2^{Q}$};
    
    \node[fill=blue,circle,inner sep=1pt] at (z_4_mi_1) {};
    \draw (z_4_mi_1) node[right] {$\vec z_4^{Q}$};
    
    \node[fill=blue,circle,inner sep=1pt] at (z_1_p) {};
    \draw (z_1_p) node[above left] {$\vec z_1^{\pi}$};
    
    \node[fill=blue,circle,inner sep=1pt] at (z_3_p) {};
    \draw (z_3_p) node[below right] {$\vec z_3^{\pi}$};
    
    \node[fill=blue,circle,inner sep=1pt] at (z_2_p) {};
    \draw (z_2_p) node[below left] {$\vec z_2^{\pi}$};
    
    \node[fill=blue,circle,inner sep=1pt] at (z_4_p) {};
    \draw (z_4_p) node[above] {$\vec z_4^{\pi}$};
    
    \node[fill=blue,circle,inner sep=1pt] at (p) {};
    \draw (p) node[right] {$\vec p$};
    
    \node[fill=blue,circle,inner sep=1pt] at (p_pl_1) {};
    \draw (p_pl_1) node[above] {$\vec{p}^{Q}$};
    
    \node[fill=blue,circle,inner sep=1pt,draw=blue] at (p_mi_1) {};
    \draw (p_mi_1) node[right] {$\vec{p}^{R}$};
    
    \node[fill=white,circle,inner sep=1pt,draw=blue] at (z_1_z_2_half) {};
    \draw (z_1_z_2_half) node[left] {$\frac{\vec{z}_1^{\pi} + \vec{z}_2^{\pi}}{2}$};
    
    \node[fill=white,circle,inner sep=1pt,draw=blue] at (z_2_z_3_half) {};
    \draw (z_2_z_3_half) node[below] {$\frac{\vec{z}_2^{\pi} + \vec{z}_3^{\pi}}{2}$};
    
    \node[fill=white,circle,inner sep=1pt,draw=blue] at (z_3_z_4_half) {};
    \draw (z_3_z_4_half) node[right] {$\frac{\vec{z}_3^{\pi} + \vec{z}_4^{\pi}}{2}$};
    
    \node[fill=white,circle,inner sep=1pt,draw=blue] at (z_4_z_1_half) {};
    \draw (z_4_z_1_half) node[left] {$\frac{\vec{z}_4^{\pi} + \vec{z}_1^{\pi}}{2}$};
    
    \draw[->] [red,very thin] (p) -- (z_1);
    \draw[->] [red,very thin] (p) -- (z_2);
    \draw[->] [red,very thin] (p) -- (p_pl_1);
    
    \draw (5, 0, 0) node[right] {$x$};
    \draw (0, 7, 0) node[left] {$z$};
    \draw (0, 0, 7) node[left] {$y$};
    
    \end{scope}
  \end{tikzpicture}
  \caption{Расположение точек внутри гиперплоскости $S_{1}$ из случая \textbf{Г.1.а.2} леммы \ref{main_lem_2_2}}
\end{figure}

\textbf{Г.1.а.3})  (будет соответствовать утверждению (10)) Предположим, что $\vec{w} \in \{\frac{\vec{z}^{\pi}_{1} +  \vec{p}}{2}, \frac{\vec{z}^{\pi}_{3} +  \vec{p}}{2}\}$. В силу доказательства случаев \textbf{Г.1.а.1} и \textbf{Г.1.а.2} никакая из точек множества 
\[\{\frac{\vec{z}^{\pi}_{1} + \vec{z}^{\pi}_{2}}{2}, \frac{\vec{z}^{\pi}_{2} + \vec{z}^{\pi}_{3}}{2}, \frac{\vec{z}^{\pi}_{3} + \vec{z}^{\pi}_{4}}{2}, \frac{\vec{z}^{\pi}_{4} + \vec{z}^{\pi}_{1}}{2}, \vec{z}^{\pi}_{1}, \vec{z}^{\pi}_{2}, \vec{z}^{\pi}_{3}, \vec{z}^{\pi}_{4}, \vec{p}\}\]
не принадлежит решетке $\Z^{4}$. Кроме того, так как $\Delta^{Q}_{k} \cap \Z^{4} = \{\vec{z}_{1}, \vec{z}_{3}, \vec{z}^{Q}_{2}, \vec{z}^{Q}_{4},  \vec{p}^{Q}\}$, то никакая из точек множества $\{\frac{\vec{z}^{\pi}_{2} +  \vec{p}}{2}, \frac{\vec{z}^{\pi}_{4} +  \vec{p}}{2}\}$ не принадлежит решетке $\Z^{4}$. Тогда
\[(\Delta^{\pi}_{k} \cup \Delta^{Q}_{k} \cup \Delta^{R}_{k}) \cap \Z^{4} = \{\vec{z}_{1}, \, \vec{z}^{Q}_{2}, \, \vec{z}_{3}, \, \vec{z}^{Q}_{4}, \, \vec{z}^{R}_{1}, \, \vec{z}_{2}, \, \vec{z}^{R}_{3}, \, \vec{z}_{4}, \, \vec{p}^{Q}, \, \vec{p}^{R}, \, \frac{\vec{z}^{\pi}_{1} +  \vec{p}}{2}, \, \frac{\vec{z}^{\pi}_{3} +  \vec{p}}{2} \}\]
и, так как плоскости $Q$ и $R$ равноудалены от $\pi$, набор векторов $\vec{z}_{1}$, $\vec{z}_{2}$, $\vec{p}^{Q} = \frac{1}{2}(\vec{z}_{1}+\vec{z}_{3})$, $\frac{\vec{z}^{\pi}_{1} +  \vec{p}}{2} = \frac{1}{2}\vec{z}_{1} +  \frac{1}{4}\vec{z}_{2} +  \frac{1}{4}\vec{z}_{4}$ образует базис решетки $\Z^{4}$, а значит, выполняется утверждение (10).

 \begin{figure}[h]
  \centering
  \begin{tikzpicture}[x=10mm, y=7mm, z=-5mm, scale=0.7]
    \begin{scope}[rotate around x=0]
    
    \draw[->] [very thin] (-5,0,0) -- (5,0,0);
    \draw[->] [very thin] (0,-7,0) -- (0,7,0);
    \draw[->] [very thin] (0,0,-7) -- (0,0,7);
    
    \node[fill=black,circle,inner sep=1pt,opacity=0.5] at (0,0,0) {};
    \node[fill=black,circle,inner sep=1pt,opacity=0.5] at (0,0,2) {};
    \node[fill=black,circle,inner sep=1pt,opacity=0.5] at (0,0,-2) {};
    \node[fill=black,circle,inner sep=1pt,opacity=0.5] at (2,0,0) {};
    \node[fill=black,circle,inner sep=1pt,opacity=0.5] at (2,0,2) {};
    \node[fill=black,circle,inner sep=1pt,opacity=0.5] at (2,0,-2) {};
    \node[fill=black,circle,inner sep=1pt,opacity=0.5] at (-2,0,0) {};
    \node[fill=black,circle,inner sep=1pt,opacity=0.5] at (-2,0,2) {};
    \node[fill=black,circle,inner sep=1pt,opacity=0.5] at (-2,0,-2) {};
    \node[fill=black,circle,inner sep=1pt,opacity=0.5] at (0,-5,0) {};
    \node[fill=black,circle,inner sep=1pt,opacity=0.5] at (0,-5,2) {};
    \node[fill=black,circle,inner sep=1pt,opacity=0.5] at (0,-5,-2) {};
    \node[fill=black,circle,inner sep=1pt,opacity=0.5] at (2,-5,0) {};
    \node[fill=black,circle,inner sep=1pt,opacity=0.5] at (2,-5,2) {};
    \node[fill=black,circle,inner sep=1pt,opacity=0.5] at (2,-5,-2) {};
    \node[fill=black,circle,inner sep=1pt,opacity=0.5] at (-2,-5,0) {};
    \node[fill=black,circle,inner sep=1pt,opacity=0.5] at (-2,-5,2) {};
    \node[fill=black,circle,inner sep=1pt,opacity=0.5] at (-2,-5,-2) {};
    \node[fill=black,circle,inner sep=1pt,opacity=0.5] at (2,-5,4) {};
    \node[fill=black,circle,inner sep=1pt,opacity=0.5] at (0,-5,4) {};
    \node[fill=black,circle,inner sep=1pt,opacity=0.5] at (-2,-5,4) {};
    \node[fill=black,circle,inner sep=1pt,opacity=0.5] at (-4,-5,4) {};
    \node[fill=black,circle,inner sep=1pt,opacity=0.5] at (-4,-5,2) {};
    \node[fill=black,circle,inner sep=1pt,opacity=0.5] at (-4,-5,0) {};
    \node[fill=black,circle,inner sep=1pt,opacity=0.5] at (-4,-5,-2) {};
    \node[fill=black,circle,inner sep=1pt,opacity=0.5] at (0,5,0) {};
    \node[fill=black,circle,inner sep=1pt,opacity=0.5] at (0,5,2) {};
    \node[fill=black,circle,inner sep=1pt,opacity=0.5] at (0,5,-2) {};
    \node[fill=black,circle,inner sep=1pt,opacity=0.5] at (2,5,0) {};
    \node[fill=black,circle,inner sep=1pt,opacity=0.5] at (2,5,2) {};
    \node[fill=black,circle,inner sep=1pt,opacity=0.5] at (2,5,-2) {};
    \node[fill=black,circle,inner sep=1pt,opacity=0.5] at (-2,5,0) {};
    \node[fill=black,circle,inner sep=1pt,opacity=0.5] at (-2,5,2) {};
    \node[fill=black,circle,inner sep=1pt,opacity=0.5] at (-2,5,-2) {};
    \node[fill=black,circle,inner sep=1pt,opacity=0.5] at (4,5,-2) {};
    \node[fill=black,circle,inner sep=1pt,opacity=0.5] at (4,5,2) {};
    \node[fill=black,circle,inner sep=1pt,opacity=0.5] at (4,5,0) {};
    \node[fill=black,circle,inner sep=1pt,opacity=0.5] at (4,5,-4) {};
    \node[fill=black,circle,inner sep=1pt,opacity=0.5] at (0,5,-4) {};
    \node[fill=black,circle,inner sep=1pt,opacity=0.5] at (-2,5,-4) {};
    
    \coordinate (z_1) at (0,0,0);
    \coordinate (z_3) at (0,0,4);
    \coordinate (z_2) at (0,5,0);
    \coordinate (z_4) at (8,-5,0);
    
    \coordinate (p_pl_1) at ($ (z_1)!1/2!(z_3) $);
    \coordinate (p_mi_1) at ($ (z_2)!1/2!(z_4) $);
    \coordinate (z_1_z_2_half) at ($ (z_1)!1/2!(z_2) $);
    \coordinate (z_2_z_3_half) at ($ (z_2)!1/2!(z_3) $);
    \coordinate (z_3_z_4_half) at ($ (z_3)!1/2!(z_4) $);
    \coordinate (z_4_z_1_half) at ($ (z_4)!1/2!(z_1) $); 
    
    \coordinate (p) at ($ (p_pl_1)!1/2!(p_mi_1) $);
    \coordinate (z_1_pl_1) at ($(p_mi_1) + (z_1) - (p_pl_1) $);
    \coordinate (z_3_pl_1) at ($(p_mi_1) + (z_3) - (p_pl_1) $);
    \coordinate (z_2_mi_1) at ($(p_pl_1) + (z_2) - (p_mi_1) $);
    \coordinate (z_4_mi_1) at ($(p_pl_1) + (z_4) - (p_mi_1) $);
    
    \coordinate (z_1_p) at ($ (z_1)!1/2!(z_1_pl_1) $);
    \coordinate (z_3_p) at ($(z_3)!1/2!(z_3_pl_1)$);
    \coordinate (z_2_p) at ($(z_2)!1/2!(z_2_mi_1)$);
    \coordinate (z_4_p) at ($(z_4)!1/2!(z_4_mi_1)$);
    
    \coordinate (z_1_p_1_2) at ($(p)!1/2!(z_1_p)$);
    \coordinate (z_3_p_1_2) at ($(p)!1/2!(z_3_p)$);
    
    \fill[blue!05,opacity=0.6] (3,0,3) -- (3,0,-3) -- (-3,0,-3) -- (-3,0,3) -- cycle;
    \fill[blue!05,opacity=0.3] (3,-5,5) -- (3,-5,-3) -- (-5,-5,-3) -- (-5,-5,5) -- cycle;   
    \fill[blue!05,opacity=0.3] (3,5,3) -- (3,5,-3) -- (-3,5,-3) -- (-3,5,3) -- cycle;
    
    \fill[blue!25,opacity=0.7] (z_1) -- (z_2_mi_1) -- (z_3) -- (z_4_mi_1) -- cycle;
    \fill[blue!25,opacity=0.7] (z_1_pl_1) -- (z_2) -- (z_3_pl_1) -- (z_4) -- cycle;
    \fill[blue!25,opacity=0.7] (z_1_p) -- (z_2_p) -- (z_3_p) -- (z_4_p) -- cycle;
    
    \node[fill=blue,circle,inner sep=1pt] at (z_1) {};
    \draw (z_1) node[left] {$\vec z_1$};
    
    \node[fill=blue,circle,inner sep=1pt] at (z_3) {};
    \draw (z_3) node[right] {$\vec z_3$};
    
    \node[fill=blue,circle,inner sep=1pt] at (z_2) {};
    \draw (z_2) node[left] {$\vec z_2$};
    
    \node[fill=blue,circle,inner sep=1pt] at (z_4) {};
    \draw (z_4) node[right] {$\vec z_4$};
    
    \node[fill=blue,circle,inner sep=1pt] at (z_1_pl_1) {};
    \draw (z_1_pl_1) node[left] {$\vec z_1^{R}$};
    
    \node[fill=blue,circle,inner sep=1pt] at (z_3_pl_1) {};
    \draw (z_3_pl_1) node[below right] {$\vec z_3^{R}$};
    
    \node[fill=blue,circle,inner sep=1pt] at (z_2_mi_1) {};
    \draw (z_2_mi_1) node[left] {$\vec z_2^{Q}$};
    
    \node[fill=blue,circle,inner sep=1pt] at (z_4_mi_1) {};
    \draw (z_4_mi_1) node[right] {$\vec z_4^{Q}$};
    
    \node[fill=white,circle,inner sep=1pt,draw=blue] at (z_1_p) {};
    \draw (z_1_p) node[above left] {$\vec z_1^{\pi}$};
    
    \node[fill=white,circle,inner sep=1pt,draw=blue] at (z_3_p) {};
    \draw (z_3_p) node[below right] {$\vec z_3^{\pi}$};
    
    \node[fill=white,circle,inner sep=1pt,draw=blue] at (z_2_p) {};
    \draw (z_2_p) node[below left] {$\vec z_2^{\pi}$};
    
    \node[fill=white,circle,inner sep=1pt,draw=blue] at (z_4_p) {};
    \draw (z_4_p) node[above] {$\vec z_4^{\pi}$};
    
    \node[fill=white,circle,inner sep=1pt,draw=blue] at (p) {};
    \draw (p) node[right] {$\vec p$};
    
    \node[fill=blue,circle,inner sep=1pt] at (z_1_p_1_2) {};
    \draw (z_1_p_1_2) node[right] {$\frac{\vec{z}^{\pi}_{1} +  \vec{p}}{2}$};
    
    \node[fill=blue,circle,inner sep=1pt] at (z_3_p_1_2) {};
    \draw (z_3_p_1_2) node[right] {$\frac{\vec{z}^{\pi}_{3} +  \vec{p}}{2}$};
    
    \node[fill=blue,circle,inner sep=1pt] at (p_pl_1) {};
    \draw (p_pl_1) node[above] {$\vec{p}^{Q}$};
    
    \node[fill=blue,circle,inner sep=1pt,draw=blue] at (p_mi_1) {};
    \draw (p_mi_1) node[right] {$\vec{p}^{R}$};
    
    \node[fill=white,circle,inner sep=1pt,draw=blue] at (z_1_z_2_half) {};
    \draw (z_1_z_2_half) node[left] {$\frac{\vec{z}_1^{\pi} + \vec{z}_2^{\pi}}{2}$};
    
    \node[fill=white,circle,inner sep=1pt,draw=blue] at (z_2_z_3_half) {};
    \draw (z_2_z_3_half) node[left] {$\frac{\vec{z}_2^{\pi} + \vec{z}_3^{\pi}}{2}$};
    
    \node[fill=white,circle,inner sep=1pt,draw=blue] at (z_3_z_4_half) {};
    \draw (z_3_z_4_half) node[right] {$\frac{\vec{z}_3^{\pi} + \vec{z}_4^{\pi}}{2}$};
    
    \node[fill=white,circle,inner sep=1pt,draw=blue] at (z_4_z_1_half) {};
    \draw (z_4_z_1_half) node[right] {$\frac{\vec{z}_4^{\pi} + \vec{z}_1^{\pi}}{2}$};
    
    \draw[->] [red,very thin] (z_1) -- (z_2);
    \draw[->] [red,very thin] (z_1) -- (p_pl_1);
    \draw[->] [red,very thin] (z_1) -- (z_1_p_1_2);
    
    \draw (5, 0, 0) node[right] {$x$};
    \draw (0, 7, 0) node[left] {$z$};
    \draw (0, 0, 7) node[left] {$y$};
    
    \end{scope}
  \end{tikzpicture}
  \caption{Расположение точек внутри гиперплоскости $S_{1}$ из случая \textbf{Г.1.а.3} леммы \ref{main_lem_2_2}}
\end{figure}

\textbf{Г.1.а.4}) Пусть $\vec{w} \in \{\frac{\vec{z}^{\pi}_{2} +  \vec{p}}{2}, \frac{\vec{z}^{\pi}_{4} +  \vec{p}}{2}\}$. Заметим, что этот случай с точностью до перестановки индексов полностью эквивалентен пункту \textbf{Г.1.а.3}. 

\textbf{Г.1.б}) (будет соответствовать утверждению (5)) Плоскость $\pi$ не является рациональной плоскостью. Тогда 
\[(\Delta^{\pi}_{k} \cup \Delta^{Q}_{k} \cup \Delta^{R}_{k}) \cap \Z^{4} = \{\vec{z}_{1}, \, \vec{z}^{Q}_{2}, \, \vec{z}_{3}, \, \vec{z}^{Q}_{4}, \, \vec{z}^{R}_{1}, \, \vec{z}_{2}, \, \vec{z}^{R}_{3}, \, \vec{z}_{4}, \, \vec{p}^{Q}, \, \vec{p}^{R}\},\]
набор векторов $\vec{z}_{1}$, $\vec{z}_{2}$, $\vec{p}^{Q} = \frac{1}{2}(\vec{z}_{1}+\vec{z}_{3})$, $\vec{p}^{R} = \frac{1}{2}(\vec{z}_{2}+\vec{z}_{4})$ образует базис решетки $\Z^{4}$, а значит, выполняется утверждение (5).

  \begin{figure}[h]
  \centering
  \begin{tikzpicture}[x=10mm, y=7mm, z=-5mm, scale=0.6]
    \begin{scope}[rotate around x=0]
    
    \draw[->] [very thin] (-5,0,0) -- (5,0,0);
    \draw[->] [very thin] (0,-4,0) -- (0,7,0);
    \draw[->] [very thin] (0,0,-5) -- (0,0,6);
    
    \node[fill=black,circle,inner sep=1pt,opacity=0.5] at (0,0,0) {};
    \node[fill=black,circle,inner sep=1pt,opacity=0.5] at (0,0,2) {};
    \node[fill=black,circle,inner sep=1pt,opacity=0.5] at (0,0,-2) {};
    \node[fill=black,circle,inner sep=1pt,opacity=0.5] at (2,0,0) {};
    \node[fill=black,circle,inner sep=1pt,opacity=0.5] at (2,0,2) {};
    \node[fill=black,circle,inner sep=1pt,opacity=0.5] at (2,0,-2) {};
    \node[fill=black,circle,inner sep=1pt,opacity=0.5] at (-2,0,0) {};
    \node[fill=black,circle,inner sep=1pt,opacity=0.5] at (-2,0,2) {};
    \node[fill=black,circle,inner sep=1pt,opacity=0.5] at (-2,0,-2) {};
    \node[fill=black,circle,inner sep=1pt,opacity=0.5] at (2,0,4) {};
    \node[fill=black,circle,inner sep=1pt,opacity=0.5] at (0,5,0) {};
    \node[fill=black,circle,inner sep=1pt,opacity=0.5] at (0,5,2) {};
    \node[fill=black,circle,inner sep=1pt,opacity=0.5] at (0,5,-2) {};
    \node[fill=black,circle,inner sep=1pt,opacity=0.5] at (2,5,0) {};
    \node[fill=black,circle,inner sep=1pt,opacity=0.5] at (2,5,2) {};
    \node[fill=black,circle,inner sep=1pt,opacity=0.5] at (2,5,-2) {};
    \node[fill=black,circle,inner sep=1pt,opacity=0.5] at (-2,5,0) {};
    \node[fill=black,circle,inner sep=1pt,opacity=0.5] at (-2,5,2) {};
    \node[fill=black,circle,inner sep=1pt,opacity=0.5] at (-2,5,-2) {};
    \node[fill=black,circle,inner sep=1pt,opacity=0.5] at (4,5,2) {};
    \node[fill=black,circle,inner sep=1pt,opacity=0.5] at (4,5,0) {};
    \node[fill=black,circle,inner sep=1pt,opacity=0.5] at (4,5,-2) {};
    
    \coordinate (z_1) at (0,0,0);
    \coordinate (z_3) at (0,0,4);
    \coordinate (z_2) at (0,5,0);
    \coordinate (z_4) at (4,5,0);
    
    \coordinate (p_pl_1) at ($ (z_1)!1/2!(z_3) $);
    \coordinate (p_mi_1) at ($ (z_2)!1/2!(z_4) $);
    \coordinate (z_1_z_2_half) at ($ (z_1)!1/2!(z_2) $);
    \coordinate (z_2_z_3_half) at ($ (z_2)!1/2!(z_3) $);
    \coordinate (z_3_z_4_half) at ($ (z_3)!1/2!(z_4) $);
    \coordinate (z_4_z_1_half) at ($ (z_4)!1/2!(z_1) $); 
    
    \coordinate (p) at ($ (p_pl_1)!1/2!(p_mi_1) $);
    \coordinate (z_1_pl_1) at ($(p_mi_1) + (z_1) - (p_pl_1) $);
    \coordinate (z_3_pl_1) at ($(p_mi_1) + (z_3) - (p_pl_1) $);
    \coordinate (z_2_mi_1) at ($(p_pl_1) + (z_2) - (p_mi_1) $);
    \coordinate (z_4_mi_1) at ($(p_pl_1) + (z_4) - (p_mi_1) $);
    
    \coordinate (z_1_p) at ($ (z_1)!1/2!(z_1_pl_1) $);
    \coordinate (z_3_p) at ($(z_3)!1/2!(z_3_pl_1)$);
    \coordinate (z_2_p) at ($(z_2)!1/2!(z_2_mi_1)$);
    \coordinate (z_4_p) at ($(z_4)!1/2!(z_4_mi_1)$);
    
    \fill[blue!05,opacity=0.3] (3,0,5) -- (3,0,-3) -- (-3,0,-3) -- (-3,0,5) -- cycle;
    \fill[blue!05,opacity=0.3] (5,5,3) -- (5,5,-3) -- (-3,5,-3) -- (-3,5,3) -- cycle;
    
    \fill[blue!25,opacity=0.3] (z_1) -- (z_2_mi_1) -- (z_3) -- (z_4_mi_1) -- cycle;
    \fill[blue!25,opacity=0.9] (z_1_pl_1) -- (z_2) -- (z_3_pl_1) -- (z_4) -- cycle;
    
    \node[fill=blue,circle,inner sep=1pt] at (z_1) {};
    \draw (z_1) node[above left] {$\vec z_1$};
    
    \node[fill=blue,circle,inner sep=1pt] at (z_3) {};
    \draw (z_3) node[right] {$\vec z_3$};
    
    \node[fill=blue,circle,inner sep=1pt] at (z_2) {};
    \draw (z_2) node[left] {$\vec z_2$};
    
    \node[fill=blue,circle,inner sep=1pt] at (z_4) {};
    \draw (z_4) node[right] {$\vec z_4$};
    
    \node[fill=blue,circle,inner sep=1pt] at (z_1_pl_1) {};
    \draw (z_1_pl_1) node[above] {$\vec z_1^{R}$};
    
    \node[fill=blue,circle,inner sep=1pt] at (z_3_pl_1) {};
    \draw (z_3_pl_1) node[left] {$\vec z_3^{R}$};
    
    \node[fill=blue,circle,inner sep=1pt] at (z_2_mi_1) {};
    \draw (z_2_mi_1) node[left] {$\vec z_2^{Q}$};
    
    \node[fill=blue,circle,inner sep=1pt] at (z_4_mi_1) {};
    \draw (z_4_mi_1) node[right] {$\vec z_4^{Q}$};
    
    \node[fill=white,circle,inner sep=1pt,draw=blue] at (p) {};
    \draw (p) node[left] {$\vec p$};
    
    \node[fill=blue,circle,inner sep=1pt,draw=blue] at (p_pl_1) {};
    \draw (p_pl_1) node[below right] {$\vec{p}^{Q}$};
    
    \node[fill=blue,circle,inner sep=1pt,draw=blue] at (p_mi_1) {};
    \draw (p_mi_1) node[above] {$\vec{p}^{R}$};
    
    \draw[->] [red,very thin] (z_1) -- (z_2);
    \draw[->] [red,very thin] (z_1) -- (p_pl_1);
    \draw[->] [red,very thin] (z_1) -- (p_mi_1);
    
    \draw (5, 0, 0) node[right] {$x$};
    \draw (0, 7, 0) node[left] {$z$};
    \draw (0, 0, 6) node[left] {$y$};
    
    \end{scope}
  \end{tikzpicture}
  \caption{Расположение точек внутри гиперплоскости $S_{1}$ из случая \textbf{Г.1.б} леммы \ref{main_lem_2_2}}
\end{figure}

\textbf{Д}) $\Delta^{Q}_{k} \cap \Z^{4} = \{\vec{z}_{1}, \vec{z}_{3}, \vec{p}^{Q}_{2},\vec{p}^{Q}_{4}\}$. Заметим, что случай $\Delta^{R}_{k} \cap \Z^{4} = \{\vec{z}_{2}, \vec{z}_{4}\}$ с точностью до перестановки индексов невозможен в силу пункта \textbf{А}. Также случай $\Delta^{R}_{k} \cap \Z^{4} = \{\vec{z}_{2}, \vec{z}_{4}, \vec{z}^{R}_{1}, \vec{z}^{R}_{3}\}$ с точностью до перестановки индексов невозможен в силу пункта \textbf{Б}. Случай $\Delta^{R}_{k} \cap \Z^{4} = \{\vec{z}_{2}, \vec{z}_{4}, \vec{p}^{R}\}$ с точностью до перестановки индексов полностью эквивалентен пункту \textbf{В.1}. Кроме того, случай $\Delta^{R}_{k} \cap \Z^{4} = \{\vec{z}_{2}, \vec{z}_{4}, \vec{z}^{R}_{1}, \vec{z}^{R}_{3},  \vec{p}^{R}\}$ с точностью до перестановки индексов невозможен в силу пункта \textbf{Г}. Осталось заметить, что если точки $\vec{p}^{R}_{1}$ и $\vec{p}^{R}_{3}$ принадлежат решетке $\Z^{4}$, то $\vec{p}^{Q} \in \Z^{4}$, чего не может быть. Значит случай  $\Delta^{R}_{k} \cap \Z^{4} = \{\vec{z}_{2}, \vec{z}_{4}, \vec{p}^{R}_{1},\vec{p}^{R}_{3}\}$ также невозможен, что завершает доказательство леммы. 

\end{proof}

\begin{lemma}\label{main_lem_4}
  Пусть $G$ --- собственная циклическая симметрия $\cf(l_1,l_2,l_3, l_4)\in\gA_3$. Тогда существуют $\vec{z}_1$, $\vec{z}_2$, $\vec{z}_3$, $\vec{z}_4$ $\in$ $\Z^4$, такие что
\[G(\vec{z}_{1}) = \vec{z}_{2}, \, G(\vec{z}_{2}) = \vec{z}_{3}, \, G(\vec{z}_{3}) = \vec{z}_{4}, \, G(\vec{z}_{4}) = \vec{z}_{1}\] 
и выполняется хотя бы одно из семи утверждений:

\textup{(1)} утверждение \textup{(4)} леммы \ref{main_lem_2_2};

\textup{(2)} утверждение \textup{(8)} леммы \ref{main_lem_2_2};

\textup{(3)} утверждение \textup{(7)} леммы \ref{main_lem_2_2};

\textup{(4)} утверждение \textup{(3)} леммы \ref{main_lem_2_2};

\textup{(5)} утверждение \textup{(10)} леммы \ref{main_lem_2_2};

\textup{(6)} утверждение \textup{(5)} леммы \ref{main_lem_2_2};

\textup{(7)} утверждение \textup{(1)} леммы \ref{main_lem_2_2}.
\end{lemma}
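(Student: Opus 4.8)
The plan is to deduce Lemma \ref{main_lem_4} from Lemma \ref{main_lem_2_2} applied to $F=G^{2}$, using the extra order-$4$ structure of $G$ to promote the involution pairing of $F$ to a genuine $G$-cycle and to cut the eleven configurations down to seven.

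First I would record the structure from Lemma \ref{rational_subspace_4}: the proper cyclic symmetry $G$ has simple eigenvalues $1,-1,i,-i$, so $G^{4}=I_{4}$, and $\R^{4}=l_{+}\oplus l_{-}\oplus L$ with $l_{+},l_{-},L$ rational, $G|_{l_{+}}=\id$, $G|_{l_{-}}=-\id$, and $G^{2}|_{L}=-\id$. By Corollaries \ref{all_to_2_2} and \ref{property_ord_eq}, $F=G^{2}$ is a proper symmetry with $\textup{ord}(\sigma_{F})=2$, whose eigenvalue-$1$ plane is $l_{+}\oplus l_{-}$ and whose eigenvalue-$(-1)$ plane is $L$. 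I would then run the construction of Lemma \ref{main_lem_2_2} for $F$ with the choice $l^{1}_{+}=l_{+}$, $l^{2}_{+}=l_{-}$, $L_{-}=L$. This makes $\vec p=S_{1}\cap l_{+}$ a fixed point of $G$, the transversal line $l\parallel l_{-}$ a $G$-invariant line on which $G$ is the reflection in $\vec p$, and the plane $\pi=\vec p+L$ a $G$-invariant plane on which $G$ acts as the quarter turn about $\vec p$.

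Next I would extract the $G$-cycle. Since $G$ reflects $l$ through $\vec p$ and preserves $\Z^{4}$, it interchanges the two nearest rational planes, $G(Q)=R$; in particular $Q$ and $R$ are equidistant from $\pi$ and $G(\vec p^{\,Q})=\vec p^{\,R}$. One may therefore run the iterative selection $G$-equivariantly, starting from $\vec z_{2}=G\vec z_{1}$ — the required non-collinearity of $\vec z_{1}-\vec p^{\,Q}$ and $\vec z_{2}-\vec p^{\,R}=G(\vec z_{1}-\vec p^{\,Q})$ is automatic, because a quarter turn fixes no line — and keeping the admissible pair symmetric. The output then satisfies $\vec z_{2}=G\vec z_{1}$, $\vec z_{3}=F\vec z_{1}=G^{2}\vec z_{1}$, $\vec z_{4}=F\vec z_{2}=G^{3}\vec z_{1}$ and $G\vec z_{4}=G^{4}\vec z_{1}=\vec z_{1}$, i.e. $\vec z_{1},\vec z_{2},\vec z_{3},\vec z_{4}$ is exactly the $G$-cycle demanded. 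Moreover $G$ now maps $\Delta^{Q}_{k}\leftrightarrow\Delta^{R}_{k}$ and acts on $\Delta^{\pi}_{k}$ as the quarter turn fixing $\vec p$, so $\Delta^{\pi}_{k}$ is a square with vertices $\vec z^{\pi}_{1}\to\vec z^{\pi}_{2}\to\vec z^{\pi}_{3}\to\vec z^{\pi}_{4}$, and the whole lattice set $(\Delta^{\pi}_{k}\cup\Delta^{Q}_{k}\cup\Delta^{R}_{k})\cap\Z^{4}$ is $G$-invariant.

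Finally I would revisit the eleven configurations of Lemma \ref{main_lem_2_2} under this invariance. The quarter turn permutes in single orbits the four vertices $\vec z^{\pi}_{i}$, the four edge midpoints, and the four midpoints $\tfrac12(\vec z^{\pi}_{i}+\vec p)$, it fixes $\vec p$, and the swap $Q\leftrightarrow R$ couples $\vec p^{\,Q}$ with $\vec p^{\,R}$ and $\{\vec z^{Q}_{2},\vec z^{Q}_{4}\}$ with $\{\vec z^{R}_{1},\vec z^{R}_{3}\}$; thus a configuration can survive only if every such orbit is entirely integral or entirely non-integral, together with $\vec p^{\,Q}\in\Z^{4}\Leftrightarrow\vec p^{\,R}\in\Z^{4}$ forced by equidistance. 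Matching the surviving configurations against the cyclic relabeling $\vec z_{i}=G^{i-1}\vec z_{1}$ should leave precisely the seven bases listed. The main obstacle is exactly this bookkeeping: one must verify that the four discarded configurations are incompatible with the simultaneous constraints (the integrality orbits of the quarter turn, the $Q\leftrightarrow R$ coupling, and the rationality versus non-rationality of $\pi$), and that each retained configuration, once $\Z^{4}$ is written in the cyclic basis, reproduces one of the displayed formulas. Here the rotation symmetry of $\pi$ does the work that the mere involution $F$ could not, since it rules out the "half-orbit" integral patterns (e.g. two opposite edge midpoints, or one diagonal's quarter points) that are admissible in the order-$2$ analysis but break under the quarter turn.
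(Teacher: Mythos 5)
Your proposal follows essentially the same route as the paper's own proof: the paper likewise runs the construction of Lemma \ref{main_lem_2_2} $G$-equivariantly (taking the full $G$-orbit of an integer point of $Q\setminus l$ at each step, with $R=G(Q)$ equidistant from $\pi$ and the orbit parallelograms centred at $\vec p$, non-degenerate because $L$ contains no $G$-invariant lines), then observes that the resulting pair $(\vec z_1,\vec z_2)$ is admissible for $F=G^{2}$ with the splitting $l^{1}_{+}=l_{+}$, $l^{2}_{+}=l_{-}$, $l^{1}_{-}+l^{2}_{-}=L$, and finally discards exactly the four configurations you name, by exactly your mechanisms: cases \textbf{А.2.б} and \textbf{В.1.б} via the coupling $G(\vec p^{\,R})=\vec p^{\,Q}$ (respectively of the half-points $\frac12(\vec z^{R}_{1}+\vec p^{\,R})\mapsto\frac12(\vec z^{Q}_{2}+\vec p^{\,Q})$), and cases \textbf{Б.1.а.3} and \textbf{Г.1.а.3} via the quarter-turn orbits of the edge midpoints and of the points $\frac12(\vec z^{\pi}_{i}+\vec p)$. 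One caveat on your final bookkeeping: the surviving configurations are (1)--(7) of Lemma \ref{main_lem_2_2} (this is what the proof of Theorem \ref{theorem_proper_4} actually uses), so the labels (8) and (10) in the lemma's printed list appear to be misprints for (2) and (6), and your matching should be checked against that corrected list rather than the literal one.
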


\begin{proof}
Рассмотрим подпространства $l_{+}$, $l_{-}$ и $L$ из леммы \ref{rational_subspace_4} и положим $S = l_{-} + L$. Обозначим через $S_1$ ближайшую к $S$ рациональную гиперплоскость, параллельную $S$ и не совпадающую с $S$ (любую из двух). Тогда $G(S_1) = S_1$. Также обозначим через $\vec{p}$ точку пересечения гиперплоскости  $S_1$ и $l_{+}$, а через $l$ и $\pi$ прямую и плоскость, проходящие через точку $\vec{p}$ и параллельные $l_{-}$ и $L$ соответственно. Тогда,  $G(\vec{p}) = \vec{p}$, $G(\vec{v} - \vec{p}) = \vec{p} - \vec{v}$  для любого вектора $\vec v \in l$ и  $G^{2}(\vec{v} - \vec{p}) = \vec{p} - \vec{v}$  для любого вектора $\vec v \in \pi$ в силу леммы \ref{rational_subspace_4}. 

Поскольку подпространство $L$ не содержит собственных для $G$ одномерных подпространств, то для произвольной точки $\vec{v} \in \pi \setminus l$ четырехугольник $\textup{conv}\big(\vec{v}, G(\vec{v}), G^{2}(\vec{v}), G^{3}(\vec{v})\big)$ является параллелограммом, диагонали которого пересекаются в точке $\vec{p} = \frac{1}{2}\big(\vec{v} + G^{2}(\vec{v})\big) = \frac{1}{2}\big(G(\vec{v}) + G^{3}(\vec{v})\big)$.

Обозначим через $Q$ рациональную плоскость ближайшую к $\pi$, лежащую в гиперплоскости $S_1$, параллельную $\pi$ и не совпадающую с $\pi$. Поскольку $G(\vec{v} - \vec{p}) = \vec{p} - \vec{v}$  для любого вектора $\vec v \in l$, то $R = G(Q)$ и $Q$ --- рациональные плоскости ближайшие к $\pi$ и равноудаленные от нее, лежащие в гиперплоскости $S_1$ по разные стороны от $\pi$, параллельные $\pi$ и не совпадающие с $\pi$. Положим $\vec{p}^{Q} = Q \cap l$ и $\vec{p}^{R} = R \cap l$. Построим точки $\vec{z}_{1}$, $\vec{z}_{2}$, $\vec{z}_{3}$, $\vec{z}_{4}$ при помощи следующей итерационной процедуры. Возьмем произвольную целочисленную точку $\vec{v}_{1,1} \in Q \setminus l$. Введем обозначения $\vec{v}_{1,2} = G(\vec{v}_{1,1})$, $\vec{v}_{1,3} = G^{2}(\vec{v}_{1,1})$, $\vec{v}_{1,4} = G^{3}(\vec{v}_{1,1})$. Пусть точки  $\vec{v}^{\pi}_{1,1}$, $\vec{v}^{\pi}_{1,2}$, $\vec{v}^{\pi}_{1,3}$, $\vec{v}^{\pi}_{1,4}$ --- проекции параллельные $l$ на плоскость $\pi$ точек $\vec{v}_{1,1}$, $\vec{v}_{1,2}$, $\vec{v}_{1,3}$, $\vec{v}_{1,4}$ соответственно. Также обозначим через $\vec{v}^{R}_{1,1}$ и $\vec{v}^{R}_{1,3}$ --- проекции параллельные $l$ на плоскость $R$ точек $\vec{v}_{1,1}$ и $\vec{v}_{1,3}$, а через $\vec{v}^{Q}_{1,2}$ и $\vec{v}^{Q}_{1,4}$ --- проекции параллельные $l$ на плоскость $Q$ точек $\vec{v}_{1,2}$ и $\vec{v}_{1,4}$. По доказанному выше множество $\Delta^{\pi}_{1} = \textup{conv}(\vec{v}^{\pi}_{1,1}, \vec{v}^{\pi}_{1,2}, \vec{v}^{\pi}_{1,3}, \vec{v}^{\pi}_{1,4})$ является параллелограммом, диагонали которого пересекаются в точке $\vec{p} = \frac{1}{4}(\vec{v}_{1,1} + \vec{v}_{1,2} + \vec{v}_{1,3} + \vec{v}_{1,4})$. Таким образом, множества $\Delta^{Q}_{1} = \textup{conv}(\vec{v}_{1,1}, \vec{v}^{Q}_{1,2}, \vec{v}_{1,3}, \vec{v}^{Q}_{1,4})$ и $\Delta^{R}_{1} = \textup{conv}(\vec{v}^{R}_{1,1}, \vec{v}_{1,2}, \vec{v}^{R}_{1,3}, \vec{v}_{1,4})$ также являются параллелограммами. Заметим, что $\vec{p}^{Q} = \frac{1}{2}(\vec{v}_{1,1} + \vec{v}_{1,3})$ и $\vec{p}^{R} = \frac{1}{2}(\vec{v}_{1,2} + \vec{v}_{1,4})$. 

Предположим, мы построили параллелограммы $\Delta^{\pi}_{j}$, $\Delta^{Q}_{j}$ и $\Delta^{R}_{j}$. Если на плоскостях $Q$ и $R$ существует целая точка, не совпадающая с точками $\vec{p}^{Q}$, $\vec{p}^{R}$ и ни с какой из вершин параллелограммов $\Delta^{Q}_{j}$ и $\Delta^{R}_{j}$, при этом лежащая в одном из этих параллелограммов (без ограничения общности, внутри $\Delta^{Q}_{j}$), то обозначим её через $\vec{v}_{j+1,1}$. Введем обозначения $\vec{v}_{j+1,2} = G(\vec{v}_{j+1,1})$, $\vec{v}_{j+1,3} = G^{2}(\vec{v}_{j+1,1})$, $\vec{v}_{j+1,4} = G^{3}(\vec{v}_{j+1,1})$. Пусть точки  $\vec{v}^{\pi}_{j+1,1}$, $\vec{v}^{\pi}_{j+1,2}$, $\vec{v}^{\pi}_{j+1,3}$, $\vec{v}^{\pi}_{j+1,4}$ --- проекции параллельные $l$ на плоскость $\pi$ точек $\vec{v}_{j+1,1}$, $\vec{v}_{j+1,2}$, $\vec{v}_{j+1,3}$, $\vec{v}_{j+1,4}$ соответственно. Также обозначим через $\vec{v}^{R}_{j+1,1}$ и $\vec{v}^{R}_{j+1,3}$ --- проекции параллельные $l$  на плоскость $R$ точек $\vec{v}_{j+1,1}$ и $\vec{v}_{j+1,3}$, а через $\vec{v}^{Q}_{j+1,2}$ и $\vec{v}^{Q}_{j+1,4}$ --- проекции параллельные $l$ на плоскость $Q$ точек $\vec{v}_{j+1,2}$ и $\vec{v}_{j+1,4}$. Определим параллелограммы 
\[\Delta^{\pi}_{j+1} = \textup{conv}(\vec{v}^{\pi}_{j+1,1}, \vec{v}^{\pi}_{j+1,2}, \vec{v}^{\pi}_{j+1,3}, \vec{v}^{\pi}_{j+1,4}),\]
\[\Delta^{Q}_{j+1} = \textup{conv}(\vec{v}_{j+1,1}, \vec{v}^{Q}_{j+1,2}, \vec{v}_{j+1,3}, \vec{v}^{Q}_{j+1,4}),\] 
\[\Delta^{R}_{j+1} = \textup{conv}(\vec{v}^{R}_{j+1,1}, \vec{v}_{j+1,2}, \vec{v}^{R}_{j+1,3}, \vec{v}_{j+1,4}).\] 
При этом $\vec{p} = \frac{1}{4}(\vec{v}_{j+1,1} + \vec{v}_{j+1,2} + \vec{v}_{j+1,3} + \vec{v}_{j+1,4})$, $\vec{p}^{Q} = \frac{1}{2}(\vec{v}_{j+1,1} + \vec{v}_{j+1,3})$ и $\vec{p}^{R} = \frac{1}{2}(\vec{v}_{j+1,2} + \vec{v}_{j+1,4})$. 

Последовательность троек $(\Delta^{\pi}_{j}, \Delta^{Q}_{j}, \Delta^{R}_{j})$ конечна. Пусть $(\Delta^{\pi}_{k}, \Delta^{Q}_{k}, \Delta^{R}_{k})$ --- последний её элемент. Положим $\vec{z}_{1} = \vec{v}_{k,1}$, $\vec{z}_{2} = \vec{v}_{k,2}$, $\vec{z}_{3} = \vec{v}_{k,3}$, $\vec{z}_{4} = \vec{v}_{k,4}$, $\vec{z}^{\pi}_{1} = \vec{v}^{\pi}_{k,1}$, $\vec{z}^{\pi}_{2} = \vec{v}^{\pi}_{k,2}$, $\vec{z}^{\pi}_{3} = \vec{v}^{\pi}_{k,3}$, $\vec{z}^{\pi}_{4} = \vec{v}^{\pi}_{k,4}$, $\vec{z}^{R}_{1} = \vec{v}^{R}_{k,1}$, $\vec{z}^{R}_{3} = \vec{v}^{R}_{k,3}$, $\vec{z}^{Q}_{2} = \vec{v}^{Q}_{k,2}$, $\vec{z}^{Q}_{4} = \vec{v}^{Q}_{k,4}$.

Обозначим через $\hat{l}^{1}$ и $\hat{l}^{2}$ такие одномерные рациональные подпространства, что $\hat{l}^{1} + \hat{l}^{2} = L$. Заметим, что подпространства $l^{1}_{+} = l_{+}$, $l^{2}_{+} = l_{-}$, $l^{1}_{-} = \hat{l}^{1}$, $l^{2}_{-} = \hat{l}^{2}$ удовлетворяют условиям леммы \ref{rational_subspace_2_2} для оператора $G^{2}$. Таким образом, пара точек $(\vec{z}_{1}, \vec{z}_{2})$ является допустимой (см. доказательство леммы \ref{main_lem_2_2}) для оператора $G^{2}$ и цепной дроби $\cf(l_1,l_2,l_3, l_4)\in\gA_3$, поскольку 
\[(\Delta^{Q}_{k} \cup \Delta^{R}_{k}) \cap \Z^{4} \subset \{\vec{z}_{1}, \vec{z}^{Q}_{2}, \vec{z}_{3}, \vec{z}^{Q}_{4}, \vec{z}^{R}_{1} , \vec{z}_{2}, \vec{z}^{R}_{3}, \vec{z}_{4}, \vec{p}_{Q}, \vec{p}_{R}\}.\]
Из этого следует, что должно выполняться хотя бы одно из одиннадцати утверждений \textup{(1)} - \textup{(11)} леммы \ref{main_lem_2_2}.

Заметим, что случай \textbf{А.2.б} из доказательства леммы \ref{main_lem_2_2} невозможен, поскольку, в этом случае, $G(\vec{p}^{R}) =  \vec{p}^{Q}$, а значит, $\vec{p}^{Q} \in \Z^{4}$, что противоречит расположению точек решетки $\Z^{4}$ в этом случае. Случай \textbf{Б.1.а.3} из доказательства леммы \ref{main_lem_2_2} невозможен, поскольку, в этом случае, $G(\frac{\vec{z}^{\pi}_{1} + \vec{z}^{\pi}_{2}}{2}) =  \frac{\vec{z}^{\pi}_{2} + \vec{z}^{\pi}_{3}}{2}$, а значит, $\frac{\vec{z}^{\pi}_{2} + \vec{z}^{\pi}_{3}}{2} \in \Z^{4}$, что противоречит расположению точек решетки $\Z^{4}$ в этом случае. Случай \textbf{В.1.б} из доказательства леммы \ref{main_lem_2_2} невозможен, поскольку, в этом случае, $G(\frac{\vec{z}^{R}_{1} + \vec{p}_{R}}{2}) =  \frac{\vec{z}^{Q}_{2} + \vec{p}_{Q}}{2}$, а значит, $\frac{\vec{z}^{Q}_{2} + \vec{p}_{Q}}{2} \in \Z^{4}$, что противоречит расположению точек решетки $\Z^{4}$ в этом случае. Случай \textbf{Г.1.а.3} из доказательства леммы \ref{main_lem_2_2} невозможен, поскольку, в этом случае, $G(\frac{\vec{z}^{\pi}_{1} + \vec{p}}{2}) =  \frac{\vec{z}^{\pi}_{2} + \vec{p}}{2}$, а значит, $\frac{\vec{z}^{\pi}_{2} + \vec{p}}{2} \in \Z^{4}$ и $\frac{\vec{z}_{1} + \vec{z}^{Q}_{2}}{2} = \vec{z}_{1} + (\frac{\vec{z}^{\pi}_{2} + \vec{p}}{2} - \frac{\vec{z}^{\pi}_{1} + \vec{p}}{2}) \in \Z^{4}$, что противоречит расположению точек решетки $\Z^{4}$ в этом случае. Итак, мы показали, что должно выполняться хотя бы одно из семи утверждений \textup{(4)}, \textup{(8)}, \textup{(7)}, \textup{(3)}, \textup{(10)}, \textup{(5)}, \textup{(1)} из формулировки леммы \ref{main_lem_2_2}. 

\end{proof}

\section{Матрицы собственных симметрий}\label{matrix_and_algebraicity}

Напомним, что если задана дробь $\cf(l_1,l_2,l_3,l_4)=\cf(A)\in\gA_3$, будем считать, что подпространство $l_1$ порождается вектором $\vec l_1=(1,\alpha,\beta,\gamma)$. Тогда из предложения \ref{prop:more_than_pelle_n_dim} следует, что числа $1,\alpha,\beta, \gamma$ образуют базис поля $K=\Q(\alpha,\beta, \gamma)$ над $\Q$ и каждое $l_i$ порождается вектором $\vec l_i=(1,\sigma_i(\alpha),\sigma_i(\beta), \sigma_i(\gamma))$, где $\sigma_1(=\id),\sigma_2,\sigma_3, \sigma_4$ --- все вложения $K$ в $\R$. То есть, если через $\big(\vec l_1,\vec l_2,\vec l_3,\vec l_4\big)$ обозначить матрицу со столбцами $\vec l_1,\vec l_2,\vec l_3,\vec l_4$, получим
 \[
  \big(\vec l_1,\vec l_2,\vec l_3,\vec l_4\big)=
  \begin{pmatrix}
    1 & 1 & 1 & 1 \\
    \alpha & \sigma_2(\alpha) & \sigma_3(\alpha) & \sigma_4(\alpha) \\
    \beta & \sigma_2(\beta) & \sigma_3(\beta) & \sigma_4(\beta) \\
    \gamma & \sigma_2(\gamma) & \sigma_3(\gamma) & \sigma_4(\gamma)
  \end{pmatrix}.
\]

Мы будем обозначать через $\widetilde{\gA_{3}}$ множество всех трехмерных алгебраических цепных дробей, для которых 
\[\sigma_3(K) = K, \quad \sigma_4(K) = \sigma_2(K), \quad \sigma^{2}_{3} = \textup{id}, \quad \sigma_{4} = \sigma_{2}\sigma_{3}.\]
 
Для каждого $i=1, 2, \ldots, 10$ определим $\widetilde{\mathbf{CF}_{i}}$ как класс дробей из $\widetilde{\gA_{3}}$, удовлетворяющих паре соотношений $\gR_{i}$, где
  
  $\gR_{1}$: $\beta + \sigma_{3}(\beta)= -\big(\alpha + \sigma_{3}(\alpha)\big),\ \gamma =   \sigma_{3}(\alpha)$;
  
  $\gR_{2}$: $\beta + \sigma_{3}(\beta)= 1 - \big(\alpha + \sigma_{3}(\alpha)\big) ,\ \gamma = \sigma_{3}(\alpha)$;
  
  $\gR_{3}$: $\beta + \sigma_{3}(\beta)= 2 - \big(\alpha + \sigma_{3}(\alpha)\big) ,\ \gamma = \sigma_{3}(\alpha)$;
  
  $\gR_{4}$: $\beta + \sigma_{3}(\beta)= -\big(\alpha + \sigma_{3}(\alpha)\big),\ \gamma =  \frac{\alpha + \sigma_{3}(\alpha)}{2}$;
  
  $\gR_{5}$: $\beta + \sigma_{3}(\beta)= 2 -\big(\alpha + \sigma_{3}(\alpha)\big),\ \gamma =   \frac{\alpha + \sigma_{3}(\alpha)}{2}$;
  
  $\gR_{6}$: $\beta + \sigma_{3}(\beta)= -\big(\alpha + \sigma_{3}(\alpha)\big),\ \gamma =  \frac{\alpha + \sigma_{3}(\alpha) + 1}{2}$;
  
  $\gR_{7}$: $\beta + \sigma_{3}(\beta)= 2 -\big(\alpha + \sigma_{3}(\alpha)\big),\ \gamma =  \frac{\alpha + \sigma_{3}(\alpha) + 1}{2}$;
  
  $\gR_{8}$: $\beta + \sigma_{3}(\beta)= 1 - \frac{\alpha + \sigma_{3}(\alpha)}{2},\ \gamma =  \frac{\alpha + \sigma_{3}(\alpha)}{2}$;
  
  $\gR_{9}$: $\beta + \sigma_{3}(\beta)= 2 - \frac{\alpha + \sigma_{3}(\alpha)}{2},\ \gamma =  \frac{\alpha + \sigma_{3}(\alpha)}{2}$;
  
  $\gR_{10}$: $\beta + \sigma_{3}(\beta)= 2 - \frac{\alpha + \sigma_{3}(\alpha)}{2},\ \gamma =  \frac{\sigma_{3}(\alpha) - \alpha}{4}$.

Покажем, что все дроби из классов $ \widetilde{\mathbf{CF}_{i}}$, палиндромичны для каждого $i=1, 2, \ldots, 10$. Положим $\widetilde{G_{1}}, \widetilde{G_{2}}, \ldots, \widetilde{G_{10}}$ равными соответственно матрицам 

\[
  \left(\begin{smallmatrix}
    \phantom{-}1 & \phantom{-}0 & \phantom{-}0 & \phantom{-}0 \\
    \phantom{-}0 & \phantom{-}0 & \phantom{-}0 & \phantom{-}1 \\
    \phantom{-}0 &                  -1 &                   -1 &                  -1 \\
    \phantom{-}0 & \phantom{-}1 & \phantom{-}0 & \phantom{-}0
  \end{smallmatrix}\right),
  \left(\begin{smallmatrix}
    \phantom{-}1 & \phantom{-}0 & \phantom{-}0 & \phantom{-}0 \\
    \phantom{-}0 & \phantom{-}0 & \phantom{-}0 & \phantom{-}1 \\
    \phantom{-}1 &                  -1 &                   -1 &                  -1 \\
    \phantom{-}0 & \phantom{-}1 & \phantom{-}0 & \phantom{-}0
  \end{smallmatrix}\right),
  \left(\begin{smallmatrix}
    1 & \phantom{-}0 & \phantom{-}0 & \phantom{-}0 \\
    0 & \phantom{-}0 & \phantom{-}0 & \phantom{-}1 \\
    2 &                  -1 &                   -1 &                  -1 \\
    0 & \phantom{-}1 & \phantom{-}0 & \phantom{-}0
  \end{smallmatrix}\right),
  \left(\begin{smallmatrix}
    1 & \phantom{-}0 & \phantom{-}0 & \phantom{-}0 \\
    0 &                  -1 & \phantom{-}0 & \phantom{-}2 \\
    0 & \phantom{-}0 &                   -1 &                  -2 \\
    0 & \phantom{-}0 & \phantom{-}0 & \phantom{-}1
  \end{smallmatrix}\right),
  \left(\begin{smallmatrix}
    1 & \phantom{-}0 & \phantom{-}0 & \phantom{-}0 \\
    0 &                  -1 & \phantom{-}0 & \phantom{-}2 \\
    2 & \phantom{-}0 &                   -1 &                  -2 \\
    0 & \phantom{-}0 & \phantom{-}0 & \phantom{-}1
  \end{smallmatrix}\right),
 \]
 \[
  \left(\begin{smallmatrix}
    \phantom{-}1 & \phantom{-}0 & \phantom{-}0 & \phantom{-}0 \\
                     -1 &                   -1 & \phantom{-}0 & \phantom{-}2 \\
    \phantom{-}1 & \phantom{-}0 &                   -1 &                  -2 \\
    \phantom{-}0 & \phantom{-}0 & \phantom{-}0 & \phantom{-}1
  \end{smallmatrix}\right),
  \left(\begin{smallmatrix}
    \phantom{-}1 & \phantom{-}0 & \phantom{-}0 & \phantom{-}0 \\
                     -1 &                  -1 & \phantom{-}0 & \phantom{-}2 \\
    \phantom{-}3 & \phantom{-}0 &                   -1 &                  -2 \\
    \phantom{-}0 & \phantom{-}0 & \phantom{-}0 & \phantom{-}1
  \end{smallmatrix}\right),
  \left(\begin{smallmatrix}
    1 & \phantom{-}0 & \phantom{-}0 & \phantom{-}0 \\
    0 &                  -1 & \phantom{-}0 & \phantom{-}2 \\
    1 & \phantom{-}0 &                   -1 &                  -1 \\
    0 & \phantom{-}0 & \phantom{-}0 & \phantom{-}1
  \end{smallmatrix}\right),
  \left(\begin{smallmatrix}
    1 & \phantom{-}0 & \phantom{-}0 & \phantom{-}0 \\
    0 &                  -1 & \phantom{-}0 & \phantom{-}2 \\
    2 & \phantom{-}0 &                   -1 &                  -1 \\
    0 & \phantom{-}0 & \phantom{-}0 & \phantom{-}1
  \end{smallmatrix}\right),
  \left(\begin{smallmatrix}
    1 & \phantom{-}0 & \phantom{-}0 & \phantom{-}0 \\
    0 & \phantom{-}1 & \phantom{-}0 & \phantom{-}4 \\
    2 &                  -1 &                   -1 &                  -2 \\
    0 & \phantom{-}0 & \phantom{-}0 &                  -1
  \end{smallmatrix}\right).
\]

\begin{lemma}\label{oper_eq_4d_2}
  Пусть $\cf(l_1,l_2,l_3,l_4)\in\gA_3$ и $i\in\{1,2, \ldots,10\}$. Тогда цепная дробь $\cf(l_1,l_2,l_3,l_4)$ принадлежит классу $\widetilde{\mathbf{CF}_{i}}$ в том и только в том случае, если $\widetilde{G_{i}}$ --- её собственная симметрия и $\textup{ord}({\sigma_{\widetilde{G_{i}}}}) = 2$.
\end{lemma}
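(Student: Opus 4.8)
The plan is to reduce both implications to a direct verification that, under the stated relations, the integer matrix $\widetilde{G_i}$ acts on the spanning vectors $\vec l_1,\vec l_2,\vec l_3,\vec l_4$ as the monomial matrix realizing the order-two permutation $\sigma=(1\,3)(2\,4)$ (the normal form \eqref{matrix_ord_2_0}). By \eqref{eq:repres}, $\widetilde{G_i}\in\textup{Sym}_{\Z}\big(\cf(A)\big)$ exactly when there is a permutation $\sigma$ and nonzero scalars $\mu_j$ with $\widetilde{G_i}\vec l_j=\mu_j\vec l_{\sigma(j)}$ for all $j$. The key simplifying observation is that the first row of \emph{every} $\widetilde{G_i}$ is $(1,0,0,0)$, so the first coordinate of $\widetilde{G_i}\vec l_j$ is always $1$; hence whenever $\widetilde{G_i}\vec l_j$ is proportional to some $\vec l_{\sigma(j)}$, the proportionality constant is automatically $1$, i.e. $\widetilde{G_i}\vec l_j=\vec l_{\sigma(j)}$. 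Consequently the matrix of $\widetilde{G_i}$ in the basis $\vec l_1,\dots,\vec l_4$ is the plain permutation matrix of $\sigma$, all $\mu_j$ equal $1$, and in particular $\mu_1\mu_3=\mu_2\mu_4=1$, so \emph{properness comes for free} from Lemma \ref{prod_lemm}. Thus the whole statement reduces to matching coordinates.

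For the forward implication, suppose $\cf\in\widetilde{\mathbf{CF}_{i}}$, so the defining relations of $\widetilde{\gA_{3}}$ together with $\gR_i$ hold. Computing $\widetilde{G_i}\vec l_1$ entrywise and substituting the $\gR_i$-expressions for $\gamma$ and for $\beta+\sigma_3(\beta)$ (and using $\sigma_3^2=\id$ to evaluate $\sigma_3(\gamma)$), one checks termwise that $\widetilde{G_i}\vec l_1=(1,\sigma_3(\alpha),\sigma_3(\beta),\sigma_3(\gamma))=\vec l_3$; applying $\sigma_3$ to the same relations gives $\widetilde{G_i}\vec l_3=\vec l_1$. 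For the pair $(2,4)$ one applies $\sigma_2$ to the relations of $\gR_i$ and uses $\sigma_4=\sigma_2\sigma_3$ to obtain $\widetilde{G_i}\vec l_2=\vec l_4$ and $\widetilde{G_i}\vec l_4=\vec l_2$. Hence $\widetilde{G_i}$ induces $\sigma=(1\,3)(2\,4)$, is a symmetry with $\textup{ord}(\sigma_{\widetilde{G_i}})=2$, and is proper by the remark above. The ten cases are mechanically identical; only the particular substitution (say $\gamma=\sigma_3(\alpha)$, or $\gamma=\tfrac{\alpha+\sigma_3(\alpha)}{2}$, or $\gamma=\tfrac{\sigma_3(\alpha)-\alpha}{4}$) changes.

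For the converse, assume $\widetilde{G_i}$ is a proper symmetry with $\textup{ord}(\sigma_{\widetilde{G_i}})=2$. By Lemma \ref{ord_3} and the normalization preceding Lemma \ref{prod_lemm} we may keep $l_1$ fixed and relabel $l_2,l_3,l_4$ so that $\widetilde{G_i}$ has the form \eqref{matrix_ord_2_0}, i.e. $\sigma_{\widetilde{G_i}}=(1\,3)(2\,4)$; by the first-row observation $\widetilde{G_i}\vec l_1=\vec l_3$ and $\widetilde{G_i}\vec l_2=\vec l_4$. Equating the coordinates of $\widetilde{G_i}\vec l_1$ with $(1,\sigma_3(\alpha),\sigma_3(\beta),\sigma_3(\gamma))$ yields precisely the $\gamma$- and $\beta$-relations of $\gR_i$. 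The structural identities are then forced: applying $\sigma_3$ to these relations and invoking that two embeddings of $K$ coinciding on the generators $\alpha,\beta,\gamma$ must be equal gives $\sigma_3^2=\id$ and $\sigma_3(K)=K$; equating coordinates in $\widetilde{G_i}\vec l_2=\vec l_4$ and again comparing embeddings on generators gives $\sigma_4=\sigma_2\sigma_3$, whence $\sigma_4(K)=\sigma_2(K)$. Therefore $\cf\in\widetilde{\gA_{3}}$ and satisfies $\gR_i$, i.e. $\cf\in\widetilde{\mathbf{CF}_{i}}$.

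The only point requiring genuine care — the expected main obstacle — is exactly this last passage in the converse: from the numerical identities among $\alpha,\beta,\gamma$ and their $\sigma$-images to the field-theoretic relations $\sigma_3^2=\id$, $\sigma_3(K)=K$ and $\sigma_4=\sigma_2\sigma_3$. One must check that relations read off on the generators really propagate to all of $K=\Q(\alpha,\beta,\gamma)$, which is where the fact that $1,\alpha,\beta,\gamma$ is a $\Q$-basis of $K$ (Proposition \ref{prop:more_than_pelle_n_dim}) and the ``equal on generators $\Rightarrow$ equal embedding'' principle are used. Everything else is the routine, if lengthy, bookkeeping of ten $4\times4$ matrix products, all sharing the skeleton above.
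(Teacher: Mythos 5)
Your proposal is correct and follows essentially the same route as the paper: both reduce the statement via Lemma \ref{prod_lemm} to the normal form \eqref{matrix_ord_2_0}, verify $\widetilde{G_i}\vec l_j=\vec l_{\sigma(j)}$ with $\sigma=(1\,3)(2\,4)$ by coordinatewise substitution of the $\gR_i$-relations, and in the converse read off $\gR_i$ together with $\sigma_3^2=\id$, $\sigma_3(K)=K$, $\sigma_4=\sigma_2\sigma_3$ from the generator identities, using that $1,\alpha,\beta,\gamma$ is a $\Q$-basis of $K$. Your only streamlining --- noting that the first row $(1,0,0,0)$ of every $\widetilde{G_i}$ forces all proportionality constants $\mu_j=1$ at once, so properness follows uniformly --- is exactly the observation the paper repeats implicitly (``откуда $\mu_3=1$'') in each of its ten explicit case computations.
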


\begin{proof}
  В силу леммы \ref{prod_lemm} оператор $G\in\Gl_4(\Z)$ является собственной симметрией дроби $\cf(l_1,l_2,l_3,l_4)$ и $\textup{ord}({\sigma_{G}}) = 2$ тогда и только тогда, когда с точностью до перестановки индексов существуют такие действительные  числа $\mu_1,\mu_2,\mu_3,\mu_4$, что $G\big(\vec l_1,\vec l_2,\vec l_3,\vec l_4\big)=\big(\mu_3\vec l_3,\mu_4\vec l_4,\mu_1\vec l_1,\mu_2\vec l_2\big)$ и $\mu_1\mu_3 = \mu_2\mu_4 = 1$.
  
Пусть $\cf(l_1, l_2, l_3, l_4) \in \widetilde{\mathbf{CF}_{1}}$. Заметим, что $\sigma_{2} = \sigma_{2}\sigma^{2}_{3} = \sigma_{4}\sigma_{3}$, $\sigma_2(\gamma) = \sigma_{2}\sigma_{3}(\alpha)$, $\sigma_3(\gamma) = \sigma^{2}_{3}(\alpha) = \alpha$, $\sigma_4(\gamma) = \sigma_{4}\sigma_{3}(\alpha) = \sigma_{2}(\alpha)$, $\sigma_{2}(\beta) + \sigma_{2}\sigma_{3}(\beta) = \sigma_{2}\big(\beta + \sigma_{3}(\beta)\big) =  \sigma_{2}\Big( - \big(\alpha + \sigma_{3}(\alpha)\big)\Big) = - \big(\sigma_{2}(\alpha) + \sigma_{2}\sigma_{3}(\alpha)\big)$ и $\sigma_{4}(\beta) = \sigma_{2}\sigma_{3}(\beta)$. Тогда

\[\widetilde{G_{1}}\vec{l}_1 =
  \begin{pmatrix}
  1\\
  \sigma_{3}(\alpha)\\
  - \beta - \big(\alpha + \sigma_{3}(\alpha)\big)\\
  \alpha
   \end{pmatrix},
    \quad
    \widetilde{G_{1}}\vec{l}_2 =
  \begin{pmatrix}
  1\\
  \sigma_{2}\sigma_{3}(\alpha)\\
  - \sigma_{2}(\beta) - \big(\sigma_{2}(\alpha) + \sigma_{2}\sigma_{3}(\alpha)\big)\\
  \sigma_{2}(\alpha)
   \end{pmatrix},
\]
\[\widetilde{G_{1}}\vec{l}_3 =
  \begin{pmatrix}
  1\\
  \alpha\\
  - \sigma_{3}(\beta) - \big(\sigma_{3}(\alpha) + \alpha\big)\\
  \sigma_{3}(\alpha)
   \end{pmatrix},
    \quad
    \widetilde{G_{1}}\vec{l}_4 =
  \begin{pmatrix}
  1\\
  \sigma_{2}(\alpha)\\
  - \sigma_{2}\sigma_{3}(\beta) - \big(\sigma_{2}\sigma_{3}(\alpha) + \sigma_{2}(\alpha)\big)\\
  \sigma_{2}\sigma_{3}(\alpha)
   \end{pmatrix},
\]
то есть $\widetilde{G_{1}} \big(\vec{l}_1, \vec{l}_2, \vec{l}_3, \vec{l}_4\big) = \big(\vec{l}_3, \vec{l}_4, \vec{l}_1, \vec{l}_2\big)$. Следовательно, $\widetilde{G_{1}}$ ---  собственная симметрия цепной дроби $\cf(l_1,l_2,l_3,l_4)$ и $\textup{ord}({\sigma_{\widetilde{G_{1}}}}) = 2$. Обратно, предположим, что $\widetilde{G_{1}}$ ---  собственная симметрия $\cf(l_1,l_2,l_3,l_4)$ и $\textup{ord}({\sigma_{\widetilde{G_{1}}}}) = 2$. Тогда существует такое $\mu_3$, что с точностью до перестановки индексов
  \[
    \widetilde{G_{1}}\vec{l}_1=
    \begin{pmatrix}
      1  \\
      \gamma \\
      - \beta - (\alpha + \gamma) \\
      \alpha
    \end{pmatrix}
    =\mu_3
    \begin{pmatrix}
      1  \\
      \sigma_{3}(\alpha) \\
      \sigma_{3}(\beta) \\
      \sigma_{3}(\gamma)
    \end{pmatrix}, 
  \]
  откуда $\mu_3 = 1$,  $\gamma =\sigma_{3}(\alpha)$, $\sigma^{2}_3(\alpha)=\sigma_{3}(\gamma) = \alpha$, $\sigma^{2}_{3}(\gamma) = \sigma_{3}(\alpha) = \gamma$, $\beta + \sigma_{3}(\beta) = - \big(\alpha + \sigma_{3}(\alpha)\big)$, $\sigma^{2}_3(\beta) = - \sigma_3(\beta)  - \big(\sigma_3(\alpha) + \sigma^{2}_{3}(\alpha)\big) = - \sigma_3(\beta) - \big(\alpha + \sigma_3(\alpha)\big) = \beta$. Существует такое $\mu_4$, что
  \[ 
    \widetilde{G_{1}}\vec{l}_2=
    \begin{pmatrix}
      1  \\
      \sigma_{2}(\gamma) \\
      -  \sigma_{2}(\beta) - \big(\sigma_{2}(\alpha) +  \sigma_{2}(\gamma)\big) \\
       \sigma_{2}(\alpha)
    \end{pmatrix}
    =\mu_4
    \begin{pmatrix}
      1  \\
      \sigma_{4}(\alpha) \\
      \sigma_{4}(\beta) \\
      \sigma_{4}(\gamma)
    \end{pmatrix},
  \]
   откуда $\mu_4= 1$, $\sigma_{4}(\alpha) = \sigma_{2}(\gamma) = \sigma_{2}\sigma_{3}(\alpha)$, $\sigma_{4}(\gamma) = \sigma_{2}(\alpha) = \sigma_{2}\sigma_{3}(\gamma)$, $\sigma_{4}(\beta) = -  \sigma_{2}(\beta) - \big(\sigma_{2}(\alpha) +  \sigma_{2}(\gamma)\big) = \sigma_{2}\big(-  \beta - (\alpha + \gamma)\big) = \sigma_{2}\sigma_{3}(\beta)$. Стало быть, $\cf(l_1, l_2, l_3, l_4) \in  \widetilde{\mathbf{CF}_{1}}$, так как числа $1,\alpha,\beta, \gamma$ образуют базис поля $K=\Q(\alpha,\beta, \gamma)$.

  Пусть $\cf(l_1, l_2, l_3, l_4) \in  \widetilde{\mathbf{CF}_{2}}$. Заметим, что $\sigma_{2} = \sigma_{2}\sigma^{2}_{3} = \sigma_{4}\sigma_{3}$, $\sigma_2(\gamma) = \sigma_{2}\sigma_{3}(\alpha)$, $\sigma_3(\gamma) = \sigma^{2}_{3}(\alpha) = \alpha$, $\sigma_4(\gamma) = \sigma_{4}\sigma_{3}(\alpha) = \sigma_{2}(\alpha)$, $\sigma_{2}(\beta) + \sigma_{2}\sigma_{3}(\beta) = \sigma_{2}\big(\beta + \sigma_{3}(\beta)\big) =  \sigma_{2}\Big(1 - \big(\alpha + \sigma_{3}(\alpha)\big)\Big) = 1 - \big(\sigma_{2}(\alpha) + \sigma_{2}\sigma_{3}(\alpha)\big)$ и $\sigma_{4}(\beta) = \sigma_{2}\sigma_{3}(\beta)$. Тогда

\[\widetilde{G_{2}}\vec{l}_1 =
  \begin{pmatrix}
  1\\
  \sigma_{3}(\alpha)\\
  1 - \beta - \big(\alpha + \sigma_{3}(\alpha)\big)\\
  \alpha
   \end{pmatrix},
    \quad
    \widetilde{G_{2}}\vec{l}_2 =
  \begin{pmatrix}
  1\\
  \sigma_{2}\sigma_{3}(\alpha)\\
  1 - \sigma_{2}(\beta) - \big(\sigma_{2}(\alpha) + \sigma_{2}\sigma_{3}(\alpha)\big)\\
  \sigma_{2}(\alpha)
   \end{pmatrix},
\]
\[\widetilde{G_{2}}\vec{l}_3 =
  \begin{pmatrix}
  1\\
  \alpha\\
  1 - \sigma_{3}(\beta) - \big(\sigma_{3}(\alpha) + \alpha\big)\\
  \sigma_{3}(\alpha)
   \end{pmatrix},
    \quad
    \widetilde{G_{2}}\vec{l}_4 =
  \begin{pmatrix}
  1\\
  \sigma_{2}(\alpha)\\
  1 - \sigma_{2}\sigma_{3}(\beta) - \big(\sigma_{2}\sigma_{3}(\alpha) + \sigma_{2}(\alpha)\big)\\
  \sigma_{2}\sigma_{3}(\alpha)
   \end{pmatrix},
\]
то есть $\widetilde{G_{2}} \big(\vec{l}_1, \vec{l}_2, \vec{l}_3, \vec{l}_4\big) = \big(\vec{l}_3, \vec{l}_4, \vec{l}_1, \vec{l}_2\big)$. Следовательно, $\widetilde{G_{2}}$ ---  собственная симметрия цепной дроби $\cf(l_1,l_2,l_3,l_4)$ и $\textup{ord}({\sigma_{\widetilde{G_{2}}}}) = 2$. Обратно, предположим, что $\widetilde{G_{2}}$ ---  собственная симметрия $\cf(l_1,l_2,l_3,l_4)$ и $\textup{ord}({\sigma_{\widetilde{G_{2}}}}) = 2$. Тогда существует такое $\mu_3$, что с точностью до перестановки индексов
  \[
    \widetilde{G_{2}}\vec{l}_1=
    \begin{pmatrix}
      1  \\
      \gamma \\
      1 - \beta - (\alpha + \gamma) \\
      \alpha
    \end{pmatrix}
    =\mu_3
    \begin{pmatrix}
      1  \\
      \sigma_{3}(\alpha) \\
      \sigma_{3}(\beta) \\
      \sigma_{3}(\gamma)
    \end{pmatrix}, 
  \]
  откуда $\mu_3 = 1$,  $\gamma =\sigma_{3}(\alpha)$, $\sigma^{2}_3(\alpha)=\sigma_{3}(\gamma) = \alpha$, $\sigma^{2}_{3}(\gamma) = \sigma_{3}(\alpha) = \gamma$, $\beta + \sigma_{3}(\beta) = 1 - \big(\alpha + \sigma_{3}(\alpha)\big)$, $\sigma^{2}_3(\beta) = - \sigma_3(\beta) + \sigma_3(1) - \big(\sigma_3(\alpha) + \sigma^{2}_{3}(\alpha)\big) = - \sigma_3(\beta) + 1 - \big(\alpha + \sigma_3(\alpha)\big) = \beta$. Существует такое $\mu_4$, что
  \[ 
    \widetilde{G_{2}}\vec{l}_2=
    \begin{pmatrix}
      1  \\
      \sigma_{2}(\gamma) \\
      1 -  \sigma_{2}(\beta) - \big(\sigma_{2}(\alpha) +  \sigma_{2}(\gamma)\big) \\
       \sigma_{2}(\alpha)
    \end{pmatrix}
    =\mu_4
    \begin{pmatrix}
      1  \\
      \sigma_{4}(\alpha) \\
      \sigma_{4}(\beta) \\
      \sigma_{4}(\gamma)
    \end{pmatrix},
  \]
   откуда $\mu_4= 1$, $\sigma_{4}(\alpha) = \sigma_{2}(\gamma) = \sigma_{2}\sigma_{3}(\alpha)$, $\sigma_{4}(\gamma) = \sigma_{2}(\alpha) = \sigma_{2}\sigma_{3}(\gamma)$, $\sigma_{4}(\beta) = 1 -  \sigma_{2}(\beta) - \big(\sigma_{2}(\alpha) +  \sigma_{2}(\gamma)\big) = \sigma_{2}\big(1 -  \beta - (\alpha + \gamma)\big) = \sigma_{2}\sigma_{3}(\beta)$. Стало быть, $\cf(l_1, l_2, l_3, l_4) \in  \widetilde{\mathbf{CF}_{2}}$, так как числа $1,\alpha,\beta, \gamma$ образуют базис поля $K=\Q(\alpha,\beta, \gamma)$.

  Пусть $\cf(l_1, l_2, l_3, l_4) \in  \widetilde{\mathbf{CF}_{3}}$. Заметим, что $\sigma_{2} = \sigma_{2}\sigma^{2}_{3} = \sigma_{4}\sigma_{3}$, $\sigma_2(\gamma) = \sigma_{2}\sigma_{3}(\alpha)$, $\sigma_3(\gamma) = \sigma^{2}_{3}(\alpha) = \alpha$, $\sigma_4(\gamma) = \sigma_{4}\sigma_{3}(\alpha) = \sigma_{2}(\alpha)$, $\sigma_{2}(\beta) + \sigma_{2}\sigma_{3}(\beta) = \sigma_{2}\big(\beta + \sigma_{3}(\beta)\big) =  \sigma_{2}\Big(2 - \big(\alpha + \sigma_{3}(\alpha)\big)\Big) = 2 - \big(\sigma_{2}(\alpha) + \sigma_{2}\sigma_{3}(\alpha)\big)$ и $\sigma_{4}(\beta) = \sigma_{2}\sigma_{3}(\beta)$. Тогда

\[\widetilde{G_{3}}\vec{l}_1 =
  \begin{pmatrix}
  1\\
  \sigma_{3}(\alpha)\\
  2 - \beta - \big(\alpha + \sigma_{3}(\alpha)\big)\\
  \alpha
   \end{pmatrix},
    \quad
    \widetilde{G_{3}}\vec{l}_2 =
  \begin{pmatrix}
  1\\
  \sigma_{2}\sigma_{3}(\alpha)\\
  2 - \sigma_{2}(\beta) - \big(\sigma_{2}(\alpha) + \sigma_{2}\sigma_{3}(\alpha)\big)\\
  \sigma_{2}(\alpha)
   \end{pmatrix},
\]
\[\widetilde{G_{3}}\vec{l}_3 =
  \begin{pmatrix}
  1\\
  \alpha\\
  2 - \sigma_{3}(\beta) - \big(\sigma_{3}(\alpha) + \alpha\big)\\
  \sigma_{3}(\alpha)
   \end{pmatrix},
    \quad
    \widetilde{G_{3}}\vec{l}_4 =
  \begin{pmatrix}
  1\\
  \sigma_{2}(\alpha)\\
  2 - \sigma_{2}\sigma_{3}(\beta) - \big(\sigma_{2}\sigma_{3}(\alpha) + \sigma_{2}(\alpha)\big)\\
  \sigma_{2}\sigma_{3}(\alpha)
   \end{pmatrix},
\]
то есть $\widetilde{G_{3}} \big(\vec{l}_1, \vec{l}_2, \vec{l}_3, \vec{l}_4\big) = \big(\vec{l}_3, \vec{l}_4, \vec{l}_1, \vec{l}_2\big)$. Следовательно, $\widetilde{G_{3}}$ ---  собственная симметрия цепной дроби $\cf(l_1,l_2,l_3,l_4)$ и $\textup{ord}({\sigma_{\widetilde{G_{3}}}}) = 2$. Обратно, предположим, что $\widetilde{G_{3}}$ ---  собственная симметрия $\cf(l_1,l_2,l_3,l_4)$ и $\textup{ord}({\sigma_{\widetilde{G_{3}}}}) = 2$. Тогда существует такое $\mu_3$, что с точностью до перестановки индексов
  \[
    \widetilde{G_{3}}\vec{l}_1=
    \begin{pmatrix}
      1  \\
      \gamma \\
      2 - \beta - (\alpha + \gamma) \\
      \alpha
    \end{pmatrix}
    =\mu_3
    \begin{pmatrix}
      1  \\
      \sigma_{3}(\alpha) \\
      \sigma_{3}(\beta) \\
      \sigma_{3}(\gamma)
    \end{pmatrix}, 
  \]
  откуда $\mu_3 = 1$,  $\gamma =\sigma_{3}(\alpha)$, $\sigma^{2}_3(\alpha)=\sigma_{3}(\gamma) = \alpha$, $\sigma^{2}_{3}(\gamma) = \sigma_{3}(\alpha) = \gamma$, $\beta + \sigma_{3}(\beta) = 2 - \big(\alpha + \sigma_{3}(\alpha)\big)$, $\sigma^{2}_3(\beta) = - \sigma_3(\beta) + \sigma_3(2) - \big(\sigma_3(\alpha) + \sigma^{2}_{3}(\alpha)\big) = - \sigma_3(\beta) + 2 - \big(\alpha + \sigma_3(\alpha)\big) = \beta$. Существует такое $\mu_4$, что
  \[ 
    \widetilde{G_{3}}\vec{l}_2=
    \begin{pmatrix}
      1  \\
      \sigma_{2}(\gamma) \\
      2 -  \sigma_{2}(\beta) - \big(\sigma_{2}(\alpha) +  \sigma_{2}(\gamma)\big) \\
       \sigma_{2}(\alpha)
    \end{pmatrix}
    =\mu_4
    \begin{pmatrix}
      1  \\
      \sigma_{4}(\alpha) \\
      \sigma_{4}(\beta) \\
      \sigma_{4}(\gamma)
    \end{pmatrix},
  \]
   откуда $\mu_4= 1$, $\sigma_{4}(\alpha) = \sigma_{2}(\gamma) = \sigma_{2}\sigma_{3}(\alpha)$, $\sigma_{4}(\gamma) = \sigma_{2}(\alpha) = \sigma_{2}\sigma_{3}(\gamma)$, $\sigma_{4}(\beta) = 2 -  \sigma_{2}(\beta) - \big(\sigma_{2}(\alpha) +  \sigma_{2}(\gamma)\big) = \sigma_{2}\big(2 -  \beta - (\alpha + \gamma)\big) = \sigma_{2}\sigma_{3}(\beta)$. Стало быть, $\cf(l_1, l_2, l_3, l_4) \in  \widetilde{\mathbf{CF}_{3}}$, так как числа $1,\alpha,\beta, \gamma$ образуют базис поля $K=\Q(\alpha,\beta, \gamma)$.

  Пусть $\cf(l_1, l_2, l_3, l_4) \in  \widetilde{\mathbf{CF}_{4}}$. Заметим, что $\sigma_{2} = \sigma_{2}\sigma^{2}_{3} = \sigma_{4}\sigma_{3}$, $\sigma_2(\gamma) = \sigma_{2}(\frac{\alpha + \sigma_{3}(\alpha)}{2}) =  \frac{\sigma_{2}(\alpha) + \sigma_{2}\sigma_{3}(\alpha)}{2}$, $\sigma_3(\gamma) = \sigma_3(\frac{\alpha + \sigma_{3}(\alpha)}{2}) = \frac{\sigma_{3}(\alpha) + \alpha}{2}$, $\sigma_4(\gamma) = \sigma_4(\frac{\alpha + \sigma_{3}(\alpha)}{2}) = \frac{\sigma_{2}\sigma_{3}(\alpha) + \sigma_{2}(\alpha)}{2}$, $\sigma_{2}(\beta) + \sigma_{2}\sigma_{3}(\beta) = \sigma_{2}\big(\beta + \sigma_{3}(\beta)\big) =  \sigma_{2}\Big(-\big(\alpha + \sigma_{3}(\alpha)\big)\Big) = -\big(\sigma_{2}(\alpha) + \sigma_{2}\sigma_{3}(\alpha)\big)$ и $\sigma_{4}(\beta) = \sigma_{2}\sigma_{3}(\beta)$. Тогда
  
  \[\widetilde{G_{4}}\vec{l}_1 =
  \begin{pmatrix}
  1\\
  \sigma_{3}(\alpha)\\
  - \beta - \big(\alpha + \sigma_{3}(\alpha)\big)\\
  \frac{1}{2}(\alpha + \sigma_{3}(\alpha))
   \end{pmatrix},
    \quad
    \widetilde{G_{4}}\vec{l}_2 =
  \begin{pmatrix}
  1\\
  \sigma_{2}\sigma_{3}(\alpha)\\
  - \sigma_{2}(\beta) - \big(\sigma_{2}(\alpha) + \sigma_{2}\sigma_{3}(\alpha)\big)\\
  \frac{1}{2}(\sigma_{2}(\alpha) + \sigma_{2}\sigma_{3}(\alpha))
   \end{pmatrix},
\]
\[\widetilde{G_{4}}\vec{l}_3 =
  \begin{pmatrix}
  1\\
  \alpha\\
  - \sigma_{3}(\beta) - \big(\sigma_{3}(\alpha) + \alpha\big)\\
  \frac{1}{2}(\sigma_{3}(\alpha) + \alpha)
   \end{pmatrix},
    \quad
    \widetilde{G_{4}}\vec{l}_4 =
  \begin{pmatrix}
  1\\
  \sigma_{2}(\alpha)\\
  - \sigma_{2}\sigma_{3}(\beta) - \big(\sigma_{2}\sigma_{3}(\alpha) + \sigma_{2}(\alpha)\big)\\
  \frac{1}{2}(\sigma_{2}\sigma_{3}(\alpha) + \sigma_{2}(\alpha))
   \end{pmatrix},
\]
то есть $\widetilde{G_{4}} \big(\vec{l}_1, \vec{l}_2, \vec{l}_3, \vec{l}_4\big) = \big(\vec{l}_3, \vec{l}_4, \vec{l}_1, \vec{l}_2\big)$. Следовательно, $\widetilde{G_{4}}$ ---  собственная симметрия цепной дроби $\cf(l_1,l_2,l_3,l_4)$ и $\textup{ord}({\sigma_{\widetilde{G_{4}}}}) = 2$. Обратно, предположим, что $\widetilde{G_{4}}$ ---  собственная симметрия $\cf(l_1,l_2,l_3,l_4)$ и $\textup{ord}({\sigma_{\widetilde{G_{4}}}}) = 2$. Тогда существует такое $\mu_3$, что с точностью до перестановки индексов
 \[
    \widetilde{G_{4}}\vec{l}_1=
    \begin{pmatrix}
      1  \\
      -\alpha + 2\gamma  \\
      - \beta - 2\gamma \\
      \gamma
    \end{pmatrix}
    =\mu_3
    \begin{pmatrix}
      1  \\
      \sigma_{3}(\alpha) \\
      \sigma_{3}(\beta) \\
      \sigma_{3}(\gamma)
    \end{pmatrix}, 
  \]
откуда $\mu_3 = 1$,  $\gamma =  \frac{\alpha + \sigma_{3}(\alpha)}{2}$, $\sigma^{2}_{3}(\alpha) = 2\sigma_{3}(\gamma) - \sigma_{3}(\alpha) = 2\gamma -  \sigma_{3}(\alpha) = \alpha$, $\sigma^{2}_{3}(\gamma) = \sigma_{3}(\gamma) = \gamma$, $\beta + \sigma_{3}(\beta) = - 2\gamma = -\big(\alpha + \sigma_{3}(\alpha)\big)$, $\sigma^{2}_3(\beta) = - \sigma_3(\beta) - 2\sigma_3(\gamma) = - \sigma_3(\beta) - 2\gamma = \beta$. Существует такое $\mu_4$, что
  \[ 
    \widetilde{G_{4}}\vec{l}_2=
    \begin{pmatrix}
      1  \\
      -\sigma_{2}(\alpha) + 2\sigma_{2}(\gamma)  \\
      -  \sigma_{2}(\beta) - 2\sigma_{2}(\gamma) \\
       \sigma_{2}(\gamma)
    \end{pmatrix}
    =\mu_4
    \begin{pmatrix}
      1  \\
      \sigma_{4}(\alpha) \\
      \sigma_{4}(\beta) \\
      \sigma_{4}(\gamma)
    \end{pmatrix},
  \]
   откуда $\mu_4 = 1$, $\sigma_{4}(\alpha) = \sigma_{2}(2\gamma - \alpha) = \sigma_{2}\sigma_{3}(\alpha)$, $\sigma_{4}(\gamma) = \sigma_{2}(\gamma) = \sigma_{2}\sigma_{3}(\gamma)$, $\sigma_{4}(\beta) = - \sigma_{2}(\beta) -  2\sigma_{2}(\gamma) = \sigma_{2}(-\beta - 2\gamma) = \sigma_{2}\sigma_{3}(\beta)$. Стало быть, $\cf(l_1, l_2, l_3, l_4) \in  \widetilde{\mathbf{CF}_{4}}$, так как числа $1,\alpha,\beta, \gamma$ образуют базис поля $K=\Q(\alpha,\beta, \gamma)$.

  Пусть $\cf(l_1, l_2, l_3, l_4) \in  \widetilde{\mathbf{CF}_{5}}$. Заметим, что $\sigma_{2} = \sigma_{2}\sigma^{2}_{3} = \sigma_{4}\sigma_{3}$, $\sigma_2(\gamma) = \sigma_{2}(\frac{\alpha + \sigma_{3}(\alpha)}{2}) =  \frac{\sigma_{2}(\alpha) + \sigma_{2}\sigma_{3}(\alpha)}{2}$, $\sigma_3(\gamma) = \sigma_3(\frac{\alpha + \sigma_{3}(\alpha)}{2}) = \frac{\sigma_{3}(\alpha) + \alpha}{2}$, $\sigma_4(\gamma) = \sigma_4(\frac{\alpha + \sigma_{3}(\alpha)}{2}) = \frac{\sigma_{2}\sigma_{3}(\alpha) + \sigma_{2}(\alpha)}{2}$, $\sigma_{2}(\beta) + \sigma_{2}\sigma_{3}(\beta) = \sigma_{2}\big(\beta + \sigma_{3}(\beta)\big) =  \sigma_{2}\Big(2 - \big(\alpha + \sigma_{3}(\alpha)\big)\Big) = 2 -\big(\sigma_{2}(\alpha) + \sigma_{2}\sigma_{3}(\alpha)\big)$ и $\sigma_{4}(\beta) = \sigma_{2}\sigma_{3}(\beta)$. Тогда

 \[\widetilde{G_{5}}\vec{l}_1 =
  \begin{pmatrix}
  1\\
  \sigma_{3}(\alpha)\\
  2 - \beta - \big(\alpha + \sigma_{3}(\alpha)\big)\\
  \frac{1}{2}(\alpha + \sigma_{3}(\alpha))
   \end{pmatrix},
    \quad
    \widetilde{G_{5}}\vec{l}_2 =
  \begin{pmatrix}
  1\\
  \sigma_{2}\sigma_{3}(\alpha)\\
  2 - \sigma_{2}(\beta) - \big(\sigma_{2}(\alpha) + \sigma_{2}\sigma_{3}(\alpha)\big)\\
  \frac{1}{2}(\sigma_{2}(\alpha) + \sigma_{2}\sigma_{3}(\alpha))
   \end{pmatrix},
\]
\[\widetilde{G_{5}}\vec{l}_3 =
  \begin{pmatrix}
  1\\
  \alpha\\
  2 - \sigma_{3}(\beta) - \big(\sigma_{3}(\alpha) + \alpha\big)\\
  \frac{1}{2}(\sigma_{3}(\alpha) + \alpha)
   \end{pmatrix},
    \quad
    \widetilde{G_{5}}\vec{l}_4 =
  \begin{pmatrix}
  1\\
  \sigma_{2}(\alpha)\\
  2 - \sigma_{2}\sigma_{3}(\beta) - \big(\sigma_{2}\sigma_{3}(\alpha) + \sigma_{2}(\alpha)\big)\\
  \frac{1}{2}(\sigma_{2}\sigma_{3}(\alpha) + \sigma_{2}(\alpha))
   \end{pmatrix},
\]
то есть $\widetilde{G_{5}} \big(\vec{l}_1, \vec{l}_2, \vec{l}_3, \vec{l}_4\big) = \big(\vec{l}_3, \vec{l}_4, \vec{l}_1, \vec{l}_2\big)$. Следовательно, $\widetilde{G_{5}}$ ---  собственная симметрия цепной дроби $\cf(l_1,l_2,l_3,l_4)$ и $\textup{ord}({\sigma_{\widetilde{G_{5}}}}) = 2$. Обратно, предположим, что $\widetilde{G_{5}}$ ---  собственная симметрия $\cf(l_1,l_2,l_3,l_4)$ и $\textup{ord}({\sigma_{\widetilde{G_{5}}}}) = 2$. Тогда существует такое $\mu_3$, что с точностью до перестановки индексов
 \[
    \widetilde{G_{5}}\vec{l}_1=
    \begin{pmatrix}
      1  \\
      -\alpha + 2\gamma  \\
      2 - \beta - 2\gamma \\
      \gamma
    \end{pmatrix}
    =\mu_3
    \begin{pmatrix}
      1  \\
      \sigma_{3}(\alpha) \\
      \sigma_{3}(\beta) \\
      \sigma_{3}(\gamma)
    \end{pmatrix}, 
  \]
откуда $\mu_3 = 1$,  $\gamma =  \frac{\alpha + \sigma_{3}(\alpha)}{2}$, $\sigma^{2}_{3}(\alpha) = 2\sigma_{3}(\gamma) - \sigma_{3}(\alpha) = 2\gamma -  \sigma_{3}(\alpha) = \alpha$, $\sigma^{2}_{3}(\gamma) = \sigma_{3}(\gamma) = \gamma$, $\beta + \sigma_{3}(\beta) = 2 - 2\gamma = 2 - \big(\alpha + \sigma_{3}(\alpha)\big)$, $\sigma^{2}_3(\beta) = \sigma_3(2) - \sigma_3(\beta) - 2\sigma_3(\gamma) = 2 - \sigma_3(\beta) - 2\gamma = \beta$. Существует такое $\mu_4$, что
  \[ 
    \widetilde{G_{5}}\vec{l}_2=
    \begin{pmatrix}
      1  \\
      -\sigma_{2}(\alpha) + 2\sigma_{2}(\gamma)  \\
     2 -  \sigma_{2}(\beta) - 2\sigma_{2}(\gamma) \\
       \sigma_{2}(\gamma)
    \end{pmatrix}
    =\mu_4
    \begin{pmatrix}
      1  \\
      \sigma_{4}(\alpha) \\
      \sigma_{4}(\beta) \\
      \sigma_{4}(\gamma)
    \end{pmatrix},
  \]
   откуда $\mu_4 = 1$, $\sigma_{4}(\alpha) = \sigma_{2}(2\gamma - \alpha) = \sigma_{2}\sigma_{3}(\alpha)$, $\sigma_{4}(\gamma) = \sigma_{2}(\gamma) = \sigma_{2}\sigma_{3}(\gamma)$, $\sigma_{4}(\beta) = 2 - \sigma_{2}(\beta) -  2\sigma_{2}(\gamma) = \sigma_{2}(2 -\beta - 2\gamma) = \sigma_{2}\sigma_{3}(\beta)$. Стало быть, $\cf(l_1, l_2, l_3, l_4) \in  \widetilde{\mathbf{CF}_{5}}$, так как числа $1,\alpha,\beta, \gamma$ образуют базис поля $K=\Q(\alpha,\beta, \gamma)$.

  Пусть $\cf(l_1, l_2, l_3, l_4) \in  \widetilde{\mathbf{CF}_{6}}$. Заметим, что $\sigma_{2} = \sigma_{2}\sigma^{2}_{3} = \sigma_{4}\sigma_{3}$, $\sigma_2(\gamma) = \sigma_{2}(\frac{\alpha + \sigma_{3}(\alpha) + 1}{2}) =  \frac{\sigma_{2}(\alpha) + \sigma_{2}\sigma_{3}(\alpha) + 1}{2}$, $\sigma_3(\gamma) = \sigma_3(\frac{\alpha + \sigma_{3}(\alpha) + 1}{2}) = \frac{\sigma_{3}(\alpha) + \alpha + 1}{2}$, $\sigma_4(\gamma) = \sigma_4(\frac{\alpha + \sigma_{3}(\alpha) + 1}{2}) = \frac{\sigma_{2}\sigma_{3}(\alpha) + \sigma_{2}(\alpha) + 1}{2}$, $\sigma_{2}(\beta) + \sigma_{2}\sigma_{3}(\beta) = \sigma_{2}\big(\beta + \sigma_{3}(\beta)\big) =  \sigma_{2}\big(- (\alpha + \sigma_{3}(\alpha))\big) = - \big(\sigma_{2}(\alpha) + \sigma_{2}\sigma_{3}(\alpha)\big)$ и $\sigma_{4}(\beta) = \sigma_{2}\sigma_{3}(\beta)$. Тогда
  
  \[\widetilde{G_{6}}\vec{l}_1 =
  \begin{pmatrix}
  1\\
  \sigma_{3}(\alpha)\\
  - \beta - \big(\alpha + \sigma_{3}(\alpha)\big)\\
  \frac{1}{2}(\alpha + \sigma_{3}(\alpha) + 1)
   \end{pmatrix},
    \quad
    \widetilde{G_{6}}\vec{l}_2 =
  \begin{pmatrix}
  1\\
  \sigma_{2}\sigma_{3}(\alpha)\\
  - \sigma_{2}(\beta) - \big(\sigma_{2}(\alpha) + \sigma_{2}\sigma_{3}(\alpha)\big)\\
  \frac{1}{2}(\sigma_{2}(\alpha) + \sigma_{2}\sigma_{3}(\alpha) + 1)
   \end{pmatrix},
\]
\[\widetilde{G_{6}}\vec{l}_3 =
  \begin{pmatrix}
  1\\
  \alpha\\
  - \sigma_{3}(\beta) - \big(\sigma_{3}(\alpha) + \alpha\big)\\
  \frac{1}{2}(\sigma_{3}(\alpha) + \alpha + 1)
   \end{pmatrix},
    \quad
    \widetilde{G_{6}}\vec{l}_4 =
  \begin{pmatrix}
  1\\
  \sigma_{2}(\alpha)\\
  - \sigma_{2}\sigma_{3}(\beta) - \big(\sigma_{2}\sigma_{3}(\alpha) + \sigma_{2}(\alpha)\big)\\
  \frac{1}{2}(\sigma_{2}\sigma_{3}(\alpha) + \sigma_{2}(\alpha) + 1)
   \end{pmatrix},
\]
то есть $\widetilde{G_{6}} \big(\vec{l}_1, \vec{l}_2, \vec{l}_3, \vec{l}_4\big) = \big(\vec{l}_3, \vec{l}_4, \vec{l}_1, \vec{l}_2\big)$. Следовательно, $\widetilde{G_{6}}$ ---  собственная симметрия цепной дроби $\cf(l_1,l_2,l_3,l_4)$ и $\textup{ord}({\sigma_{\widetilde{G_{6}}}}) = 2$. Обратно, предположим, что $\widetilde{G_{6}}$ ---  собственная симметрия $\cf(l_1,l_2,l_3,l_4)$ и $\textup{ord}({\sigma_{\widetilde{G_{6}}}}) = 2$. Тогда существует такое $\mu_3$, что с точностью до перестановки индексов
 \[
    \widetilde{G_{6}}\vec{l}_1=
    \begin{pmatrix}
      1  \\
      -1 -\alpha + 2\gamma  \\
      1 - \beta - 2\gamma \\
      \gamma
    \end{pmatrix}
    =\mu_3
    \begin{pmatrix}
      1  \\
      \sigma_{3}(\alpha) \\
      \sigma_{3}(\beta) \\
      \sigma_{3}(\gamma)
    \end{pmatrix}, 
  \]
откуда $\mu_3 = 1$,  $\gamma =  \frac{\alpha + \sigma_{3}(\alpha) + 1}{2}$, $\sigma^{2}_{3}(\alpha) = 2\sigma_{3}(\gamma) - \sigma_{3}(\alpha) - 1 = 2\gamma -  \sigma_{3}(\alpha) - 1= \alpha$, $\sigma^{2}_{3}(\gamma) = \sigma_{3}(\gamma) = \gamma$, $\beta + \sigma_{3}(\beta) = 1 - 2\gamma = - \big(\alpha + \sigma_{3}(\alpha)\big)$, $\sigma^{2}_3(\beta) = - \sigma_3(\beta) + \sigma_3(1) - 2\sigma_3(\gamma) = - \sigma_3(\beta) + 1 - 2\gamma = \beta$. Существует такое $\mu_4$, что
  \[ 
    \widetilde{G_{6}}\vec{l}_2=
    \begin{pmatrix}
      1  \\
      -1 -\sigma_{2}(\alpha) + 2\sigma_{2}(\gamma)  \\
      1 -  \sigma_{2}(\beta) - 2\sigma_{2}(\gamma) \\
       \sigma_{2}(\gamma)
    \end{pmatrix}
    =\mu_4
    \begin{pmatrix}
      1  \\
      \sigma_{4}(\alpha) \\
      \sigma_{4}(\beta) \\
      \sigma_{4}(\gamma)
    \end{pmatrix},
  \]
   откуда $\mu_4 = 1$, $\sigma_{4}(\alpha) = \sigma_{2}(2\gamma - \alpha - 1) = \sigma_{2}\sigma_{3}(\alpha)$, $\sigma_{4}(\gamma) = \sigma_{2}(\gamma) = \sigma_{2}\sigma_{3}(\gamma)$, $\sigma_{4}(\beta) = 1 -  \sigma_{2}(\beta) -  2\sigma_{2}(\gamma) = \sigma_{2}(1 -  \beta - 2\gamma) = \sigma_{2}\sigma_{3}(\beta)$. Стало быть, $\cf(l_1, l_2, l_3, l_4) \in  \widetilde{\mathbf{CF}_{6}}$, так как числа $1,\alpha,\beta, \gamma$ образуют базис поля $K=\Q(\alpha,\beta, \gamma)$.

  Пусть $\cf(l_1, l_2, l_3, l_4) \in  \widetilde{\mathbf{CF}_{7}}$. Заметим, что $\sigma_{2} = \sigma_{2}\sigma^{2}_{3} = \sigma_{4}\sigma_{3}$, $\sigma_2(\gamma) = \sigma_{2}(\frac{\alpha + \sigma_{3}(\alpha) + 1}{2}) =  \frac{\sigma_{2}(\alpha) + \sigma_{2}\sigma_{3}(\alpha) + 1}{2}$, $\sigma_3(\gamma) = \sigma_3(\frac{\alpha + \sigma_{3}(\alpha) + 1}{2}) = \frac{\sigma_{3}(\alpha) + \alpha + 1}{2}$, $\sigma_4(\gamma) = \sigma_4(\frac{\alpha + \sigma_{3}(\alpha) + 1}{2}) = \frac{\sigma_{2}\sigma_{3}(\alpha) + \sigma_{2}(\alpha) + 1}{2}$, $\sigma_{2}(\beta) + \sigma_{2}\sigma_{3}(\beta) = \sigma_{2}\big(\beta + \sigma_{3}(\beta)\big) =  \sigma_{2}\big(2 - (\alpha + \sigma_{3}(\alpha))\big) = 2 - \big(\sigma_{2}(\alpha) + \sigma_{2}\sigma_{3}(\alpha)\big)$ и $\sigma_{4}(\beta) = \sigma_{2}\sigma_{3}(\beta)$. Тогда
  
  \[\widetilde{G_{7}}\vec{l}_1 =
  \begin{pmatrix}
  1\\
  \sigma_{3}(\alpha)\\
  2 - \beta - \big(\alpha + \sigma_{3}(\alpha)\big)\\
  \frac{1}{2}(\alpha + \sigma_{3}(\alpha) + 1)
   \end{pmatrix},
    \quad
    \widetilde{G_{7}}\vec{l}_2 =
  \begin{pmatrix}
  1\\
  \sigma_{2}\sigma_{3}(\alpha)\\
  2 - \sigma_{2}(\beta) - \big(\sigma_{2}(\alpha) + \sigma_{2}\sigma_{3}(\alpha)\big)\\
  \frac{1}{2}(\sigma_{2}(\alpha) + \sigma_{2}\sigma_{3}(\alpha) + 1)
   \end{pmatrix},
\]
\[\widetilde{G_{7}}\vec{l}_3 =
  \begin{pmatrix}
  1\\
  \alpha\\
  2 - \sigma_{3}(\beta) - \big(\sigma_{3}(\alpha) + \alpha\big)\\
  \frac{1}{2}(\sigma_{3}(\alpha) + \alpha + 1)
   \end{pmatrix},
    \quad
    \widetilde{G_{7}}\vec{l}_4 =
  \begin{pmatrix}
  1\\
  \sigma_{2}(\alpha)\\
  2 - \sigma_{2}\sigma_{3}(\beta) - \big(\sigma_{2}\sigma_{3}(\alpha) + \sigma_{2}(\alpha)\big)\\
  \frac{1}{2}(\sigma_{2}\sigma_{3}(\alpha) + \sigma_{2}(\alpha) + 1)
   \end{pmatrix},
\]
то есть $\widetilde{G_{7}} \big(\vec{l}_1, \vec{l}_2, \vec{l}_3, \vec{l}_4\big) = \big(\vec{l}_3, \vec{l}_4, \vec{l}_1, \vec{l}_2\big)$. Следовательно, $\widetilde{G_{7}}$ ---  собственная симметрия цепной дроби $\cf(l_1,l_2,l_3,l_4)$ и $\textup{ord}({\sigma_{\widetilde{G_{7}}}}) = 2$. Обратно, предположим, что $\widetilde{G_{7}}$ ---  собственная симметрия $\cf(l_1,l_2,l_3,l_4)$ и $\textup{ord}({\sigma_{\widetilde{G_{7}}}}) = 2$. Тогда существует такое  $\mu_3$, что с точностью до перестановки индексов
 \[
    \widetilde{G_{7}}\vec{l}_1=
    \begin{pmatrix}
      1  \\
      -1 -\alpha + 2\gamma  \\
      3 - \beta - 2\gamma \\
      \gamma
    \end{pmatrix}
    =\mu_3
    \begin{pmatrix}
      1  \\
      \sigma_{3}(\alpha) \\
      \sigma_{3}(\beta) \\
      \sigma_{3}(\gamma)
    \end{pmatrix}, 
  \]
откуда $\mu_3 = 1$,  $\gamma =  \frac{\alpha + \sigma_{3}(\alpha) + 1}{2}$, $\sigma^{2}_{3}(\alpha) = 2\sigma_{3}(\gamma) - \sigma_{3}(\alpha) - 1 = 2\gamma -  \sigma_{3}(\alpha) - 1= \alpha$, $\sigma^{2}_{3}(\gamma) = \sigma_{3}(\gamma) = \gamma$, $\beta + \sigma_{3}(\beta) = 3 - 2\gamma = 2 - \big(\alpha + \sigma_{3}(\alpha)\big)$, $\sigma^{2}_3(\beta) = - \sigma_3(\beta) + \sigma_3(3) - 2\sigma_3(\gamma) = - \sigma_3(\beta) + 3 - 2\gamma = \beta$. Существует такое $\mu_4$, что
  \[ 
    \widetilde{G_{7}}\vec{l}_2=
    \begin{pmatrix}
      1  \\
      -1 -\sigma_{2}(\alpha) + 2\sigma_{2}(\gamma)  \\
      3 -  \sigma_{2}(\beta) - 2\sigma_{2}(\gamma) \\
       \sigma_{2}(\gamma)
    \end{pmatrix}
    =\mu_4
    \begin{pmatrix}
      1  \\
      \sigma_{4}(\alpha) \\
      \sigma_{4}(\beta) \\
      \sigma_{4}(\gamma)
    \end{pmatrix},
  \]
   откуда $\mu_4 = 1$, $\sigma_{4}(\alpha) = \sigma_{2}(2\gamma - \alpha - 1) = \sigma_{2}\sigma_{3}(\alpha)$, $\sigma_{4}(\gamma) = \sigma_{2}(\gamma) = \sigma_{2}\sigma_{3}(\gamma)$, $\sigma_{4}(\beta) = 3 -  \sigma_{2}(\beta) -  2\sigma_{2}(\gamma) = \sigma_{2}(3 -  \beta - 2\gamma) = \sigma_{2}\sigma_{3}(\beta)$. Стало быть, $\cf(l_1, l_2, l_3, l_4) \in  \widetilde{\mathbf{CF}_{7}}$, так как числа $1,\alpha,\beta, \gamma$ образуют базис поля $K=\Q(\alpha,\beta, \gamma)$.

  Пусть $\cf(l_1, l_2, l_3, l_4) \in  \widetilde{\mathbf{CF}_{8}}$. Заметим, что $\sigma_{2} = \sigma_{2}\sigma^{2}_{3} = \sigma_{4}\sigma_{3}$, $\sigma_2(\gamma) = \sigma_{2}(\frac{\alpha + \sigma_{3}(\alpha)}{2}) =  \frac{\sigma_{2}(\alpha) + \sigma_{2}\sigma_{3}(\alpha)}{2}$, $\sigma_3(\gamma) = \sigma_3(\frac{\alpha + \sigma_{3}(\alpha)}{2}) = \frac{\sigma_{3}(\alpha) + \alpha}{2}$, $\sigma_4(\gamma) = \sigma_4(\frac{\alpha + \sigma_{3}(\alpha)}{2}) = \frac{\sigma_{2}\sigma_{3}(\alpha) + \sigma_{2}(\alpha)}{2}$, $\sigma_{2}(\beta) + \sigma_{2}\sigma_{3}(\beta) = \sigma_{2}\big(\beta + \sigma_{3}(\beta)\big) =  \sigma_{2}\big(1 - \frac{\alpha + \sigma_{3}(\alpha)}{2}\big) = 1 - \frac{\sigma_{2}(\alpha) + \sigma_{2}\sigma_{3}(\alpha)}{2}$ и $\sigma_{4}(\beta) = \sigma_{2}\sigma_{3}(\beta)$. Тогда
  
   \[\widetilde{G_{8}}\vec{l}_1 =
  \begin{pmatrix}
  1\\
  \sigma_{3}(\alpha)\\
  1 - \beta - \frac{1}{2}(\alpha + \sigma_{3}(\alpha))\\
  \frac{1}{2}(\alpha + \sigma_{3}(\alpha))
   \end{pmatrix},
    \quad
    \widetilde{G_{8}}\vec{l}_2 =
  \begin{pmatrix}
  1\\
  \sigma_{2}\sigma_{3}(\alpha)\\
  1 - \sigma_{2}(\beta) - \frac{1}{2}(\sigma_{2}(\alpha) + \sigma_{2}\sigma_{3}(\alpha))\\
  \frac{1}{2}(\sigma_{2}(\alpha) + \sigma_{2}\sigma_{3}(\alpha))
   \end{pmatrix},
\]
\[\widetilde{G_{8}}\vec{l}_3 =
  \begin{pmatrix}
  1\\
  \alpha\\
  1 - \sigma_{3}(\beta) - \frac{1}{2}(\sigma_{3}(\alpha) + \alpha)\\
  \frac{1}{2}(\sigma_{3}(\alpha) + \alpha)
   \end{pmatrix},
    \quad
    \widetilde{G_{8}}\vec{l}_4 =
  \begin{pmatrix}
  1\\
  \sigma_{2}(\alpha)\\
  1 - \sigma_{2}\sigma_{3}(\beta) - \frac{1}{2}(\sigma_{2}\sigma_{3}(\alpha) + \sigma_{2}(\alpha))\\
  \frac{1}{2}(\sigma_{2}\sigma_{3}(\alpha) + \sigma_{2}(\alpha))
   \end{pmatrix},
\]
то есть $\widetilde{G_{8}} \big(\vec{l}_1, \vec{l}_2, \vec{l}_3, \vec{l}_4\big) = \big(\vec{l}_3, \vec{l}_4, \vec{l}_1, \vec{l}_2\big)$. Следовательно, $\widetilde{G_{8}}$ ---  собственная симметрия цепной дроби $\cf(l_1,l_2,l_3,l_4)$ и $\textup{ord}({\sigma_{\widetilde{G_{8}}}}) = 2$. Обратно, предположим, что $\widetilde{G_{8}}$ ---  собственная симметрия $\cf(l_1,l_2,l_3,l_4)$ и $\textup{ord}({\sigma_{\widetilde{G_{8}}}}) = 2$. Тогда существует такое $\mu_3$, что с точностью до перестановки индексов
 \[
    \widetilde{G_{8}}\vec{l}_1=
    \begin{pmatrix}
      1  \\
      -\alpha + 2\gamma  \\
      1 - \beta - \gamma \\
      \gamma
    \end{pmatrix}
    =\mu_3
    \begin{pmatrix}
      1  \\
      \sigma_{3}(\alpha) \\
      \sigma_{3}(\beta) \\
      \sigma_{3}(\gamma)
    \end{pmatrix}, 
  \]
откуда $\mu_3 = 1$,  $\gamma =  \frac{\alpha + \sigma_{3}(\alpha)}{2}$, $\sigma^{2}_{3}(\alpha) = 2\sigma_{3}(\gamma) - \sigma_{3}(\alpha) = 2\gamma -  \sigma_{3}(\alpha) = \alpha$, $\sigma^{2}_{3}(\gamma) = \sigma_{3}(\gamma) = \gamma$, $\beta + \sigma_{3}(\beta) = 1 - \gamma = 1 - \frac{\alpha + \sigma_{3}(\alpha)}{2}$, $\sigma^{2}_3(\beta) = - \sigma_3(\beta) + \sigma_3(1) - \sigma_3(\gamma) = - \sigma_3(\beta) + 1 - \gamma = \beta$. Существует такое $\mu_4$, что
  \[ 
    \widetilde{G_{8}}\vec{l}_2=
    \begin{pmatrix}
      1  \\
      -\sigma_{2}(\alpha) + 2\sigma_{2}(\gamma)  \\
      1 -  \sigma_{2}(\beta) - \sigma_{2}(\gamma) \\
       \sigma_{2}(\gamma)
    \end{pmatrix}
    =\mu_4
    \begin{pmatrix}
      1  \\
      \sigma_{4}(\alpha) \\
      \sigma_{4}(\beta) \\
      \sigma_{4}(\gamma)
    \end{pmatrix},
  \]
   откуда $\mu_4 = 1$, $\sigma_{4}(\alpha) = \sigma_{2}(2\gamma - \alpha) = \sigma_{2}\sigma_{3}(\alpha)$, $\sigma_{4}(\gamma) = \sigma_{2}(\gamma) = \sigma_{2}\sigma_{3}(\gamma)$, $\sigma_{4}(\beta) = 1 -  \sigma_{2}(\beta) -  \sigma_{2}(\gamma) = \sigma_{2}(1 -  \beta - \gamma) = \sigma_{2}\sigma_{3}(\beta)$. Стало быть, $\cf(l_1, l_2, l_3, l_4) \in  \widetilde{\mathbf{CF}_{8}}$, так как числа $1,\alpha,\beta, \gamma$ образуют базис поля $K=\Q(\alpha,\beta, \gamma)$.

  Пусть $\cf(l_1, l_2, l_3, l_4) \in  \widetilde{\mathbf{CF}_{9}}$. Заметим, что $\sigma_{2} = \sigma_{2}\sigma^{2}_{3} = \sigma_{4}\sigma_{3}$, $\sigma_2(\gamma) = \sigma_{2}(\frac{\alpha + \sigma_{3}(\alpha)}{2}) =  \frac{\sigma_{2}(\alpha) + \sigma_{2}\sigma_{3}(\alpha)}{2}$, $\sigma_3(\gamma) = \sigma_3(\frac{\alpha + \sigma_{3}(\alpha)}{2}) = \frac{\sigma_{3}(\alpha) + \alpha}{2}$, $\sigma_4(\gamma) = \sigma_4(\frac{\alpha + \sigma_{3}(\alpha)}{2}) = \frac{\sigma_{2}\sigma_{3}(\alpha) + \sigma_{2}(\alpha)}{2}$, $\sigma_{2}(\beta) + \sigma_{2}\sigma_{3}(\beta) = \sigma_{2}\big(\beta + \sigma_{3}(\beta)\big) =  \sigma_{2}\big(2 - \frac{\alpha + \sigma_{3}(\alpha)}{2}\big) = 2 - \frac{\sigma_{2}(\alpha) + \sigma_{2}\sigma_{3}(\alpha)}{2}$ и $\sigma_{4}(\beta) = \sigma_{2}\sigma_{3}(\beta)$. Тогда
  
  \[\widetilde{G_{9}}\vec{l}_1 =
  \begin{pmatrix}
  1\\
  \sigma_{3}(\alpha)\\
  2 - \beta - \frac{1}{2}(\alpha + \sigma_{3}(\alpha))\\
  \frac{1}{2}(\alpha + \sigma_{3}(\alpha))
   \end{pmatrix},
    \quad
    \widetilde{G_{9}}\vec{l}_2 =
  \begin{pmatrix}
  1\\
  \sigma_{2}\sigma_{3}(\alpha)\\
  2 - \sigma_{2}(\beta) - \frac{1}{2}(\sigma_{2}(\alpha) + \sigma_{2}\sigma_{3}(\alpha))\\
  \frac{1}{2}(\sigma_{2}(\alpha) + \sigma_{2}\sigma_{3}(\alpha))
   \end{pmatrix},
\]
\[\widetilde{G_{9}}\vec{l}_3 =
  \begin{pmatrix}
  1\\
  \alpha\\
  2 - \sigma_{3}(\beta) - \frac{1}{2}(\sigma_{3}(\alpha) + \alpha)\\
  \frac{1}{2}(\sigma_{3}(\alpha) + \alpha)
   \end{pmatrix},
    \quad
    \widetilde{G_{9}}\vec{l}_4 =
  \begin{pmatrix}
  1\\
  \sigma_{2}(\alpha)\\
  2 - \sigma_{2}\sigma_{3}(\beta) - \frac{1}{2}(\sigma_{2}\sigma_{3}(\alpha) + \sigma_{2}(\alpha))\\
  \frac{1}{2}(\sigma_{2}\sigma_{3}(\alpha) + \sigma_{2}(\alpha))
   \end{pmatrix},
\]
то есть $\widetilde{G_{9}} \big(\vec{l}_1, \vec{l}_2, \vec{l}_3, \vec{l}_4\big) = \big(\vec{l}_3, \vec{l}_4, \vec{l}_1, \vec{l}_2\big)$. Следовательно, $\widetilde{G_{9}}$ ---  собственная симметрия цепной дроби $\cf(l_1,l_2,l_3,l_4)$ и $\textup{ord}({\sigma_{\widetilde{G_{9}}}}) = 2$. Обратно, предположим, что $\widetilde{G_{9}}$ ---  собственная симметрия $\cf(l_1,l_2,l_3,l_4)$ и $\textup{ord}({\sigma_{\widetilde{G_{9}}}}) = 2$. Тогда существует такое $\mu_3$, что с точностью до перестановки индексов
 \[
    \widetilde{G_{9}}\vec{l}_1=
    \begin{pmatrix}
      1  \\
      -\alpha + 2\gamma  \\
      2 - \beta - \gamma \\
      \gamma
    \end{pmatrix}
    =\mu_3
    \begin{pmatrix}
      1  \\
      \sigma_{3}(\alpha) \\
      \sigma_{3}(\beta) \\
      \sigma_{3}(\gamma)
    \end{pmatrix}, 
  \]
откуда $\mu_3 = 1$,  $\gamma =  \frac{\alpha + \sigma_{3}(\alpha)}{2}$, $\sigma^{2}_{3}(\alpha) = 2\sigma_{3}(\gamma) - \sigma_{3}(\alpha) = 2\gamma -  \sigma_{3}(\alpha) = \alpha$, $\sigma^{2}_{3}(\gamma) = \sigma_{3}(\gamma) = \gamma$, $\beta + \sigma_{3}(\beta) = 2 - \gamma = 2 - \frac{\alpha + \sigma_{3}(\alpha)}{2}$, $\sigma^{2}_3(\beta) = - \sigma_3(\beta) + \sigma_3(2) - \sigma_3(\gamma) = - \sigma_3(\beta) + 2 - \gamma = \beta$. Существует такое $\mu_4$, что
  \[ 
    \widetilde{G_{9}}\vec{l}_2=
    \begin{pmatrix}
      1  \\
      -\sigma_{2}(\alpha) + 2\sigma_{2}(\gamma)  \\
      2 -  \sigma_{2}(\beta) - \sigma_{2}(\gamma) \\
       \sigma_{2}(\gamma)
    \end{pmatrix}
    =\mu_4
    \begin{pmatrix}
      1  \\
      \sigma_{4}(\alpha) \\
      \sigma_{4}(\beta) \\
      \sigma_{4}(\gamma)
    \end{pmatrix},
  \]
   откуда $\mu_4 = 1$, $\sigma_{4}(\alpha) = \sigma_{2}(2\gamma - \alpha) = \sigma_{2}\sigma_{3}(\alpha)$, $\sigma_{4}(\gamma) = \sigma_{2}(\gamma) = \sigma_{2}\sigma_{3}(\gamma)$, $\sigma_{4}(\beta) = 2 -  \sigma_{2}(\beta) -  \sigma_{2}(\gamma) = \sigma_{2}(2 -  \beta - \gamma) = \sigma_{2}\sigma_{3}(\beta)$. Стало быть, $\cf(l_1, l_2, l_3, l_4) \in  \widetilde{\mathbf{CF}_{9}}$, так как числа $1,\alpha,\beta, \gamma$ образуют базис поля $K=\Q(\alpha,\beta, \gamma)$.

  Пусть $\cf(l_1, l_2, l_3, l_4) \in  \widetilde{\mathbf{CF}_{10}}$. Заметим, что $\sigma_{2} = \sigma_{2}\sigma^{2}_{3} = \sigma_{4}\sigma_{3}$, $\sigma_2(\gamma) = \sigma_{2}(\frac{\sigma_{3}(\alpha) - \alpha}{4}) =  \frac{\sigma_{2}\sigma_{3}(\alpha) - \sigma_{2}(\alpha)}{4}$, $\sigma_3(\gamma) = \sigma_3(\frac{\sigma_{3}(\alpha) - \alpha}{4}) = \frac{\alpha - \sigma_{3}(\alpha)}{4}$, $\sigma_4(\gamma) = \sigma_4(\frac{\sigma_{3}(\alpha) - \alpha}{4}) = \frac{\sigma_{2}(\alpha) - \sigma_{2}\sigma_{3}(\alpha)}{4}$, $\sigma_{2}(\beta) + \sigma_{2}\sigma_{3}(\beta) = \sigma_{2}\big(\beta + \sigma_{3}(\beta)\big) =  \sigma_{2}\big(2 - \frac{\alpha + \sigma_{3}(\alpha)}{2}\big) = 2 - \frac{\sigma_{2}(\alpha) + \sigma_{2}\sigma_{3}(\alpha)}{2}$ и $\sigma_{4}(\beta) = \sigma_{2}\sigma_{3}(\beta)$. Тогда
  
  \[\widetilde{G_{10}}\vec{l}_1 =
  \begin{pmatrix}
  1\\
  \sigma_{3}(\alpha)\\
  2 - \beta - \frac{1}{2}(\alpha + \sigma_{3}(\alpha))\\
  \frac{1}{2}(\alpha - \sigma_{3}(\alpha))
   \end{pmatrix},
    \quad
    \widetilde{G_{10}}\vec{l}_2 =
  \begin{pmatrix}
  1\\
  \sigma_{2}\sigma_{3}(\alpha)\\
  2 - \sigma_{2}(\beta) - \frac{1}{2}(\sigma_{2}(\alpha) + \sigma_{2}\sigma_{3}(\alpha))\\
  \frac{1}{2}(\sigma_{2}(\alpha) - \sigma_{2}\sigma_{3}(\alpha))
   \end{pmatrix},
\]
\[\widetilde{G_{10}}\vec{l}_3 =
  \begin{pmatrix}
  1\\
  \alpha\\
  2 - \sigma_{3}(\beta) - \frac{1}{2}(\sigma_{3}(\alpha) + \alpha)\\
  \frac{1}{2}(\sigma_{3}(\alpha) - \alpha)
   \end{pmatrix},
    \quad
    \widetilde{G_{10}}\vec{l}_4 =
  \begin{pmatrix}
  1\\
  \sigma_{2}(\alpha)\\
  2 - \sigma_{2}\sigma_{3}(\beta) - \frac{1}{2}(\sigma_{2}\sigma_{3}(\alpha) + \sigma_{2}(\alpha))\\
  \frac{1}{2}(\sigma_{2}\sigma_{3}(\alpha) - \sigma_{2}(\alpha))
   \end{pmatrix},
\]
то есть $\widetilde{G_{10}} \big(\vec{l}_1, \vec{l}_2, \vec{l}_3, \vec{l}_4\big) = \big(\vec{l}_3, \vec{l}_4, \vec{l}_1, \vec{l}_2\big)$. Следовательно, $\widetilde{G_{10}}$ ---  собственная симметрия цепной дроби $\cf(l_1,l_2,l_3,l_4)$ и $\textup{ord}({\sigma_{\widetilde{G_{10}}}}) = 2$. Обратно, предположим, что $\widetilde{G_{10}}$ ---  собственная симметрия $\cf(l_1,l_2,l_3,l_4)$ и $\textup{ord}({\sigma_{\widetilde{G_{10}}}}) = 2$. Тогда существует такое $\mu_3$, что с точностью до перестановки индексов
 \[
    \widetilde{G_{10}}\vec{l}_1=
    \begin{pmatrix}
      1  \\
      \alpha + 4\gamma  \\
      2 - \beta - (\alpha + 2\gamma) \\
      -\gamma
    \end{pmatrix}
    =\mu_3
    \begin{pmatrix}
      1  \\
      \sigma_{3}(\alpha) \\
      \sigma_{3}(\beta) \\
      \sigma_{3}(\gamma)
    \end{pmatrix}, 
  \]
откуда $\mu_3 = 1$,  $\gamma =  \frac{\sigma_{3}(\alpha) - \alpha}{4}$, $\sigma^{2}_{3}(\alpha) = 4\sigma_{3}(\gamma) + \sigma_{3}(\alpha) = -4\gamma +  \sigma_{3}(\alpha) = \alpha$, $\sigma^{2}_{3}(\gamma) = \sigma_{3}(-\gamma) = \gamma$, $\beta + \sigma_{3}(\beta) = 2 - (\alpha + 2\gamma) = 2 - \frac{\alpha + \sigma_{3}(\alpha)}{2}$, $\sigma^{2}_3(\beta) = - \sigma_3(\beta) + \sigma_3(2) - \sigma_3(\alpha + 2\gamma) = - \sigma_3(\beta) + 2 -  \frac{\alpha + \sigma_{3}(\alpha)}{2} = \beta$. Существует такое $\mu_4$, что
  \[ 
    \widetilde{G_{10}}\vec{l}_2=
    \begin{pmatrix}
      1  \\
      \sigma_{2}(\alpha) + 4\sigma_{2}(\gamma)  \\
      2 -  \sigma_{2}(\beta) - \big(\sigma_{2}(\alpha) + 2\sigma_{2}(\gamma)\big) \\
       -\sigma_{2}(\gamma)
    \end{pmatrix}
    =\mu_4
    \begin{pmatrix}
      1  \\
      \sigma_{4}(\alpha) \\
      \sigma_{4}(\beta) \\
      \sigma_{4}(\gamma)
    \end{pmatrix},
  \]
   откуда $\mu_4 = 1$, $\sigma_{4}(\alpha) = \sigma_{2}(\alpha + 4\gamma) = \sigma_{2}\sigma_{3}(\alpha)$, $\sigma_{4}(\gamma) = \sigma_{2}(-\gamma) = \sigma_{2}\sigma_{3}(\gamma)$, $\sigma_{4}(\beta) = 2 -  \sigma_{2}(\beta) -  \big(\sigma_{2}(\alpha) + 2\sigma_{2}(\gamma)\big) = \sigma_{2}\big(2 -  \beta - (\alpha + 2\gamma)\big) = \sigma_{2}\big(2 -  \beta -  \frac{\alpha + \sigma_{3}(\alpha)}{2}\big) = \sigma_{2}\sigma_{3}(\beta)$. Стало быть, $\cf(l_1, l_2, l_3, l_4) \in  \widetilde{\mathbf{CF}_{10}}$, так как числа $1,\alpha,\beta, \gamma$ образуют базис поля $K=\Q(\alpha,\beta, \gamma)$.

\end{proof}

Мы будем обозначать через $\gA_{3}'$ множество всех трехмерных алгебраических цепных дробей, для которых поле $K$ из предложения \ref{prop:more_than_pelle_n_dim} --- вполне вещественное циклическое расширение Галуа. Пусть $\sigma$ --- образующая группы Галуа $\gal(K/\Q)$. Также мы выбирали такую нумерацию прямых $l_1, l_2, l_3, l_4$, что если через $\big(\vec l_1,\vec l_2, \vec l_3, \vec l_4 \big)$ обозначить матрицу со столбцами $\vec l_1,\vec l_2, \vec l_3, \vec l_4$, то 
 \[
  \big(\vec l_1,\vec l_2, \vec l_3, \vec l_4 \big)=
  \begin{pmatrix}
     1 & 1 & 1 & 1 \\
    \alpha & \sigma(\alpha) & \sigma^{2}(\alpha) & \sigma^{3}(\alpha) \\
    \beta & \sigma(\beta) & \sigma^{2}(\beta) & \sigma^{3}(\beta) \\
    \gamma & \sigma(\gamma) & \sigma^{2}(\gamma) & \sigma^{3}(\gamma)
  \end{pmatrix}.
\]

Для каждого $i=1, 2, \ldots, 7$ определим $\mathbf{CF}_{i}'$ как класс дробей из $\gA_{3}'$, удовлетворяющих тройке соотношений $\gQ_{i}$, где
  
  $\gQ_{1}$: $\beta = \sigma(\alpha), \gamma = \sigma^{2}(\alpha), \trace(\alpha) = 0$;
  
  $\gQ_{2}$: $\beta = \sigma(\alpha), \gamma = \sigma^{2}(\alpha), \trace(\alpha) = 1$;
  
  $\gQ_{3}$: $\beta = \sigma(\alpha), \gamma = \sigma^{2}(\alpha), \trace(\alpha) = 2$;
  
  $\gQ_{4}$: $\beta = \sigma(\alpha), \gamma = \frac{\alpha + \sigma^{2}(\alpha)}{2}, \trace(\alpha) = 0$;
  
  $\gQ_{5}$: $\beta = \sigma(\alpha), \gamma = \frac{\alpha + \sigma^{2}(\alpha)}{2}, \trace(\alpha) = 2$;
  
  $\gQ_{6}$: $\beta = \sigma(\alpha), \gamma = \frac{\alpha + \sigma^{2}(\alpha) + 1}{2}, \trace(\alpha) = 0$;  
  
  $\gQ_{7}$: $\beta = \sigma(\alpha), \gamma = \frac{\alpha + \sigma^{2}(\alpha) + 1}{2}, \trace(\alpha) = 2$.
  
  Покажем, что все дроби из классов $\mathbf{CF}_{i}'$, палиндромичны для каждого $i=1, 2, \ldots, 7$. Положим $G_{1}', G_{2}', \ldots, G_{7}'$ равными соответственно матрицам
  
\[
  \left(\begin{smallmatrix}
    1 & \phantom{-}0 & \phantom{-}0 & \phantom{-}0 \\
    0 & \phantom{-}0 & \phantom{-}1 & \phantom{-}0\\
    0 & \phantom{-}0 & \phantom{-}0 & \phantom{-}1\\
    0 & -1 & -1 & -1
  \end{smallmatrix}\right),  
  \left(\begin{smallmatrix}
    1 & \phantom{-}0 &  \phantom{-}0 & \phantom{-}0 \\
    0 & \phantom{-}0 &  \phantom{-}1 & \phantom{-}0 \\
    0 &  \phantom{-}0 &  \phantom{-}0 & \phantom{-}1 \\
    1 & -1 & -1 & -1
  \end{smallmatrix}\right), 
  \left(\begin{smallmatrix}
    1 & \phantom{-}0 &  \phantom{-}0 & \phantom{-}0 \\
    0 & \phantom{-}0 &  \phantom{-}1 & \phantom{-}0 \\
    0 &  \phantom{-}0 &  \phantom{-}0 & \phantom{-}1 \\
    2 & -1 & -1 & -1
  \end{smallmatrix}\right), 
  \left(\begin{smallmatrix}
    1 & \phantom{-}0 & \phantom{-}0 & \phantom{-}0 \\
    0 & \phantom{-}0 & \phantom{-}1 & \phantom{-}0 \\
    0 & -1 & \phantom{-}0 & \phantom{-}2 \\
    0 &\phantom{-}0  & \phantom{-}0 & -1
  \end{smallmatrix}\right),
 \]
 \[
  \left(\begin{smallmatrix}
    1 & \phantom{-}0 &  \phantom{-}0 & \phantom{-}0 \\
    0 & \phantom{-}0 &  \phantom{-}1 & \phantom{-}0 \\
    0 &  -1 &  \phantom{-}0 & \phantom{-}2 \\
    1 & \phantom{-}0 & \phantom{-}0 & -1
  \end{smallmatrix}\right),
  \left(\begin{smallmatrix}
    \phantom{-}1 & \phantom{-}0 &  \phantom{-}0 & \phantom{-}0 \\
     \phantom{-}0 & \phantom{-}0 & \phantom{-}1 & \phantom{-}0 \\
    -1 & -1 & \phantom{-}0 & \phantom{-}2 \\
     \phantom{-}1 & \phantom{-}0 & \phantom{-}0 & -1
  \end{smallmatrix}\right),
  \left(\begin{smallmatrix}
    \phantom{-}1 & \phantom{-}0 &  \phantom{-}0 & \phantom{-}0 \\
     \phantom{-}0 & \phantom{-}0 & \phantom{-}1 & \phantom{-}0 \\
    -1 & -1 & \phantom{-}0 & \phantom{-}2 \\
     \phantom{-}2 & \phantom{-}0 & \phantom{-}0 & -1
  \end{smallmatrix}\right).
\]

\begin{lemma}\label{oper_eq_4d}
  Пусть $\cf(l_1,l_2,l_3,l_4)\in\gA_3$ и $i\in\{1,2,3,4,5,6,7\}$. Тогда цепная дробь $\cf(l_1,l_2,l_3,l_4)$ принадлежит классу $\mathbf{CF}_{i}'$ в том и только в том случае, если $G_{i}'$ --- её собственная циклическая симметрия.
\end{lemma}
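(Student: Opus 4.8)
The plan is to reduce Lemma~\ref{oper_eq_4d} to the already established Lemma~\ref{oper_eq_4d_2}, exploiting the structural link between an order-$4$ cyclic symmetry and its square provided by Corollaries~\ref{all_to_2_2} and~\ref{property_ord_eq}. The starting observation, checked by a one-line matrix multiplication in each of the seven cases, is that
\[ (G_i')^2 = \widetilde{G_i}, \qquad i = 1, 2, \ldots, 7. \]
Alongside this I would record the purely field-theoretic translation between the two relation systems: in a cyclic Galois quartic field with generator $\sigma = \sigma_2$ one has $\sigma_3 = \sigma^2$ and $\sigma_4 = \sigma^3$, so $\gA_3' \subseteq \widetilde{\gA_3}$; and assuming in addition $\beta = \sigma(\alpha)$ one computes $\beta + \sigma_3(\beta) = \sigma(\alpha) + \sigma^3(\alpha)$ and $\alpha + \sigma_3(\alpha) = \alpha + \sigma^2(\alpha)$, so that each trace condition $\trace(\alpha) = c$ of $\gQ_i$ becomes exactly the condition $\beta + \sigma_3(\beta) = c - (\alpha + \sigma_3(\alpha))$ of $\gR_i$, while the conditions on $\gamma$ coincide verbatim. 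Hence, within $\gA_3'$, the system $\gQ_i$ is equivalent to $\gR_i$ together with the single extra equation $\beta = \sigma_2(\alpha)$.

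First I would treat the forward implication. Assume $\cf(l_1,l_2,l_3,l_4) \in \mathbf{CF}_i'$. Then $K$ is cyclic Galois, the translation above gives $\gR_i$, and $\cf \in \widetilde{\gA_3}$, so $\cf \in \widetilde{\mathbf{CF}_i}$; by Lemma~\ref{oper_eq_4d_2} the operator $\widetilde{G_i} = (G_i')^2$ is a proper symmetry with $\textup{ord}(\sigma_{\widetilde{G_i}}) = 2$, whence $G_i'$ is itself a proper symmetry by Corollary~\ref{property_ord_eq}. To see that $G_i'$ is moreover cyclic, I would compute $G_i'\vec{l}_j$ directly from the matrix, substituting $\vec{l}_j = \bigl(1, \sigma^{j-1}(\alpha), \sigma^{j-1}(\beta), \sigma^{j-1}(\gamma)\bigr)$ and the relations $\gQ_i$, and verify the identity $G_i'(\vec{l}_1,\vec{l}_2,\vec{l}_3,\vec{l}_4) = (\vec{l}_2,\vec{l}_3,\vec{l}_4,\vec{l}_1)$. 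This exhibits the permutation $\sigma_{G_i'} = (1,2,3,4)$ of~\eqref{eq:repres}, so $G_i'$ is a cyclic symmetry.

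For the converse, assume $G_i'$ is a proper cyclic symmetry. Its square $(G_i')^2 = \widetilde{G_i}$ is then a proper symmetry with $\textup{ord}(\sigma_{(G_i')^2}) = 2$ by Corollaries~\ref{all_to_2_2} and~\ref{property_ord_eq}, so Lemma~\ref{oper_eq_4d_2} already yields $\cf \in \widetilde{\mathbf{CF}_i}$, i.e. membership in $\widetilde{\gA_3}$ together with $\gR_i$. It remains to produce the extra equation $\beta = \sigma_2(\alpha)$ and to upgrade $\widetilde{\gA_3}$ to $\gA_3'$, and here I would use $G_i'$ itself rather than its square. Since $\sigma_{G_i'}$ is a $4$-cycle, after the renumbering making $\sigma_{G_i'} = (1,2,3,4)$ we have $G_i'(l_1) = l_2$; because the top row of every $G_i'$ is $(1,0,0,0)$, the vector $G_i'\vec{l}_1$ has first coordinate $1$, which forces the multiplier to be $1$ and $G_i'\vec{l}_1 = \vec{l}_2$. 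Reading off the coordinates of this vector equation then gives $\beta = \sigma_2(\alpha)$ from the second coordinate and reproduces the $\gamma$- and trace-relations of $\gQ_i$ from the remaining two.

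The step I expect to carry the real content is the final field-theoretic upgrade in the converse. From $G_i'\vec{l}_1 = \vec{l}_2$ one gets $\sigma_2(\alpha) = \beta \in K$ and $\sigma_2(\beta), \sigma_2(\gamma) \in K$, so $\sigma_2$ maps the $\Q$-basis $\{1,\alpha,\beta,\gamma\}$ into $K$ and therefore $\sigma_2(K) = K$; thus $K/\Q$ is normal. Applying $\sigma_2$ repeatedly to the relations one checks $\sigma_2^{4}(\alpha) = \alpha$ while $\sigma_2^{2}(\alpha) \neq \alpha$, and since $1,\alpha,\beta,\gamma$ are $\Q$-independent (Proposition~\ref{prop:more_than_pelle_n_dim}) the element $\alpha$ has degree $4$ and generates $K$; hence $\gal(K/\Q) = \langle \sigma_2\rangle$ is cyclic of order $4$ and $\cf \in \gA_3'$. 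Combined with $\gR_i$ and $\beta = \sigma_2(\alpha)$, the equivalence recorded at the outset yields $\gQ_i$, so $\cf \in \mathbf{CF}_i'$. Everything else — the identities $(G_i')^2 = \widetilde{G_i}$ and the column computations $G_i'\vec{l}_j = \vec{l}_{j+1}$ — is a finite verification entirely parallel to those already carried out in Lemma~\ref{oper_eq_4d_2}, including the cases $i = 4, \ldots, 7$ where the half-integer entries of $G_i'$ make the bookkeeping slightly heavier.
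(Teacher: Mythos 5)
Your proposal is correct, and the hinge identity checks out: $(G_i')^2=\widetilde{G_i}$ does hold for all seven matrices, as does your dictionary $\gQ_i \Leftrightarrow \gR_i \wedge \big(\beta=\sigma_2(\alpha)\big)$ inside $\gA_3'$ (with $\sigma_3=\sigma^2$, $\sigma_4=\sigma^3$ one has $\beta+\sigma_3(\beta)+\alpha+\sigma_3(\alpha)=\trace(\alpha)$, so the trace conditions translate exactly). But your route is genuinely different from the paper's. The paper proves Lemma \ref{oper_eq_4d} in a self-contained way, exactly parallel to Lemma \ref{oper_eq_4d_2}: in each of the seven cases it verifies by direct substitution that $G_i'\big(\vec l_1,\vec l_2,\vec l_3,\vec l_4\big)=\big(\vec l_2,\vec l_3,\vec l_4,\vec l_1\big)$, and conversely reads the equations $G_i'\vec l_1=\mu_2\vec l_2$, $G_i'\vec l_2=\mu_3\vec l_3$, $G_i'\vec l_3=\mu_4\vec l_4$ coordinate by coordinate, extracting $\beta=\sigma_2(\alpha)$, the $\gamma$-relation, $\sigma_3=\sigma_2^2$, $\sigma_4=\sigma_2^3$ and the trace condition all at once; the squaring identity and Lemma \ref{oper_eq_4d_2} are never invoked there. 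Your factorization genuinely shortens the converse --- importing $\gR_i$ from Lemma \ref{oper_eq_4d_2} leaves only the single equation $G_i'\vec l_1=\vec l_2$ and the normality upgrade --- and it has the conceptual merit of exhibiting, at the matrix level, how the cyclic criterion of Theorem \ref{theorem_proper_4} sits inside the order-two criterion of Theorem \ref{theorem_proper_2_2}. In the forward direction, by contrast, the detour buys nothing: you must still verify the cyclic column identity directly, and once that is done properness follows at once from Lemma \ref{prod_lemm_cyclic}, all multipliers being $1$.

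Two small repairs. First, an ordering slip: as written you conclude that $G_i'$ is a proper symmetry ``by Corollary \ref{property_ord_eq}'' before having shown that $G_i'$ is a symmetry of the fraction at all; Corollary \ref{property_ord_eq} presupposes that $G$ is a palindromic symmetry, and the fact that $(G_i')^2$ permutes the $l_j$ does not by itself imply that $G_i'$ does. The flaw is harmless, since the direct column computation you perform next supplies exactly the missing fact --- move it first and drop the appeal to Corollary \ref{property_ord_eq} in that direction. Second, in the converse you should fix the numbering once and for all: choose it so that $\sigma_{G_i'}=(1,2,3,4)$, whence $\sigma_{(G_i')^2}=(1,3)(2,4)$, which is precisely the permutation shape used in the converse of Lemma \ref{oper_eq_4d_2}; this guarantees that the embedding called $\sigma_3$ in the relations $\gR_i$ obtained from that lemma is the same $\sigma_2^2$ appearing in your extra equation (in the cyclic case $\sigma_3$ is in any event forced to be the unique involution). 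With these adjustments the argument is complete.
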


\begin{proof}
  В силу леммы \ref{prod_lemm_cyclic} оператор $G\in\Gl_4(\Z)$ является собственной циклической симметрией дроби $\cf(l_1,l_2,l_3,l_4)$ тогда и только тогда, когда с точностью до перестановки индексов существуют такие действительные числа $\mu_1,\mu_2,\mu_3,\mu_4$, что $G\big(\vec l_1,\vec l_2,\vec l_3,\vec l_4\big)=\big(\mu_2\vec l_2,\mu_3\vec l_3,\mu_4\vec l_4,\mu_1\vec l_1\big)$ и $\mu_{1}\mu_{2}\mu_{3}\mu_{4} = 1$.

  Пусть $\cf(l_1, l_2, l_3, l_4) \in \mathbf{CF}_{1}'$. Тогда
  
   \[ 
    G_{1}'\vec{l}_1=
    \begin{pmatrix}
      1  \\
      \sigma(\alpha) \\
      \sigma^{2}(\alpha) \\
      -\alpha - \sigma(\alpha) - \sigma^{2}(\alpha)
    \end{pmatrix},
    \quad
    G_{1}'\vec{l}_2=
    \begin{pmatrix}
      1  \\
      \sigma^{2}(\alpha) \\
      \sigma^{3}(\alpha) \\
      -\sigma(\alpha) - \sigma^{2}(\alpha) - \sigma^{3}(\alpha)
    \end{pmatrix},
  \]
  \[ 
    G_{1}'\vec{l}_3=
    \begin{pmatrix}
      1  \\
      \sigma^{3}(\alpha) \\
      \alpha \\
      -\sigma^{2}(\alpha) - \sigma^{3}(\alpha) - \alpha
    \end{pmatrix},
    \quad
    G_{1}'\vec{l}_4=
    \begin{pmatrix}
      1  \\
      \alpha \\
      \sigma(\alpha) \\
      -\sigma^{3}(\alpha) - \alpha - \sigma(\alpha) 
    \end{pmatrix},
  \]
 то есть $G_{1}' \big(\vec{l}_1, \vec{l}_2, \vec{l}_3, \vec{l}_4\big) = \big(\vec{l}_2, \vec{l}_3, \vec{l}_4, \vec{l}_1\big)$. Следовательно, $G_{1}'$ --- собственная циклическая симметрия цепной дроби $\cf(l_1,l_2,l_3,l_4)$. Обратно, предположим, что $G_{1}'$ --- собственная циклическая симметрия цепной дроби $\cf(l_1,l_2,l_3,l_4)$. Тогда существует такое $\mu_2$, что с точностью до перестановки индексов
  \[
    G_{1}'\vec{l}_1=
    \begin{pmatrix}
      1  \\
      \beta \\
      \gamma \\
      - \alpha - \beta - \gamma
    \end{pmatrix}
    =\mu_2
    \begin{pmatrix}
      1  \\
      \sigma_{2}(\alpha) \\
      \sigma_{2}(\beta) \\
      \sigma_{2}(\gamma)
    \end{pmatrix}, 
  \]
  откуда $\mu_2 = 1$, $\beta =\sigma_{2}(\alpha)$, $\gamma =\sigma_{2}(\beta)$, $- \alpha - \beta - \gamma=\sigma_{2}(\gamma)$. Существует такое $\mu_3$, что
  \[ 
    G_{1}'\vec{l}_2=
    \begin{pmatrix}
      1  \\
      \gamma \\
      - \alpha - \beta - \gamma \\
      \alpha
    \end{pmatrix}
    =\mu_3
    \begin{pmatrix}
      1  \\
      \sigma_{3}(\alpha) \\
      \sigma_{3}(\beta) \\
      \sigma_{3}(\gamma)
    \end{pmatrix},
  \]
   откуда $\mu_3 = 1$, $\sigma_{3}(\alpha) = \gamma = \sigma_{2}(\beta) = \sigma^{2}_{2}(\alpha)$, $\sigma_{3}(\beta) = - \alpha - \beta - \gamma = \sigma_{2}(\gamma) = \sigma^{2}_{2}(\beta)$, $\sigma_{3}(\gamma) = \alpha = -\beta - \gamma - (- \alpha - \beta - \gamma) = \sigma_{2}(- \alpha - \beta - \gamma) = \sigma^{2}_{2}(\gamma)$.
   Существует такое $\mu_4$, что
  \[ 
    G_{1}'\vec{l}_3=
    \begin{pmatrix}
      1  \\
      - \alpha - \beta - \gamma  \\
      \alpha \\
      \beta
    \end{pmatrix}
    =\mu_4
    \begin{pmatrix}
      1  \\
      \sigma_{4}(\alpha) \\
      \sigma_{4}(\beta) \\
      \sigma_{4}(\gamma)
    \end{pmatrix},
  \]
   откуда $\mu_4 = 1$, $\sigma_{4}(\alpha) = - \alpha - \beta - \gamma = \sigma_{2}(\gamma) =  \sigma^{3}_{2}(\alpha)$, $\sigma_{4}(\beta) = \alpha = -\beta - \gamma - (- \alpha - \beta - \gamma) = \sigma_{2}(-\alpha - \beta - \gamma) =  \sigma^{3}_{2}(\beta)$, $\sigma_{4}(\gamma) = \beta  = \sigma_{2}(\alpha) = \sigma^{3}_{2}(\gamma)$, $\trace(\alpha) = 0$. Стало быть, $\cf(l_1, l_2, l_3, l_4) \in \mathbf{CF}_{1}'$, так как числа $1,\alpha,\beta, \gamma$ образуют базис поля $K=\Q(\alpha,\beta, \gamma)$.

Пусть $\cf(l_1, l_2, l_3, l_4) \in \mathbf{CF}_{2}'$. Тогда

 \[ 
    G_{2}'\vec{l}_1=
    \begin{pmatrix}
      1  \\
      \sigma(\alpha) \\
      \sigma^{2}(\alpha) \\
      1 -\alpha - \sigma(\alpha) - \sigma^{2}(\alpha)
    \end{pmatrix},
    \quad
    G_{2}'\vec{l}_2=
    \begin{pmatrix}
      1  \\
      \sigma^{2}(\alpha) \\
      \sigma^{3}(\alpha) \\
      1 -\sigma(\alpha) - \sigma^{2}(\alpha) - \sigma^{3}(\alpha)
    \end{pmatrix},
  \]
  \[ 
    G_{2}'\vec{l}_3=
    \begin{pmatrix}
      1  \\
      \sigma^{3}(\alpha) \\
      \alpha \\
      1 -\sigma^{2}(\alpha) - \sigma^{3}(\alpha) - \alpha
    \end{pmatrix},
    \quad
    G_{2}'\vec{l}_4=
    \begin{pmatrix}
      1  \\
      \alpha \\
      \sigma(\alpha) \\
      1 -\sigma^{3}(\alpha) - \alpha - \sigma(\alpha) 
    \end{pmatrix},
  \]
 то есть $G_{2}' \big(\vec{l}_1, \vec{l}_2, \vec{l}_3, \vec{l}_4\big) = \big(\vec{l}_2, \vec{l}_3, \vec{l}_4, \vec{l}_1\big)$. Следовательно, $G_{2}'$ --- собственная циклическая симметрия цепной дроби $\cf(l_1,l_2,l_3,l_4)$. Обратно, предположим, что $G_{2}'$ --- собственная циклическая симметрия цепной дроби $\cf(l_1,l_2,l_3,l_4)$. Тогда существует такое $\mu_2$, что с точностью до перестановки индексов 
   \[
    G_{2}'\vec{l}_1=
    \begin{pmatrix}
      1  \\
      \beta \\
      \gamma \\
     1 - \alpha - \beta - \gamma
    \end{pmatrix}
    =\mu_2
    \begin{pmatrix}
      1  \\
      \sigma_{2}(\alpha) \\
      \sigma_{2}(\beta) \\
      \sigma_{2}(\gamma)
    \end{pmatrix}, 
  \]
  откуда $\mu_2 = 1$, $\beta =\sigma_{2}(\alpha)$, $\gamma =\sigma_{2}(\beta)$, $1 - \alpha - \beta - \gamma=\sigma_{2}(\gamma)$. Существует такое $\mu_3$, что
  \[ 
    G_{2}'\vec{l}_2=
    \begin{pmatrix}
      1  \\
      \gamma \\
      1 - \alpha - \beta - \gamma \\
      \alpha
    \end{pmatrix}
    =\mu_3
    \begin{pmatrix}
      1  \\
      \sigma_{3}(\alpha) \\
      \sigma_{3}(\beta) \\
      \sigma_{3}(\gamma)
    \end{pmatrix},
  \]
   откуда $\mu_3 = 1$, $\sigma_{3}(\alpha) = \gamma = \sigma_{2}(\beta) = \sigma^{2}_{2}(\alpha)$, $\sigma_{3}(\beta) = 1 - \alpha - \beta - \gamma = \sigma_{2}(\gamma) = \sigma^{2}_{2}(\beta)$, $\sigma_{3}(\gamma) = \alpha = 1 -\beta - \gamma - (1 - \alpha - \beta - \gamma) = \sigma_{2}(1 - \alpha - \beta - \gamma) = \sigma^{2}_{2}(\gamma)$.
   Существует такое $\mu_4$, что
  \[ 
    G_{2}'\vec{l}_3=
    \begin{pmatrix}
      1  \\
      1 - \alpha - \beta - \gamma  \\
      \alpha \\
      \beta
    \end{pmatrix}
    =\mu_4
    \begin{pmatrix}
      1  \\
      \sigma_{4}(\alpha) \\
      \sigma_{4}(\beta) \\
      \sigma_{4}(\gamma)
    \end{pmatrix},
  \]
   откуда $\mu_4 = 1$, $\sigma_{4}(\alpha) = 1 - \alpha - \beta - \gamma = \sigma_{2}(\gamma) =  \sigma^{3}_{2}(\alpha)$, $\sigma_{4}(\beta) = \alpha = 1 - \beta - \gamma - (1 - \alpha - \beta - \gamma) = \sigma_{2}(1 - \alpha - \beta - \gamma) =  \sigma^{3}_{2}(\beta)$, $\sigma_{4}(\gamma) = \beta  = \sigma_{2}(\alpha) = \sigma^{3}_{2}(\gamma)$, $\trace(\alpha) = 1$. Стало быть, $\cf(l_1, l_2, l_3, l_4) \in \mathbf{CF}_{2}'$, так как числа $1,\alpha,\beta, \gamma$ образуют базис поля $K=\Q(\alpha,\beta, \gamma)$.

Пусть $\cf(l_1, l_2, l_3, l_4) \in \mathbf{CF}_{3}'$. Тогда

 \[ 
    G_{3}'\vec{l}_1=
    \begin{pmatrix}
      1  \\
      \sigma(\alpha) \\
      \sigma^{2}(\alpha) \\
      2 -\alpha - \sigma(\alpha) - \sigma^{2}(\alpha)
    \end{pmatrix},
    \quad
    G_{3}'\vec{l}_2=
    \begin{pmatrix}
      1  \\
      \sigma^{2}(\alpha) \\
      \sigma^{3}(\alpha) \\
      2 -\sigma(\alpha) - \sigma^{2}(\alpha) - \sigma^{3}(\alpha)
    \end{pmatrix},
  \]
  \[ 
    G_{3}'\vec{l}_3=
    \begin{pmatrix}
      1  \\
      \sigma^{3}(\alpha) \\
      \alpha \\
      2 -\sigma^{2}(\alpha) - \sigma^{3}(\alpha) - \alpha
    \end{pmatrix},
    \quad
    G_{3}'\vec{l}_4=
    \begin{pmatrix}
      1  \\
      \alpha \\
      \sigma(\alpha) \\
      2 -\sigma^{3}(\alpha) - \alpha - \sigma(\alpha) 
    \end{pmatrix},
  \]
 то есть $G_{3}' \big(\vec{l}_1, \vec{l}_2, \vec{l}_3, \vec{l}_4\big) = \big(\vec{l}_2, \vec{l}_3, \vec{l}_4, \vec{l}_1\big)$. Следовательно, $G_{3}'$ --- собственная циклическая симметрия цепной дроби $\cf(l_1,l_2,l_3,l_4)$. Обратно, предположим, что $G_{3}'$ --- собственная циклическая симметрия цепной дроби $\cf(l_1,l_2,l_3,l_4)$. Тогда существует такое $\mu_2$, что с точностью до перестановки индексов
  \[
    G_{3}'\vec{l}_1=
    \begin{pmatrix}
      1  \\
      \beta \\
      \gamma \\
      2 - \alpha - \beta - \gamma
    \end{pmatrix}
    =\mu_2
    \begin{pmatrix}
      1  \\
      \sigma_{2}(\alpha) \\
      \sigma_{2}(\beta) \\
      \sigma_{2}(\gamma)
    \end{pmatrix}, 
  \]
  откуда $\mu_2 = 1$, $\beta =\sigma_{2}(\alpha)$, $\gamma =\sigma_{2}(\beta)$, $2 - \alpha - \beta - \gamma =\sigma_{2}(\gamma)$. Существует такое $\mu_3$, что   
  \[ 
    G_{3}'\vec{l}_2=
    \begin{pmatrix}
      1  \\
      \gamma \\
      2 - \alpha - \beta - \gamma \\
      \alpha
    \end{pmatrix}
    =\mu_3
    \begin{pmatrix}
      1  \\
      \sigma_{3}(\alpha) \\
      \sigma_{3}(\beta) \\
      \sigma_{3}(\gamma)
    \end{pmatrix},
  \]
   откуда $\mu_3 = 1$, $\sigma_{3}(\alpha) = \gamma = \sigma_{2}(\beta) = \sigma^{2}_{2}(\alpha)$, $\sigma_{3}(\beta) = 2 - \alpha - \beta - \gamma = \sigma_{2}(\gamma) = \sigma^{2}_{2}(\beta)$, $\sigma_{3}(\gamma) = \alpha = 2 - \beta - \gamma - (2 -\alpha - \beta -\gamma) = \sigma_{2}(2 - \alpha - \beta - \gamma) = \sigma^{2}_{2}(\gamma)$. Существует такое $\mu_4$, что
  \[ 
    G_{3}'\vec{l}_3=
    \begin{pmatrix}
      1  \\
      2 - \alpha - \beta - \gamma  \\
      \alpha \\
      \beta
    \end{pmatrix}
    =\mu_4
    \begin{pmatrix}
      1  \\
      \sigma_{4}(\alpha) \\
      \sigma_{4}(\beta) \\
      \sigma_{4}(\gamma)
    \end{pmatrix},
  \]
   откуда $\mu_4 = 1$, $\sigma_{4}(\alpha) = 2 - \alpha - \beta - \gamma = \sigma_{2}(\gamma) = \sigma^{3}_{2}(\alpha)$, $\sigma_{4}(\beta) = \alpha = 2 - \beta - \gamma - (2 -\alpha - \beta -\gamma) = \sigma_{2}(2 - \alpha - \beta - \gamma) = \sigma^{3}_{2}(\beta)$, $\sigma_{4}(\gamma) = \beta = \sigma_{2}(\alpha) = \sigma^{3}_{2}(\gamma)$, $\trace(\alpha) = 2$. Стало быть, $\cf(l_1, l_2, l_3, l_4) \in \mathbf{CF}_{3}'$, так как числа $1,\alpha,\beta, \gamma$ образуют базис поля $K=\Q(\alpha,\beta, \gamma)$.

   Пусть $\cf(l_1, l_2, l_3, l_4) \in \mathbf{CF}_{4}'$. Тогда
 
 \[ 
    G_{4}'\vec{l}_1=
    \begin{pmatrix}
      1  \\
      \sigma(\alpha) \\
      \sigma^{2}(\alpha) \\
      -\frac{1}{2}(\alpha + \sigma^{2}(\alpha))
    \end{pmatrix},
    \quad
    G_{4}'\vec{l}_2=
    \begin{pmatrix}
      1  \\
      \sigma^{2}(\alpha) \\
      \sigma^{3}(\alpha) \\
      -\frac{1}{2}(\sigma(\alpha) + \sigma^{3}(\alpha))
    \end{pmatrix},
  \]
  \[ 
    G_{4}'\vec{l}_3=
    \begin{pmatrix}
      1  \\
      \sigma^{3}(\alpha) \\
      \alpha \\
      -\frac{1}{2}(\sigma^{2}(\alpha) + \alpha)
    \end{pmatrix},
    \quad
    G_{4}'\vec{l}_4=
    \begin{pmatrix}
      1  \\
      \alpha \\
      \sigma(\alpha) \\
      -\frac{1}{2}(\sigma^{3}(\alpha) + \sigma(\alpha))
    \end{pmatrix},
  \]
 то есть $G_{4}' \big(\vec{l}_1, \vec{l}_2, \vec{l}_3, \vec{l}_4\big) = \big(\vec{l}_2, \vec{l}_3, \vec{l}_4, \vec{l}_1\big)$. Следовательно, $G_{4}'$ --- собственная циклическая симметрия цепной дроби $\cf(l_1,l_2,l_3,l_4)$. Обратно, предположим, что $G_{4}'$ --- собственная циклическая симметрия цепной дроби $\cf(l_1,l_2,l_3,l_4)$. Тогда существует такое $\mu_2$, что с точностью до перестановки индексов
  \[
    G_{4}'\vec{l}_1=
    \begin{pmatrix}
      1  \\
      \beta \\
      - \alpha + 2\gamma\\
      - \gamma
    \end{pmatrix}
    =\mu_2
    \begin{pmatrix}
      1  \\
      \sigma_{2}(\alpha) \\
      \sigma_{2}(\beta) \\
      \sigma_{2}(\gamma)
    \end{pmatrix}, 
  \]
откуда $\mu_2 = 1$, $\beta =\sigma_{2}(\alpha)$, $- \alpha + 2\gamma =\sigma_{2}(\beta)$, $ - \gamma =\sigma_{2}(\gamma)$. Существует такое $\mu_3$, что
    \[ 
    G_{4}'\vec{l}_2=
    \begin{pmatrix}
      1  \\
      - \alpha + 2\gamma  \\
      - \beta - 2\gamma \\
      \gamma
    \end{pmatrix}
    =\mu_3
    \begin{pmatrix}
      1  \\
      \sigma_{3}(\alpha) \\
      \sigma_{3}(\beta) \\
      \sigma_{3}(\gamma)
    \end{pmatrix},
  \]
откуда $\mu_3 = 1$, $\frac{\alpha + \sigma_{3}(\alpha)}{2} = \gamma, \sigma_{3}(\alpha) = - \alpha + 2\gamma = \sigma_{2}(\beta) = \sigma^{2}_{2}(\alpha)$,  $\sigma_{3}(\beta) = - \beta - 2\gamma = \sigma_{2}(-\alpha + 2\gamma) = \sigma^{2}_{2}(\beta)$, $\sigma_{3}(\gamma) = \gamma = \sigma_{2}(-\gamma) = \sigma^{2}_{2}(\gamma)$. Существует такое $\mu_4$, что
 \[ 
    G_{4}'\vec{l}_3=
    \begin{pmatrix}
      1  \\
      - \beta -  2\gamma  \\
      \alpha \\
      -\gamma
    \end{pmatrix}
    =\mu_4
    \begin{pmatrix}
      1  \\
      \sigma_{4}(\alpha) \\
      \sigma_{4}(\beta) \\
      \sigma_{4}(\gamma)
    \end{pmatrix},
  \]
откуда $\mu_4 = 1$, $\sigma_{4}(\alpha) = - \beta -  2\gamma = \sigma_{2}(-\alpha) + \sigma_{2}(2\gamma) = \sigma_{2}(-\alpha + 2\gamma) = \sigma^{3}_{2}(\alpha)$, $\sigma_{4}(\beta) = \alpha = 2\gamma - \sigma_{2}(\beta)  = \sigma_{2}(-2\gamma - \beta) =  \sigma^{3}_{2}(\beta)$, $\sigma_{4}(\gamma) = -\gamma = \sigma_{2}(\gamma)  =  \sigma^{3}_{2}(\gamma)$, $\trace(\alpha) = 0$. Стало быть, $\cf(l_1, l_2, l_3, l_4) \in \mathbf{CF}_{4}'$, так как числа $1,\alpha,\beta, \gamma$ образуют базис поля $K=\Q(\alpha,\beta, \gamma)$.

Пусть $\cf(l_1, l_2, l_3, l_4) \in \mathbf{CF}_{5}'$. Тогда

\[ 
    G_{5}'\vec{l}_1=
    \begin{pmatrix}
      1  \\
      \sigma(\alpha) \\
      \sigma^{2}(\alpha) \\
      \frac{1}{2}(2 - \alpha - \sigma^{2}(\alpha))
    \end{pmatrix},
    \quad
    G_{5}'\vec{l}_2=
    \begin{pmatrix}
      1  \\
      \sigma^{2}(\alpha) \\
      \sigma^{3}(\alpha) \\
      \frac{1}{2}(2 - \sigma(\alpha) - \sigma^{3}(\alpha))
    \end{pmatrix},
  \]
  \[ 
    G_{5}'\vec{l}_3=
    \begin{pmatrix}
      1  \\
      \sigma^{3}(\alpha) \\
      \alpha \\
      \frac{1}{2}(2 - \sigma^{2}(\alpha) - \alpha)
    \end{pmatrix},
    \quad
    G_{5}'\vec{l}_4=
    \begin{pmatrix}
      1  \\
      \alpha \\
      \sigma(\alpha) \\
      \frac{1}{2}(2 - \sigma^{3}(\alpha) - \sigma(\alpha))
    \end{pmatrix},
  \]
 то есть $G_{5}' \big(\vec{l}_1, \vec{l}_2, \vec{l}_3, \vec{l}_4\big) = \big(\vec{l}_2, \vec{l}_3, \vec{l}_4, \vec{l}_1\big)$. Следовательно, $G_{5}'$ --- собственная циклическая симметрия цепной дроби $\cf(l_1,l_2,l_3,l_4)$. Обратно, предположим, что $G_{5}'$ --- собственная циклическая симметрия цепной дроби $\cf(l_1,l_2,l_3,l_4)$. Тогда существует такое $\mu_2$, что с точностью до перестановки индексов
   \[
    G_{5}'\vec{l}_1=
    \begin{pmatrix}
      1  \\
      \beta \\
      - \alpha + 2\gamma\\
      1 - \gamma
    \end{pmatrix}
    =\mu_2
    \begin{pmatrix}
      1  \\
      \sigma_{2}(\alpha) \\
      \sigma_{2}(\beta) \\
      \sigma_{2}(\gamma)
    \end{pmatrix}, 
  \]
откуда $\mu_2 = 1$, $\beta =\sigma_{2}(\alpha)$, $- \alpha + 2\gamma =\sigma_{2}(\beta)$, $ 1 - \gamma =\sigma_{2}(\gamma)$. Существует такое $\mu_3$, что
    \[ 
    G_{5}'\vec{l}_2=
    \begin{pmatrix}
      1  \\
      - \alpha + 2\gamma  \\
     2 - \beta - 2\gamma \\
      \gamma
    \end{pmatrix}
    =\mu_3
    \begin{pmatrix}
      1  \\
      \sigma_{3}(\alpha) \\
      \sigma_{3}(\beta) \\
      \sigma_{3}(\gamma)
    \end{pmatrix},
  \]
откуда $\mu_3 = 1$, $\frac{\alpha + \sigma_{3}(\alpha)}{2} = \gamma, \sigma_{3}(\alpha) = - \alpha + 2\gamma = \sigma_{2}(\beta) = \sigma^{2}_{2}(\alpha)$,  $\sigma_{3}(\beta) = 2 - \beta - 2\gamma = \sigma_{2}(-\alpha + 2\gamma) = \sigma^{2}_{2}(\beta)$, $\sigma_{3}(\gamma) = \gamma = \sigma_{2}(1 - \gamma) = \sigma^{2}_{2}(\gamma)$. Существует такое $\mu_4$, что
 \[ 
    G_{5}'\vec{l}_3=
    \begin{pmatrix}
      1  \\
      2 - \beta -  2\gamma  \\
      \alpha \\
      1 -\gamma
    \end{pmatrix}
    =\mu_4
    \begin{pmatrix}
      1  \\
      \sigma_{4}(\alpha) \\
      \sigma_{4}(\beta) \\
      \sigma_{4}(\gamma)
    \end{pmatrix},
  \]
откуда $\mu_4 = 1$, $\sigma_{4}(\alpha) = 2 - \beta -  2\gamma = \sigma_{2}(-\alpha) + \sigma_{2}(2\gamma) = \sigma_{2}(-\alpha + 2\gamma) = \sigma^{3}_{2}(\alpha)$, $\sigma_{4}(\beta) = \alpha = 2\gamma - \sigma_{2}(\beta)  = \sigma_{2}(2 - 2\gamma - \beta) =  \sigma^{3}_{2}(\beta)$, $\sigma_{4}(\gamma) = 1 -\gamma = \sigma_{2}(\gamma)  =  \sigma^{3}_{2}(\gamma)$, $\trace(\alpha) = 2$. Стало быть, $\cf(l_1, l_2, l_3, l_4) \in \mathbf{CF}_{5}'$, так как числа $1,\alpha,\beta, \gamma$ образуют базис поля $K=\Q(\alpha,\beta, \gamma)$.

Пусть $\cf(l_1, l_2, l_3, l_4) \in \mathbf{CF}_{6}'$. Тогда

 \[ 
    G_{6}'\vec{l}_1=
    \begin{pmatrix}
      1  \\
      \sigma(\alpha) \\
      \sigma^{2}(\alpha) \\
      \frac{1}{2}(1 - \alpha - \sigma^{2}(\alpha))
    \end{pmatrix},
    \quad
    G_{6}'\vec{l}_2=
    \begin{pmatrix}
      1  \\
      \sigma^{2}(\alpha) \\
      \sigma^{3}(\alpha) \\
      \frac{1}{2}(1 - \sigma(\alpha) - \sigma^{3}(\alpha))
    \end{pmatrix},
  \]
  \[ 
    G_{6}'\vec{l}_3=
    \begin{pmatrix}
      1  \\
      \sigma^{3}(\alpha) \\
      \alpha \\
      \frac{1}{2}(1 - \sigma^{2}(\alpha) - \alpha)
    \end{pmatrix},
    \quad
    G_{6}'\vec{l}_4=
    \begin{pmatrix}
      1  \\
      \alpha \\
      \sigma(\alpha) \\
      \frac{1}{2}(1 - \sigma^{3}(\alpha) - \sigma(\alpha))
    \end{pmatrix},
  \]
 то есть $G_{6}' \big(\vec{l}_1, \vec{l}_2, \vec{l}_3, \vec{l}_4\big) = \big(\vec{l}_2, \vec{l}_3, \vec{l}_4, \vec{l}_1\big)$. Следовательно, $G_{6}'$ --- собственная циклическая симметрия цепной дроби $\cf(l_1,l_2,l_3,l_4)$. Обратно, предположим, что $G_{6}'$ --- собственная циклическая симметрия цепной дроби $\cf(l_1,l_2,l_3,l_4)$. Тогда существует такое $\mu_2$, что с точностью до перестановки индексов
  \[
    G_{6}'\vec{l}_1=
    \begin{pmatrix}
      1  \\
      \beta \\
      -1 - \alpha + 2\gamma \\
      1 - \gamma
    \end{pmatrix}
    =\mu_2
    \begin{pmatrix}
      1  \\
      \sigma_{2}(\alpha) \\
      \sigma_{2}(\beta) \\
      \sigma_{2}(\gamma)
    \end{pmatrix}, 
  \]
откуда $\mu_2 = 1$, $\beta =\sigma_{2}(\alpha)$, $-1 - \alpha + 2\gamma  =\sigma_{2}(\beta)$, $1 - \gamma =\sigma_{2}(\gamma)$. Существует такое $\mu_3$, что
   \[ 
    G_{6}'\vec{l}_2=
    \begin{pmatrix}
      1  \\
      -1 - \alpha + 2\gamma  \\
      1 - \beta - 2\gamma\\
      \gamma
    \end{pmatrix}
    =\mu_3
    \begin{pmatrix}
      1  \\
      \sigma_{3}(\alpha) \\
      \sigma_{3}(\beta) \\
      \sigma_{3}(\gamma)
    \end{pmatrix},
  \]
откуда $\mu_3 = 1$, $\frac{\alpha + \sigma_{3}(\alpha) + 1}{2} = \gamma, \sigma_{3}(\alpha) = -1 - \alpha + 2\gamma = \sigma_{2}(\beta) = \sigma^{2}_{2}(\alpha)$, $\sigma_{3}(\beta) = 1 - \beta - 2\gamma = -1 -\beta + (2 - 2\gamma) = \sigma_{2}(-1 -\alpha + 2\gamma) = \sigma^{2}_{2}(\beta)$, $\sigma_{3}(\gamma) = \gamma = \sigma_{2}(1 - \gamma) = \sigma^{3}_{2}(\gamma)$. Существует такое $\mu_4$, что
\[ 
    G_{6}'\vec{l}_3=
    \begin{pmatrix}
      1  \\
      1 - \beta -  2\gamma \\
      \alpha \\
      1 - \gamma
    \end{pmatrix}
    =\mu_4
    \begin{pmatrix}
      1  \\
      \sigma_{4}(\alpha) \\
      \sigma_{4}(\beta) \\
      \sigma_{4}(\gamma)
    \end{pmatrix},
  \]
откуда $\mu_4 = 1$, $\sigma_{4}(\alpha) = 1 - \beta -  2\gamma = -1 -\beta + (2 - 2\gamma) = \sigma_{2}(-1 -\alpha + 2\gamma) = \sigma^{3}_{2}(\alpha)$, $\sigma_{4}(\beta) = \alpha = 1 -(-1 -\alpha + 2\gamma) - (2 - 2\gamma) = \sigma_{2}(1 -\beta - 2\gamma) = \sigma^{3}_{2}(\beta)$, $\sigma_{4}(\gamma) = 1 - \gamma = \sigma_{2}(\gamma) = \sigma^{3}_{2}(\gamma)$, $\trace(\alpha) = 0$. Стало быть, $\cf(l_1, l_2, l_3, l_4) \in \mathbf{CF}_{6}'$, так как числа $1,\alpha,\beta, \gamma$ образуют базис поля $K=\Q(\alpha,\beta, \gamma)$.

 Пусть $\cf(l_1, l_2, l_3, l_4) \in \mathbf{CF}_{7}'$. Тогда

\[ 
    G_{7}'\vec{l}_1=
    \begin{pmatrix}
      1  \\
      \sigma(\alpha) \\
      \sigma^{2}(\alpha) \\
      \frac{1}{2}(3 - \alpha - \sigma^{2}(\alpha))
    \end{pmatrix},
    \quad
    G_{7}'\vec{l}_2=
    \begin{pmatrix}
      1  \\
      \sigma^{2}(\alpha) \\
      \sigma^{3}(\alpha) \\
      \frac{1}{2}(3 - \sigma(\alpha) - \sigma^{3}(\alpha))
    \end{pmatrix},
  \]
  \[ 
    G_{7}'\vec{l}_3=
    \begin{pmatrix}
      1  \\
      \sigma^{3}(\alpha) \\
      \alpha \\
      \frac{1}{2}(3 - \sigma^{2}(\alpha) - \alpha)
    \end{pmatrix},
    \quad
    G_{7}'\vec{l}_4=
    \begin{pmatrix}
      1  \\
      \alpha \\
      \sigma(\alpha) \\
      \frac{1}{2}(3 - \sigma^{3}(\alpha) - \sigma(\alpha))
    \end{pmatrix},
  \]
 то есть $G_{7}' \big(\vec{l}_1, \vec{l}_2, \vec{l}_3, \vec{l}_4\big) = \big(\vec{l}_2, \vec{l}_3, \vec{l}_4, \vec{l}_1\big)$. Следовательно, $G_{7}'$ --- собственная циклическая симметрия цепной дроби $\cf(l_1,l_2,l_3,l_4)$. Обратно, предположим, что $G_{7}'$ --- собственная циклическая симметрия цепной дроби $\cf(l_1,l_2,l_3,l_4)$. Тогда существует такое $\mu_2$, что с точностью до перестановки индексов
  \[
    G_{7}'\vec{l}_1=
    \begin{pmatrix}
      1  \\
      \beta \\
      -1 - \alpha + 2\gamma \\
      2 - \gamma
    \end{pmatrix}
    =\mu_2
    \begin{pmatrix}
      1  \\
      \sigma_{2}(\alpha) \\
      \sigma_{2}(\beta) \\
      \sigma_{2}(\gamma)
    \end{pmatrix}, 
  \]
откуда $\mu_2 = 1$, $\beta =\sigma_{2}(\alpha)$, $-1 - \alpha + 2\gamma = \sigma_{2}(\beta)$, $2 - \gamma =\sigma_{2}(\gamma)$. Существует такое $\mu_3$, что
   \[ 
    G_{7}'\vec{l}_2=
    \begin{pmatrix}
      1  \\
      -1 - \alpha + 2\gamma  \\
      3 - \beta - 2\gamma\\
      \gamma
    \end{pmatrix}
    =\mu_3
    \begin{pmatrix}
      1  \\
      \sigma_{3}(\alpha) \\
      \sigma_{3}(\beta) \\
      \sigma_{3}(\gamma)
    \end{pmatrix},
  \]
откуда $\mu_3 = 1$, $\frac{\alpha + \sigma_{3}(\alpha) + 1}{2} = \gamma, \sigma_{3}(\alpha) = -1 - \alpha + 2\gamma = \sigma_{2}(\beta) = \sigma^{2}_{2}(\alpha)$, $\sigma_{3}(\beta) = 3 - \beta - 2\gamma = -1 -\beta + (4 - 2\gamma) = \sigma_{2}(-1 -\alpha + 2\gamma) = \sigma^{2}_{2}(\beta)$, $\sigma_{3}(\gamma) = \gamma = \sigma_{2}(2 - \gamma) = \sigma^{3}_{2}(\gamma)$. Существует такое $\mu_4$, что
\[ 
    G_{7}'\vec{l}_3=
    \begin{pmatrix}
      1  \\
      3 - \beta -  2\gamma \\
      \alpha \\
      2- \gamma
    \end{pmatrix}
    =\mu_4
    \begin{pmatrix}
      1  \\
      \sigma_{4}(\alpha) \\
      \sigma_{4}(\beta) \\
      \sigma_{4}(\gamma)
    \end{pmatrix},
  \]
откуда $\mu_4 = 1$, $\sigma_{4}(\alpha) = 3 - \beta -  2\gamma = -1 -\beta + (4 - 2\gamma) = \sigma_{2}(-1 -\alpha + 2\gamma) = \sigma^{3}_{2}(\alpha)$, $\sigma_{4}(\beta) = \alpha = 3 -(-\alpha + 2\gamma - 1) - (4 - 2\gamma) = \sigma_{2}(3 -\beta - 2\gamma) = \sigma^{3}_{2}(\beta)$, $\sigma_{4}(\gamma) = 2 - \gamma = \sigma_{2}(\gamma) = \sigma^{3}_{2}(\gamma)$, $\trace(\alpha) = 2$. Стало быть, $\cf(l_1, l_2, l_3, l_4) \in \mathbf{CF}_{7}'$, так как числа $1,\alpha,\beta, \gamma$ образуют базис поля $K=\Q(\alpha,\beta, \gamma)$.
\end{proof}

\section{Доказательство теорем \ref{theorem_proper_2_2} и \ref{theorem_proper_4}}\label{proof_theorem_proper_2_2}

Обозначим для каждого $i=1,\ldots,10$ через $\overline{ \widetilde{\mathbf{CF}_{i}}}$ образ $\widetilde{\mathbf{CF}_{i}}$ при действии группы $\Gl_4(\Z)$:
\[
  \overline{\widetilde{\mathbf{CF}_{i}}}=
  \Big\{ \cf(l_1,l_2,l_3,l_4)\in\gA_3 \,\Big|\, \exists X\in\Gl_4(\Z):X\big(\cf(l_1,l_2,l_3,l_4)\big)\in\mathbf{CF}_i \Big\}.
\]

Также обозначим для каждого $i=1,\ldots,7$ через $\overline{\mathbf{CF}_{i}'}$ образ $\mathbf{CF}_{i}'$ при действии группы $\Gl_4(\Z)$:
\[
  \overline{\mathbf{CF}_{i}'}=
  \Big\{ \cf(l_1,l_2,l_3,l_4)\in\gA_3' \,\Big|\, \exists X\in\Gl_4(\Z):X\big(\cf(l_1,l_2,l_3,l_4)\big)\in\mathbf{CF}_i \Big\}.
\]

\begin{lemma}\label{l:CF_instead_of_statements_2_2}
  Для дроби $\cf(l_1,l_2,l_3,l_4)\in\gA_3$ выполняется условие $(i)$ теоремы \ref{theorem_proper_2_2} тогда и только тогда, когда $\cf(l_1,l_2,l_3,l_4)$ принадлежит классу $\overline{\widetilde{\mathbf{CF}_{i}}}$, где $i\in\{1, 2,\ldots, 10\}$.
\end{lemma}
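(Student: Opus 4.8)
The plan is to read the relation $\sim$ literally as the action of $\Gl_4(\Z)$ on generating vectors and to match, term by term, the analytic data of condition $(i)$ with the defining relations $\gR_i$ of the normal form $\widetilde{\mathbf{CF}_i}$. Concretely, for each $i$ I would set $\omega=\alpha$, $\psi=\beta$ and let the prescribed fourth coordinate $c_i$ of $(i)$ play the role of $\gamma$; then the equalities $\omega'=\sigma_3(\omega)$, $\psi'=\sigma_3(\psi)$ turn the fourth-coordinate expression together with the trace relation of $(i)$ into precisely the pair $\gR_i$. For instance in case $i=10$ the fourth coordinate $\tfrac{\omega'-\omega}{4}$ becomes $\gamma=\tfrac{\sigma_3(\alpha)-\alpha}{4}$ and $\psi+\psi'=2-\tfrac{\omega+\omega'}{2}$ becomes $\beta+\sigma_3(\beta)=2-\tfrac{\alpha+\sigma_3(\alpha)}{2}$; the other nine cases are identical bookkeeping. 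Thus the whole content is to promote this coordinate dictionary to a statement about $\Gl_4(\Z)$-orbits of continued fractions.

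For the implication from $(i)$ to membership, I would take the operator $X\in\Gl_4(\Z)$ and scalar $\mu$ furnished by $\sim$, so that $X\vec l_1=\mu\,(1,\omega,\psi,c_i)^{\intercal}$, and consider $X\big(\cf(l_1,l_2,l_3,l_4)\big)=\cf(Xl_1,\dots,Xl_4)$. The essential observation is that, because $X$ has rational entries, it commutes with applying an embedding $\sigma_j$ coordinatewise; hence $X\vec l_j=\sigma_j(\mu)\,(1,\sigma_j(\omega),\sigma_j(\psi),\sigma_j(c_i))^{\intercal}$, which shows at once that $\{1,\omega,\psi,c_i\}$ is again a $\Q$-basis of the same field $K$, that the new continued fraction is algebraic (apply Proposition \ref{prop:more_than_pelle_n_dim} to $XAX^{-1}$), and that its embeddings and their action are unchanged. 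Consequently the Galois conditions $\sigma_3(K)=K$, $\sigma_4(K)=\sigma_2(K)$, $\sigma_3^2=\id$, $\sigma_4=\sigma_2\sigma_3$ — which are built into condition $(i)$ through the very use of $\sigma_3$ — hold for $X(\cf)$, i.e. $X(\cf)\in\widetilde{\gA_3}$, and by the dictionary it satisfies $\gR_i$. Taking $Xl_1$ as the distinguished subspace gives $X(\cf)\in\widetilde{\mathbf{CF}_i}$ and hence $\cf\in\overline{\widetilde{\mathbf{CF}_i}}$.

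The converse runs the same computation backwards: given $X$ with $X(\cf)\in\widetilde{\mathbf{CF}_i}$, the distinguished subspace of $X(\cf)$ is $Xl_{\tau(1)}$ for some permutation $\tau$ of the indices, and its generator is proportional to $(1,\omega,\psi,c_i)$ with $(\omega,\psi,c_i)$ satisfying $\gR_i$. Relabelling the subspaces of the original fraction so that $l_{\tau(1)}$ becomes $l_1$ — which is exactly the freedom ``up to permutation of indices'' allowed in Theorem \ref{theorem_proper_2_2} — turns $X\vec l_{\tau(1)}\sim(1,\omega,\psi,c_i)$ into $(1,\alpha,\beta,\gamma)\sim(1,\omega,\psi,c_i)$, and the relation $\gR_i$ is read back as condition $(i)$; the preservation of $K$ and of its embeddings again guarantees that the Galois conditions of $(i)$ are met.

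The step I expect to be the main obstacle is verifying that $\omega$ and $\psi$ genuinely have degree $4$, as required in Theorem \ref{theorem_proper_2_2}, rather than merely lying in $K$. For $\omega=\alpha$ this follows cleanly: in every $\gR_i$ the fourth coordinate $\gamma$ is a $\Q$-combination of $\alpha$ and $\sigma_3(\alpha)$, so the linear independence of $1,\alpha,\gamma$ (a consequence of $\cf\in\gA_3$) already forces $1,\alpha,\sigma_3(\alpha)$ to be independent and hence $[\Q(\alpha):\Q]\geq 3$, i.e. $=4$. The corresponding assertion for $\psi=\beta$ is more delicate, since a priori $\beta$ could generate a proper subfield; here I would use that $K/\Q$ is totally real with the prescribed order-two automorphism $\sigma_3$, that $\beta+\sigma_3(\beta)$ is pinned down by the trace relation, and that $1,\alpha,\beta,\gamma$ is a basis, to exclude the degenerate possibilities — and, where a naive choice of $\psi$ degenerates, to adjust it within the $\Gl_4(\Z)$-orbit without leaving $\widetilde{\mathbf{CF}_i}$. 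Carefully closing this degree argument for all ten relations is the technical heart of the proof; the remaining matching is routine.
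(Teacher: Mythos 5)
Your proposal is correct and follows essentially the same route as the paper: the paper's joint proof of Lemmas \ref{l:CF_instead_of_statements_2_2} and \ref{l:CF_instead_of_statements_4} is three lines long --- conjugation by $X\in\Gl_4(\Z)$ preserves hyperbolicity, the eigenspaces of a hyperbolic operator are recovered from any one eigenvector, and one then unwinds the definition of $\sim$ --- which is exactly your coordinate dictionary $\omega=\alpha$, $\psi=\beta$, $\gamma=c_i$ written out in full (your observation that $\mu$ lies in $K$, being the first coordinate of $X\vec l_1$, is the small point that makes the entrywise application of $\sigma_j$ legitimate, and your handling of the index permutation in the converse matches the paper's ``с точностью до перестановки индексов'').

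The one step you single out as the ``technical heart'', the degree-$4$ property of $\psi$, is in fact automatic, and no adjustment within the $\Gl_4(\Z)$-orbit is ever needed. Since $1,\alpha,\beta,\gamma$ is a $\Q$-basis of $K$ by Proposition \ref{prop:more_than_pelle_n_dim}, suppose $\beta$ were quadratic. As $\sigma_3$ is an automorphism of $K$, the element $\sigma_3(\beta)$ is a root of the same minimal polynomial, so either $\sigma_3(\beta)=\beta$ or $\beta+\sigma_3(\beta)=\trace(\beta)\in\Q$. In the first case each pair $\gR_i$ expresses $\beta$ as a rational combination of $1,\alpha,\gamma$ (for instance $2\beta=1-\alpha-\gamma$ under $\gR_2$, and $2\beta=2-\alpha-2\gamma$ under $\gR_{10}$ after substituting $\sigma_3(\alpha)=\alpha+4\gamma$), contradicting the basis property. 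In the second case the trace relation forces $\alpha+\sigma_3(\alpha)\in\Q$, whence in every one of the ten relations $\gamma\in\Q+\Q\alpha$, again a contradiction. The same two-case dichotomy settles $\alpha$, as you already noted. So the degree requirement in Theorem \ref{theorem_proper_2_2} is silently absorbed by the basis property --- which is presumably why the paper's proof does not mention it at all; your proposal simply makes explicit a verification the paper leaves to the reader.
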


\begin{lemma}\label{l:CF_instead_of_statements_4}
  Для дроби $\cf(l_1,l_2,l_3,l_4)\in\gA_3$ выполняется условие $(i)$ теоремы \ref{theorem_proper_4} тогда и только тогда, когда $\cf(l_1,l_2,l_3,l_4)$ принадлежит классу $\overline{\mathbf{CF}_{i}'}$, где $i\in\{1,2,3,4,5,6,7\}$.
\end{lemma}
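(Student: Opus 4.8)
The plan is to prove the equivalence by unfolding the definition of each side and checking that the index correspondence is the identity $i\mapsto i$. Recall that for a fraction in $\gA_3'$ the field $K$ is a cyclic Galois quartic field and, with $\sigma$ a fixed generator of $\gal(K/\Q)$, the embeddings may be taken as $\sigma_1=\id$, $\sigma_2=\sigma$, $\sigma_3=\sigma^2$, $\sigma_4=\sigma^3$; in particular $\sigma_3^2=\id$ and $\sigma_4=\sigma_2\sigma_3$, so the arithmetic constraints of Theorem~\ref{theorem_proper_2_2} are automatically met here. First I would record the substitutions valid when $\psi=\sigma_2(\omega)$: one has $\psi=\sigma(\omega)$, $\omega'=\sigma_3(\omega)=\sigma^2(\omega)$ and $\psi'=\sigma_3(\psi)=\sigma^3(\omega)$, whence
\[
  (\omega+\omega')+(\psi+\psi')=\omega+\sigma(\omega)+\sigma^2(\omega)+\sigma^3(\omega)=\trace(\omega).
\]
Thus every condition on $\psi+\psi'$ in the list $(1)$--$(7)$ of Theorem~\ref{theorem_proper_2_2} collapses to a single condition on $\trace(\omega)$.

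Carrying this out case by case is routine but must be done for all seven conditions. For $(1)$ the requirement $\psi+\psi'=-(\omega+\omega')$ reads $\trace(\omega)=0$, while $(1,\alpha,\beta,\gamma)\sim(1,\omega,\psi,\omega')=(1,\omega,\sigma(\omega),\sigma^2(\omega))$ is exactly the statement that $(1,\alpha,\beta,\gamma)$ is equivalent to the first generating vector of a fraction obeying $\gQ_1$. The same reading gives $\trace(\omega)=1$ for $(2)$ and $\trace(\omega)=2$ for $(3)$, matching $\gQ_2,\gQ_3$; conditions $(4)$ and $(5)$ carry fourth coordinate $\tfrac{\omega+\omega'}{2}=\tfrac{\omega+\sigma^2(\omega)}{2}$ with $\trace(\omega)=0$ resp.\ $2$, i.e.\ $\gQ_4$ resp.\ $\gQ_5$; and $(6),(7)$ carry $\tfrac{\omega+\omega'+1}{2}$ with $\trace(\omega)=0$ resp.\ $2$, i.e.\ $\gQ_6$ resp.\ $\gQ_7$. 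Hence, after setting $\omega=\alpha$ on the target side, condition $(i)$ of Theorem~\ref{theorem_proper_4} says precisely: the fraction lies in $\gA_3'$ and $(1,\alpha,\beta,\gamma)\sim(1,\omega,\sigma(\omega),h_i(\omega))$ for some $\omega$ of degree $4$, where $(1,\omega,\sigma(\omega),h_i(\omega))$ is the first generating vector of a fraction in $\mathbf{CF}_i'$.

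It remains to pass from the relation $\sim$ on the single direction $l_1$ to membership of the whole fraction in the $\Gl_4(\Z)$-orbit $\overline{\mathbf{CF}_i'}$, and this is the step needing real work. The tool is Proposition~\ref{prop:more_than_pelle_n_dim}. For the implication toward $\overline{\mathbf{CF}_i'}$, let $X\in\Gl_4(\Z)$ and $\mu\neq0$ realise $X\vec l_1=\mu\,(1,\omega,\sigma(\omega),h_i(\omega))$. Then $X(\cf(l_1,l_2,l_3,l_4))$ is again an algebraic fraction lying in $\gA_3'$ (a conjugate operator has the same field), and one of its eigendirections is spanned by $(1,\omega,\sigma(\omega),h_i(\omega))$; since a spanning vector of an eigendirection of a hyperbolic operator has coordinates forming a $\Q$-basis of $K$, and the remaining eigendirections are spanned by the Galois-conjugate vectors $(1,\tau(\omega),\tau(\sigma(\omega)),\tau(h_i(\omega)))$, $\tau\in\gal(K/\Q)$ (both by Proposition~\ref{prop:more_than_pelle_n_dim}), these four directions coincide with those of the $\gQ_i$-fraction, so $X(\cf(l_1,l_2,l_3,l_4))\in\mathbf{CF}_i'$. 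The converse reads the definition of $\overline{\mathbf{CF}_i'}$ backwards; the only point requiring attention is that $X$ may send $l_1$ to a conjugate of the first $\gQ_i$-direction rather than to that direction itself, which is harmless because each $\gQ_i$ is invariant under replacing $\alpha$ by $\sigma(\alpha)$ (the trace is Galois-invariant and each of the shapes $\sigma(\alpha)$, $\tfrac{\alpha+\sigma^2(\alpha)}{2}$, $\tfrac{\alpha+\sigma^2(\alpha)+1}{2}$ transforms into itself under $\alpha\mapsto\sigma(\alpha)$), so a conjugate of a $\gQ_i$-direction is again a $\gQ_i$-direction. I expect this last structural identification --- that $\sim$ on $l_1$ is equivalent to $\Gl_4(\Z)$-equivalence of the entire continued fraction, because an algebraic fraction is determined by one eigendirection together with the Galois action --- to be the only genuinely nontrivial part, the per-case substitutions of the second paragraph being purely computational.
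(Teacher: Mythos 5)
Your proposal is correct and takes essentially the same route as the paper: the paper's (joint, very terse) proof of Lemmas \ref{l:CF_instead_of_statements_2_2} and \ref{l:CF_instead_of_statements_4} is precisely the compressed form of your key step --- conjugation by $X\in\Gl_4(\Z)$ preserves hyperbolicity, the eigenspaces of a hyperbolic operator are uniquely recovered from any one eigenvector (Proposition \ref{prop:more_than_pelle_n_dim}), and one then unwinds the definition of $\sim$ from Section \ref{intro}. Your explicit trace identity $(\omega+\omega')+(\psi+\psi')=\trace(\omega)$ matching conditions $(1)$--$(7)$ to $\gQ_1$--$\gQ_7$, and the observation that each $\gQ_i$ is stable under $\alpha\mapsto\sigma(\alpha)$ (so landing on a conjugate eigendirection is harmless), are details the paper leaves implicit, but they are correct and do not alter the argument.
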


\begin{proof}[Доказательство леммы \ref{l:CF_instead_of_statements_2_2} и леммы \ref{l:CF_instead_of_statements_4}]
  Для любого $X\in\Gl_4(\Z)$ гиперболичность оператора $A\in\Gl_4(\Z)$ равносильна гиперболичности оператора $XAX^{-1}$. При этом собственные подпространства гиперболического оператора однозначно восстанавливаются по любому его собственному вектору. Остаётся воспользоваться определением эквивалентности из параграфа \ref{intro}.
 \end{proof}

Теорему \ref{theorem_proper_2_2} при помощи леммы \ref{l:CF_instead_of_statements_2_2}  можно переформулировать следующим образом: \emph{дробь $\cf(l_1,l_2,l_3,l_4)\in\gA_3$ имеет собственную симметрию $G$ тогда и только тогда, когда она принадлежит одному из классов $\overline{ \widetilde{\mathbf{CF}_{i}}}$, где $i\in\{1,2,\ldots,10\}$}.

Аналогично, теорему \ref{theorem_proper_4} при помощи леммы \ref{l:CF_instead_of_statements_4}  можно переформулировать следующим образом: \emph{дробь $\cf(l_1,l_2,l_3,l_4)\in\gA_3$ имеет собственную симметрию $G$ тогда и только тогда, когда она принадлежит одному из классов $\overline{\mathbf{CF}_{i}'}$, где $i\in\{1,2,\ldots,7\}$}.

  \begin{proof}[Доказательство теоремы \ref{theorem_proper_2_2}]
Если $\cf(l_1,l_2,l_3,l_4)$ принадлежит какому-то $\overline{\widetilde{\mathbf{CF}_{i}}}$, то по лемме \ref{oper_eq_4d_2} она имеет собственную симметрию $G$, ибо действие оператора из $\Gl_4(\Z)$ сохраняет свойство существования у алгебраической цепной дроби собственной симметрии.

Обратно, пусть дробь $\cf(l_1,l_2,l_3,l_4)\in\gA_3$ имеет собственную симметрию $G$. Положим $F=G'$ и рассмотрим точки $\vec z_1$, $\vec z_2$, $\vec z_3$, $\vec z_4$ из леммы \ref{main_lem_2_2}. Обозначим также через $\vec{e}_1$, $\vec{e}_2$, $\vec{e}_3$, $\vec{e}_4$ стандартный базис $\R^4$. Для точек $\vec z_1$, $\vec z_2$, $\vec z_3$, $\vec z_4$ выполняется хотя бы одно из утверждений \textup{(1)} - \textup{(11)} леммы \ref{main_lem_2_2}.

Пусть выполняется утверждение \textup{(1)} леммы \ref{main_lem_2_2}. Рассмотрим такой оператор $X_{1} \in \Gl_4(\Z)$, что
\[X_{1}\big(\vec{z}_1, \vec{z}_2, \vec{z}_3, \frac{1}{4}(\vec{z}_{1}+\vec{z}_{2}+\vec{z}_{3}+\vec{z}_{4})\big) = \big(\vec{e}_1 - \vec{e}_2, \vec{e}_1 + \vec{e}_4, \vec{e}_1 + \vec{e}_3 - \vec{e}_4, \vec{e}_1\big).\]
Тогда $X_{1}(\vec{z}_4) = X_{1}(4\cdot\frac{1}{4}(\vec{z}_{1}+\vec{z}_{2}+\vec{z}_{3}+\vec{z}_{4}) - \vec{z}_1 - \vec{z}_2 - \vec{z}_3) = \vec{e}_1 + \vec{e}_2 - \vec{e}_3$ и $X_{1}GX_{1}^{-1} = \widetilde{G_{1}}$, так как по лемме \ref{main_lem_2_2}
\[X_{1}GX_{1}^{-1}\big(\vec{e}_1 - \vec{e}_2, \vec{e}_1 + \vec{e}_4, \vec{e}_1 + \vec{e}_3 - \vec{e}_4, \vec{e}_1 + \vec{e}_2 - \vec{e}_3\big) = \]
\[\big(\vec{e}_1 + \vec{e}_3 - \vec{e}_4, \vec{e}_1 + \vec{e}_2 - \vec{e}_3, \vec{e}_1 - \vec{e}_2, \vec{e}_1 + \vec{e}_4\big).\]
Стало быть, $X_{1}\big(\cf(l_1,l_2,l_3,l_4)\big) \in \widetilde{\mathbf{CF}_{1}}$, то есть $\cf(l_1,l_2,l_3,l_4)\in\overline{\widetilde{\mathbf{CF}_{1}}}$.

Пусть выполняется утверждение \textup{(2)} леммы \ref{main_lem_2_2}. Рассмотрим такой оператор $X_{2} \in \Gl_4(\Z)$, что
\[X_{2}\big(\vec{z}_1, \vec{z}_2, \vec{z}_3, \vec{z}_{4}\big) = \big(\vec{e}_1, \vec{e}_1 + \vec{e}_4, \vec{e}_1 + \vec{e}_3, \vec{e}_1 + \vec{e}_2\big).\]
Тогда $X_{2}GX_{2}^{-1} = \widetilde{G_{2}}$, так как по лемме \ref{main_lem_2_2}
\[X_{2}GX_{2}^{-1}\big(\vec{e}_1, \vec{e}_1 + \vec{e}_4, \vec{e}_1 + \vec{e}_3, \vec{e}_1 + \vec{e}_2\big) = \]
\[\big(\vec{e}_1 + \vec{e}_3, \vec{e}_1 + \vec{e}_2, \vec{e}_1, \vec{e}_1 + \vec{e}_4\big).\]
Стало быть, $X_{2}\big(\cf(l_1,l_2,l_3,l_4)\big) \in \widetilde{\mathbf{CF}_{2}}$, то есть $\cf(l_1,l_2,l_3,l_4)\in\overline{\widetilde{\mathbf{CF}_{2}}}$.

Пусть выполняется утверждение \textup{(3)} леммы \ref{main_lem_2_2}. Рассмотрим такой оператор $X_{3} \in \Gl_4(\Z)$, что
\[X_{3}\big(\vec{z}_1, \frac{1}{2}(\vec{z}_{1}+\vec{z}_{2}), \frac{1}{2}(\vec{z}_{1}+\vec{z}_{3}), \frac{1}{2}(\vec{z}_{1}+\vec{z}_{4})\big) = \big(\vec{e}_1, \vec{e}_1 + \vec{e}_4, \vec{e}_1 + \vec{e}_3, \vec{e}_1 + \vec{e}_2 \big).\]
Тогда $X_{3}(\vec{z}_2) = X_{3}(2\cdot\frac{1}{2}(\vec{z}_1 + \vec{z}_2) - \vec{z}_1) = \vec{e}_1  + 2\vec{e}_4$, $X_{3}(\vec{z}_3) = X_{3}(2\cdot\frac{1}{2}(\vec{z}_1 + \vec{z}_3) - \vec{z}_1) = \vec{e}_1  + 2\vec{e}_3$, $X_{3}(\vec{z}_4) = X_{3}(2\cdot\frac{1}{2}(\vec{z}_{1}+\vec{z}_{4})-\vec{z}_1) = \vec{e}_1 + 2\vec{e}_2$ и $X_{3}GX_{3}^{-1} = \widetilde{G_{3}}$, так как по лемме \ref{main_lem_2_2}
\[X_{3}GX_{3}^{-1}\big(\vec{e}_1, \vec{e}_1  + 2\vec{e}_4, \vec{e}_1 + 2\vec{e}_3, \vec{e}_1 + 2\vec{e}_2\big) = \]
\[\big(\vec{e}_1 + 2\vec{e}_3, \vec{e}_1 + 2\vec{e}_2, \vec{e}_1, \vec{e}_1  + 2\vec{e}_4\big).\]
Стало быть, $X_{3}\big(\cf(l_1,l_2,l_3,l_4)\big) \in \widetilde{\mathbf{CF}_{3}}$, то есть $\cf(l_1,l_2,l_3,l_4)\in\overline{\widetilde{\mathbf{CF}_{3}}}$.

Пусть выполняется утверждение \textup{(4)} леммы \ref{main_lem_2_2}. Рассмотрим такой оператор $X_{4} \in \Gl_4(\Z)$, что
\[X_{4}\big(\vec{z}_1, \vec{z}_2, \frac{1}{2}(\vec{z}_1 + \vec{z}_3),  \frac{1}{4}(\vec{z}_{1}+\vec{z}_{2}+\vec{z}_{3}+\vec{z}_{4})\big) = \big(\vec{e}_1 - \vec{e}_3 + \vec{e}_4, \vec{e}_1 -\vec{e}_2 + 2\vec{e}_3 - \vec{e}_4, \vec{e}_1 + \vec{e}_2 - \vec{e}_3 +\vec{e}_4, \vec{e}_1\big).\]
Тогда $X_{4}(\vec{z}_3) = X_{4}(2\cdot\frac{1}{2}(\vec{z}_1 + \vec{z}_3) - \vec{z}_1) = \vec{e}_1 + 2\vec{e}_2 - \vec{e}_3 + \vec{e}_4$, $X_{4}(\vec{z}_4) = X_{4}(4\cdot\frac{1}{4}(\vec{z}_{1}+\vec{z}_{2}+\vec{z}_{3}+\vec{z}_{4}) - \vec{z}_1 - \vec{z}_2 - \vec{z}_3) = \vec{e}_1 - \vec{e}_2 - \vec{e}_4$ и $X_{4}GX_{4}^{-1} = \widetilde{G_{4}}$, так как по лемме \ref{main_lem_2_2}
\[X_{4}GX_{4}^{-1}\big(\vec{e}_1 - \vec{e}_3 + \vec{e}_4, \vec{e}_1 -\vec{e}_2 + 2\vec{e}_3 - \vec{e}_4, \vec{e}_1 + 2\vec{e}_2 - \vec{e}_3 + \vec{e}_4, \vec{e}_1 - \vec{e}_2 - \vec{e}_4\big) = \]
\[\big(\vec{e}_1 + 2\vec{e}_2 - \vec{e}_3 + \vec{e}_4, \vec{e}_1 - \vec{e}_2 - \vec{e}_4, \vec{e}_1 - \vec{e}_3 + \vec{e}_4, \vec{e}_1 -\vec{e}_2 + 2\vec{e}_3 - \vec{e}_4\big).\]
Стало быть, $X_{4}\big(\cf(l_1,l_2,l_3,l_4)\big) \in \widetilde{\mathbf{CF}_{4}}$, то есть $\cf(l_1,l_2,l_3,l_4)\in\overline{\widetilde{\mathbf{CF}_{4}}}$.

Пусть выполняется утверждение \textup{(5)} леммы \ref{main_lem_2_2}. Рассмотрим такой оператор $X_{5} \in \Gl_4(\Z)$, что
\[X_{5}\big(\vec{z}_1, \vec{z}_2, \frac{1}{2}(\vec{z}_1 + \vec{z}_3),  \frac{1}{2}(\vec{z}_2 + \vec{z}_4)\big) = \big(\vec{e}_1, \vec{e}_1 + \vec{e}_4, \vec{e}_1 + \vec{e}_3, \vec{e}_1 + \vec{e}_2\big).\]
Тогда $X_{5}(\vec{z}_3) = X_{5}(2\cdot\frac{1}{2}(\vec{z}_1 + \vec{z}_3) - \vec{z}_1) = \vec{e}_1 + 2\vec{e}_3$, $X_{5}(\vec{z}_4) = X_{5}(2\cdot\frac{1}{2}(\vec{z}_2 + \vec{z}_4) - \vec{z}_2) = \vec{e}_1 + 2\vec{e}_2 + \vec{e}_4$ и $X_{5}GX_{5}^{-1} = \widetilde{G_{5}}$, так как по лемме \ref{main_lem_2_2}
\[X_{5}GX_{5}^{-1}\big(\vec{e}_1, \vec{e}_1 + \vec{e}_4, \vec{e}_1 + 2\vec{e}_3, \vec{e}_1 + 2\vec{e}_2 + \vec{e}_4\big) = \]
\[\big( \vec{e}_1 + 2\vec{e}_3, \vec{e}_1 + 2\vec{e}_2 + \vec{e}_4, \vec{e}_1, \vec{e}_1 + \vec{e}_4\big).\]
Стало быть, $X_{5}\big(\cf(l_1,l_2,l_3,l_4)\big) \in \widetilde{\mathbf{CF}_{5}}$, то есть $\cf(l_1,l_2,l_3,l_4)\in\overline{\widetilde{\mathbf{CF}_{5}}}$.

Пусть выполняется утверждение \textup{(6)} леммы \ref{main_lem_2_2}. Рассмотрим такой оператор $X_{6} \in \Gl_4(\Z)$, что
\[X_{6}\big(\vec{z}_1, \vec{z}_{2}, \vec{z}_{3}, \frac{1}{2}(\vec{z}_{1} + \vec{z}_{3} + \vec{z}_{4} - \vec{z}_{2})\big) =\]
\[\big(\vec{e}_1 + \vec{e}_2 + \vec{e}_4, \vec{e}_1, \vec{e}_1 - \vec{e}_3 + \vec{e}_4, \vec{e}_1 + \vec{e}_4\big).\]
Тогда $X_{6}(\vec{z}_4) = X_{6}(2\cdot\frac{1}{2}(\vec{z}_{1} + \vec{z}_{3} + \vec{z}_{4} - \vec{z}_{2}) - \vec{z}_1 + \vec{z}_2 - \vec{z}_3) = \vec{e}_1 - \vec{e}_2 + \vec{e}_3$ и $X_{6}GX_{6}^{-1} = \widetilde{G_{6}}$, так как по лемме \ref{main_lem_2_2}
\[X_{6}GX_{6}^{-1}\big(\vec{e}_1 + \vec{e}_2 + \vec{e}_4, \vec{e}_1, \vec{e}_1 - \vec{e}_3 + \vec{e}_4, \vec{e}_1 - \vec{e}_2 + \vec{e}_3\big) = \]
\[\big(\vec{e}_1 - \vec{e}_3 + \vec{e}_4, \vec{e}_1 - \vec{e}_2 + \vec{e}_3, \vec{e}_1 + \vec{e}_2 + \vec{e}_4, \vec{e}_1\big).\]
Стало быть, $X_{6}\big(\cf(l_1,l_2,l_3,l_4)\big) \in \widetilde{\mathbf{CF}_{6}}$, то есть $\cf(l_1,l_2,l_3,l_4)\in\overline{\widetilde{\mathbf{CF}_{6}}}$.

Пусть выполняется утверждение \textup{(7)} леммы \ref{main_lem_2_2}. Рассмотрим такой оператор $X_{7} \in \Gl_4(\Z)$, что
\[X_{7}\big(\vec{z}_1, \vec{z}_{2}, \vec{z}_{3}, \frac{1}{2}(\vec{z}_{1}+\vec{z}_{2}) + \frac{1}{4}(\vec{z}_{1}+\vec{z}_{4} - \vec{z}_{3} - \vec{z}_{2})\big) =\]
\[\big(\vec{e}_1+\vec{e}_2-\vec{e}_3+2\vec{e}_4, \vec{e}_1-\vec{e}_2+2\vec{e}_3, \vec{e}_1 + 2\vec{e}_2 + 2\vec{e}_4, \vec{e}_1+\vec{e}_4\big).\]
Тогда $X_{7}(\vec{z}_4) = X_{7}(4(\frac{3\vec{z}_1+\vec{z}_2-\vec{z}_3+\vec{z}_4}{4})-3\vec{z}_1-\vec{z}_2+\vec{z}_3) = \vec{e}_1  + \vec{e}_3$ и $X_{7}GX_{7}^{-1} = \widetilde{G_{7}}$, так как по лемме \ref{main_lem_2_2}
\[X_{7}GX_{7}^{-1}\big(\vec{e}_1+\vec{e}_2-\vec{e}_3+2\vec{e}_4, \vec{e}_1-\vec{e}_2+2\vec{e}_3, \vec{e}_1 + 2\vec{e}_2 + 2\vec{e}_4, \vec{e}_1  + \vec{e}_3\big) = \]
\[\big(\vec{e}_1 + 2\vec{e}_2 + 2\vec{e}_4, \vec{e}_1  + \vec{e}_3, \vec{e}_1+\vec{e}_2-\vec{e}_3+2\vec{e}_4, \vec{e}_1-\vec{e}_2+2\vec{e}_3\big).\]
Стало быть, $X_{7}\big(\cf(l_1,l_2,l_3,l_4)\big) \in \widetilde{\mathbf{CF}_{7}}$, то есть $\cf(l_1,l_2,l_3,l_4)\in\overline{\widetilde{\mathbf{CF}_{7}}}$.

Пусть выполняется утверждение \textup{(8)} леммы \ref{main_lem_2_2}. Рассмотрим такой оператор $X_{8} \in \Gl_4(\Z)$, что
\[X_{8}\big(\vec{z}_1, \vec{z}_2, \vec{z}_3, \frac{1}{2}(\vec{z}_{2} + \vec{z}_{4})\big) = \big(\vec{e}_1, \vec{e}_1 + \vec{e}_4, \vec{e}_1 + \vec{e}_3, \vec{e}_1 + \vec{e}_2 + \vec{e}_4\big).\]
Тогда $X_{8}(\vec{z}_4) = X_{8}(2\cdot\frac{1}{2}(\vec{z}_{2} + \vec{z}_{4}) - \vec{z}_2 = \vec{e}_1 + 2 \vec{e}_2 + \vec{e}_4$ и $X_{8}GX_{8}^{-1} = \widetilde{G_{8}}$, так как по лемме \ref{main_lem_2_2}
\[X_{8}GX_{8}^{-1}\big(\vec{e}_1, \vec{e}_1 + \vec{e}_4, \vec{e}_1 + \vec{e}_3, \vec{e}_1 + 2 \vec{e}_2 + \vec{e}_4\big) = \]
\[\big( \vec{e}_1 + \vec{e}_3, \vec{e}_1 + 2 \vec{e}_2 + \vec{e}_4, \vec{e}_1, \vec{e}_1 + \vec{e}_4\big).\]
Стало быть, $X_{8}\big(\cf(l_1,l_2,l_3,l_4)\big) \in \widetilde{\mathbf{CF}_{8}}$, то есть $\cf(l_1,l_2,l_3,l_4)\in\overline{\widetilde{\mathbf{CF}_{8}}}$.

Пусть выполняется утверждение \textup{(9)} леммы \ref{main_lem_2_2}. Рассмотрим такой оператор $X_{9} \in \Gl_4(\Z)$, что
\[X_{9}\big(\vec{z}_1, \vec{z}_2, \frac{1}{2}(\vec{z}_{1} + \vec{z}_{3}), \frac{1}{4}\vec{z}_{1} + \frac{1}{2}\vec{z}_{2} - \frac{1}{4}\vec{z}_{3} + \frac{1}{2}\vec{z}_{4}\big) = \big(\vec{e}_1, \vec{e}_1 + \vec{e}_4, \vec{e}_1 + \vec{e}_3, \vec{e}_1 + \vec{e}_2 + \vec{e}_4\big).\]
Тогда $X_{9}(\vec{z}_3) = X_{9}(2\cdot\frac{1}{2}(\vec{z}_{1} + \vec{z}_{3}) - \vec{z}_1) = \vec{e}_1 + 2 \vec{e}_3$, $X_{9}(\vec{z}_4) = X_{9}(2\cdot(\frac{1}{4}\vec{z}_{1} + \frac{1}{2}\vec{z}_{2} - \frac{1}{4}\vec{z}_{3} + \frac{1}{2}\vec{z}_{4}) - \frac{1}{2}\vec{z}_1 - \vec{z}_2 + \frac{1}{2}\vec{z}_3 = \vec{e}_1 + 2 \vec{e}_2 + \vec{e}_3 + \vec{e}_4$ и $X_{9}GX_{9}^{-1} = \widetilde{G_{9}}$, так как по лемме \ref{main_lem_2_2}
\[X_{9}GX_{9}^{-1}\big(\vec{e}_1, \vec{e}_1 + \vec{e}_4, \vec{e}_1 + 2 \vec{e}_3, \vec{e}_1 + 2 \vec{e}_2 + \vec{e}_3 + \vec{e}_4\big) = \]
\[\big( \vec{e}_1 + 2 \vec{e}_3, \vec{e}_1 + 2 \vec{e}_2 + \vec{e}_3 + \vec{e}_4, \vec{e}_1, \vec{e}_1 + \vec{e}_4\big).\]
Стало быть, $X_{9}\big(\cf(l_1,l_2,l_3,l_4)\big) \in \widetilde{\mathbf{CF}_{9}}$, то есть $\cf(l_1,l_2,l_3,l_4)\in\overline{\widetilde{\mathbf{CF}_{9}}}$.

Пусть выполняется утверждение \textup{(10)} леммы \ref{main_lem_2_2}. Рассмотрим такой оператор $X_{10} \in \Gl_4(\Z)$, что
\[X_{10}\big(\vec{z}_1, \vec{z}_2, \frac{1}{2}(\vec{z}_{1} + \vec{z}_{3}), \frac{1}{2}\vec{z}_{1} + \frac{1}{4}\vec{z}_{2} + \frac{1}{4}\vec{z}_{4}\big) = \big(\vec{e}_1, \vec{e}_1 + \vec{e}_4, \vec{e}_1 + \vec{e}_3, \vec{e}_1 + \vec{e}_2\big).\]
Тогда $X_{10}(\vec{z}_3) = X_{10}(2\cdot\frac{1}{2}(\vec{z}_{1} + \vec{z}_{3}) - \vec{z}_1 = \vec{e}_1 + 2 \vec{e}_3$, $X_{10}(\vec{z}_4) = X_{10}(4\cdot(\frac{1}{2}\vec{z}_{1} + \frac{1}{4}\vec{z}_{2} + \frac{1}{4}\vec{z}_{4}) - 2\vec{z}_1 - \vec{z}_2) = \vec{e}_1 + 4 \vec{e}_2 - \vec{e}_4$ и $X_{10}GX_{10}^{-1} = \widetilde{G_{10}}$, так как по лемме \ref{main_lem_2_2}
\[X_{10}GX_{10}^{-1}\big(\vec{e}_1, \vec{e}_1 + \vec{e}_4, \vec{e}_1 + 2 \vec{e}_3, \vec{e}_1 + 4 \vec{e}_2 - \vec{e}_4\big) = \]
\[\big(\vec{e}_1 + 2 \vec{e}_3, \vec{e}_1 + 4 \vec{e}_2 - \vec{e}_4, \vec{e}_1, \vec{e}_1 + \vec{e}_4\big).\]
Стало быть, $X_{10}\big(\cf(l_1,l_2,l_3,l_4)\big) \in \widetilde{\mathbf{CF}_{10}}$, то есть $\cf(l_1,l_2,l_3,l_4)\in\overline{\widetilde{\mathbf{CF}_{10}}}$.

Пусть выполняется утверждение \textup{(11)} леммы \ref{main_lem_2_2}. Рассмотрим такой оператор $X_{11} \in \Gl_4(\Z)$, что
\[X_{11}\big(\vec{z}_1, \frac{1}{2}(\vec{z}_1 + \vec{z}_2), \vec{z}_3, \frac{1}{2}(\vec{z}_1 - \vec{z}_2 + \vec{z}_3 + \vec{z}_4)\big) = \big(\vec{e}_1, \vec{e}_1 + \vec{e}_4, \vec{e}_1 + \vec{e}_3, \vec{e}_1 + \vec{e}_2 - \vec{e}_4\big).\]
Тогда $X_{11}(\vec{z}_2) = X_{11}(2\cdot\frac{1}{2}(\vec{z}_1 + \vec{z}_2) - \vec{z}_1) = \vec{e}_1 + 2\vec{e}_4$, $X_{11}(\vec{z}_4) = X_{11}(2\cdot\frac{1}{2}(\vec{z}_1 - \vec{z}_2 + \vec{z}_3 + \vec{z}_4) - \vec{z}_1 + \vec{z}_2 -  \vec{z}_3) = \vec{e}_1 + 2\vec{e}_2 - \vec{e}_3$ и $X_{11}GX_{11}^{-1} = \widetilde{G_{2}}$, так как по лемме \ref{main_lem_2_2}
\[X_{11}GX_{11}^{-1}\big(\vec{e}_1, \vec{e}_1 + 2\vec{e}_4, \vec{e}_1 + \vec{e}_3, \vec{e}_1 + 2\vec{e}_2 - \vec{e}_3\big) = \]
\[\big(\vec{e}_1 + \vec{e}_3, \vec{e}_1 + 2\vec{e}_2 - \vec{e}_3, \vec{e}_1, \vec{e}_1 + 2\vec{e}_4\big).\]
Стало быть, $X_{11}\big(\cf(l_1,l_2,l_3,l_4)\big) \in \widetilde{\mathbf{CF}_{2}}$, то есть $\cf(l_1,l_2,l_3,l_4)\in\overline{\widetilde{\mathbf{CF}_{2}}}$.
\end{proof}

\begin{proof}[Доказательство теоремы \ref{theorem_proper_4}]
Если $\cf(l_1,l_2,l_3,l_4)$ принадлежит какому-то $\overline{\mathbf{CF}}_i$, то по лемме \ref{oper_eq_4d} она имеет собственную циклическую симметрию $G$, ибо действие оператора из $\Gl_4(\Z)$ сохраняет свойство существования у алгебраической цепной дроби собственной циклической симметрии.

Обратно, пусть дробь $\cf(l_1,l_2,l_3,l_4)\in\gA_3$ имеет собственную циклическую симметрию $G$ с неподвижной точкой на некотором парусе $\partial(\cK(C)) \in \cf(l_1,l_2, l_3, l_4)$. Рассмотрим точки $\vec z_1$, $\vec z_2$, $\vec z_3$, $\vec z_4$ из леммы \ref{main_lem_4}. Обозначим также через $\vec{e}_1$, $\vec{e}_2$, $\vec{e}_3$, $\vec{e}_4$ стандартный базис $\R^4$. Для точек $\vec z_1$, $\vec z_2$, $\vec z_3$, $\vec z_4$ выполняется хотя бы одно из утверждений \textup{(1)} - \textup{(7)} леммы \ref{main_lem_4}.

Пусть выполняется утверждение \textup{(1)} леммы \ref{main_lem_4}. Рассмотрим такой оператор $X_{1} \in \Gl_4(\Z)$, что
\[X_{1}\big(\vec{z}_1, \vec{z}_2, \vec{z}_3, \frac{1}{4}(\vec{z}_{1}+\vec{z}_{2}+\vec{z}_{3}+\vec{z}_{4})\big) = \big(\vec{e}_1 - \vec{e}_2, \vec{e}_1 + \vec{e}_4, \vec{e}_1 + \vec{e}_3 - \vec{e}_4, \vec{e}_1\big).\]
Тогда $X_{1}(\vec{z}_4) = X_{1}(4\cdot\frac{1}{4}(\vec{z}_{1}+\vec{z}_{2}+\vec{z}_{3}+\vec{z}_{4}) - \vec{z}_1 - \vec{z}_2 - \vec{z}_3) = \vec{e}_1 + \vec{e}_2 - \vec{e}_3$ и $X_{1}GX_{1}^{-1} = G_{1}'$, так как по лемме \ref{main_lem_4}
\[X_{1}GX_{1}^{-1}\big(\vec{e}_1 - \vec{e}_2, \vec{e}_1 + \vec{e}_4, \vec{e}_1 + \vec{e}_3 - \vec{e}_4, \vec{e}_1 + \vec{e}_2 - \vec{e}_3\big) = \]
\[\big(\vec{e}_1 + \vec{e}_4, \vec{e}_1 + \vec{e}_3 - \vec{e}_4, \vec{e}_1 + \vec{e}_2 - \vec{e}_3, \vec{e}_1 - \vec{e}_2\big).\]
Стало быть, $X_{1}\big(\cf(l_1,l_2,l_3,l_4)\big) \in \mathbf{CF}_{1}'$, то есть $\cf(l_1,l_2,l_3,l_4)\in\overline{\mathbf{CF}_{1}'}$.

Пусть выполняется утверждение \textup{(2)} леммы \ref{main_lem_4}. Рассмотрим такой оператор $X_{2} \in \Gl_4(\Z)$, что
\[X_{2}\big(\vec{z}_1, \vec{z}_2, \vec{z}_3, \vec{z}_{4}\big) = \big(\vec{e}_1, \vec{e}_1 + \vec{e}_4, \vec{e}_1 + \vec{e}_3, \vec{e}_1 + \vec{e}_2\big).\]
Тогда $X_{2}GX_{2}^{-1} = G_{2}'$, так как по лемме \ref{main_lem_4}
\[X_{2}GX_{2}^{-1}\big(\vec{e}_1, \vec{e}_1 + \vec{e}_4, \vec{e}_1 + \vec{e}_3, \vec{e}_1 + \vec{e}_2\big) = \]
\[\big( \vec{e}_1 + \vec{e}_4, \vec{e}_1 + \vec{e}_3, \vec{e}_1 + \vec{e}_2, \vec{e}_1\big).\]
Стало быть, $X_{2}\big(\cf(l_1,l_2,l_3,l_4)\big) \in \mathbf{CF}_{2}'$, то есть $\cf(l_1,l_2,l_3,l_4)\in\overline{\mathbf{CF}_{2}'}$.

Пусть выполняется утверждение \textup{(3)} леммы \ref{main_lem_4}. Рассмотрим такой оператор $X_{3} \in \Gl_4(\Z)$, что
\[X_{3}\big(\vec{z}_1, \frac{1}{2}(\vec{z}_{1}+\vec{z}_{2}), \frac{1}{2}(\vec{z}_{1}+\vec{z}_{3}), \frac{1}{2}(\vec{z}_{1}+\vec{z}_{4})\big) = \big(\vec{e}_1, \vec{e}_1 + \vec{e}_4, \vec{e}_1 + \vec{e}_3, \vec{e}_1 + \vec{e}_2 \big).\]
Тогда $X_{3}(\vec{z}_2) = X_{3}(2\cdot\frac{1}{2}(\vec{z}_1 + \vec{z}_2) - \vec{z}_1) = \vec{e}_1  + 2\vec{e}_4$, $X_{3}(\vec{z}_3) = X_{3}(2\cdot\frac{1}{2}(\vec{z}_1 + \vec{z}_3) - \vec{z}_1) = \vec{e}_1  + 2\vec{e}_3$, $X_{3}(\vec{z}_4) = X_{3}(2\cdot\frac{1}{2}(\vec{z}_{1}+\vec{z}_{4})-\vec{z}_1) = \vec{e}_1 + 2\vec{e}_2$ и $X_{3}GX_{3}^{-1} = G_{3}'$, так как по лемме \ref{main_lem_4}
\[X_{3}GX_{3}^{-1}\big(\vec{e}_1, \vec{e}_1  + 2\vec{e}_4, \vec{e}_1 + 2\vec{e}_3, \vec{e}_1 + 2\vec{e}_2\big) = \]
\[\big(\vec{e}_1  + 2\vec{e}_4, \vec{e}_1 + 2\vec{e}_3, \vec{e}_1 + 2\vec{e}_2, \vec{e}_1\big).\]
Стало быть, $X_{3}\big(\cf(l_1,l_2,l_3,l_4)\big) \in \mathbf{CF}_{3}'$, то есть $\cf(l_1,l_2,l_3,l_4)\in\overline{\mathbf{CF}_{3}'}$.

Пусть выполняется утверждение \textup{(4)} леммы \ref{main_lem_4}. Рассмотрим такой оператор $X_{4} \in \Gl_4(\Z)$, что
\[X_{4}\big(\vec{z}_1, \vec{z}_2, \frac{1}{2}(\vec{z}_1 + \vec{z}_3),  \frac{1}{4}(\vec{z}_{1}+\vec{z}_{2}+\vec{z}_{3}+\vec{z}_{4})\big) = \big(\vec{e}_1 - \vec{e}_3 + \vec{e}_4, \vec{e}_1 -\vec{e}_2 + 2\vec{e}_3 - \vec{e}_4, \vec{e}_1 + \vec{e}_2 - \vec{e}_3 +\vec{e}_4, \vec{e}_1\big).\]
Тогда $X_{4}(\vec{z}_3) = X_{4}(2\cdot\frac{1}{2}(\vec{z}_1 + \vec{z}_3) - \vec{z}_1) = \vec{e}_1 + 2\vec{e}_2 - \vec{e}_3 + \vec{e}_4$, $X_{4}(\vec{z}_4) = X_{4}(4\cdot\frac{1}{4}(\vec{z}_{1}+\vec{z}_{2}+\vec{z}_{3}+\vec{z}_{4}) - \vec{z}_1 - \vec{z}_2 - \vec{z}_3) = \vec{e}_1 - \vec{e}_2 - \vec{e}_4$ и $X_{4}GX_{4}^{-1} = G_{4}'$, так как по лемме \ref{main_lem_4}
\[X_{4}GX_{4}^{-1}\big(\vec{e}_1 - \vec{e}_3 + \vec{e}_4, \vec{e}_1 -\vec{e}_2 + 2\vec{e}_3 - \vec{e}_4, \vec{e}_1 + 2\vec{e}_2 - \vec{e}_3 + \vec{e}_4, \vec{e}_1 - \vec{e}_2 - \vec{e}_4\big) = \]
\[\big(\vec{e}_1 -\vec{e}_2 + 2\vec{e}_3 - \vec{e}_4, \vec{e}_1 + 2\vec{e}_2 - \vec{e}_3 + \vec{e}_4, \vec{e}_1 - \vec{e}_2 - \vec{e}_4, \vec{e}_1 - \vec{e}_3 + \vec{e}_4\big).\]
Стало быть, $X_{4}\big(\cf(l_1,l_2,l_3,l_4)\big) \in \mathbf{CF}_{4}'$, то есть $\cf(l_1,l_2,l_3,l_4)\in\overline{\mathbf{CF}_{4}'}$.

Пусть выполняется утверждение \textup{(5)} леммы \ref{main_lem_4}. Рассмотрим такой оператор $X_{5} \in \Gl_4(\Z)$, что
\[X_{5}\big(\vec{z}_1, \vec{z}_2, \frac{1}{2}(\vec{z}_1 + \vec{z}_3),  \frac{1}{2}(\vec{z}_2 + \vec{z}_4)\big) = \big(\vec{e}_1, \vec{e}_1 + \vec{e}_4, \vec{e}_1 + \vec{e}_3, \vec{e}_1 + \vec{e}_2\big).\]
Тогда $X_{5}(\vec{z}_3) = X_{5}(2\cdot\frac{1}{2}(\vec{z}_1 + \vec{z}_3) - \vec{z}_1) = \vec{e}_1 + 2\vec{e}_3$, $X_{5}(\vec{z}_4) = X_{5}(2\cdot\frac{1}{2}(\vec{z}_2 + \vec{z}_4) - \vec{z}_2) = \vec{e}_1 + 2\vec{e}_2 + \vec{e}_4$ и $X_{5}GX_{5}^{-1} = G_{5}'$, так как по лемме \ref{main_lem_4}
\[X_{5}GX_{5}^{-1}\big(\vec{e}_1, \vec{e}_1 + \vec{e}_4, \vec{e}_1 + 2\vec{e}_3, \vec{e}_1 + 2\vec{e}_2 + \vec{e}_4\big) = \]
\[\big( \vec{e}_1 + \vec{e}_4, \vec{e}_1 + 2\vec{e}_3, \vec{e}_1 + 2\vec{e}_2 + \vec{e}_4, \vec{e}_1\big).\]
Стало быть, $X_{5}\big(\cf(l_1,l_2,l_3,l_4)\big) \in \mathbf{CF}_{5}'$, то есть $\cf(l_1,l_2,l_3,l_4)\in\overline{\mathbf{CF}_{5}'}$.

Пусть выполняется утверждение \textup{(6)} леммы \ref{main_lem_4}. Рассмотрим такой оператор $X_{6} \in \Gl_4(\Z)$, что
\[X_{6}\big(\vec{z}_1, \vec{z}_{2}, \vec{z}_{3}, \frac{1}{2}(\vec{z}_{1} + \vec{z}_{3} + \vec{z}_{4} - \vec{z}_{2})\big) =\]
\[\big(\vec{e}_1 + \vec{e}_2 + \vec{e}_4, \vec{e}_1, \vec{e}_1 - \vec{e}_3 + \vec{e}_4, \vec{e}_1 + \vec{e}_4\big).\]
Тогда $X_{6}(\vec{z}_4) = X_{6}(2\cdot\frac{1}{2}(\vec{z}_{1} + \vec{z}_{3} + \vec{z}_{4} - \vec{z}_{2}) - \vec{z}_1 + \vec{z}_2 - \vec{z}_3) = \vec{e}_1 - \vec{e}_2 + \vec{e}_3$ и $X_{6}GX_{6}^{-1} = G_{6}'$, так как по лемме \ref{main_lem_4}
\[X_{6}GX_{6}^{-1}\big(\vec{e}_1 + \vec{e}_2 + \vec{e}_4, \vec{e}_1, \vec{e}_1 - \vec{e}_3 + \vec{e}_4, \vec{e}_1 - \vec{e}_2 + \vec{e}_3\big) = \]
\[\big(\vec{e}_1, \vec{e}_1 - \vec{e}_3 + \vec{e}_4, \vec{e}_1 - \vec{e}_2 + \vec{e}_3, \vec{e}_1 + \vec{e}_2 + \vec{e}_4\big).\]
Стало быть, $X_{6}\big(\cf(l_1,l_2,l_3,l_4)\big) \in \mathbf{CF}_{6}'$, то есть $\cf(l_1,l_2,l_3,l_4)\in\overline{\mathbf{CF}_{6}'}$.

Пусть выполняется утверждение \textup{(7)} леммы \ref{main_lem_4}. Рассмотрим такой оператор $X_{7} \in \Gl_4(\Z)$, что
\[X_{7}\big(\vec{z}_1, \vec{z}_{2}, \vec{z}_{3}, \frac{1}{2}(\vec{z}_{1}+\vec{z}_{2}) + \frac{1}{4}(\vec{z}_{1}+\vec{z}_{4} - \vec{z}_{3} - \vec{z}_{2})\big) =\]
\[\big(\vec{e}_1+\vec{e}_2-\vec{e}_3+2\vec{e}_4, \vec{e}_1-\vec{e}_2+2\vec{e}_3, \vec{e}_1 + 2\vec{e}_2 + 2\vec{e}_4, \vec{e}_1+\vec{e}_4\big).\]
Тогда $X_{7}(\vec{z}_4) = X_{7}(4(\frac{3\vec{z}_1+\vec{z}_2-\vec{z}_3+\vec{z}_4}{4})-3\vec{z}_1-\vec{z}_2+\vec{z}_3) = \vec{e}_1  + \vec{e}_3$ и $X_{7}GX_{7}^{-1} = G_{7}'$, так как по лемме \ref{main_lem_4}
\[X_{7}GX_{7}^{-1}\big(\vec{e}_1+\vec{e}_2-\vec{e}_3+2\vec{e}_4, \vec{e}_1-\vec{e}_2+2\vec{e}_3, \vec{e}_1 + 2\vec{e}_2 + 2\vec{e}_4, \vec{e}_1  + \vec{e}_3\big) = \]
\[\big(\vec{e}_1-\vec{e}_2+2\vec{e}_3, \vec{e}_1 + 2\vec{e}_2 + 2\vec{e}_4, \vec{e}_1  + \vec{e}_3, \vec{e}_1+\vec{e}_2-\vec{e}_3+2\vec{e}_4\big).\]
Стало быть, $X_{7}\big(\cf(l_1,l_2,l_3,l_4)\big) \in \mathbf{CF}_{7}'$, то есть $\cf(l_1,l_2,l_3,l_4)\in\overline{\mathbf{CF}_{7}'}$.
\end{proof}

\section*{Благодарности}

Автор благодарит О.Н. Германа за внимание к работе и полезные обсуждения результатов.

\end{document}